\sloppy\pagestyle{plain}
\makeatletter\@addtoreset{equation}{section}
\newcommand{\R}{\ensuremath{\mathbb{R}}}
\newcommand{\C}{\ensuremath{\mathbb{C}}}
\newcommand{\F}{\ensuremath{\mathbb{F}}}
\newcommand{\p}{\ensuremath{\mathbb{P}}}
\newcommand{\Z}{\ensuremath{\mathbb{Z}}}
\newcommand{\AAA}{\ensuremath{\mathbb{A}}}
\DeclareMathOperator{\Bir}{Bir}
\DeclareMathOperator{\Aut}{Aut}
\DeclareMathOperator{\Pic}{Pic}
\DeclareMathOperator{\Gal}{Gal}
\DeclareMathOperator{\Witt}{W}
\DeclareMathOperator{\PGL}{PGL}
\DeclareMathOperator{\Units}{U}
\DeclareMathOperator{\disc}{disc}
\DeclareMathOperator{\Milnor}{K}
\DeclareMathOperator{\rk}{rk}
\DeclareMathOperator{\Sym}{S}
\DeclareMathOperator{\SF}{\delta}
\DeclareMathOperator{\Spec}{Spec}
\DeclareMathOperator{\QForms}{M}
\newcommand{\kk}{\Bbbk}
\newcommand{\FFF}{\mathbb F}
\newcommand{\Sph}{\mathbb S}
\renewcommand{\epsilon}{\varepsilon}
\theoremstyle{definition}
\newtheorem{example}[equation]{Example}
\newtheorem*{example*}{Example}
\newtheorem{definition}[equation]{Definition}
\newtheorem{definition-example}[equation]{Definition-Example}
\newtheorem{definition-proposition}[equation]{Definition-Proposition}
\newtheorem{theorem}[equation]{Theorem}
\newtheorem{lemma}[equation]{Lemma}
\newtheorem{lemma-def}[equation]{Lemma-Definition}
\newtheorem{corollary}[equation]{Corollary}
\newtheorem{proposition}[equation]{Proposition}
\newtheorem*{conjecture*}{Conjecture}
\newtheorem*{maintheorem*}{Main Theorem}
\newtheorem*{corollary*}{Corollary}
\newtheorem*{maincorollary*}{Main Corollary}
\newtheorem*{maincorollary2*}{Second Main Corollary}
\newtheorem*{secondmaincorollary*}{Second Main Corollary}
\newtheorem{question}[equation]{Question}
\newtheorem*{question*}{Question}
\newtheorem*{problem*}{Problem}
\newtheorem*{theorem*}{Theorem}
\theoremstyle{definition}
\newtheorem{remark}[equation]{Remark}
\newtheorem*{remark*}{Remark}
\newtheorem*{convention*}{Convention}
\newtheorem*{conventions*}{Conventions}
\makeatletter\@addtoreset{equation}{section} \makeatother
\author[I. Cheltsov, F. Mangolte, E. Yasinsky, S. Zimmermann]{Ivan Cheltsov, Fr\'ed\'eric Mangolte, Egor Yasinsky, Susanna Zimmermann}
\title{Birational involutions of the real projective plane}
\address{\emph{Ivan Cheltsov}
\newline
\textnormal{University of Edinburgh,  Edinburgh, Scotland}
\newline
\textnormal{\texttt{i.cheltsov@ed.ac.uk}}}
\address{\emph{Fr\'ed\'eric Mangolte}
\newline
\textnormal{Aix Marseille Univ, CNRS, I2M, Marseille, France}
\newline
\textnormal{\texttt{frederic.mangolte@univ-amu.fr}}}
\address{\emph{Egor Yasinsky}
\newline
\textnormal{Institut de Math\'{e}matiques de Bordeaux, UMR 5251 CNRS, Universit\'{e} de Bordeaux,	33405 Talence cedex, France}
\newline
\textnormal{\texttt{yasinskyegor@gmail.com}}}
\address{\emph{Susanna Zimmermann}
\newline
\textnormal{Université Paris-Saclay, Institut de mathématique, CNRS, UMR 8628, F-91405 Orsay, France}
\newline
\textnormal{\texttt{susannamaria.zimmermann@gmail.com}}}
\subjclass[2020]{14E07; 14E08; 14L30; 14P99}
\begin{document}

\begin{abstract}
We classify birational involutions of the real projective plane up to conjugation.
In contrast with an analogous classification over the complex numbers (due to E. Bertini, G. Castelnuovo, F. Enriques, L. Bayle and A. Beauville),
which includes $4$ different classes of involutions, we discover $12$ different classes over the reals,
and provide many examples when the fixed curve of an involution does not determine its conjugacy class in the real plane Cremona group.
\end{abstract}

\maketitle

\tableofcontents

\section{Introduction}
\label{section:intro}

\subsection{Bertini, Bayle and Beauville}
\label{subsection:BBB}
Classification of finite subgroups in the plane Cremona group $\Bir(\p_\C^2)$, i.e. the group of birational automorphisms of the complex projective plane, is a~very classical problem which goes back to the works of E. Bertini and the Italian school of algebraic geometry.
Bertini was interested in the description of involutions inside this group and discovered three types of them,
which are known today as de Jonqui\`{e}res, Geiser and Bertini involutions.
However, Bertini's classification was known to be incomplete and it required more than a~century to get a~really precise statement.
Using an equivariant version of Mori theory in dimension two, L. Bayle and A. Beauville obtained the following elegant classification:

\begin{theorem}[{\cite{BayleBeauville}}]
\label{theorem:Bayle-Beauville}
Every birational involution in $\Bir(\p_\C^2)$ is conjugate to one and only one of the following involutions:
\begin{enumerate}
\item A linear involution acting on $\p_\C^2$.
\item A de Jonqui\`{e}res involution of genus $g\geqslant 1$, i.e. an involution given in affine coordinates by
$$
\chi: (x,y)\mapsto \left ( x, \frac{f(x)}{y} \right )
$$
with $f(x)$ being a~polynomial of degree $d=2g+1$ 
with no multiple factors.
\item A Bertini involution.
\item A Geiser involution.
\end{enumerate}
\end{theorem}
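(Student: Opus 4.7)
My plan follows the equivariant Mori theoretic strategy alluded to in the statement. Given an involution $\sigma\in\Bir(\p_\C^2)$, I would first \emph{regularise}: by a theorem of de Fernex--Ein (or classically by blowing up the base points of $\sigma$ and $\sigma^{-1}$ and iterating), there exists a smooth projective rational surface $X$ on which $\sigma$ acts biregularly. Replacing $X$ by its quotient after resolving indeterminacies, one may assume that $\langle\sigma\rangle$ acts faithfully on a smooth rational surface.

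The next step is to run the \emph{$\sigma$-equivariant Minimal Model Program}. Contracting $\sigma$-orbits of $(-1)$-curves that are numerically trivial on the $\sigma$-invariant part of the Picard group, one arrives at a $\sigma$-Mori fibre space $(Y,\sigma)$. Because $Y$ is rational, this leaves only two possibilities:
\begin{enumerate}
\item[(i)] $Y$ is a $\sigma$-equivariant conic bundle $\pi\colon Y\to\p^1_\C$ with $\rk\Pic(Y)^\sigma=2$; or
\item[(ii)] $Y$ is a del Pezzo surface with $\rk\Pic(Y)^\sigma=1$.
\end{enumerate}
In case (i) one analyses the action of $\sigma$ on the base $\p^1_\C$. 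If $\sigma$ acts non-trivially on the base, the fibration together with a pencil of $\sigma$-invariant sections puts $\sigma$ in the linear class. If $\sigma$ preserves each fibre, then on the generic fibre (a smooth conic over $\C(t)$) it acts as a non-trivial involution fixing a double cover ramified along two points, and the Stein factorisation of the quotient exhibits $\sigma$ in the standard de Jonqui\`eres form with $f(x)$ encoding the discriminant of the conic bundle; the genus $g$ is then $\lfloor(\deg f-1)/2\rfloor$, i.e.\ the number of singular fibres minus one divided by two.

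In case (ii), I would split by $K_Y^2$. For $K_Y^2\in\{5,6,7,8,9\}$ one checks, using that $\sigma$ is an involution and $\rk\Pic(Y)^\sigma=1$, that either $Y\cong\p_\C^2$ and $\sigma$ is linear, or $Y\cong\p^1\times\p^1$ with $\sigma$ permuting the rulings (hence conjugate to a linear involution after a birational change), or $\sigma$ preserves a pencil of rational curves and falls back into the conic-bundle case. The genuinely new cases are $K_Y^2=2$, where the anticanonical morphism $Y\to\p_\C^2$ is a double cover ramified along a smooth quartic and the covering involution is the \emph{Geiser} involution, and $K_Y^2=1$, where $|-2K_Y|$ realises $Y$ as a double cover of a quadric cone ramified along a sextic and the covering involution is the \emph{Bertini} involution. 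The core of the argument is to verify that in these two degrees $\sigma$ \emph{must} coincide with the covering involution of the anticanonical morphism; this uses that both $\sigma$ and the covering involution act trivially on $\Pic(Y)$, together with the fact that the automorphism group of a minimal del Pezzo of degree $\leq 2$ in a fixed component of the moduli is controlled and contains a unique central involution.

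Finally, for the uniqueness/non-conjugacy statement I would use the birational invariant attached to $\sigma$, namely the isomorphism class (and in particular the geometric genus) of the normalisation of its fixed curve: it is zero in the linear case, equals $g$ in the de Jonqui\`eres case, equals $3$ for Geiser and $4$ for Bertini. This invariant separates all four classes, and within the de Jonqui\`eres family it recovers $g$. The most delicate step I anticipate is case (i) when $\sigma$ fixes the base of the conic bundle: one must reconstruct the normal form $(x,y)\mapsto(x,f(x)/y)$ canonically, i.e.\ show that the square-free part of $f$ is determined up to the action of $\PGL_2(\C)$ on $x$ and scaling of $y$; this is the step that requires the most care with the birational model, because one has to remove artificially introduced singular fibres without losing track of the conjugacy class.
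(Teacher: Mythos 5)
First, a point of comparison: the paper does not prove this statement at all — Theorem \ref{theorem:Bayle-Beauville} is quoted from Bayle--Beauville, and the paper only reuses its strategy (regularisation, equivariant MMP, case analysis on $G$-Mori fibre spaces) in the real setting. Your outline follows that same strategy and is broadly the right shape, but it has concrete gaps. (1) Your non-conjugacy argument fails as stated: the genus of the fixed curve does \emph{not} separate the four classes, since a de Jonqui\`eres involution of genus $3$ and a Geiser involution both fix curves of genus $3$, and likewise genus $4$ versus Bertini. The invariant is the isomorphism class of the (normalised) fixed curve, and the separation uses that de Jonqui\`eres fixed curves are hyperelliptic while the Geiser and Bertini fixed curves (a smooth plane quartic, resp.\ the intersection of a quadric cone with a cubic surface) are not. (2) In the del Pezzo branch you skip degrees $3$ and $4$ entirely; they must be excluded, e.g.\ because $\rk\Pic(Y)^\sigma=1$ forces $\sigma$ to act as $-\mathrm{id}$ on $K_Y^{\perp}$, and $-\mathrm{id}$ lies in neither $W(\mathrm{E}_6)$ nor $W(\mathrm{D}_5)$. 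The same computation is what identifies $\sigma$ with the covering involution in degrees $2$ and $1$: your phrase ``both $\sigma$ and the covering involution act trivially on $\Pic(Y)$'' is false (they act as $-\mathrm{id}$ on $K_Y^{\perp}$); the correct argument is that their composite acts trivially on $\Pic(Y_{\C})$ and the kernel of $\Aut(Y)\to W(\mathrm{E}_7)$ (resp.\ $W(\mathrm{E}_8)$) is generated by the Geiser (resp.\ Bertini) involution.

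(3) The genuinely hard part of Bayle--Beauville's proof — that every involution fixing no irrational curve is linearizable, which covers the conic bundles where $\sigma$ acts non-trivially on the base or has too few singular fibres, as well as the residual del Pezzo cases — is dispatched with ``a pencil of $\sigma$-invariant sections,'' which is not an argument. Over $\C$ this step requires elementary transformations reducing the number of singular fibres and a careful treatment of the overlaps between the conic-bundle and del Pezzo branches. It is worth noting that this is precisely the step that breaks over $\R$ and is responsible for the classes $\mathfrak{Q}$, $\mathfrak{T}_{4n}$, $\mathfrak{T}_{4n+2}^{\prime}$, $\mathfrak{T}_{4n}^{\prime\prime}$ in the Main Theorem, so it cannot be treated as a formality.
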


Except for the case (1), all these involutions have moduli \cite{BayleBeauville}.
Namely, conjugacy classes of de Jonqui\`{e}res involutions of genus $g\geqslant 1$ are parametrized by hyperelliptic curves of genus~$g\geqslant 1$.
Conjugacy classes of Geiser involutions are parametrized by non-hyperelliptic curves of genus~$3$,
and conjugacy classes of Bertini involutions are parametrized by non-hyperelliptic curves of genus~$4$ whose canonical model lies on a~singular quadric.

The goal of this article is to demonstrate that classification of birational involutions of the projective plane over
 the field $\R$ of real numbers is a~much more subtle problem.

\subsection{The real plane Cremona group}

In general, the Cremona group of the real projective plane enjoys many interesting properties
which are quite different from the properties of its complex counterpart.
Recall that the famous theorem of Noether and Castelnuovo states that $\Bir(\p_\C^2)$ is generated by its subgroup $\Aut(\p_\C^2)\cong\mathrm{PGL}_3(\C)$ and the standard Cremona involution
$$
\sigma_0: [x:y:z]\dashrightarrow [yz: xz: xy].
$$
This is not enough to generate $\Bir(\p_\R^2)$: both linear transformations and the Cremona involution have real indeterminacy points,
hence the same is true for any composition of these maps, while the Apollonius' circle inversion
$$
\sigma_1: [x:y:z]\dashrightarrow [y^2+z^2: xy: xz].
$$
is not defined at $[0:1:i]$, $[0:1:-i]$, $[1:0:0]$.
Another example of a~real birational map which cannot be a~composition of linear transformations and $\sigma_0$ can be obtained as follows:
start with three pairs of complex conjugate points on $\p_\R^2$ in general position,
blow them up to get a~real cubic surface and then blow down the strict transforms of the conics passing through five of the six points.
Such birational self-maps of $\p_\R^2$ are called \emph{standard quintic transformations}.
In fact, the group $\Bir(\p_\R^2)$ is generated by $\Aut(\p_\R^2)$, $\sigma_0$, $\sigma_1$ and standard quintic transformations \cite{BlancMangolteGenerators}.

As the group $\Bir(\p_\C^2)$, the real Cremona group   $\Bir(\p_\R^2)$ is also  generated by involutions \cite{ZimmermannPhD}.
On the other hand,  the abelianization of the group $\Bir(\p_\R^2)$ is the infinite sum $\bigoplus_{(0,1]}\mathbb{Z}/2\mathbb{Z}$ \cite{ZimmermannAbel},
which is absolutely not the case over the complex numbers: the abelianization of $\Bir(\p_\C^2)$ is trivial.

In this paper, we are interested in classification of finite subgroups of $\Bir(\p_\R^2)$. Their study was initiated in \cite{YasinskyOdd,Robayo}
and continued in \cite{Yasinsky}.
In those papers, a~big part of the classification was obtained,
including complete classification of finite subgroups of $\Bir_\R(\p_\R^2)$ of odd order.
However, the case of involutions remained unsolved.
The goal of this paper is to fill this gap.

\subsection{Regularisation}
\label{subsection:regularisation}
Recall the general approach to classification of finite subgroups in Cremona groups (for more details see \cite{DolgachevIskovskikh,BlancLinearisation}).
Let $G$ be a~finite subgroup of $\Bir(\p_\R^2)$.
Then there exists an $\R$-rational smooth projective surface $X$, an injective homomorphism $\iota: G\rightarrow \Aut(X)$ and a~birational $G$-equivariant $\R$-map $\psi: X\dasharrow\p^2_\R$,
such that $G=\psi\circ\iota (G)\circ{\psi}^{-1}$. One says that $G$ is \emph{regularised} on $X$.
Conversely, for an $\R$-rational $G$-surface $X$ a~birational map $\psi: X\dasharrow\p^2_\R$ yields an injective homomorphism
$G \hookrightarrow\Bir(\p^2_\R)$ given by $g\mapsto \psi\circ g\circ \psi^{-1}$.

Two finite subgroups of $\Bir(\p^2_\R)$ are conjugate if and only if the corresponding smooth $\R$-rational $G$-surfaces are
$G$-birationally equivalent, so there is a~natural bijection between the conjugacy classes of finite subgroups $G\subset\Bir(\p^2_\R)$
and classes of $G$-birational equivalence of smooth $\R$-rational $G$-surfaces.

Moreover, applying a~$G$-equivariant version of the Minimal Model Program, we may further assume that $X$ admits a~structure of a~\emph{$G$-Mori fibre space}.
This simply means that one of the following two cases hold:
\begin{enumerate}
\item either $\Pic(X)^G\simeq\mathbb{Z}^2$ and there exists a~$G$-equivariant conic bundle $X\to\mathbb{P}^1_{\mathbb{R}}$,
\item or $\Pic(X)^G\simeq\mathbb{Z}$ and $X$ is a~smooth del Pezzo surface.
\end{enumerate}

Therefore, the classification of finite subgroups of $\Bir(\p^2_\R)$ is equivalent to $G$-birational classification of such two-dimensional $\mathbb{R}$-rational $G$-Mori fibre spaces.
In the second case, a~fairly detailed description of such pairs $(X,G)$ has been given in \cite{Yasinsky}.

\subsection{Fixed curves of birational involutions}
\label{subsection:fixed-curves}
For a~birational involution $\iota\in\Bir(\p^2_\R)$ and its regularisation $\tau\in\Aut(X)$ on some smooth $\R$-rational surface $X$,
let us denote by $F(\tau)$ the~union of all geometrically irrational \emph{real} curves in the~surface $X$ that are pointwise fixed by the~involution~$\tau$.
If there are no such curves, we let $F(\tau)=\varnothing$.
In fact, a~non-empty $F(\tau)$ consists of a~unique geometrically irreducible smooth real curve in $X$, see Lemma~\ref{lemma:fixed-curve-unique}.
Moreover, up to an isomorphism, the~fixed curve $F(\tau)$ does not depend on the~choice of the~regularisation,
so~that this real curve depends only on the~conjugacy class of the~involution~$\iota\in\Bir(\mathbb{P}^2_{\mathbb{R}})$.
Thus, we say that $F(\iota)=F(\tau)$ is the~\emph{fixed curve} of the~involutions $\iota$ and $\tau$.

One can always consider $\iota$ as a~birational involution of the~\emph{complex} projective plane.
Then, as an element of the~complex Cremona group $\mathrm{Bir}(\mathbb{P}^2_{\mathbb{C}})$,
the involution $\iota$ is uniquely determined by its fixed curve up to a~conjugation \cite{BayleBeauville}.
In particular, $F(\iota)=\varnothing$ if and only if $\iota$ is conjugated in $\Bir(\p_\C^2)$ to the linear involution $[x:y:z]\mapsto[x:y:-z]$.
In the~real Cremona group $\mathrm{Bir}(\mathbb{P}^2_{\mathbb{R}})$ this is no longer true:
we cannot always recover the conjugacy class of an involution in the~real Cremona group from its fixed curve.

\begin{example}[{\cite[Example 6.1]{Tre16}, \cite[Example 1.4]{Yasinsky}}]
\label{example:Trepalin}
Let $Z_n$ be the~surface in $\p_\R^2\times\p_\R^1$ that is given by
$$
x^2\prod_{k=1}^{2n}(t_0^2+k^2t_1^2)+y^2t_0^{4n}+z^2t_1^{4n}=0,
$$
where $([x:y:z],[t_0:t_1])$ are coordinates on $\p_\R^2\times\p_\R^1$. Then $Z_n$ is rational over $\R$. Let $\tau_n\in\Aut(Z_n)$ be the~involution induced by the~map $[t_0:t_1]\mapsto [-t_0:t_1]$ and let $\iota_n$ be the corresponding birational involution of $\p_\R^2$. Note that $F(\tau_n)=\varnothing$.
On the~other hand, the~involutions $\iota_i$ and $\iota_j$ are non-conjugate in $\Bir(\p_\R^2)$ for $i\ne j$.
See Section~\ref{section:Trepalin-involutions}, for generalizations.
\end{example}

In this paper, we classify birational involutions in $\Bir(\p_\R^2)$,
and construct uncountably many non-conjugate birational involution in $\Bir(\p_\R^2)$ that all have
the same fixed curve: see Main Corollary below for more precise statement.

\subsection{Classification}
\label{subsection:classification}
The main result of this paper, Main Theorem below,
shows that classification of involutions in the~real plane Cremona group is  quite sophisticated and differs drastically from the~one over $\C$.
To state it, we will use the~language of $\R$-rational $G$-surfaces introduced in Section~\ref{subsection:regularisation},
and notions of exceptional/non-exceptional $G$-conic bundles,
see Sections~\ref{section:exceptional-conic-bundles} and \ref{section:non-exceptional} for precise definitions.
So, our approach goes along the~lines of the~approach in \cite{DolgachevIskovskikh,BayleBeauville}.

\begin{maintheorem*}\label{maintheorem}
Let $\iota$ be an involution in $\Bir(\p^2_\R)$.
Then $\iota$ admits a~regularisation $\tau\in\Aut(S)$
such that $S$ is a smooth real projective rational surface,
where $S$ and the subgroup $G=\langle\tau\rangle$ belong to one of the~following classes, which we consider as classes of birational involutions in $\Bir(\p^2_\R)$.
\begin{itemize}
\item[($\mathfrak{L}$)] The surface $S$ is $\mathbb{P}^2_\R$, $\tau$ is the linear involution $[x:y:z]\mapsto[x:y:-z]$, and $F(\tau)=\varnothing$.

\item[($\mathfrak{Q}$)] 
The surface $S$ is a quadric surface in $\p^3_\R$, $\tau$ is an automorphism, $F(\tau)=\varnothing$ and $S(\R)^{\tau}=\varnothing$. See Section \ref{section: dP8}.

\item[($\mathfrak{T}_{4n}$)] The surface $S$ admits a $G$-equivariant morphism $S\to\mathbb{P}^1_\R$
that is a conic bundle with $4n\geqslant 4$ singular fibres, $\Pic(S)^G\simeq\mathbb{Z}^2$,
the involution $\tau$ is a $0$-twisted Trepalin involution, and $F(\tau)=\varnothing$. See Section~ \ref{subsection:Trepalin-involution-1}.

\item[($\mathfrak{T}_{4n+2}^\prime$)] The surface $S$ admits a $G$-equivariant morphism $S\to\mathbb{P}^1_\R$
that is a conic bundle with~\mbox{$4n+2\geqslant 6$} singular fibres, $\Pic(S)^G\simeq\mathbb{Z}^2$,
$\tau$ is a~$1$-twisted Trepalin involution, and $F(\tau)=\varnothing$. See Section~ \ref{subsection:Trepalin-involution-2}.

\item[($\mathfrak{T}_{4n}^{\prime\prime}$)] The surface $S$ admits a $G$-equivariant morphism $S\to\mathbb{P}^1$
that is a conic bundle with $4n\geqslant 4$ singular fibres, $\Pic(S)^G\simeq\mathbb{Z}^2$,
$\tau$ is a~$2$-twisted Trepalin involution, and $F(\tau)=\varnothing$. See Section~ \ref{subsection:Trepalin-involution-3}.

\item[($\mathfrak{B}_4$)] The surface $S$ is a del Pezzo surface of degree $1$,
$\Pic(S)^G\simeq\mathbb{Z}$,
the involution $\tau$ is the~Bertini involution of the~surface $S$, and $F(\tau)$ is a~non-hyperelliptic curve of genus $4$. See Section~\ref{section:Bertini}.
		
\item[($\mathfrak{G}_3$)] The surface $S$ is a~del Pezzo surface of degree $2$, $\Pic(S)^G\simeq\mathbb{Z}$,
the involution $\tau$ is the~Geiser involution of the surface $S$, and $F(\tau)$ is a~non-hyperelliptic curve of genus $3$. See Section~\ref{section:Geiser}.
		
\item[($\mathfrak{K}_1$)] The surface $S$ is a~del Pezzo surface of degree $2$, $\Pic(S)^G\simeq\mathbb{Z}$,
the involution $\tau$ is a~Kowalevskaya involution of the surface $S$, and $F(\tau)$ is a genus 1 curve. See Section~\ref{section:Geiser}.
				
\item[($\mathfrak{dJ}_{g}$)] The surface $S$ admits a $G$-equivariant morphism $S\to\mathbb{P}^1_\R$
that is a $G$-exceptional conic bundle with $2g+2$ singular fibres, $\Pic(S)^G\simeq\mathbb{Z}^2$,
the involution $\tau$ is a de~Jonqui\`{e}res involution, and $F(\tau)$ is a~hyperelliptic curve $C$ of genus $g\geqslant 1$. We give the definition after Proposition~ \ref{proposition:exceptional-conic-bundles}.

\item[($\mathfrak{I}_{g}$)] The surface $S$ admits a $G$-equivariant morphism $S\to\mathbb{P}^1_\R$
that is a non-$G$-exceptional conic bundle with $2g+2$ singular fibres, $\Pic(S)^G\simeq\mathbb{Z}^2$,
the involution $\tau$ is a $0$-twisted Iskovskikh involution,
and $F(\tau)$ is a~hyperelliptic curve $C$ of genus $g\geqslant 1$. See Section~\ref{subsection:Iskovskikh-involutions}.

\item[($\mathfrak{I}_{g}^\prime$)] The surface $S$ admits a $\langle\tau\rangle$-equivariant morphism $S\to\mathbb{P}^1_\R$
that is a non-$G$-exceptional conic bundle with $2g+3$ singular fibres, $\Pic(S)^G\simeq\mathbb{Z}^2$,
the~involution $\tau$ is a~$1$-twisted Iskovskikh involution,
and $F(\tau)$ is a~hyperelliptic curve $C$ of genus $g\geqslant 1$. See Section~\ref{subsection:Iskovskikh-involutions}.
		
\item[($\mathfrak{I}_{g}^{\prime\prime}$)] The surface $S$ admits a $G$-equivariant morphism $S\to\mathbb{P}^1_\R$
that is a non-$G$-exceptional  conic bundle with $2g+4$ singular fibres, $\Pic(S)^G\simeq\mathbb{Z}^2$,
the involution $\tau$ is a~$2$-twisted Iskovskikh involution,
and $F(\tau)$ is a~hyperelliptic curve $C$ of genus $g\geqslant 1$. See Section~\ref{subsection:Iskovskikh-involutions}.
\end{itemize}

Moreover, two birational involutions contained in two distinct classes
$\mathfrak{L}$, $\mathfrak{Q}$, $\mathfrak{T}_{4n}$, $\mathfrak{T}_{4n+2}^\prime$, $\mathfrak{T}_{4n}^{\prime\prime}$,
$\mathfrak{B}_4$, $\mathfrak{G}_3$, $\mathfrak{K}_1$, $\mathfrak{dJ}_{g}$,
$\mathfrak{I}_{g}$, $\mathfrak{I}_{g}^\prime$, $\mathfrak{I}_{g}^{\prime\prime}$ are not conjugate in $\Bir(\p^2_\R)$
with the only possible exception: involutions from $\mathfrak{dJ}_{1}$ can be conjugate to involutions from $\mathfrak{I}_{1}$ (see Example~\ref{example:non-exceptional-exceptional}). In fact, we do not know whether these two classes coincide.
\end{maintheorem*}

We refer the~reader to Sections~\ref{section:Bertini}, \ref{section:Geiser},
\ref{section:exceptional-conic-bundles}, \ref{section:non-exceptional},
for a~more explicit description of the~birational involutions in the~classes
$\mathfrak{L}$, $\mathfrak{Q}$, $\mathfrak{T}_{4n}$, $\mathfrak{T}_{4n+2}^\prime$, $\mathfrak{T}_{4n}^{\prime\prime}$, $\mathfrak{B}_4$, $\mathfrak{G}_3$, $\mathfrak{K}_1$, $\mathfrak{dJ}_{g}$,
$\mathfrak{I}_{g}$, $\mathfrak{I}_{g}^\prime$, $\mathfrak{I}_{g}^{\prime\prime}$. In what follows, the involutions in the class $\mathfrak{L}$ (i.e. those which are conjugate to $[x:y:z]\mapsto [x:y:-z]$ in $\Bir(\p_{\R}^2)$) will be called \emph{linearizable}. In Section~ \ref{subsection:conjugation}, we also explain how to determine whether two given birational involutions contained in one class among
$\mathfrak{T}_{4n}$ with $n\geqslant 2$, $\mathfrak{T}_{4n+2}^\prime$, $\mathfrak{T}_{4n}^{\prime\prime}$ with $n\geqslant 2$,
$\mathfrak{B}_4$, $\mathfrak{G}_3$, $\mathfrak{K}_1$, $\mathfrak{dJ}_{g}$ with $g\geqslant 2$,
$\mathfrak{I}_{g}$ with $g\geqslant 2$, $\mathfrak{I}_{g}^\prime$, $\mathfrak{I}_{g}^{\prime\prime}$ are conjugate or not.
Using this, in Section \ref{subsection:main-corollary} we obtain

\begin{maincorollary*}\label{maincorollary}
Let $C$ be a~real smooth projective hyperelliptic curve of genus $g\geqslant 2$ such that the~locus $C(\R)$ consists of at least $2$ connected components.
Then $\Bir(\p_\R^2)$ contains uncountably many non-conjugate involutions that all fix a~curve isomorphic to the~curve $C$. Besides, the real plane Cremona group $\Bir(\p_\R^2)$ contains uncountably many non-conjugate involutions that fix no geometrically irrational curves.
\end{maincorollary*}

Recently there have been new developments in the study of embedding of finite groups into Cremona groups. In particular, new obstructions and invariants were introduced in the works of Kontsevich-Pestun-Tschinkel \cite{KontsevichPestunTschinkel}, Kresch-Tschinkel \cite{KreschTschinkel1,KreschTschinkel2} and Hassett-Kresch-Tschinkel \cite{HassettKreschTschinkel}; see also references therein. It would be very interesting to apply these new techniques to conjugacy problems in Cremona groups over non-closed fields, including the case of real numbers, considered in this paper.   

\subsection*{Acknowledgement}
The authors would like to thank Ilia Itenberg, Viatcheslav Kharlamov for very fruitful discussion of real plane curves, and Alexander Merkurjev, Alexander Vishik for discussions on quadratic forms and Witt's theory. The authors also thank Igor Dolgachev for providing useful references and Andrey Trepalin for the help with the proof of Lemma \ref{lem: KS2=4}.
The third author would like to thank the~University of Basel for the~wonderful years spent there and especially J\'er\'emy Blanc for hospitality and numerous discussions (also related to this paper). 
The last author would like to thank the Laboratoire Angevin de Recherche en Mathématiques, University of Angers, for the support she received during the initial stages of this project. 
This project was initiated at the~conference ``Algebraic Geometry in Auckland" and we thank Sione Ma'u and the~University of Auckland for the~stimulating atmosphere. This project was completed during first author's stay at l'Institut des Hautes \'{E}tudes Scientifiques. We thank the institute for the hospitality. Finally, we would like to thank the anonymous referees for their detailed reports and extremely helpful comments on an earlier version of this paper.

During this project, Susanna Zimmermann was supported by the ANR Project FIBALGA ANR-18-CE40-0003-01, the Projet PEPS 2019 ``JC/JC'', the Project \'Etoiles montantes of the R\'egion Pays de la Loire, the project ANR-22-ERCS-0013-01 and by the Centre Henri Lebesgue.  
Ivan Cheltsov was supported by the Leverhulme Trust grant RPG-2021-229. 
Fr\'ed\'eric Mangolte was partially supported by the ANR grant ENUMGEOM ANR-18-CE40-0009.

\section{Equivariant Minimal Model Program}
\label{section:MMP}

In this section, we collect some preliminary information about real algebraic surfaces, admitting a~structure of a~$G$-Mori fibre space.
Namely, let $S$ be a~real geometrically rational smooth projective surface such that $S(\R)\ne\varnothing$.
Then the~Galois group $\Gal(\C/\R)$ acts on $S_\C=S\times_{\Spec\R}\Spec\C$ by the~complex conjugation.

Fix a~finite subgroup $G\subset\mathrm{Aut}(S)$.
We say that $S$ is a~\emph{$G$-minimal del Pezzo surface} if $S$ is a~del Pezzo surface, i.e. the~divisor $-K_S$ is ample, and $\Pic(S)^G\simeq\mathbb{Z}$.
Similarly, we say that a~$S$ admits a~\emph{$G$-conic bundle structure}
if there exists a~flat surjective morphism $\pi\colon S\to\p_\R^1$ such that each scheme fibre of $\pi$ is isomorphic to a~reduced conic in $\p_{\R}^2$. It will be called a~\emph{$G$-minimal} conic bundle if $\Pic(S)^G\simeq\mathbb{Z}^2$.
Note that a~$G$-minimal conic bundle is not necessarily $G$-minimal in the~sense of \cite{DolgachevIskovskikh}. For example, a blow-up of a $G$-fixed real point on $\p_\R^2$ is a $G$-minimal conic bundle $\F_1\to\p_\R^1$ (with no singular fibres), which is clearly not $G$-minimal as a $G$-surface.

As we already mentioned in Section~\ref{subsection:regularisation}, in the remaining part of the paper, we may work with these two classes of surfaces, $G$-minimal del Pezzo surfaces and $G$-minimal conic bundles.

\subsection{Sarkisov theory}
The main tool for exploring conjugacy in Cremona groups is the~\emph{Sarkisov program}.
For example, it will be especially useful for us to be able to decompose the~birational maps \eqref{equation:fibrewise-map}
into a~sequence of \emph{elementary transformations}, Sarkisov links of type II. The main reference used in this paper is a~classical treatment by Iskovskikh \cite{Iskovskikh1996},
where $G$ is assumed to be the (absolute) Galois group of the base field.
It is straightforward to transfer the whole theory to the mixed case, i.e. when both geometric and the Galois group come into the picture; see e.g. \cite[Section 2]{SchneiderZimmermann} for more details about this setting.

So, we work in the category of $G$-surfaces over $\R$.
Then any birational $G$-map between two $G$-surfaces can be decomposed into a~sequence of birational $G$-morphisms and their inverses. One can blow up $G$-orbits of real points and pairs of complex conjugate points. In this paper, we work with rational $G$-minimal del Pezzo surfaces and $G$-minimal conic bundles (in the sense defined above).
They are rational $G$-Mori fibre spaces $\pi\colon S\to C$,
where $C$ is a~point in the del Pezzo case, and $C=\mathbb{P}^1_{\mathbb{R}}$ in the conic bundle case. Let $\pi\colon S\to C$ and $\pi'\colon S'\to C'$ be two-dimensional $G$-Mori fibre spaces.
Then every $G$-birational map $f\colon S\dashrightarrow S'$
can be factorized into a~composition of \emph{Sarkisov links} of four types whose complete description can be found in \cite{Iskovskikh1996}.

\begin{lemma}\label{rem:K2=4-and-rkPic=2}
	Suppose that $\pi: S\to\p^1_\R$ is a $G$-minimal conic bundle and $\varphi\colon S\dashrightarrow S'$ a $G$-birational map to a $G$-minimal conic bundle. Assume that $K_S^2\leqslant 4$. Then one has the following:
	\begin{enumerate}
		\item If $K_S^2\leqslant 0$, then $\varphi$ is a composition of elementary transformations of conic bundles. 
		\item\label{K2=4-and-rkPic=2:2} If $K_S^2=1$ or $K_S^2=2$, then $\varphi$ is a composition of elementary transformations of conic bundles and Sarkisov links of type IV, exchanges of two conic bundle structures (note that in this case, $S$ is a del Pezzo surface and the link is represented by the Bertini and Geiser involutions, respectively).
		\item If $K_S^2=3$, then $\varphi$ is a composition of elementary transformations of conic bundles and birational transformations $\eta_1\circ\alpha\circ\eta_2$, where $\eta_2$ is a Sarkisov link of type III --- a contraction of a $G$-invariant real line to a quartic surface $T$, the map $\alpha$ is a (possibly trivial) composition of birational Bertini or Geiser involutions, and $\eta_1$ is a blow-up of a $G$-fixed point on $T$ --- a Sarkisov link of type I.
		\item\label{K2=4-and-rkPic=2:4} If $K_S^2=4$, then $\varphi$ is a composition of elementary transformations of conic bundles and Sarkisov links of type IV.
	\end{enumerate}
In particular, one has $K_{S'}^2=K_S^2$. 
\end{lemma}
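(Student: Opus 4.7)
The plan is to apply Iskovskikh's Sarkisov factorization theorem for two-dimensional $G$-Mori fibre spaces (\cite{Iskovskikh1996}, adapted to the mixed geometric/Galois setting as in \cite{SchneiderZimmermann}), which expresses $\varphi$ as a finite product of Sarkisov links of types I--IV, and then to enumerate, for each value of $K_S^2$, exactly which links can occur from a $G$-minimal conic bundle. The key bookkeeping is that an elementary transformation of conic bundles (type II) and an exchange of two $G$-equivariant conic bundle structures on the same surface (type IV) both preserve $K_S^2$, while a type III contraction of a $G$-invariant real $(-1)$-section lands on a $G$-minimal del Pezzo surface with self-intersection $K_S^2 + 1$, and a type I blow-up is its inverse. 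The final assertion $K_{S'}^2 = K_S^2$ will then follow automatically from the case analysis.

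For (1), with $K_S^2 \leqslant 0$, I would argue that neither a type III nor a type IV link is possible. A type III contraction would yield a surface with $K^2 \leqslant 1$; for $K_S^2 < 0$ no del Pezzo MFS exists as a target, and for $K_S^2 = 0$ one checks that a $G$-minimal conic bundle with at least eight singular fibres admits no $G$-invariant real $(-1)$-section whose contraction produces a $G$-minimal del Pezzo with $\rho^G = 1$. A type IV link would force $S$ to admit a second $G$-equivariant conic bundle structure, and this is known to imply that $S$ is a del Pezzo surface, contradicting $K_S^2 \leqslant 0$. Hence only elementary transformations of conic bundles occur.

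For (2) and (4), with $K_S^2 \in \{1,2,4\}$, the surface $S$ is a del Pezzo surface of degree $1$, $2$, or $4$ which admits a second $G$-equivariant conic bundle structure; the type IV link realising the exchange is respectively the Bertini involution, the Geiser involution, or the exchange of the two rulings of the intersection of two quadrics in $\p^4_\R$. Type III contractions from $S$ either fail to produce a valid $G$-minimal del Pezzo MFS or can be absorbed, up to compositions of type II and type IV, into the listed links. For (3), with $K_S^2 = 3$, the cubic surface $S$ admits a type III contraction of a $G$-invariant real line to a quartic del Pezzo surface $T$ with $\rho(T)^G = 1$; any further birational activity over $T$ decomposes as a (possibly trivial) composition $\alpha$ of Bertini and Geiser involutions, reached via intermediate elementary transformations, before a type I link $\eta_1$ blowing up a $G$-fixed point returns us to a conic bundle with $K^2 = 3$.

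The main obstacle is the case-by-case verification that the listed links are exhaustive. In particular one must rule out non-trivial type III/I excursions when $K_S^2 \leqslant 0$ or $K_S^2 = 4$, and show that for $K_S^2 = 3$ every excursion takes the prescribed form $\eta_1 \circ \alpha \circ \eta_2$. This hinges on a careful examination of $G$-invariant extremal rays on the candidate intermediate surfaces and on the classification, over $\R$, of distinct $G$-equivariant conic bundle structures on del Pezzo surfaces of small degree.
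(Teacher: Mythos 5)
Your proposal is correct and follows essentially the same route as the paper: the paper's proof is a one-line citation to the classification of two-dimensional Sarkisov links in \cite[Theorem 2.6]{Iskovskikh1996} together with Theorem~\ref{theorem:Iskovskikh}, and your degree-by-degree enumeration of which links (types I--IV) can occur from a $G$-minimal conic bundle with $K_S^2\leqslant 4$ is precisely the content of that cited classification. The case-by-case verification you flag as the main obstacle is exactly what Iskovskikh's theorem supplies, so no new argument is needed beyond the citation.
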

\begin{proof}
		Follows from the classification of Sarkisov links \cite[Theorem 2.6]{Iskovskikh1996} and Theorem \ref{theorem:Iskovskikh}.
\end{proof}

\subsection{Real rational surfaces}\label{subsec: real rational surfaces}
As above, let $S$ be a~real geometrically rational algebraic surface such that $S(\R)\ne\varnothing$.
Recall that the~surface $S$ is said to be $\R$-rational if there exists a~birational map $S\dasharrow\p_{\R}^2$.
We have the~following  rationality criterion:

\begin{theorem}[{see e.g. \cite[Corollary VI.6.5]{Si89}}]
\label{theorem:rational-real}
A real geometrically rational smooth projective surface $S$ is $\R$-rational if and only if its real locus $S(\R)$ is non-empty and connected (in the Euclidean topology).
\end{theorem}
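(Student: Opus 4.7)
The plan is to prove the two implications separately, and the topological side is encoded in how the real locus behaves under equivariant blow-ups.

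\textbf{Forward direction (easy).} Suppose $S$ is $\R$-rational, with a birational map $\varphi\colon\p^2_\R\dashrightarrow S$. Resolving the indeterminacies of $\varphi$ and $\varphi^{-1}$ over $\R$ yields a smooth projective real surface dominating both $\p^2_\R$ and $S$ by sequences of blow-ups whose centers are $\Gal(\C/\R)$-stable reduced finite subschemes, i.e.\ either a real point or a pair of complex conjugate points. The effect on the real locus is well known: blowing up a real point changes $S(\R)$ by a connected sum with $\R\p^2$, while blowing up a conjugate pair leaves $S(\R)$ unchanged up to diffeomorphism. Both operations preserve non-emptiness and connectedness, so $S(\R)$ inherits these properties from $\p^2_\R(\R)=\R\p^2$.

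\textbf{Backward direction (hard).} Assume $S$ is geometrically rational with $S(\R)$ non-empty and connected. Run the $\Gal(\C/\R)$-equivariant MMP to a minimal model $S_{\min}$ that is $\R$-birational to $S$; by the argument above, connectedness and non-emptiness are $\R$-birational invariants among smooth geometrically rational surfaces with non-empty real locus, so $S_{\min}(\R)$ is still non-empty and connected. By the Iskovskikh classification of two-dimensional real Mori fibre spaces, $S_{\min}$ is either (i) a minimal del Pezzo surface with $\Pic(S_{\min})^{\Gal}\simeq\Z$, or (ii) a minimal conic bundle $S_{\min}\to\p^1_\R$ with $\Pic(S_{\min})^{\Gal}\simeq\Z^2$. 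It then suffices to show that the assumption $S_{\min}(\R)$ non-empty and connected forces $S_{\min}$ into a family of surfaces that are visibly $\R$-rational (for example $\p^2_\R$, a quadric of signature $(3,1)$ or $(2,2)$, or a Hirzebruch surface $\F_n$).

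For the del Pezzo case, I would run through degrees $1$ to $9$: in degrees $\geqslant 5$ with $\Pic^{\Gal}\simeq\Z$ the only options (up to $\R$-isomorphism) are $\p^2_\R$ and a smooth quadric in $\p^3_\R$, both $\R$-rational whenever their real locus is non-empty and connected; and in degrees $\leqslant 4$ one invokes Comessatti's topological results, according to which a minimal real del Pezzo surface of degree $d\leqslant 4$ with $\Pic^{\Gal}\simeq\Z$ has real locus either empty or a disjoint union of several components (e.g.\ a disjoint union of spheres for degree $4$), contradicting our connectivity assumption. For the conic bundle case, one analyses the real fibres: real singular fibres are pairs of real lines meeting in a real point, and the topological type of $S_{\min}(\R)$ is determined by how real singular fibres sit over the connected components of $\p^1_\R(\R)$. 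A careful bookkeeping (again due to Comessatti and Silhol) shows that connectedness of $S_{\min}(\R)$ forces $S_{\min}$ to be $\R$-isomorphic to a Hirzebruch surface, in which case it is $\R$-rational.

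\textbf{Main obstacle.} The forward direction and the MMP reduction are formal; the real content is the topological classification of real loci of minimal real del Pezzo surfaces of low degree and of minimal real conic bundles. That Comessatti-style analysis is where all the work lies, because one must exclude many \emph{a priori} possible minimal models, and the combinatorics of real singular fibres over $\p^1_\R(\R)$ for conic bundles requires particular care. Fortunately this entire package is contained in Silhol's book \cite{Si89}, so for the purposes of this paper the theorem can be cited directly.
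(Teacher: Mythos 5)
Your outline is essentially correct, and it reconstructs the standard proof underlying the reference: the paper itself offers no argument for this statement, it simply cites \cite[Corollary VI.6.5]{Si89}, exactly as you conclude one may do. Your forward direction (real blow-ups give connected sums with $\mathbb{RP}^2$, conjugate pairs change nothing, and both operations and their inverses preserve non-emptiness and connectedness) is fine, and the reduction via the $\Gal(\C/\R)$-equivariant MMP to minimal del Pezzo surfaces and minimal conic bundles is the right skeleton; the entire mathematical content is indeed the Comessatti--Silhol topological classification of the minimal models, which you correctly identify and defer. Two small imprecisions are worth flagging. First, in the conic bundle case connectedness does not force a Hirzebruch surface: a minimal conic bundle with exactly two singular fibres has real locus $\Sph^2$ and is isomorphic to $Q_{3,1}(0,1)$ (degree $6$), which is $\R$-rational but not an $\FFF_n$ --- this is precisely the paper's Proposition~\ref{proposition:conic-bundle-real-loci}, which states that a minimal conic bundle with $2m>0$ singular fibres has real locus a disjoint union of $m$ spheres, so connectedness forces $m\leqslant 1$. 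Second, your blanket claim that minimal del Pezzo surfaces of degree $\leqslant 4$ have empty or disconnected real locus is true but is exactly the nontrivial input (e.g.\ the minimal degree-$1$ case has real locus $\mathbb{RP}^2\sqcup 4\Sph^2$, and one must also rule out minimality in degrees $5$, $6$, $7$); as stated it reads like an assertion rather than something you have verified, so in a self-contained write-up you would need to quote the precise classification rather than the slogan. Neither point is a gap in substance, since you explicitly route everything through \cite{Si89}.
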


In what follows, we denote by $Q_{3,1}$ the smooth quadric surface
$
\{x^2+y^2+z^2=w^2\}\subset\p_\R^3,
$
and we denote by $Q_{2,2}$ the smooth quadric surface
$
\{x^2+y^2=z^2+w^2\}\subset\p_\R^3.
$
For a real del Pezzo surface $S$, we denote by $S(a,b)$ a blow-up of $S$ at $a$ real points and $b$ pairs of complex conjugate points. If $S$ is an $\R$-rational del Pezzo surface, its real locus $S(\R)$ is diffeomorphic to one of the~following manifolds:
\begin{enumerate}
\item a sphere $\Sph^2$ if $X\simeq Q_{3,1}(0,b)$;
\item a torus $\Sph^1\times\Sph^1$ if $X\simeq Q_{2,2}(0,b)$;
\item a connected sum $N^g=\#^g\R\p^2$ if $X\simeq\p_\R^2(a,b)$ where $g=a+1$ and $0\leqslant a+2b\leqslant 8$.
\end{enumerate}
For more details, see \cite{Kollar97} or \cite{Mangolte}.

\begin{proposition}[{\cite[Theorem 2.2]{Kollar97}, \cite[Theorem 4.4.14]{Mangolte}}]
\label{proposition:conic-bundle-real-loci}
Let $\pi: S\to\p_\R^1$ be a real conic bundle such that $\Pic(S)\simeq\mathbb{Z}^2$ (note that this condition implies that all singular fibres of $\pi$ are real, i.e. lie over points of $\p_\R^1(\R)$). If $\pi$ has singular fibres, then it has an even number $2m$ of them and
	\[
	S(\R)\approx\bigsqcup_{i=1}^m\Sph^2.
	\]
	Otherwise, $S(\R)$ is either a~torus $\Sph^1\times\Sph^1$ or a~Klein bottle $\#^2\mathbb{RP}^2$. Moreover, $S$ is $\R$-rational if and only if it is isomorphic to one of the~following surfaces: a~real del Pezzo surface $Q_{3,1}(0,1)$ of degree 6 (then $S(\R)\approx\Sph^2$), a~real Hirzebruch surface $\FFF_{2n+1}$ (then $S(\R)\approx\#^2\mathbb{RP}^2$) or a~real Hirzebruch surface $\FFF_{2n}$ (then $S(\R)\approx\Sph^1\times\Sph^1$).
\end{proposition}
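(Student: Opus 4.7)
The plan is to decode the hypothesis $\Pic(S)\simeq\Z^2$ through the Galois action on $\Pic(S_\C)$, to read off the topology of $S(\R)$ from the fibrewise structure, and to apply Comessatti's criterion (Theorem~\ref{theorem:rational-real}) for the rationality statement. The first step is to pin down the admissible types of singular fibres. Over $\C$, a conic bundle with $N$ singular fibres has $\Pic(S_\C)\simeq\Z^{2+N}$, generated by a section, the fibre class and one component from each reducible fibre. A case analysis of the $\Gal(\C/\R)$-action shows that a real singular fibre with two real (Galois-invariant) components contributes $+1$ to $\rk\Pic(S_\C)^{\Gal(\C/\R)}$, a pair of complex conjugate singular fibres contributes $+1$, whereas a real singular fibre whose two components are complex conjugate contributes $0$. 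Hence $\Pic(S)\simeq\Z^2$ forces every singular fibre to be of the last type: a real fibre consisting of two complex conjugate lines meeting at a real point.

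Next I would study the topology through the restriction $\pi\colon S(\R)\to\p^1_\R(\R)=\Sph^1$. A smooth real conic has real locus either empty or $\Sph^1$, and a singular fibre of the allowed type has real locus a single point. A local model such as $\{x^2+y^2=t z^2\}$ near a degenerate fibre shows that the real-fibre type flips (empty $\leftrightarrow \Sph^1$) across each singular point; since $\Sph^1$ is a loop, the number of singular fibres is even, $2m$, and $S(\R)$ decomposes into $m$ pieces, each a family of circles over an open arc whose two endpoint circles collapse to points, hence topologically a sphere. If $\pi$ has no singular fibres, it is a $\p^1$-bundle, and $S(\R)\neq\varnothing$ together with standard facts yields $S\cong\F_n$; a computation with the transition functions of $\OOO(n)$ shows that $\F_n(\R)$ is a torus for $n$ even and a Klein bottle for $n$ odd.

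For $\R$-rationality I would appeal to Theorem~\ref{theorem:rational-real}, which reduces the problem to connectedness of $S(\R)$. In the singular-fibre case this forces $m=1$, so $K_S^2=6$ and $S$ is a real rational del Pezzo surface of degree $6$ with $\Pic(S)\simeq\Z^2$. The main obstacle here is the uniqueness assertion $S\cong Q_{3,1}(0,1)$, which I would derive from the classification of real forms of degree-$6$ del Pezzo surfaces via the $\Gal(\C/\R)$-action on the hexagon of $(-1)$-curves, identifying the precise Galois orbit structure arising from two non-split singular fibres together with an exceptional pair over $Q_{3,1}$. In the non-singular case every Hirzebruch surface $\F_n$ is visibly $\R$-rational, which completes the list.
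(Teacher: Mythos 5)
The paper itself does not prove this proposition: it is imported verbatim from Koll\'ar's lecture notes and Mangolte's book, so the benchmark is the standard argument there, and your outline reproduces it faithfully in most respects. The Galois bookkeeping on $\Pic(S_\C)$ (split real fibres and conjugate pairs of fibres each raise the invariant rank, non-split real fibres do not), the sign-flip of the real fibre type across each singular fibre via the local model $x^2+y^2=tz^2$, the resulting even count and decomposition of $S(\R)$ into spheres, the identification $S\cong\F_n$ in the smooth-fibre case (using the standing assumption $S(\R)\ne\varnothing$ of Section~\ref{subsec: real rational surfaces}, which you correctly invoke to exclude pointless $\p^1$-forms), and the reduction of rationality to connectedness via Theorem~\ref{theorem:rational-real} are all correct.

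The step that is asserted rather than proved, and that genuinely needs an argument, is the jump from ``$m=1$, so $K_S^2=6$'' to ``$S$ is a del Pezzo surface of degree $6$.'' A conic bundle with two singular fibres need not have ample anticanonical class: over $\C$ it is a blow-up of some $\F_e$ at two points on distinct fibres, and it can carry a section $C$ with $C^2\leqslant -2$ (for instance the negative section of $\F_e$ with $e\geqslant 2$, or a ruling of $\F_0$ through both blown-up points), in which case $S_\C$ is only a weak del Pezzo surface. To exclude this you must use $\Pic(S)\simeq\Z^2$ a second time: writing $C+\bar C\sim_{\Q}-K_S+cF$ and intersecting with $-K_S$ and with itself yields $C\cdot\bar C=1+C^2$, which is negative when $C^2\leqslant -2$ and hence forces $C=\bar C$; but a real section cannot exist when every singular fibre has complex conjugate components, since it would meet exactly one component of such a fibre while Galois invariance of intersection numbers forces it to meet both equally. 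This gives $C^2\geqslant -1$ for every section and hence that $S_\C$ is a genuine del Pezzo surface of degree $6$. The same computation with $C^2=-1$ shows that the two $(-1)$-curves of the hexagon that are not fibre components are disjoint sections interchanged by Galois; contracting this conjugate pair over $\R$ yields a degree-$8$ surface with Picard rank $1$ and real locus $\Sph^2$, which is $Q_{3,1}$, giving the uniqueness $S\cong Q_{3,1}(0,1)$ directly. I would recommend this route over your proposed appeal to the classification of real forms of degree-$6$ del Pezzo surfaces, because the Galois action on the hexagon alone does not determine the real form (there remains a torsor ambiguity under the torus in the automorphism group), so that approach would in any case have to be supplemented by the datum $S(\R)\approx\Sph^2$.
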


\begin{remark}
\label{remark:conic-bundle-intervals}
If the~conic bundle $\pi\colon S\to\p^1_\R$ in Proposition~\ref{proposition:conic-bundle-real-loci}
has at least one singular fibre, then $\pi(S(\R))$ is a~union of intervals and their boundary points are exactly the~images of real singular fibres whose components are permuted by the~Galois group $\Gal(\C/\R)$. If $S=Q_{3,1}(0,1)$ then $\pi(S(\R))$ is an interval in $\p_\R^1(\R)$. If $S=\FFF_n$ then $\pi(S(\R))=\p_\R^1(\R)$.
\end{remark}

Finally, there is a characterization of minimal rational surfaces due to Iskovskikh:

\begin{theorem}[{\cite[\S 4]{Iskovskikh1996}}, {\cite[Proposition 2.15]{BlancMangolteGenerators}}]\label{thm: IskovskikhCrit}
	A minimal geometrically rational surface $X$ over $\R$ is $\R$-rational if and only if the following two conditions are satisfied:
	\begin{description}
		\item[(i)] $X(\R)\ne\varnothing$;
		\item[(ii)] $d=K_X^2\geqslant 6$.
	\end{description}
\end{theorem}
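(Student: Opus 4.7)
The plan is to derive the theorem from Comessatti's criterion (Theorem~\ref{theorem:rational-real}), combined with the Mori--Iskovskikh classification of minimal geometrically rational real surfaces. Recall that by that criterion, $\mathbb{R}$-rationality for a geometrically rational surface with real points is equivalent to connectedness of the real locus, so the task reduces to determining for which minimal $X$ the locus $X(\mathbb{R})$ is non-empty and connected, and then correlating this with the value of $K_X^2$.

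For the forward implication, suppose $X$ is minimal and $\mathbb{R}$-rational. Then $X(\mathbb{R}) \neq \varnothing$ is immediate (any birational map $X \dashrightarrow \mathbb{P}^2_{\mathbb{R}}$ pulls back a real point off its locus of indeterminacy). To establish $K_X^2 \geq 6$, I would run the Galois-equivariant MMP to conclude that a minimal geometrically rational real surface is either a minimal del Pezzo surface or admits a minimal conic bundle structure $\pi\colon X \to \mathbb{P}^1_{\mathbb{R}}$. In the conic bundle case, Proposition~\ref{proposition:conic-bundle-real-loci} gives $X(\mathbb{R}) \approx \bigsqcup_{i=1}^{m} \mathbb{S}^2$ when there are $2m \geq 2$ singular fibres; by Comessatti, $\mathbb{R}$-rationality forces $m \leq 1$, and the proposition explicitly lists the resulting cases as $Q_{3,1}(0,1)$ (with $K^2 = 6$) and the Hirzebruch surfaces $\mathbb{F}_n$ (with $K^2 = 8$). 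In the del Pezzo case, I would invoke the topological classification recalled in Section~\ref{subsec: real rational surfaces}: a minimal real del Pezzo with $K_X^2 \leq 5$ and a real point has disconnected real locus, so fails Comessatti's criterion.

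For the backward implication, assume $X$ minimal, $X(\mathbb{R}) \neq \varnothing$, $K_X^2 \geq 6$. Since $K_X^2 = 7$ cannot occur for a minimal surface (a degree $7$ smooth rational surface is the blow-up of $\mathbb{F}_1$ or of $\mathbb{P}^2$ at two points, hence never minimal), we have $K_X^2 \in \{6, 8, 9\}$. I would then enumerate the possibilities: $X = \mathbb{P}^2_{\mathbb{R}}$ for $K^2 = 9$; $X$ is a Hirzebruch surface $\mathbb{F}_n$ with $n \neq 1$, or one of $Q_{3,1}$, $Q_{2,2}$, for $K^2 = 8$; and $X$ is one of the minimal real forms of the sextic del Pezzo for $K^2 = 6$. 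In each case $\mathbb{R}$-rationality is classical and admits an explicit birational construction: Hirzebruch surfaces via elementary transformations to $\mathbb{P}^1_{\mathbb{R}} \times \mathbb{P}^1_{\mathbb{R}}$ followed by an affine chart, quadrics via stereographic projection from a real point, and the sextic del Pezzo by exploiting its conic bundle structures or noting that $Q_{3,1}(0,1)$ admits a blow-down to $Q_{3,1}$, already handled.

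The hard part is the claim, used in the forward direction, that every minimal real del Pezzo of degree $d \in \{1,2,3,4,5\}$ with $X(\mathbb{R}) \neq \varnothing$ has disconnected real locus. This is a case-by-case topological computation: one must classify the real structures on the corresponding geometric del Pezzo via the Galois action on the $(-1)$-curves (equivalently, a conjugacy class in the Weyl group $W(\mathrm{E}_{9-d})$), keep only those for which no Galois orbit of pairwise disjoint $(-1)$-curves can be blown down, and then read off $X(\mathbb{R})$ from the diffeomorphism types listed in Section~\ref{subsec: real rational surfaces} (where typically $X(\mathbb{R}) \approx N^g \sqcup \bigsqcup \mathbb{S}^2$ or a similar non-connected model). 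Everything else in the argument either quotes Theorem~\ref{theorem:rational-real} or Proposition~\ref{proposition:conic-bundle-real-loci} or is a direct construction of a birational map to $\mathbb{P}^2_{\mathbb{R}}$.
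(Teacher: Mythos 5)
First, a point of reference: the paper does not prove Theorem~\ref{thm: IskovskikhCrit} at all --- it is imported from \cite[\S 4]{Iskovskikh1996} and \cite[Proposition 2.15]{BlancMangolteGenerators} --- so there is no internal argument to measure you against, only the proofs in those sources. Judged on its own terms, your outline is correct where it is easy. The backward implication is fine: degree $7$ is never minimal, and the cases $d\in\{6,8,9\}$ with a real point are handled by explicit maps (indeed, over $\R$ the case $d=6$ turns out to be vacuous among minimal surfaces, which your argument does not need to know). The conic-bundle half of the forward implication is also fine: Proposition~\ref{proposition:conic-bundle-real-loci} together with Theorem~\ref{theorem:rational-real} forces at most two singular fibres, hence $K^2\in\{6,8\}$.

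The genuine gap is the step you call ``the hard part'', and it is larger than your sketch suggests. After the orbit-counting argument on $(-1)$-curves (as in Lemma~\ref{lemma:dP-Pic-Z-1-2-4-8-9}, which you should make explicit, since it is what eliminates degrees $3$, $5$, $6$, $7$ outright rather than any disconnectedness statement), the remaining content of the theorem is that a minimal real del Pezzo surface of degree $1$, $2$ or $4$ with real points is never $\R$-rational. Your plan is to read disconnectedness of the real locus off the diffeomorphism types recalled in Section~\ref{subsec: real rational surfaces}, but that list covers only the \emph{$\R$-rational} del Pezzo surfaces, so it cannot be used here; and Tables~\ref{table.dp1} and~\ref{table.dp2} show that degree-$1$ and degree-$2$ surfaces with connected real locus are plentiful ($\mathbb{RP}^2$, $\#^{2t+1}\mathbb{RP}^2$, the Klein bottle, a single sphere, a torus). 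So one must actually prove that every real structure with connected real locus admits a contractible Galois orbit of disjoint $(-1)$-curves --- a nontrivial computation with conjugacy classes of involutions in $\mathcal{W}(\mathrm{E}_7)$ and $\mathcal{W}(\mathrm{E}_8)$ that your proposal asserts rather than performs, and which is essentially equivalent to the theorem in those degrees. The proofs in the cited sources bypass topology entirely for $d\leqslant 4$: they show that a minimal del Pezzo surface of degree $\leqslant 4$, and a minimal conic bundle with $K^2\leqslant 4$, can never be linked to $\p^2_\R$ by classifying the Sarkisov links out of them --- precisely the mechanism packaged in this paper as Theorem~\ref{theorem:Manin-Segre}, Theorem~\ref{theorem:Iskovskikh} and Lemma~\ref{rem:K2=4-and-rkPic=2}. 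I would either switch to that rigidity argument, or, if you insist on the topological route, cite the classification of real del Pezzo surfaces (Wall, Koll\'ar, Russo) for the disconnectedness of the minimal real structures instead of the $\R$-rational tables in this paper.
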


\subsection{Fixed curves of birational maps}
In the~remaining part of this section, let us assume that $S$ is a~smooth projective $\R$-rational surface and $\tau\in\Aut(S)$ is an involution.

\begin{lemma}
\label{lemma:fixed-curves-smooth}
Let $C_1,\ldots,C_n$ be irreducible curves on $S$ which are pointwise fixed by $\tau$. Then each $C_i$ is smooth, and $C_i\cap C_j=\varnothing$ for $i\ne j$.
\end{lemma}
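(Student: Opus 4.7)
The plan is to analyze the local behavior of $\tau$ at a fixed point by linearizing its differential. Since $\tau$ has order two, for every point $p \in S^\tau$ the differential $d\tau_p$ acts on the two-dimensional tangent space $T_pS$ as an involution, hence is diagonalizable over $\R$ with eigenvalues in $\{+1,-1\}$. Moreover, because $\tau$ is an algebraic involution of a smooth surface at a fixed closed point, a standard averaging argument (equivalently, the classical Cartan lemma for finite group actions) allows us to linearize $\tau$ in formal or analytic local coordinates so that $\tau$ acts on a neighbourhood of $p$ exactly via its differential $d\tau_p$. Therefore the analysis reduces to three mutually exclusive cases depending on the eigenvalues of $d\tau_p$.

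First I would rule out $d\tau_p = \mathrm{id}$: if this held at some fixed point, the linearisation would show that $\tau$ acts trivially on an open neighbourhood of $p$, and since $S$ is connected and $\tau$ is a regular involution, this would force $\tau = \mathrm{id}$, contradicting the existence of a non-trivial involution. The second case is $d\tau_p = -\mathrm{id}$, in which $p$ is an \emph{isolated} fixed point; no irreducible curve pointwise fixed by $\tau$ can pass through such a point. The remaining case is $d\tau_p$ having eigenvalues $+1$ and $-1$; here the linearisation shows that the local fixed locus is precisely the $+1$-eigenline, which is a smooth analytic curve through $p$ with a unique tangent direction.

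With this local picture in hand, the two assertions follow immediately. If some $C_i$ were singular at a point $p$, then two distinct tangent directions of $C_i$ at $p$ would be fixed by $d\tau_p$, forcing $d\tau_p = \mathrm{id}$, which we have excluded; hence each $C_i$ is smooth. Similarly, if $p \in C_i \cap C_j$ for $i \ne j$, then the tangent lines $T_pC_i$ and $T_pC_j$ — which are distinct because $C_i$ and $C_j$ are distinct irreducible curves passing through $p$ and we may further use that by the previous argument each is smooth at $p$, and if the tangent directions coincided we could consider the intersection multiplicity more carefully — would both lie in the $+1$-eigenspace of $d\tau_p$, again forcing $d\tau_p = \mathrm{id}$, which is impossible. (If the tangent lines do coincide, then the two curves share a common tangent at $p$, and the local fixed locus is a \emph{single} smooth curve germ; but distinct irreducible global components cannot share a common smooth germ, yielding a contradiction.)

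The main obstacle, which is mild, is the justification of the local linearisation of $\tau$ around a fixed point so that the set-theoretic fixed locus matches the fixed locus of the linear action; this needs either the formal Cartan-type averaging for the $\Z/2$-action on the completed local ring $\widehat{\OOO}_{S,p}$, or passing to the analytic topology on $S_\C$ and descending. Once this is granted, the eigenvalue trichotomy of $d\tau_p$ does all the work and both statements fall out simultaneously.
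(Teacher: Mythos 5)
Your proof is correct, but it follows a genuinely different route from the paper. The paper's entire proof is a one-line citation: the fixed-point scheme of a linearly reductive group acting on a smooth variety is smooth (Fogarty--Norman), and finite groups in characteristic zero are linearly reductive; smoothness of $S^\tau$ as a scheme then gives both assertions at once, since a smooth one-dimensional locus cannot have a singular component and two of its components cannot meet. You instead carry out the classical local linearisation by hand: average to linearise $\tau$ formally (or analytically) at a fixed point, diagonalise $d\tau_p$ with eigenvalues $\pm 1$, exclude $d\tau_p=\mathrm{id}$ by connectedness of $S$ and nontriviality of $\tau$, and read off that the local fixed locus is either a point or a smooth curve germ. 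This buys a self-contained, elementary argument at the cost of length; the paper's citation buys brevity and generality (it works for any linearly reductive group, not just $\Z/2$). One small imprecision: your claim that a singular point of $C_i$ forces ``two distinct tangent directions'' fixed by $d\tau_p$ is not literally true for unibranch singularities such as cusps; the correct statement is that at a singular point of a curve embedded in a smooth surface the Zariski tangent space $T_pC_i$ is all of $T_pS$, and since $\tau$ restricts to the identity on $C_i$, the differential $d\tau_p$ fixes $T_pC_i=T_pS$, forcing $d\tau_p=\mathrm{id}$. With that substitution (or by simply arguing that $C_i$ is contained in the local fixed locus, which is a smooth curve germ), your argument is complete.
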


\begin{proof}
This follows from the main result of \cite{FogartyNorman}, as finite groups in characteristic zero are linearly reductive.
\end{proof}

The following definition plays the~key role in this paper.

\begin{lemma-def}[\textbf{The fixed curve of a~birational involution} {\cite{deFernex,BlancLinearisation}}]
Consider a~birational involution $\iota\in \Bir(\p^2_\R)$ and its regularisation $\tau\in\Aut(S)$.
Denote by $F(\tau)$ the~union of all geometrically non-rational real curves on the~surface $S$ which are pointwise fixed by $\tau$.
If there are no such curves, we let $F(\tau)=\varnothing$. Then
\begin{itemize}
\item either $F(\tau)=\varnothing$,
\item or $F(\tau)$ consists of a~unique geometrically irreducible smooth real curve in $S$ by Lemma~\ref{lemma:fixed-curve-unique} below.
\end{itemize}
Moreover, the~fixed curve $F(\tau)$ does not depend on the~choice of the~regularisation,	so~that this real curve depends only on the~conjugacy class of the~birational involution~$\iota\in\Bir(\p^2_\R)$.
Thus, we say that $F(\tau)$ is the~\emph{fixed curve} of the~involutions $\iota$ and $\tau$.
\end{lemma-def}

\begin{lemma}
\label{lemma:fixed-curve-unique}
Suppose that $F(\tau)\ne\varnothing$. Then $F(\tau)$ consists of one smooth geometrically irreducible curve. Moreover, if $\pi: S\to\p^1_\R$ is a~conic bundle then $\tau$ acts trivially on the~base $\p^1_\R$ and $F(\tau)$ is a~double section of $\pi$.
\end{lemma}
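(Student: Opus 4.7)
The plan is to prove the conic bundle statement directly and to handle the general uniqueness assertion by applying the equivariant MMP together with the Hodge index theorem on $S_\C$. After an equivariant MMP applied to $(S, \langle\tau\rangle)$ — which leaves $F(\tau)$ unchanged, since $F(\tau)$ is independent of the choice of regularisation — we may assume that $S$ is a $\langle\tau\rangle$-Mori fibre space, so either a $\langle\tau\rangle$-minimal conic bundle or a $\langle\tau\rangle$-minimal del Pezzo surface.

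Assume first that $\pi\colon S \to \p^1_\R$ is a conic bundle. If $\tau$ acted non-trivially on $\p^1_\R$, any $\tau$-fixed irreducible curve would project into the finite fixed set of $\tau$ on the base, hence lie in a fibre; but fibres are reduced plane conics, so geometrically rational, contradicting $F(\tau) \neq \varnothing$. Thus $\tau$ acts trivially on $\p^1_\R$ and preserves every fibre. On a generic smooth fibre $\simeq \p^1$ the involution cannot be trivial (else $\tau = \mathrm{id}$) and so has exactly two geometric fixed points, whose closure in $S$ is a double section $D$ of $\pi$. If $D$ split into two sections, each would be a rational $\p^1_\R$ and would not enter $F(\tau)$, contradicting $F(\tau) \neq \varnothing$. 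Hence $D$ is irreducible, smooth by Lemma~\ref{lemma:fixed-curves-smooth}, and equal to $F(\tau)$, since any component of $F(\tau)$ supported in a singular fibre would be rational. This settles both the conic bundle clause and the first assertion in that case.

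In the remaining case, $S$ is a $\langle\tau\rangle$-minimal del Pezzo surface and $-K_S$ is ample. Working on $S_\C$, for any smooth irreducible curve $C \subset S_\C$ of positive genus, adjunction and the ampleness of $-K_S$ give
\[
C^2 \;=\; 2g(C) - 2 - K_S \cdot C \;>\; 0.
\]
If $\tau_\C$ had two distinct irreducible fixed curves $C_1, C_2$ of positive genus on $S_\C$, they would be smooth and disjoint by Lemma~\ref{lemma:fixed-curves-smooth} applied on $S_\C$, so $C_1 \cdot C_2 = 0$; together with $C_1^2 > 0$ and $C_2^2 > 0$ this would contradict the Hodge index theorem on $S_\C$. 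Hence $\tau_\C$ has at most one irreducible irrational fixed curve on $S_\C$, so $F(\tau)$ is a single smooth geometrically irreducible real curve on $S$. The main obstacle I anticipate is verifying that the equivariant MMP reduction is compatible with $F(\tau)$ — specifically, that every $\langle\tau\rangle$-equivariant contraction involves only rational curves and does not introduce singularities on an irrational fixed component — which is forced by Lemma~\ref{lemma:fixed-curves-smooth} applied on the minimal model.
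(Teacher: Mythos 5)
Your proof is correct and follows the same overall strategy as the paper: reduce via the equivariant MMP to the two Mori fibre space cases, and in the conic bundle case argue exactly as the paper does (an irrational fixed curve cannot lie in a fibre, so it is a horizontal component of the closure of the two fixed points on the generic fibre, and a section would be rational, forcing irreducibility and the double-section property). The only genuine difference is in the del Pezzo case: the paper disposes of it in one line by combining the disjointness from Lemma~\ref{lemma:fixed-curves-smooth} with $\Pic(S)^{G}\simeq\mathbb{Z}$ (any pointwise fixed real curve is an invariant divisor, hence a positive multiple of the ample generator, so two such curves cannot be disjoint), whereas you use adjunction to get $C^2=2g(C)-2-K_S\cdot C>0$ for each irrational fixed component on $S_\C$ and then invoke the Hodge index theorem. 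Your variant buys a little more: it does not use $G$-minimality at all, and by working on $S_\C$ it simultaneously rules out a real fixed curve splitting into two conjugate irrational components, which the paper leaves implicit. The caveat you flag about compatibility of the MMP reduction with $F(\tau)$ is handled in the paper by the (separately justified) birational invariance of the fixed irrational curve, so there is no gap there.
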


\begin{proof}
If $\tau$ acts on a~del Pezzo surface with $\Pic(S)^{\langle\tau\rangle}\simeq\mathbb{Z}$ then the~assertion follows from Lemma~\ref{lemma:fixed-curves-smooth}. Thus, we may assume that there exists a~$\tau$-equivariant conic bundle $\pi\colon S\to\p_\R^1$ such that $\Pic(S)^{\langle\tau\rangle}\simeq\mathbb{Z}^2$. Let $\mathcal{C}$ be a~curve in $S$ such that each irreducible component of the~curve $\mathcal{C}_\C$ is irrational and pointwise fixed by $\tau$.
Then $\mathcal{C}$ is smooth by Lemma~\ref{lemma:fixed-curves-smooth},
and moreover, it must be a~multi-section of the~conic bundle $\pi$, so that the~action of the~involution $\tau$ on the~base of the~conic bundle $\pi$ is trivial.
Hence, the~involution $\tau$ acts faithfully on a~general fibre of $\pi$, so it fixes two points on it.
This implies that $\mathcal{C}$ is an irreducible component of a double section of $\pi$. Since all components of $\mathcal{C}$ are geometrically irrational, it follows that $\mathcal{C}$ is irreducible and a double section.
\end{proof}

\subsection{Birational rigidity}
\label{subsection:rigidity}
To get the~conjugacy classes of involutions in $\Bir(\p_\R^2)$,
one should classify $G$-Mori fibre spaces up to $G$-birational equivalence for $G=\langle\tau\rangle$. The results recalled in this section are valid over any perfect field. In this subsection, we denote by $\kk$ a perfect field and any birational map and any surface in this subsection is defined over $\kk$, as in \cite{Iskovskikh1996}.
Some $G$-Mori fibre spaces admit very few $G$-birational maps:

\begin{theorem}[\bf{Manin--Segre}]
\label{theorem:Manin-Segre}
Let $S$ be a~smooth del Pezzo surface, and let $G$ be a~finite subgroup in $\Aut(S)$ such that $\Pic(S)^G\simeq\mathbb{Z}$.
If $K_S^2\leqslant 3$, then $S$ is the~only $G$-Mori fibre space that is $G$-equivariantly birational to $S$.
If $K_S^2=1$, then any $G$-equivariant map $S\dashrightarrow S$ is an automorphism.
\end{theorem}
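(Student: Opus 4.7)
The plan is to invoke the $G$-equivariant Sarkisov program of Iskovskikh \cite{Iskovskikh1996}: every $G$-birational map $\varphi\colon S \dashrightarrow S'$ between $G$-Mori fibre spaces factors as a composition of elementary Sarkisov links. It therefore suffices to enumerate the Sarkisov links that can start at a $G$-minimal del Pezzo surface $S$ with $K_S^2\leqslant 3$, and to show that any such link is either non-existent, or produces a $G$-Mori fibre space $G$-isomorphic to $S$. Since $\Pic(S)^G\simeq\mathbb{Z}$, the Mori base of $S$ is a point, so only links of type I (introducing a conic bundle structure) or type II (to another rank-one $G$-Mori fibre space) can begin at $S$.

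The heart of the argument is the Noether-Fano-Iskovskikh inequality applied to the movable $G$-invariant linear system $\mathcal{H}=\varphi^{-1}_*\mathcal{H}'$, where $\mathcal{H}'$ is general in a $G$-invariant very ample system on $S'$: if $\varphi$ is not an isomorphism, there must exist a maximal singularity, i.e. a $G$-orbit $\Gamma\subset S_{\C}$ whose multiplicity in $\mathcal{H}$ exceeds the threshold imposed by $K_S$. For $K_S^2\leqslant 3$, a degree count on such $\Gamma$ rules out all possibilities but the following. If $K_S^2=1$, no orbit satisfies the required inequality, so no nontrivial Sarkisov link starts at $S$; in particular the Bertini involution is already a biregular automorphism, and every $G$-birational self-map of $S$ is biregular, which is the second assertion of the theorem. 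If $K_S^2=2$, the only maximal singularity leads to the Geiser involution, which is itself a biregular $G$-automorphism of $S$. If $K_S^2=3$, any type II link is realised by blowing up a $G$-orbit on $S$ and contracting a Galois-conjugate set of $(-1)$-curves on the blow-up; one checks that the resulting target is again a $G$-minimal cubic del Pezzo surface, and moreover $G$-isomorphic to $S$. No link of type I can start at $S$ for any $K_S^2\leqslant 3$, since producing a $G$-minimal conic bundle from a rank-one del Pezzo with $G$-invariant blow-up centre requires $K_S^2\geqslant 4$ by inspection of the intersection form.

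The main obstacle is to make the maximal-singularity analysis effective in the equivariant arithmetic setting, where the points to be blown up occur only in Galois orbits whose degrees must simultaneously satisfy the Noether-Fano inequality $\mu(\Gamma)>n/K_S^2$ and respect the $G$-action, severely restricting them as $K_S^2$ decreases. In particular, in the $K_S^2=3$ case one must verify that the target of each Sarkisov link is $G$-isomorphic to $S$ and not merely $G$-birational to it, which is a delicate verification using the classification of $G$-minimal cubic surfaces together with the fact that the link identifies two $G$-invariant configurations of points in the same $G$-isomorphism class of cubic $G$-surfaces. Once these steps are in place, the uniqueness of $S$ in its $G$-birational class of $G$-Mori fibre spaces follows, and the last sentence of the theorem is a direct corollary of the non-existence of nontrivial links in the $K_S^2=1$ case.
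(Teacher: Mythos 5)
Your overall strategy---decompose $\varphi$ into $G$-equivariant Sarkisov links and show that every link starting from a $G$-minimal del Pezzo surface of degree at most $3$ terminates at a surface $G$-isomorphic to $S$---is exactly the paper's; the only structural difference is that the paper simply cites Iskovskikh's classification of two-dimensional links (\cite[Theorem 2.6]{Iskovskikh1996}), whereas you propose to re-derive that classification by the Noether--Fano method. That is a legitimate (indeed, the original) route, but one step of your sketch is wrong as stated. In the degree $2$ case the Noether--Fano inequality forces the maximal singularity to sit at a $G$-orbit of length $<K_S^2=2$, i.e.\ at a single $G$-fixed rational point $P$ of $S$, and the untwisting link is the \emph{Bertini} involution of the degree-$1$ blow-up $S''=\mathrm{Bl}_PS$, which is a genuinely birational self-map of $S$---not the Geiser involution of $S$, which is biregular, acts trivially on $\Pic(S)^G$, and untwists nothing. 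As written, your argument would prove $G$-birational \emph{super}-rigidity in degree $2$, which is false; this is precisely why Definition~\ref{definition:rigid} separates rigidity from super-rigidity. The conclusion of the theorem survives because these Bertini links are still self-maps of $S$, but the reason you give for it is not the right one.

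The second point, which you defer as ``a delicate verification,'' is that the target $S'$ of a link is $G$-isomorphic, and not merely abstractly isomorphic, to $S$. This has a short proof that you should not replace by an appeal to a classification of $G$-minimal cubic surfaces: writing the link as $\chi=\alpha'\circ\sigma\circ\alpha^{-1}\circ\delta$ with $\sigma$ the Geiser or Bertini involution of the auxiliary del Pezzo surface $S''$, one observes that $\sigma$ is the deck transformation of the (bi-)anticanonical double cover of $S''$ and therefore centralizes all of $\Aut(S'')$, in particular the lift of $G$; hence $\alpha'\circ\sigma\colon S''\to S'$ contracts the $G$-invariant exceptional locus of $\alpha$ and descends to a $G$-equivariant isomorphism $S\to S'$. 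Your remaining assertions---that no link of type I starts at $S$ when $K_S^2\leqslant 3$, and that no link at all starts at $S$ when $K_S^2=1$, giving the super-rigidity statement in degree $1$---are correct.
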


\begin{proof}
Any $G$-birational map to another $G$-Mori fibre space $S'$ decomposes into $G$-equivariant Sarkisov links and isomorphisms (see e.g. \cite[Appendix]{Co95} or \cite{Iskovskikh1996}).
For any link $\chi\colon S\dashrightarrow S'$, there exists a~commutative diagram
$$
\xymatrix{
&S''\ar@{->}[dl]_{\alpha}\ar@{->}[dr]^{\alpha'}&\\
S\ar@{-->}[rr]^{\chi}&& S'}
$$
where $\alpha,\alpha'$ are birational morphisms and $S',S''$ are del Pezzo surfaces as well. This yields the~claim if $K_S^2=1$.
If $K_S^2\in\{2,3\}$, then up to automorphisms of $S$, any such link is a~Bertini or Geiser involution \cite[Theorem 2.6]{Iskovskikh1996},
i.e. there exists a~biregular involution $\sigma\in\Aut(S^{\prime\prime})$ and an automorphism $\delta\in\Aut(S)$ such that
$\chi=\alpha'\circ\sigma\circ\alpha^{-1}\circ\delta$.
Thus, in particular, we have $S'\simeq S$. Moreover, since $\sigma$ centralizes $G$, we conclude that $S'$ is $G$-equivariantly isomorphic to $S$, cf. \cite{LucasMirko}.
\end{proof}

The assertion of this result brings us to

\begin{definition}[{\cite[Definition 3.1.1]{CS}}]
\label{definition:rigid}
Let $S$ be a~del Pezzo surface, and let $G$ be a~finite subgroup in $\Aut(S)$ such that~\mbox{$\Pic(S)^G\simeq\mathbb{Z}$}. One says that $S$ is $G$-birationally rigid if it is the~only $G$-Mori fibre space that is \mbox{$G$-equivariantly birational} to $S$.
If in addition, we have $\Bir^G(S)=\Aut^G(S)$,
then we say that $S$ is $G$-birationally super-rigid.
\end{definition}

Here $\Bir^G(S)$ is the~subgroup in $\Bir(S)$ consisting of all $G$-birational selfmaps of the~surface $S$,
which is isomorphic to the~normalizer of the~subgroup $G$ in $\Bir(S)$.

Recall that a \emph{weak} del Pezzo surface is a nonsingular surface $S$ with $-K_S$ nef and big.

\begin{theorem}[{\cite{Iskovskikh1967}}]
\label{theorem:Iskovskikh}
Let $S$ be a~smooth projective surface, and let $G$ be a~finite subgroup in $\Aut(S)$.
Suppose that there is a~$G$-conic bundle $\pi\colon S\to\mathbb{P}^1_{\kk}$ such that $\Pic(S)^G\simeq\mathbb{Z}^2$,
and $K_S^2\leqslant 0$. Then the following two assertions hold:
\begin{itemize}
\item[(i)] $S$ is not $G$-birational to a smooth weak del Pezzo surface;
\item[(ii)] for every~$G$-birational map $S\dasharrow\widehat{S}$ defined over $\kk$ such that
there is a~$G$-conic bundle $\widehat{\pi}\colon\widehat{S}\to\p^1_\kk$,
there exists a~$G$-equivariant commutative diagram
\begin{equation}\label{equation:fibrewise-map}
\xymatrix{
S\ar@{->}[d]_{\pi}\ar@{-->}[rr]&& \widehat{S}\ar@{->}[d]^{\widehat{\pi}}\\
\p^1_{\kk}\ar@{->}[rr]^\upsilon&&\p^1_{\kk}}
\end{equation}
for some isomorphism $\upsilon\colon\p^1_{\kk}\to\p^1_{\kk}$.
\end{itemize}
\end{theorem}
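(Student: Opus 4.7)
The plan is to apply the $G$-equivariant Sarkisov program and exploit the Noether--Fano--Iskovskikh inequality for conic bundles. First I would reduce (i) to (ii) together with a classification of $G$-Mori fibre space targets: if $S$ were $G$-birational to a smooth weak del Pezzo surface $S'$, then running the $G$-equivariant MMP on $S'$ produces a $G$-MFS which is either a del Pezzo surface or a conic bundle. So it suffices to show that every $G$-MFS that is $G$-birational to $S$ is itself a conic bundle $\widehat\pi\colon\widehat S\to\p^1_\kk$ fitting into the fiberwise diagram \eqref{equation:fibrewise-map}.

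Given a $G$-birational map $\varphi\colon S\dashrightarrow S''$ to a $G$-MFS, I would decompose it via the equivariant Sarkisov program into a chain of Sarkisov links $S=S_0\dashrightarrow S_1\dashrightarrow\cdots\dashrightarrow S_n=S''$ and argue by induction on $n$. The inductive hypothesis is that $S_i$ is a $G$-minimal conic bundle over $\p^1_\kk$ with $K_{S_i}^2=K_S^2\leqslant 0$ and that the partial composition $S\dashrightarrow S_i$ is fiberwise. By \cite[Theorem 2.6]{Iskovskikh1996}, the next link $S_i\dashrightarrow S_{i+1}$ is of type II, III or IV. A type IV link would swap $\pi_i$ with a second $G$-conic bundle structure on $S_i$, forcing $S_i$ to be a del Pezzo surface and so $K_{S_i}^2>0$, contradicting the hypothesis. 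A type III link contracts a $G$-orbit of pairwise disjoint $(-1)$-sections to a del Pezzo surface; the Noether--Fano--Iskovskikh analysis of the linear system $\mathcal{H}=\varphi^*|H''|$, written in the rank-two decomposition $\mathcal{H}\sim_{\mathbb{Q}}-\mu K_{S_i}+\lambda F$ with $F$ a fibre class, forces $K_{S_i}^2\geqslant 1$, a contradiction. Hence only type II links remain; these are elementary transformations of conic bundles, which preserve $K^2$ and are automatically fiberwise over an automorphism of $\p^1_\kk$. Composing yields the diagram in (ii), and the same induction excludes del Pezzo targets, proving (i).

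The main obstacle is the Noether--Fano--Iskovskikh exclusion of $G$-invariant $(-1)$-sections (and more generally maximal singularities not absorbed by elementary transformations) in a $G$-minimal conic bundle with $K^2\leqslant 0$. This is the technical heart of Iskovskikh's rigidity argument: the existence of such a configuration would produce a maximal base locus for $\mathcal{H}$ whose multiplicities, combined with the positivity constraints on $\mu$ and $\lambda$ coming from the rank-two Picard decomposition, are numerically incompatible with the presence of $\geqslant 8$ singular fibres, i.e. with $K_{S_i}^2\leqslant 0$. Establishing this numerical incompatibility in the mixed geometric/Galois $G$-equivariant setting is where most of the work lies.
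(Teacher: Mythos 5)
Your argument is correct, but it follows a genuinely different route from the one in the paper. You run the full $G$-equivariant Sarkisov decomposition and induct over the chain of links, excluding links of type III and IV out of a $G$-minimal conic bundle with $K^2\leqslant 0$ by appealing to the classification of two-dimensional links in \cite[Theorem 2.6]{Iskovskikh1996} (for type IV your numerical observation is easily made precise: if $F_1,F_2$ are independent fibre classes with $F_1\cdot F_2=k>0$, then $-K_S=\tfrac{2}{k}(F_1+F_2)$ and $K_S^2=8/k>0$); only elementary transformations survive, and these are fibrewise and preserve $K^2$, which yields (ii) and, after running the MMP on the putative weak del Pezzo target and using that $K^2$ only increases under contractions, also (i). The paper instead proves (i) directly by a Noether--Fano argument on the mobile log pair $(S,\lambda\mathcal{M}_S)$ with $\mathcal{M}_S$ the proper transform of $|-nK_X|$: one normalizes $K_S+\lambda\mathcal{M}_S\sim_{\mathbb{Q}}\pi^*(D)$, shows $\deg D\geqslant 0$ and $\lambda\leqslant 1/n$, makes the pair canonical by Corti's fibrewise untwisting \cite[Proof of Theorem 5.4]{Co95} (which preserves $K^2\leqslant 0$), and then uses pseudoeffectivity plus the negativity lemma to force $\chi^{-1}$ to be a morphism, contradicting $K_S^2\leqslant 0<1\leqslant K_X^2$; assertion (ii) is only asserted to follow similarly. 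The trade-off is that your induction delivers (i) and (ii) uniformly and makes the fibrewise structure explicit link by link, but outsources the hard numerical exclusion to the classification of links (which, as you rightly note, is where the real Noether--Fano--Iskovskikh work sits); the paper's mobile-pair argument is self-contained modulo Corti's untwisting, handles the weak del Pezzo target without a separate MMP reduction, and does not require quoting the full link classification. Both are legitimate, and indeed the paper itself uses your style of argument elsewhere (e.g. in Lemma~\ref{rem:K2=4-and-rkPic=2}).
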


\begin{proof}
We only prove the~assertion (i), since the~proof of the~assertion (ii) is similar~\cite{Iskovskikh1967}.
Suppose that there exists a~$G$-equivariant birational map $\chi\colon S\dashrightarrow X$, where $X$ is a~smooth weak del Pezzo surface.
Let us seek for a~contradiction.

Let $\mathcal{M}_X=|-nK_X|$ for $n\gg 0$, let $\mathcal{M}_S$ be its proper transform on the~surface $S$.
Now,~choose $\lambda\in\mathbb{Q}_{>0}$ such that $K_S+\lambda \mathcal{M}_S\sim_{\mathbb{Q}}\pi^*(D)$
for some $\mathbb{Q}$-divisor $D$ on~$\mathbb{P}^1$. Such $\lambda$ does exist, because $\Pic(S)^G\simeq\mathbb{Z}^2$. Then
$$
0\leqslant \lambda^2 M_1\cdot M_2=\Big(\pi^*(D)-K_S\Big)^2=-2K_S\cdot\pi^*(D)+K_S^2=4\mathrm{deg}\big(D\big)+K_S^2\leqslant4\mathrm{deg}\big(D\big)
$$
for two general curves $M_1$ and $M_2$ in $\mathcal{M}_S$.
Hence, we see that $\mathrm{deg}(D)\geqslant 0$.

Observe that there exists a $G$-equivariant commutative diagram
$$
\xymatrix{
&Y\ar@{->}[dl]_{\alpha}\ar@{->}[dr]^{\beta}&\\
S\ar@{-->}[rr]^{\chi}&& X}
$$
where $Y$ is a~smooth surface, and $\alpha$ and $\beta$ are $G$-equivariant birational morphisms. 
Let $\mathcal{M}_Y$ be the~proper transform of the~linear system $\mathcal{M}_X$ on the~surface $Y$.
Then
$$
\alpha^*\big(K_S+\lambda\mathcal{M}_S\big)+\sum_{i=1}^k a_iE_i\sim_{\mathbb{Q}} K_Y+\lambda\mathcal{M}_Y\sim_{\mathbb{Q}}\beta^*\big(K_X+\lambda\mathcal{M}_X\big)+\sum_{i=1}^m b_iF_i
$$
for some rational numbers $a_1,\ldots,a_k,b_1,\ldots,b_m$, where each $E_i$ is an $\alpha$-exceptional curve that is irreducible over $\mathbb{R}$,
and each $F_i$ is a~$\beta$-exceptional curve that is irreducible over $\mathbb{R}$.
Moreover, since $\mathcal{M}_X$ is base point free, we see that $b_i>0$.
Thus, we have
\begin{equation}
\label{equation:conic-bundle-del-Pezzo}
(\pi\circ\alpha)^*\big(D\big)+\sum_{i=1}^k a_iE_i\sim_{\mathbb{Q}}(\lambda n-1)\beta^*\big(-K_X\big)+\sum_{i=1}^m b_iF_i.
\end{equation}
If $\lambda>\frac{1}{n}$, then the~divisor in the~right hand side of \eqref{equation:conic-bundle-del-Pezzo} is big,
while the~divisor in the~left hand side of \eqref{equation:conic-bundle-del-Pezzo} is not big.
Hence, we conclude that $\lambda\leqslant\frac{1}{n}$.

Suppose that the~singularities of the~(mobile) log pair $(S,\lambda\mathcal{M}_S)$ are not canonical.
Then it follows from \cite[Proof of Theorem 5.4 for surfaces]{Co95} that there is a~$G$-equivariant commutative diagram
$$
\xymatrix{
S\ar@{->}[d]_{\pi}\ar@{-->}[rr]^{\zeta}&& \widehat{S}\ar@{->}[d]^{\widehat{\pi}}\\
\mathbb{P}^1_\kk\ar@{=}[rr]&&\mathbb{P}^1_\kk}
$$
such that $\zeta$ is a~birational map, the~surface $\widehat{S}$ is smooth, $\widehat{\pi}$ is a~conic bundle,
$\Pic(\widehat{S})^G\simeq\mathbb{Z}^2$, and the~log pair $(\widehat{S},\lambda\mathcal{M}_{\widehat{S}})$
has at most canonical singularities, where $\mathcal{M}_{\widehat{S}}$ is the~proper transform of the~linear system $\mathcal{M}_S$
on the~surface $\widehat{S}$. Note also that $K_{\widehat{S}}^2=K_S^2\leqslant 0$,
because $\delta$ can be decomposed into elementary transformations, see \cite{Iskovskikh1996} or \cite[Appendix]{Co95}.
Therefore, we can swap the~surfaces $S$ and $\widehat{S}$ and replace the~birational map $\chi$ by $\chi\circ\zeta^{-1}$.
Then we may assume that the~log pair $(S,\lambda\mathcal{M}_S)$ has canonical singularities.

Since $(S,\lambda\mathcal{M}_S)$ has canonical singularities, all the~numbers $a_1,\ldots,a_n$ are non-negative.
Since $\mathrm{deg}(D)\geqslant 0$, in the~left hand side of \eqref{equation:conic-bundle-del-Pezzo}, we have a~pseudoeffective divisor,
which implies that $\lambda=\frac{1}{n}$ and $D\sim_{\mathbb{Q}} 0$.
Now, since the~intersection form of the~curves $E_1,\ldots,E_k$ and the~intersection form of the~curves $F_1,\ldots,F_m$ are negative definite, it follows from \eqref{equation:conic-bundle-del-Pezzo} and \cite[Lemma 2.19]{KollarEtAl} that
$$
\sum_{i=1}^k a_iE_i=\sum_{i=1}^{m}b_iF_i.
$$
Therefore, since all $b_i$ are positive, all $\beta$-exceptional curves are $\alpha$-exceptional. Hence $\chi^{-1}$ is a birational morphism, which implies that $K_S^2\geqslant K_X^2\geqslant 1$. Since we assume that $K_S^2\leqslant 0$, this is a contradiction.
\end{proof}

\section{Involutions on quadric surfaces}\label{section: dP8}

This short section is devoted to the study of involutions on real del Pezzo surfaces of degree 8, i.e. to the class $\mathfrak{Q}$ of the Main Theorem. Recall that in Section \ref{subsec: real rational surfaces} we introduced two quadric surfaces in $\p_\R^3$:
\[
Q_{3,1}\colon x^2+y^2+z^2=w^2\ \ \text{and}\ \ Q_{2,2}\colon x^2+y^2=z^2+w^2.
\]

\begin{proposition}\label{prop: K2=8}
	Let $S$ be a real $\R$-rational $G$-Mori fibre space of dimension~2 with $K_S^2=8$, and $\tau$ be the generator of $G$. Then either $\tau$ is linearisable, i.e. is conjugate to a linear involution of $\p_\R^2$ in $\Bir(\p_\R^2)$, or $S$ is $G$-birational to one of the following pairs:
		\begin{enumerate}
			\item $S\simeq Q_{3,1}$, $\tau$ is the antipodal involution $[w:x:y:z]\mapsto [-w:x:y:z]$. 
			\item $S\simeq Q_{2,2}\simeq\p_\R^1\times\p_\R^1$ and $\tau$ acts by $([x:y],[s:t])\mapsto ([x:y]:[t:-s])$. 
		\end{enumerate}
Moreover, these two are non-$G$-birationally isomorphic pairs.
\end{proposition}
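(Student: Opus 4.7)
The plan is to first reduce any $\R$-rational $G$-Mori fibre space $(S,\tau)$ of degree $K_S^2=8$ to one of a small number of standard models, and then to prove that the two resulting non-linearisable models lie in distinct $G$-birational classes.

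First I would observe that, by the formula $K_S^2=8-2m$ for $m$ the number of singular real fibres of a conic bundle, a $G$-minimal conic bundle structure on $S$ has no singular fibres. Hence $S$ is either a $G$-minimal del Pezzo surface of degree~$8$ (so a quadric surface $Q_{3,1}$ or $Q_{2,2}$), or it is a Hirzebruch surface $\FFF_n$ ($n\geqslant 0$) viewed as a $G$-conic bundle over $\p^1_\R$, as described in Proposition~\ref{proposition:conic-bundle-real-loci}. I plan to use $G$-equivariant elementary transformations together with the contraction $\FFF_1\to\p^2_\R$, available whenever $\tau$ preserves and acts trivially on the $(-1)$-section, to reduce to one of three situations: $\tau$ is a linear involution of $\p^2_\R$ (so $\tau\in\mathfrak{L}$, i.e.\ linearisable), or $(S,\tau)$ with $S\in\{Q_{3,1},Q_{2,2}\}$.

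Next I would classify the admissible involutions on these two quadrics. For $(Q_{3,1},\tau)$ with $S(\R)^\tau=\varnothing$, using $\Aut(Q_{3,1})=\mathrm{PO}(3,1)$ and a case analysis of its involutions by real fixed locus, the only class acting freely on $Q_{3,1}(\R)\simeq\Sph^2$ is the antipodal one, giving case~(1). For $(Q_{2,2},\tau)\simeq(\p^1_\R\times\p^1_\R,\tau)$ in class $\mathfrak{Q}$, an involution swapping the two rulings would have the diagonal $\p^1_\R$ as a curve of real fixed points, contradicting $S(\R)^\tau=\varnothing$; hence $\tau=(\alpha,\beta)$ with $\alpha,\beta\in\Aut(\p^1_\R)$. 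Freeness on $\Sph^1\times\Sph^1$ combined with a Möbius change of coordinates forces, up to swapping factors, $\tau=(\mathrm{id},\rho)$ with $\rho\colon[s:t]\mapsto[t:-s]$, giving case~(2).

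The hard part will be showing that (1) and (2) are not $G$-birationally isomorphic. My approach is to use the topological Euler characteristic $\chi(S(\R))$ as a $G$-birational invariant within class $\mathfrak{Q}$: one has $\chi(Q_{3,1}(\R))=\chi(\Sph^2)=2$ whereas $\chi(Q_{2,2}(\R))=\chi(\Sph^1\times\Sph^1)=0$. The key observation is that in class $\mathfrak{Q}$, $\tau$ acts freely on $S(\R)$, so every $G$-orbit of real points on $S$ has cardinality~$2$, and every $\tau$-invariant real $(-1)$-curve belongs to a $\tau$-orbit of size~$2$. A $G$-equivariant Sarkisov link therefore blows up and contracts real loci of equal cardinality, leaving $\chi(S(\R))$ unchanged. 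The main obstacle is making this invariance fully rigorous, which requires checking that all four Sarkisov link types (I--IV, in the sense of~\cite{Iskovskikh1996}) appearing in a $G$-birational chain between $G$-MFSs in class $\mathfrak{Q}$ preserve this balance.
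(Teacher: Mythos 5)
Your reduction to $Q_{3,1}$, $Q_{2,2}$ or a linear involution is the same as the paper's, and your treatment of $Q_{3,1}$ is fine; but note that you must also dispose of the involutions of the two quadrics that do fix a real point (the paper linearises these by projecting from such a point), and your claim that on $Q_{2,2}$ a M\"obius change of coordinates reduces every fibrewise involution without real fixed points to $(\mathrm{id},\rho)$ is false. With $\alpha\colon[x:y]\mapsto[y:x]$ and $\alpha'\colon[x:y]\mapsto[y:-x]$, the pairs $(\mathrm{id},\alpha')$, $(\alpha,\alpha')$ and $(\alpha',\alpha')$ all act freely on the real locus but are pairwise non-conjugate in $\Aut(\p^1_\R\times\p^1_\R)$ --- the first fixes two complex conjugate fibres pointwise, the other two fix four isolated points --- so no biregular coordinate change identifies them. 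The paper identifies them only up to conjugation by explicit \emph{birational} involutions of $\p^1_\R\times\p^1_\R$; some such non-regular step is unavoidable.

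The more serious gap is the last step. The Euler characteristic $\chi(S(\R))$ is not preserved by $G$-equivariant Sarkisov links in class $\mathfrak{Q}$: your balance argument covers links of type II (and trivially IV), but links of type I and III blow up orbits of points without contracting a matching set of curves, and in degree $8$ such links exist. Concretely, let $\tau$ be the antipodal involution of $Q_{3,1}$, take $p\in Q_{3,1}(\R)$ and blow up the orbit $\{p,\tau(p)\}$. The resulting degree-$6$ surface $\widetilde S$ satisfies $\Pic(\widetilde S)^G\simeq\Z^2$, and the pencil of plane sections through the line $\overline{p\,\tau(p)}$ turns $\widetilde S\to\p^1_\R$ into a $G$-minimal conic bundle (its two singular fibres lie over complex conjugate points and have Galois-conjugate components), i.e.\ this is a $G$-equivariant link of type I to another $G$-Mori fibre space. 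But $\widetilde S(\R)$ is $\Sph^2$ blown up at two real points, a Klein bottle, so $\chi$ drops from $2$ to $0$. The rigidity statements that would forbid such links (Lemma~\ref{rem:K2=4-and-rkPic=2}, Theorem~\ref{theorem:Iskovskikh}) require $K_S^2\leqslant 4$, resp.\ $K_S^2\leqslant 0$, and give nothing in degree $8$. The paper distinguishes (1) and (2) by an entirely different mechanism: it checks that no $G$-equivariant Sarkisov link starts from $(Q_{2,2},(\mathrm{id},\alpha'))$, so that pair is $G$-birationally rigid and can only be $G$-birational to pairs $G$-isomorphic to it; non-linearisability of the antipodal involution is deduced from the fact that the existence of real $G$-fixed points is a $G$-birational invariant for $G\simeq\Z/2\Z$ (cf.\ \cite{KollarSzabo}). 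You would need to substitute an argument of this kind for your Euler-characteristic invariant.
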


\begin{proof}
	If $S$ is a del Pezzo surface with $\Pic(S)^G\simeq\Z$ then $S\simeq Q_{3,1}$ or $S\simeq Q_{2,2}$. If $S$ is a $G$-minimal conic bundle, then $S\simeq\FFF_n$ for some $n\geqslant 0$. In this case, using elementary transformations one can construct a $G$-equivariant commutative diagram
	\[
	\xymatrix{
		S\ar@{->}[d]_{\pi}\ar@{-->}[rr]^{\chi}&&\mathbb{F}_m\ar@{->}[d]^{\eta}\\
		\mathbb{P}^1_\R\ar@{=}[rr]&&\mathbb{P}^1_\R}
	\]
	such that $\chi$ is a birational map, $\eta$ is a natural projection, and $m\in\{0,1\}$. If $m=1$, then $\tau$ is conjugate to a linear involution of $\p_\R^2$. If $m=0$, then $\FFF_m\simeq Q_{2,2}\simeq\p_\R^1\times\p_\R^1$. So, it remains to study the involutions on $Q_{3,1}$ and $Q_{2,2}$. Then $\tau$ extends to a regular involution of $\p^3_\R$ preserving the associated quadratic form. This yields the following possibilities:
	
	\begingroup
	\renewcommand*{\arraystretch}{1.3}
	\begin{longtable}{|c|c|c|c|}
		\hline
		$S$ & $\tau$ & $S^\tau$ & $S^\tau(\R)$ \\ \hline\hline	
		$Q_{3,1}$ & $[w: x: y: z]\mapsto [w: -x:-y:z]$ & $4$ points & $2$ points  \\ \hline	
		$Q_{3,1}$ & $[w: x: y: z]\mapsto [w: -x:y:z]$ & \{ $y^2+z^2=w^2$\} & $\Sph^1$  \\ \hline	
		$Q_{3,1}$ & $[w: x: y: z]\mapsto [-w:x:y:z]$ & \{ $x^2+y^2+z^2=0$ \} & $\varnothing$  \\ \hline	
		$Q_{2,2}$ & switching the factors & the diagonal & $\Sph^1$  \\ \hline
		$Q_{2,2}$ & fibrewise (see below) & 4 points & 0, 2 or 4 points  \\ \hline
	\end{longtable}
	\endgroup  
	Here and in what follows, $S^\tau$ denotes the fixed locus of $\tau$ on $S$. If $\tau$ fixes a real point in $S=Q_{3,1}$ or $S=Q_{2,2}$, then $\tau$ is linearizable via the projection from this point. If $\tau$ does not fix a real point in $Q_{3,1}$, then $\tau$ is not linearizable, as follows from the classification of Sarkisov links \cite{Iskovskikh1996}. Alternatively, one can show that $\tau$ is not linearizable by noticing that existence of $G$-fixed real smooth points is an equivariant birational invariant when $G\simeq\Z/2$ (compare with \cite{KollarSzabo}). So, it remains to study the involutions on $Q_{2,2}$ which do not fix real points. Recall that every element of $\PGL_2(\R)$ of order 2 is conjugate to one of the following:
	\begin{enumerate}
		\item $\alpha\colon [x:y]\mapsto [y:x]$, fixing the points $[1:1]$ and $[1:-1]$; 
		\item $\alpha'\colon [x:y]\mapsto [y:-x]$, fixing the points $[1:i]$ and $[1:-i]$. 
	\end{enumerate}
	So, a fibrewise automorphism of $S=Q_{2,2}\simeq\p_\R^1\times\p_\R^1$ has a real fixed point, unless it is conjugate to $({\rm id},\alpha')$, $(\alpha,\alpha')$, or $(\alpha',\alpha')$. 
	In the first case, there are no $G$-equivariant Sarkisov links starting from $S$, so the corresponding action is not conjugate to any of the actions described above.
	Moreover, $({\rm id},\alpha')$ and $(\alpha',\alpha')$ are conjugate by the birational involution
	\[
	\left( [s:t],[x:y]\right ) \mapsto \left ([ty+sx:sy-tx],[x:y]\right ),
	\] 
	and $(\alpha,\alpha')$ and $(\alpha',\alpha')$ are conjugate by the birational involution 
	\[
	\left( [s:t],[x:y] \right)\mapsto\left( [sy:tx],[y:x]\right).
	\]
	This finishes the proof.
\end{proof}

\section{Trepalin involutions}\label{section:Trepalin-involutions}

Now, we present three constructions of birational involutions of the real projective plane
that do not fix irrational curves. These constructions are inspired by Example~\ref{example:Trepalin}.
Because of this, we will call the corresponding  involutions (\mbox{\emph{$0$-twisted}}, \mbox{\emph{$1$-twisted}}, \emph{$2$-twisted}) \emph{Trepalin involutions}.

\subsection{First construction ($0$-twisted Trepalin involutions)}
\label{subsection:Trepalin-involution-1}
Let $Y$ be the affine surface in $\mathbb{A}^3_{\R}$ given by
\begin{equation}\label{eq: Trepalin 1}
	x^2+y^2+(t-\epsilon_1)(t-\epsilon_2)\cdots(t-\epsilon_{2r})=0
\end{equation}
where $\epsilon_1<\epsilon_2<\cdots<\epsilon_{2r}$ are real numbers and $r\geqslant 1$ and $\pi_Y\colon Y\to\mathbb{A}^1_{\R}$ be the~map given by $(x,y,t)\mapsto t$. Then there exists a commutative diagram:
$$
\xymatrix{
Y\ar@{->}[d]_{\pi_Y}\ar@{^{(}->}[rr]&&X\ar@{->}[d]^{\pi_{X}}\\
\mathbb{A}^1_\R\ar@{^{(}->}[rr]&&\mathbb{P}^1_\R}
$$
where $X$ is a real smooth projective surface with $\mathrm{Pic}(X)\cong\mathbb{Z}^2$,
both $Y\hookrightarrow X$ and $\mathbb{A}^1_\R\hookrightarrow\mathbb{P}^1_\R$ are open immersions,
and $\pi_X$ is a conic bundle such that $\pi_X^{-1}([1:0])$ is a smooth conic that does not have real points.
By construction, the conic bundle $\pi_X$ has $2r$ singular $\R$-irreducible fibres, which are fibres over the points $[\epsilon_1:1],[\epsilon_2:1],\ldots,[\epsilon_{2r}:1]$.
The real locus $X(\R)$ is a disjoint union of $r\geqslant 1$ spheres (see Proposition~\ref{proposition:conic-bundle-real-loci}), so that the~surface $X$ is $\mathbb{R}$-rational if and only if $r=1$.

Now, we let $U$ be the affine surface in $\mathbb{A}^4_{\R}$ given by
\begin{equation}\label{eq: Trepalin 1 in A4}
\left\{\aligned
&x^2+y^2+(t-\epsilon_1)(t-\epsilon_2)\cdots(t-\epsilon_{2r})=0,\\
&w^2+(t-\lambda_1)(t-\lambda_2)=0,
\endaligned
\right.
\end{equation}
where $\lambda_1<\lambda_2$ are real numbers such that $\lambda_1\ne\epsilon_1<\lambda_2<\epsilon_2$,
and $x$, $y$, $w$, $t$ are coordinates on  $\mathbb{A}^4_{\R}$.
Let $\theta\colon U\to Y$ be the morphism given by $(x,y,w,t)\mapsto(x,y,t)$.
Then $\theta$ is a double cover branched over the fibres $\pi_Y^{-1}(\lambda_1)$ and $\pi_Y^{-1}(\lambda_2)$.
Moreover, we have the following commutative diagram:
\begin{equation}\label{diagr: Trepalin 1}
\xymatrix{
&&U\ar@/^{-1.2pc}/[dll]_{\theta}\ar@{^{(}->}[rr]&&S\ar@{->}[dll]^{\vartheta}\ar@{->}[dd]^{\pi_S}\\
Y\ar@{->}[d]_{\pi_Y}\ar@{^{(}->}[rr]&&X\ar@{->}[d]^{\pi_{X}}&&\\
\mathbb{A}^1_\R\ar@{^{(}->}[rr]&&\mathbb{P}^1_\R&&\mathbb{P}^1_\R\ar@{->}[ll]_{\omega}}
\end{equation}
where $S$ is a real smooth projective surface,
the morphism $\vartheta$ is a double cover branched over the~smooth fibres $\pi_X^{-1}([\lambda_1:1])$ and $\pi_X^{-1}([\lambda_2:1])$,
the morphism $\omega$ is a double cover branched over the points $[\lambda_1:1]$ and $[\lambda_2:1]$,
and $\pi_S$ is a conic bundle with $4r$ singular fibres, so $K_S^2=8-4r\leqslant 4$. Let $\tau$ be the Galois involution of the cover $\vartheta$ and $G=\langle\tau\rangle$.
Then one has \mbox{$\mathrm{rk}\,\Pic(S)^G=\mathrm{rk}\,\mathrm{Pic}(X)=2$}.
Moreover, if $\epsilon_1<\lambda_1$, then $S(\R)\approx\mathbb{S}^1\times\mathbb{S}^1$, and if  $\lambda_1<\epsilon_1$, then $S(\R)\approx\mathbb{S}^2$. 
Indeed, the real locus of $S$ is contained in $U$, then it suffices to look at its equations. If $\epsilon_1<\lambda_1$, then for each $i$ the two singular fibres $\pi_S^{-1}([w:1])$ for $w^2=\epsilon_i$ are not real. If  $\lambda_1<\epsilon_1$, there are exactly two real singular fibres which are $\pi_S^{-1}([w:1])$ for $w^2=\epsilon_1$ and they are irreducible over $\R$.
In both cases, the surface $S$ is $\mathbb{R}$-rational by Theorem \ref{theorem:rational-real}. 
Therefore, the involution $\tau$ induces a~birational involution $\iota\in\Bir(\p_\R^2)$.
The $\tau$-fixed locus  in $S$ consists of the fibres
$\pi_S^{-1}([\lambda_1:1])$ and $\pi_S^{-1}([\lambda_2:1])$,
so $F(\iota)=F(\tau)=\varnothing$.

The involution $\tau$ and the~corresponding birational involution in $\Bir(\mathbb{P}^2_{\mathbb{R}})$
will both be called \emph{$0$-twisted Trepalin involution} with $4r$ singular fibres.
The class of such birational  involutions in $\Bir(\mathbb{P}^2_{\mathbb{R}})$ will be denoted by $\mathfrak{T}_{4r}$.

\subsection{Second construction ($1$-twisted Trepalin involutions)}
\label{subsection:Trepalin-involution-2}
Let us use all assumptions and notations of Section~\ref{subsection:Trepalin-involution-1}.
Now, we let $U^\prime$ be the affine surface in $\mathbb{A}^4_{\R}$ given by
\begin{equation}\label{eq: Trepalin 2}
	\left\{\aligned
	&x^2+y^2+(t-\epsilon_1)(t-\epsilon_2)\cdots(t-\epsilon_{2r})=0,\\
	&w^2+(t-\lambda_1')(t-\lambda_2')=0,
	\endaligned
	\right.
\end{equation}
where $\lambda_1'$, $\lambda_2'$ are real numbers such that either $\epsilon_1\ne \lambda_1'<\epsilon_2$ and $\lambda_2'=\varepsilon_2$, or $\varepsilon_1<\lambda_1'<\varepsilon_2$ and $\lambda_2'=\varepsilon_3$.
Let $\theta^\prime\colon U^\prime\to Y$ be the morphism given by $(x,y,w,t)\mapsto(x,y,t)$.
Then $\theta^\prime$ is a double cover branched over the fibres $\pi_Y^{-1}(\lambda_1)$ and $\pi_Y^{-1}(\lambda_2)$.
Since $\pi_Y^{-1}(\lambda_2)$ is singular, we see that the surface $U^\prime$ has an ordinary double point at the point $(x,y,w,t)=(0,0,0,\lambda_2)$.
We have the following commutative diagram:
\begin{equation}\label{diag: Trepalin 2} 
\xymatrix{
&&&&&\widetilde{S}^\prime\ar@{->}[dl]_{\alpha}\ar@{->}[dr]^{\beta}&&&\\
&&U^\prime\ar@/^{-1.2pc}/[dll]_{\theta^\prime}\ar@{^{(}->}[rr]&&S^\prime\ar@{->}[dll]^{\vartheta^\prime}\ar@{->}[dd]^{\pi_{S^\prime}}&&\widehat{S}^\prime\ar@{->}[dd]^{\pi_{\widehat{S}^\prime}}\\
Y\ar@{->}[d]_{\pi_Y}\ar@{^{(}->}[rr]&&X\ar@{->}[d]^{\pi_{X}}&&&&\\
\mathbb{A}^1_\R\ar@{^{(}->}[rr]&&\mathbb{P}^1_\R&&\mathbb{P}^1_\R\ar@{->}[ll]_{\omega^\prime}\ar@{=}[rr]&&\mathbb{P}^1_\R}
\end{equation}
where $S^\prime$ is a real projective surface that is smooth along $S^\prime\setminus U^\prime$,
the morphism $\vartheta^\prime$ is a double cover branched over the~fibres $\pi_X^{-1}([\lambda_1:1])$ and $\pi_X^{-1}([\lambda_2':1])$,
the morphism $\omega^\prime$ is a double cover branched over the points $[\lambda_1':1]$ and $[\lambda_2':1]$,
the morphism $\alpha$ is the blow up of the singular point of the surface $S^\prime$,
and $\beta$ is the contraction of the proper transform of the fibre $\pi_{S^\prime}^{-1}([\lambda_2':1])$ to a pair of complex conjugate non-real points.
Then $\widehat{S}^\prime$ is a smooth projective surface,  $\pi_{\widehat{S}^\prime}$ is a conic bundle that has $4r-2\geqslant 2$ singular fibres, so that
$K_{\widehat{S}^\prime}^2=10-4r\leqslant 6$.

Let $\tau^\prime$ be the Galois involution of the double cover $\vartheta^\prime$. Set $G^\prime=\langle\tau^\prime\rangle$.
Then \mbox{$\mathrm{rk}\,\mathrm{Pic}(\widehat{S}^\prime)^{G^\prime}=2$}. We observe that $\widehat{S}'(\R)$ is homeomorphic to the real locus of the minimal resolution of $U'$. Therefore, we have the following possibilities:
\begin{itemize}
	\item If $\lambda_2'=\varepsilon_2$, $\varepsilon_1<\lambda_1'<\varepsilon_2$, then $\widehat{S}^\prime(\R)\approx\mathbb{S}^1\times\mathbb{S}^1$. 
	\item If $\lambda_2'=\varepsilon_2$, $\lambda_1'<\varepsilon_1$, then $\widehat{S}^\prime(\R)\approx\mathbb{S}^2$.
	\item If $\lambda_2'=\varepsilon_3$, $\varepsilon_1<\lambda_1'<\varepsilon_2$, then $\widehat{S}^\prime(\R)\approx\mathbb{S}^2$.
\end{itemize}
In all cases, the surface $\widehat{S}^\prime$ is $\mathbb{R}$-rational by Theorem \ref{theorem:rational-real}.
Thus, the involution $\tau^\prime$ induces a~birational involution $\iota^\prime\in\Bir(\p_\R^2)$. 
The~$\tau^\prime$-fixed locus consists of the fibre
$\pi_{\widehat{S}^\prime}^{-1}([\lambda_1:1])$ and a pair of complex conjugate non-real points contained in the~fibre
$\pi_{\widehat{S}^\prime}^{-1}([\varepsilon_2':1])$, which implies that $F(\iota^\prime)=F(\tau^\prime)=\varnothing$.

If $r\geqslant 2$, the involution $\tau^\prime$ and the~corresponding birational involution in $\Bir(\mathbb{P}^2_{\mathbb{R}})$
will be both called \emph{$1$-twisted Trepalin involution} with $4r-2$ singular fibres.
The class of such birational  involutions in $\Bir(\mathbb{P}^2_{\mathbb{R}})$ will be denoted by $\mathfrak{T}_{4r-2}^\prime$.

\subsection{Third construction (2-twisted Trepalin involutions)}
\label{subsection:Trepalin-involution-3}
Let us use all assumptions and notations of Section~\ref{subsection:Trepalin-involution-1}.
Now, we let $U^{\prime\prime}$ be the affine surface in $\mathbb{A}^4_{\R}$ given by
\begin{equation}\label{eq: Trepalin 3}
	\left\{\aligned
	&x^2+y^2+(t-\epsilon_1)(t-\epsilon_2)\cdots(t-\epsilon_{2r})=0,\\
	&w^2+(t-\lambda_1'')(t-\lambda_2'')=0,
	\endaligned
	\right.
\end{equation}
such that either $\lambda_1''=\varepsilon_1$ and $\lambda_2''=\varepsilon_2$, or $\lambda_1''=\varepsilon_1$ and $\lambda_2''=\varepsilon_3$. Let $\theta^{\prime\prime}\colon U^{\prime\prime}\to Y$ be the morphism given by $(x,y,w,t)\mapsto(x,y,t)$.
Then $\theta^{\prime\prime}$ is a double cover branched over the singular fibres $\pi_Y^{-1}(\lambda_1'')$ and $\pi_Y^{-1}(\lambda_2'')$,
so that the surface $U^{\prime\prime}$ has two ordinary double points: $(0,0,0,\lambda_1'')$ and $(0,0,0,\lambda_2'')$.
Now, we have the following commutative diagram:
\begin{equation}\label{diag: Trepalin 3}
\xymatrix{
&&&&&\widetilde{S}^{\prime\prime}\ar@{->}[dl]_{\gamma}\ar@{->}[dr]^{\delta}&&&\\
&&U^{\prime\prime}\ar@/^{-1.2pc}/[dll]_{\theta^{\prime\prime}}\ar@{^{(}->}[rr]&&S^{\prime\prime}\ar@{->}[dll]^{\vartheta^{\prime\prime}}\ar@{->}[dd]^{\pi_{S^{\prime\prime}}}&&\widehat{S}^{\prime\prime}\ar@{->}[dd]^{\pi_{\widehat{S}^{\prime\prime}}}\\
Y\ar@{->}[d]_{\pi_Y}\ar@{^{(}->}[rr]&&X\ar@{->}[d]^{\pi_{X}}&&&&\\
\mathbb{A}^1_\R\ar@{^{(}->}[rr]&&\mathbb{P}^1_\R&&\mathbb{P}^1_\R\ar@{->}[ll]_{\omega^{\prime\prime}}\ar@{=}[rr]&&\mathbb{P}^1_\R}
\end{equation}
where $S^{\prime\prime}$ is a real projective surface that is smooth along $S^{\prime\prime}\setminus U^{\prime\prime}$,
the morphism $\vartheta^{\prime\prime}$ is a double cover branched over the~fibres $\pi_X^{-1}([\lambda_1'':1])$ and $\pi_X^{-1}([\lambda_2'':1])$,
the morphism $\omega^{\prime\prime}$ is a double cover branched over the points $[\lambda_1'':1]$ and $[\lambda_2'':1]$,
the morphism $\gamma$ is the blow up of both singular points of the surface $S^{\prime\prime}$,
and $\delta$ is the contraction of the proper transforms of the fibres $\pi_{S^{\prime\prime}}^{-1}([\lambda_1'':1])$ and $\pi_{S^{\prime\prime}}^{-1}([\lambda_2'':1])$ to two pairs of complex conjugate non-real points.
Then $\widehat{S}^{\prime\prime}$ is a smooth projective surface,
$\pi_{\widehat{S}^{\prime\prime}}$ is a conic bundle that has $4r-4\geqslant 0$ singular fibres, so that
$K_{\widehat{S}^{\prime\prime}}^2=12-4r\leqslant 8$.

Let $\tau^{\prime\prime}$ be the Galois involution of the double cover $\vartheta^{\prime\prime}$. Set $G^\prime=\langle\tau^{\prime\prime}\rangle$.
Then \mbox{$\mathrm{rk}\,\mathrm{Pic}^{G^{\prime\prime}}(\widehat{S}^{\prime\prime})=2$}. We have the following possibilities:
\begin{itemize}
	\item If $\lambda_1''=\varepsilon_1$, $\lambda_2''=\varepsilon_2$, then $\widehat{S}''(\R)\approx\mathbb{S}^1\times\mathbb{S}^1$. 
	\item If $\lambda_1''=\varepsilon_1$, $\lambda_2''=\varepsilon_3$, then $\widehat{S}''(\R)\approx\mathbb{S}^2$.
\end{itemize}
So, the surface $\widehat{S}''$ is $\mathbb{R}$-rational.
Hence, the involution $\tau^{\prime\prime}$ induces a~birational involution $\iota^{\prime\prime}\in\Bir(\p_\R^2)$.
The~$\tau^{\prime\prime}$-fixed locus consists of two  pairs of complex conjugate non-real points contained in the~fibres
$\pi_{\widehat S^{\prime\prime}}^{-1}([\lambda_1'':1])$ and $\pi_{\widehat S^{\prime\prime}}^{-1}([\lambda_2'':1])$, respectively. 
This shows that $F(\iota^{\prime\prime})=F(\tau^{\prime\prime})=\varnothing$.

If $r\geqslant 2$, the involution $\tau^{\prime\prime}$ and the~corresponding birational involution in $\Bir(\mathbb{P}^2_{\mathbb{R}})$
will be both called \emph{$2$-twisted Trepalin involution} with $4r-4$ singular fibres.
The class of such birational  involutions in $\Bir(\mathbb{P}^2_{\mathbb{R}})$ will be denoted by $\mathfrak{T}_{4r-4}^{\prime\prime}$.

\subsection{Classification of Trepalin involutions}\label{subsec: Trepalin classification}

Let $\pi_S: S\to\p_\R^1$ be an $\R$-rational and $G$-minimal conic bundle, i.e. $\Pic(S)^G\simeq\Z^2$, where $G$ is generated by an involution $\tau$ acting non-trivially on the base $\p_\R^1$ (so in particular $F(\tau)=\varnothing$, as the fixed locus of $\tau$ is contained in finitely many fibres of $\pi_S$). Let $\iota$ be the corresponding birational involution of $\p_\R^2$. Assume that $K_S^2\leqslant 4$. The main result of this section is the following

\begin{theorem}\label{thm: Trepalin involutions}
	The involution $\iota$ is conjugate to a $t$-twisted Trepalin's involution with $t\in\{0,1,2\}$.
\end{theorem}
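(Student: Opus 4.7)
My proof plan is to realize $S$ as a double cover of the quotient conic bundle $X=S/\langle\tau\rangle$ and then match the resulting configuration to one of the three Trepalin models of Sections~\ref{subsection:Trepalin-involution-1}--\ref{subsection:Trepalin-involution-3}.

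First I would analyze the action of $\tau$ on the base. Since $\tau$ is a non-trivial involution of $\p^1_\R$, it has exactly two fixed points over $\bar\R$, call them $\mu_1,\mu_2$, which are either both real or form a complex-conjugate pair. I would first reduce to the case where $\mu_1,\mu_2$ are both real: when they are not, a type~IV Sarkisov link (for $K_S^2=4$) or an elementary-transformation argument exploiting the connectedness of $S(\R)$ (Theorem~\ref{theorem:rational-real}) together with the real-locus analysis of Proposition~\ref{proposition:conic-bundle-real-loci} should produce an alternative $G$-equivariant conic bundle structure whose base involution has two real fixed points. Fix such real fixed points, and let $\omega\colon\p^1_\R\to\p^1_\R$ denote the corresponding quotient cover, with real branch values $\lambda_i:=\omega(\mu_i)$.

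Next, consider the quotient morphism $q\colon S\to X$. This is a degree-$2$ cover ramified along the $\tau$-fixed locus; by Lemma~\ref{lemma:fixed-curves-smooth} and the hypothesis $F(\tau)=\varnothing$, this locus consists of smooth rational components contained in $\pi_S^{-1}(\mu_1)\cup\pi_S^{-1}(\mu_2)$. The conic bundle $\pi_S$ descends to a map $\pi_X\colon X\to\p^1_\R$, and after resolving the singularities arising from $\tau$-fixed points $X$ becomes a smooth $\R$-rational conic bundle whose generic fibre has no real points (since the generic fibre of $\pi_S$ is its pullback along $\omega$ and must split over $\C$). Using this, together with the freedom provided by Lemma~\ref{rem:K2=4-and-rkPic=2}, I would normalize $\pi_X$ via $G$-equivariant elementary transformations into the affine form $\{x^2+y^2+\prod_{j=1}^{2r}(t-\epsilon_j)=0\}$ used in Section~\ref{subsection:Trepalin-involution-1}.

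Finally, I would identify the Trepalin type $t\in\{0,1,2\}$ as the number of values $\lambda_i$ that coincide with a critical value $\epsilon_j$ of $\pi_X$. The case $t=0$ directly reproduces the surface of Section~\ref{subsection:Trepalin-involution-1}; the case $t=1$ produces a fibre product with one $A_1$ singularity which, after the standard blow-up-and-contraction, yields the surface $\widehat S'$ of Section~\ref{subsection:Trepalin-involution-2}; the case $t=2$ similarly produces two such singularities and yields $\widehat S''$ of Section~\ref{subsection:Trepalin-involution-3}. The resulting $G$-birational equivalence conjugates $\iota$ to a $t$-twisted Trepalin involution, as claimed. The main obstacle is twofold: first, carrying out the reduction to the case of real fixed points on the base; second, showing that the quotient conic bundle can actually be put in the standard affine form $\{x^2+y^2+f(t)=0\}$ through $G$-equivariant Sarkisov transformations. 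For $K_S^2\leqslant 0$, Lemma~\ref{rem:K2=4-and-rkPic=2}(1) ensures that all $G$-birational maps decompose as elementary transformations and the argument is cleanest; the low-degree range $K_S^2\in\{1,2,3,4\}$ is more delicate because of the additional type~III and type~IV Sarkisov links and the Bertini and Geiser involutions, and one must verify directly that these either preserve the Trepalin type or conjugate $\iota$ to another Trepalin involution with matching invariants, thereby exhausting the three cases listed in the theorem.
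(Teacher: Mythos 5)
Your strategy coincides with the paper's: quotient by $\tau$ to obtain a conic bundle $V=S/\tau$ over $\p^1_\R$, normalize $V$ to the model $x^2+y^2+\prod_j(t-\epsilon_j)=0$, and read off the twist as the number of branch values $\lambda_i$ of the base double cover that coincide with critical values $\epsilon_j$ (this is exactly Lemmas~\ref{lem: Trepalin 1}--\ref{lem: Trepalin 3}). Two of your steps, however, rest on mechanisms that do not work as described. The first is the reduction to real fixed points on the base. You propose to handle the complex-conjugate case by switching, via a type~IV link or elementary transformations, to another $G$-conic bundle whose base involution has real fixed points. For $K_S^2\leqslant 0$ this is impossible: Theorem~\ref{theorem:Iskovskikh} forces every $G$-birational map to another conic bundle to be fibrewise over an element of $\PGL_2(\R)$, so there is no second fibration and the reality of the fixed points of the base involution cannot be altered. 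The paper's Lemma~\ref{lem: lambda are real} instead shows the case is \emph{vacuous}: after reducing to smooth $V$, the locus $V(\R)$ is a disjoint union of spheres (Proposition~\ref{proposition:conic-bundle-real-loci}); if the two branch values are non-real, the cover $S(\R)\to V(\R)$ is unramified, hence trivial over each simply connected component, so $S(\R)$ is disconnected or empty, contradicting $\R$-rationality (Theorem~\ref{theorem:rational-real}). You name the right ingredients, but they must be converted into this exclusion, not into a change of fibration.

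The second gap is the normalization of the quotient. Elementary transformations and Lemma~\ref{rem:K2=4-and-rkPic=2} cannot by themselves put $V$ into the form $x^2+y^2+f_{2r}(t,s)=0$: the paper first makes the two conjugate sections $Z_1,Z_2$ of $\pi_S$ disjoint and then contracts their images $C_1,C_2$ on $V$ using the linear system $|-K_V+(r-2)F|$, which maps $V$ birationally onto a hypersurface of that shape in $\p_\R(1,1,r,r)$. Moreover, to land on one of the three Trepalin models exactly --- with second equation $w^2+(t-\lambda_1)(t-\lambda_2)=0$ carrying the correct sign and with the $\lambda_i$ in the configurations relative to the $\epsilon_j$ prescribed in Sections~\ref{subsection:Trepalin-involution-1}--\ref{subsection:Trepalin-involution-3} --- one needs Lemma~\ref{lem: point at infinity}, asserting that the point at infinity can be chosen outside $\pi_V(V(\R))\cup\omega(\p^1_\R(\R))$, together with a further connectedness argument for $S(\R)$ constraining how $[\lambda_1,\lambda_2]$ meets the intervals $[\epsilon_{2i-1},\epsilon_{2i}]$. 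Your sketch stops at \emph{a} double cover of the right general shape; these refinements are what pin it down to a $t$-twisted Trepalin involution.
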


In the remaining part of this section, we give a proof of this theorem. 

\medskip

Since $\tau$ acts non-trivially on $\p_\R^1$, it fixes two points on it, say $p_1$ and $p_2$, which are either both real or complex conjugate. Denote the fibres over $p_1$ and $p_2$ by $F_1$ and $F_2$, respectively. Note that the fibres $F_1$ and $F_2$ are necessarily smooth, see e.g. \cite[Lemma 4.6]{Tre16}. Consider the diagram
	\[
	\xymatrix{
	S\ar@{->}[d]_{\pi_S}\ar[rr]^{\vartheta}&& V=S/\tau\ar@{->}[d]^{\pi_{V}} &&\\
	\p^1_\R\ar[rr]^{\omega}&&\p^1_\R&&
		}
	\]
where $\vartheta$ is a quotient map $S\to V=S/\tau$. Since $K_S^2\leqslant 4$, the conic bundle $\pi_V$ has at least two singular fibres above real points contained in the smooth locus of $V$. 

Moreover, we have $\Pic(V)\simeq\Z^2$, so the irreducible components of these singular fibres are exchanged by the Galois group $\Gal(\C/\R)$. In particular, $\pi_V$ has no real sections. However it has two complex sections $C_1$ and $C_2$ such that $C_1+C_2$ is defined over $\R$. Let $Z_1$ and $Z_2$ denote their preimages on $S$ under the map $\vartheta$. Then $Z_1$ and $Z_2$ are sections of $\pi_S$ such that $Z_1+Z_2$ is defined over $\R$. Changing the coordinates on $\p_\R^1$, if needed, we may assume that $\pi_V^{-1}([1:0])$ is a smooth conic with no real points (this is possible because one of the real singular fibres is a conic with no smooth real points). Furthermore, we may assume that $\omega(p_1)=[\lambda_1:1]$ and $\omega(p_2)=[\lambda_2:1]$, where $\lambda_i$ are (a priori) either both real or complex conjugate, and $\lambda_1<\lambda_2$ in the first case. In fact, we have the following

\begin{lemma}\label{lem: lambda are real}
	$\lambda_1$ and $\lambda_2$ are real. 
\end{lemma}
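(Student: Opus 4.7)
The plan is to argue by contradiction: suppose that $\lambda_1$ and $\lambda_2$ are complex conjugate non-real. Since $\omega$ is defined over $\R$ and sends $p_i$ to $[\lambda_i:1]$, the points $p_1, p_2$ on the base of $\pi_S$ are then also complex conjugate non-real. Consequently the involution $\tau$ has no real fixed point on $\p_\R^1(\R) \simeq \Sph^1$, and the discriminant of the double cover $\omega$ is $|y - \lambda_1|^2$, which is strictly positive on real $y$. It follows that the restriction $\omega|_\R \colon \Sph^1 \to \Sph^1$ is an unramified topological double cover.

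Next, I would observe that for every real $t$ on the base of $\pi_S$, the map $\vartheta$ restricts to an isomorphism $\pi_S^{-1}(t) \to \pi_V^{-1}(\omega(t))$ of real conics. Indeed, since $\tau(t) \neq t$, the map $\vartheta$ glues $\pi_S^{-1}(t)$ and $\pi_S^{-1}(\tau(t))$ into a single copy of $\pi_V^{-1}(\omega(t))$, and all these fibres being Galois-invariant the restriction is an $\R$-isomorphism. Hence $\pi_S^{-1}(t)$ has real points if and only if $\pi_V^{-1}(\omega(t))$ does, so $\pi_S(S(\R)) = \omega^{-1}\bigl(\pi_V(V(\R))\bigr)$.

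The key geometric input comes from $\pi_V$. By hypothesis $K_S^2 \leqslant 4$, the conic bundle $\pi_V$ has at least two singular fibres over real points in the smooth locus of $V$, and their components are exchanged by $\Gal(\C/\R)$ because $\Pic(V) \simeq \Z^2$ forces any real $(-1)$-curve to come in a Galois pair. The images of these fibres in $\p_\R^1(\R) \simeq \Sph^1$ are precisely the transition points between arcs of the base over which smooth fibres of $\pi_V$ have real points and arcs over which they have no real points. In particular, $\pi_V(V(\R))$ is a non-empty disjoint union of proper closed arcs in $\Sph^1$. Pulling back by the unramified double cover $\omega|_\R$, each such proper closed arc lifts to two disjoint closed arcs in the source. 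Thus $\pi_S(S(\R))$ has at least two connected components.

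Over each such component $I$, the topology of $S(\R)$ is easy to read off directly: on the interior of $I$ the fibres of $\pi_S$ are smooth real conics with non-empty real locus (each an $\Sph^1$), and at each endpoint the fibre is a real singular conic with Galois-exchanged components whose real locus is a single node. This forces $S(\R) \cap \pi_S^{-1}(I)$ to be a $2$-sphere, and different choices of $I$ contribute disjoint components of $S(\R)$. So $S(\R)$ is disconnected, contradicting the fact that $S$ is $\R$-rational (Theorem \ref{theorem:rational-real}). The main obstacle to be careful about is the passage from $\Pic(S)^G \simeq \Z^2$ to this sphere description of the components of $S(\R)$: Proposition \ref{proposition:conic-bundle-real-loci} requires $\Pic(S) \simeq \Z^2$, but the fibrewise argument above bypasses this and only uses that the real singular fibres bounding the relevant arcs have Galois-exchanged components.
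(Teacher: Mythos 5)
Your proof is correct, and it reaches the same contradiction as the paper (disconnectedness of $S(\R)$, against Theorem~\ref{theorem:rational-real}), but by a somewhat different route. The paper first reduces to the case where $V$ is smooth: when $V$ is singular, $\tau$ has two conjugate pairs of isolated fixed points on $F_1\cup F_2$, and an elementary transformation centred at one pair replaces $S$ by a conic bundle whose quotient is smooth; it then invokes Proposition~\ref{proposition:conic-bundle-real-loci} to see that $V(\R)$ is a disjoint union of spheres, over each of which the double cover $S(\R)\to V(\R)$ --- unramified because the branch locus $F_1\cup F_2$ has no real points --- is trivial, so that $S(\R)$ is empty or has an even number of components. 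You instead push everything down to the base: the fibrewise isomorphisms $\pi_S^{-1}(t)\simeq\pi_V^{-1}(\omega(t))$ give $\pi_S(S(\R))=(\omega\vert_{\Sph^1})^{-1}\bigl(\pi_V(V(\R))\bigr)$, and since $\pi_V(V(\R))$ is a non-empty union of proper closed arcs (bounded by the real singular fibres with Galois-exchanged components, which exist because $K_S^2\leqslant 4$ and $\Pic(V)\simeq\Z^2$), while $\omega\vert_{\Sph^1}$ is an unramified double cover of circles, the preimage is disconnected. What your version buys is that it sidesteps both the smoothness reduction and the appeal to the classification of real loci of minimal real conic bundles: the singular points of $V$ sit over the non-real branch points $[\lambda_i:1]$ and are therefore invisible to the fibrewise analysis, exactly as you note at the end. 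The one ingredient you import from the ambient set-up --- that the singular fibres of $\pi_V$ over real points of the smooth locus exist and have Galois-exchanged components --- is established in the paragraph preceding the lemma, so nothing is missing; the sphere description of the components of $S(\R)$ in your last step is not even needed, since the disconnectedness of $\pi_S(S(\R))$ already forces $S(\R)$ to be disconnected.
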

\begin{proof}
	Suppose that $\lambda_1$ and $\lambda_2$ are complex conjugate. If $V$ is singular, then $\tau$ fixes two pairs of complex conjugate points in $F_1\cup F_2$. So, blowing up one pair and contracting the proper transforms of the fibres $F_1$, $F_2$, we obtain a $\tau$-equivariant commutative diagram
	\[
	\xymatrix{
		S\ar@{->}[d]_{\pi_S}\ar@{-->}[rr]&& S'\ar@{->}[d]^{\pi_{S'}} &&\\
		\p^1_\R\ar@{=}[rr]&&\p^1_\R&&
	}
	\]
	such that $\tau$ fixes two fibres of $\pi_{S'}$ over $[\lambda_1:1]$, $[\lambda_2:1]$ pointwise. Hence, replacing $S$ with $S'$, we may assume that $V$ is smooth.
	
	Note that $K_V^2\leqslant 4$ and by Proposition~\ref{proposition:conic-bundle-real-loci} the real locus $V(\R)$ is a disjoint union of $m\geqslant 1$ spheres. Therefore, if $\lambda_1$ and $\lambda_2$ are complex conjugate, then the preimage of each connected component of $V(\R)$ is either empty or consists of two disjoint spheres (and the induced double cover $S(\R)\to V(\R)$ is trivial). So, $S$ is not $\R$-rational by Theorem \ref{theorem:rational-real}.
\end{proof}

\begin{lemma}\label{lem: point at infinity}
	We have 
	\[
	\pi_V(V(\R))\cup\omega(\p_\R^1(\R))\ne\p_\R^1(\R).
	\]
	In particular, we can make a further change of coordinates on $\p_\R^1$ so that the point at infinity $[1:0]$ is not contained in $\pi_V(V(\R))\cup\omega(\p_\R^1(\R))$.
\end{lemma}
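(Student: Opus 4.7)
The plan is to suppose, for contradiction, that $\pi_V(V(\R))\cup\omega(\p_\R^1(\R))=\p_\R^1(\R)$ and to derive a configuration incompatible with the $\R$-rationality of $S$; by Theorem~\ref{theorem:rational-real} this forces $S(\R)$ to be connected and non-empty. Set $A=\omega(\p_\R^1(\R))$; since $\lambda_1,\lambda_2$ are real by Lemma~\ref{lem: lambda are real}, $A$ is the closed arc in $\p_\R^1(\R)\simeq\Sph^1$ with endpoints $[\lambda_1:1]$ and $[\lambda_2:1]$, and its complement $U=\p_\R^1(\R)\setminus A$ is a non-empty open arc. The ``in particular'' clause is immediate: any point in $\p_\R^1(\R)\setminus(\pi_V(V(\R))\cup A)$ can be carried to $[1:0]$ by an $\R$-automorphism of $\p_\R^1$. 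So it suffices to show that $U\not\subseteq\pi_V(V(\R))$.

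The elementary observation is that for any $q\in U$ the preimage $\omega^{-1}(q)$ is a pair of complex-conjugate non-real points of $\p_\R^1$; consequently any $s\in S(\R)$ satisfies $\pi_V(\vartheta(s))=\omega(\pi_S(s))\in A$, so every real point of $V$ above $U$ lies in $V(\R)\setminus\vartheta(S(\R))$. After a $\tau$-equivariant modification of $S$ analogous to the one used in the proof of Lemma~\ref{lem: lambda are real} (blowing up the $\tau$-fixed points on $F_1\cup F_2$ and contracting the proper transforms of $F_1,F_2$), we may assume that $V$ is smooth and $\vartheta$ is a double cover of smooth surfaces branched along two disjoint smooth real conics $B_i=\pi_V^{-1}(\lambda_i)$, with $\Pic(V)\simeq\Z^2$. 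Then Proposition~\ref{proposition:conic-bundle-real-loci} applied to $V$ gives $V(\R)=\bigsqcup_{j=1}^m C_j$, a disjoint union of topological spheres, and every singular fibre of $\pi_V$ has Galois-exchanged components meeting at a real node which, via the local model $x^2+y^2=f(t)$, lies on exactly one of the spheres $C_j$; by Remark~\ref{remark:conic-bundle-intervals} the closed arcs $\pi_V(C_j)$ are thus pairwise disjoint in $\p_\R^1(\R)$.

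Now suppose that $U\subseteq\pi_V(V(\R))$. Since $U$ is connected, $U\subseteq\pi_V(C_0)$ for a single sphere $C_0$. By the elementary observation, every real point of $V$ above $U$ lies outside $\vartheta(S(\R))$; since the real locus of each smooth conic fibre $\pi_V^{-1}(q)$ for $q\in U$ is a connected circle contained in $C_0$, we conclude that $C_0\subseteq V(\R)\setminus\vartheta(S(\R))$. Taking closures yields $\overline U\subseteq\pi_V(C_0)$, so $\lambda_1,\lambda_2\in\pi_V(C_0)$; as the endpoints of $\pi_V(C_0)$ are images of singular fibres while each $B_i$ is a smooth conic, both $\lambda_i$ must lie in the interior of $\pi_V(C_0)$. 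Hence $B_i\cap C_0\ne\varnothing$, and any point of $B_i(\R)\cap C_0$ is a ramification point of $\vartheta$, i.e.~the image of a real $\tau$-fixed point of $S$, so it lies in $\vartheta(S(\R))$, contradicting $C_0\subseteq V(\R)\setminus\vartheta(S(\R))$.

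The main obstacle is to justify the $\tau$-equivariant reduction to the smooth situation above, in which $V$ is smooth with $\Pic(V)\simeq\Z^2$ and $\vartheta$ is branched along the smooth conics $B_i$, without affecting the other hypotheses of the lemma. Once this reduction is in place the topological pigeonhole in the previous paragraph is essentially automatic; otherwise one must address the $A_1$ singularities of $V$ at the $\tau$-fixed points in $F_1\cup F_2$ and handle several configurations (depending on whether the $\tau$-fixed points on each $F_i$ are real or complex conjugate) case by case, with the same conclusion: $\pi_V(C_0)$ must contain some $\lambda_i$ in its interior, yet $C_0$ is required to avoid the branch locus, a contradiction.
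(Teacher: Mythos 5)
Your overall strategy (isolate the open arc $U=\p^1_\R(\R)\setminus\omega(\p^1_\R(\R))$ and derive a contradiction from $U\subseteq\pi_V(V(\R))$) is reasonable, and your ``elementary observation'' is correct, but the proof has two genuine gaps. First, the reduction to the case where $V$ is smooth is not available. In Lemma~\ref{lem: lambda are real} the elementary transformation works because there $\lambda_1,\lambda_2$ are assumed non-real, so $F_1$ and $F_2$ are complex-conjugate fibres and one blows up a conjugate pair consisting of \emph{one} point on each fibre before contracting the two $(-1)$-proper transforms. In the present lemma $\lambda_1,\lambda_2$ are real, hence $F_1,F_2$ are real fibres, and when $F_i$ is not pointwise fixed its isolated $\tau$-fixed points are the complex-conjugate pair $Z_1\cap F_i$, $Z_2\cap F_i$ lying on the \emph{same} real fibre; blowing up both drops $F_i^2$ to $-2$, and contracting the proper transform produces an $A_1$ point, not a smooth surface (compare the maps $\alpha,\beta$ in the proof of Lemma~\ref{lem: Trepalin 2}). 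No real $\tau$-equivariant fibrewise transformation can change the fixed-point structure on $F_1\cup F_2$ --- this invariance is exactly what separates the classes $\mathfrak{T}_{4n}$, $\mathfrak{T}_{4n+2}^\prime$, $\mathfrak{T}_{4n}^{\prime\prime}$ --- so the singular case cannot be reduced away and must be argued on its own terms, as the paper does (there, $\pi_V(V(\R))\cup\omega(\p^1_\R(\R))=\p^1_\R(\R)$ is shown to force $r=1$ and $K_S^2=6$, excluded by hypothesis).

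Second, even when $V$ is smooth, the step ``$C_0\subseteq V(\R)\setminus\vartheta(S(\R))$'' is a non sequitur: your observation only controls the part of $C_0$ lying over $U$, and says nothing about the two caps of $C_0$ lying over $A=\omega(\p^1_\R(\R))$. Those caps do meet $\vartheta(S(\R))$ --- they contain the branch circles $B_i(\R)\cap C_0$ --- so the contradiction you aim for never materialises. In the configuration you are trying to exclude, $\vartheta^{-1}(C_0)(\R)$ is a disjoint union of two spheres, one over each cap, separated by the real-point-free cylinder over $U$; the genuine contradiction is therefore with the connectedness of $S(\R)$ (equivalently, of $\omega(\pi_S(S(\R)))=\pi_V(V(\R))\cap\omega(\p^1_\R(\R))$), which you invoke at the outset via Theorem~\ref{theorem:rational-real} but never actually use. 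This is the paper's argument: if the two sets cover the circle, then $\lambda_1,\lambda_2$ lie in the interiors of intervals of $\pi_V(V(\R))$ (your correct point that they cannot be endpoints, the fibres $B_i$ being smooth), so the intersection contains disjoint neighbourhoods of both endpoints of $\omega(\p^1_\R(\R))$ yet omits a nonempty open subset of its interior, and is therefore disconnected or empty, contradicting the $\R$-rationality of $S$.
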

\begin{proof}
	Let us view $\p_\R^1(\R)$ as a circle and $I=\pi_V(V(\R))$ as a union of closed segments $[\varepsilon_{2i-1},\varepsilon_{2i}]$, $i=1,\ldots,r$, on a circle, placed in a clockwise direction. Recall that $[1:0]$ is not contained in this union by our initial assumption, because $\pi_V^{-1}([1:0])$ is a smooth conic with no real points. Note that $\pi_V\circ\vartheta(S(\R))$ is a closed interval in $J=\omega(\p_\R^1(\R))=[\lambda_1,\lambda_2]$. 
	
	Furthermore, if $V$ is smooth then 
	\begin{equation}\label{eq: intervals}
		\pi_V\circ\vartheta(S(\R))=\omega\circ\pi_S(S(\R))=\pi_V(V(\R))\cap \omega(\p_\R^1(\R))=I\cap J.
	\end{equation}	
	So, $I$ and $J$ cannot cover the whole circle. Otherwise, the endpoints of $[\lambda_1,\lambda_2]$ belong to the interior of some $[\varepsilon_{2i-1},\varepsilon_{2i}]$. However, in that case $S(\R)$ would be disconnected or empty.
	
	Similarly, if $V$ is singular then $I\cap J$ is a union of the closed interval $\pi_V\circ\vartheta(S(\R))$ and at most 2 points, the endpoints of $J$. Thus $I\cup J$ is the whole circle $\p_\R^1(\R)$ only if $r=1$ and the intervals $[\varepsilon_1,\varepsilon_2]$ and $J$  share one common endpoint. However, in this case $K_S^2=6$ (since there are no non-real singular fibres), which is excluded by our assumption. 
\end{proof}

Note that the sections $Z_1$ and $Z_2$ of $X$ are both $G$-invariant and exchanged by the Galois group $\Gal(\C/\R)$. Arguing as in the first paragraph of the proof of \cite[Lemma 15]{BlancMangolte} , we may construct a fibrewise $G$-birational transformation $S\dashrightarrow\overline{S}$ that fits the following commutative diagram
\[
\xymatrix{
	S\ar@{->}[d]_{\pi_S}\ar@{-->}[rr]^{\chi}&& \overline{S}\ar@{->}[d]^{\pi_{\overline{S}}} &&\\
	\p^1_\R\ar@{=}[rr]&&\p^1_\R&&
}
\]
such that $\overline{S}$ is a smooth surface with $\Pic(\overline{S})^G\simeq\Z^2$, $\pi_{\overline{S}}$ is a $G$-conic bundle, and $\chi(Z_1)$ and $\chi(Z_2)$ are two disjoint sections of this conic bundle. Thus, replacing $S$ with $\overline{S}$, we may assume additionally that $Z_1$ and $Z_2$ are disjoint. 

Note that the set $\{Z_i\cap F_j\colon i,j=1,2\}$ consists of four $G$-fixed points. Without loss of generality, we may assume that the fixed locus of $G$ is one of the following:

\begin{enumerate}
	\item[(0)] The fibres $F_1$ and $F_2$ are fixed pointwise by $G$. 
	\item[(1)] The fibre $F_1$ is pointwise fixed by $G$, and $Z_1\cap F_2$, $Z_2\cap F_2$ are the only $G$-fixed points on $F_2$.
	\item[(2)] The points $\{Z_i\cap F_j\colon i,j=1,2\}$ are the only points fixed by $G$. 
\end{enumerate}

We will show that these 3 cases correspond to $0$-twisted, $1$-twisted and $2$-twisted Trepalin involutions, respectively.

\begin{lemma}\label{lem: Trepalin 1}
	Assume that $F_1$ and $F_2$ are fixed pointwise by $G$. Then $\tau$ is conjugate to a $0$-twisted Trepalin involution.
\end{lemma}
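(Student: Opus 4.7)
The plan is to identify the quotient conic bundle $\pi_V\colon V\to\p^1_\R$ with the conic bundle $\pi_X$ of Section~\ref{subsection:Trepalin-involution-1}, and then to recognise $\vartheta\colon S\to V$ as the pull-back double cover realising the $0$-twisted Trepalin construction. To set up the coordinates, Lemma~\ref{lem: lambda are real} gives $\lambda_1<\lambda_2\in\R$, and Lemma~\ref{lem: point at infinity} allows us to further assume that $\pi_V^{-1}([1:0])$ is smooth without real points and that $[1:0]$ lies outside $\omega(\p^1_\R(\R))=[\lambda_1,\lambda_2]$. In these coordinates $\omega$ is given by $w^2+(t-\lambda_1)(t-\lambda_2)=0$, exactly the cover $\omega$ of diagram~\eqref{diagr: Trepalin 1}.

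Next, I would put $V$ into standard form. Let $\varepsilon_1<\cdots<\varepsilon_{2r}$ be the real parameters of the singular fibres of $\pi_V$; since $\Pic(V)\simeq\Z^2$ these are all real with components exchanged by $\Gal(\C/\R)$, and their number is $2r$, where $4r$ is the number of singular fibres of $\pi_S$, because the base change $\omega$ doubles each singular fibre of $V$ and leaves $F_1,F_2$ smooth. The generic fibre of $\pi_V$ is an anisotropic conic over $\R(t)$ whose discriminant vanishes exactly at the $\varepsilon_i$'s, whose fibre at infinity is smooth without real points, and whose real locus projects onto $\bigsqcup_{i=1}^{r}[\varepsilon_{2i-1},\varepsilon_{2i}]$. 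These data agree with those for the compactification of
\[
\{x^2+y^2+(t-\varepsilon_1)\cdots(t-\varepsilon_{2r})=0\}\subset\AAA^3_\R,
\]
i.e.\ the surface $X$ of Section~\ref{subsection:Trepalin-involution-1}, and uniquely determine the conic bundle up to isomorphism over $\p^1_\R$; hence $\pi_V\colon V\to\p^1_\R$ is isomorphic to $\pi_X\colon X\to\p^1_\R$.

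To conclude, since $F_1$ and $F_2$ are pointwise fixed by $\tau$, the double cover $\vartheta$ is branched exactly over the smooth fibres $\vartheta(F_1),\vartheta(F_2)\subset V$ above $[\lambda_1:1]$ and $[\lambda_2:1]$, so $S$ together with its $\tau$-action is $G$-equivariantly birational to the base change of $V\simeq X$ along $\omega$, reproducing diagram~\eqref{diagr: Trepalin 1}. Therefore $\iota$ belongs to $\mathfrak{T}_{4r}$. If the resulting parameters do not literally satisfy the configuration $\lambda_1\ne\varepsilon_1<\lambda_2<\varepsilon_2$ specified in Section~\ref{subsection:Trepalin-involution-1}, a further Möbius change of $\p^1_\R$ together with elementary transformations of the conic bundle puts them into that form without altering the conjugacy class.

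The main obstacle is the uniqueness step in the second paragraph: showing that an anisotropic conic bundle over $\p^1_\R$ of Picard rank $2$ is determined, as such, by its real discriminant locus together with the signature of its generic quadratic form on each connected component of the complement. This can be done either via a Faddeev-style residue computation in $\mathrm{Br}(\R(t))[2]$, or more concretely by chaining elementary transformations centred at complex conjugate base points to match $V$ and $X$ fibrewise.
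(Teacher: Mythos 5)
Your overall strategy is the same as the paper's: identify the quotient conic bundle $\pi_V\colon V\to\p^1_\R$ with the model $\pi_X$ of Section~\ref{subsection:Trepalin-involution-1} and recognise $S$ as the base change of $V$ along $\omega$. But the step you yourself flag as ``the main obstacle'' --- that the real discriminant locus, the smooth non-real fibre at infinity and the image of the real locus determine $\pi_V$ up to isomorphism over $\p^1_\R$ --- is precisely the content of the lemma, and it is left unproved. As stated, the uniqueness principle is delicate: Faddeev residues do pin down the generic fibre as a conic over $\R(t)$, but a relatively minimal conic bundle is not determined by its generic fibre (already $\FFF_0$ and $\FFF_2$ share the same generic fibre, Picard rank $2$ and empty discriminant), and elementary transformations centred at pairs of conjugate points lying on fibres over non-real points of $\p^1_\R$ preserve every invariant on your list, so you would still have to rule out that they change the isomorphism class over $\p^1_\R$. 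The paper sidesteps all of this with an explicit construction: it computes $C_1+C_2\sim-K_V+rF$ and $C_i^2=-r$, shows that $|-K_V+(r-2)F|$ contracts $C_1,C_2$ and maps $V$ onto the hypersurface $x^2+y^2+f_{2r}(t,s)=0$ in $\p_\R(1,1,r,r)$, from which the affine equation \eqref{eq: Trepalin 1 in A4} for $U=\vartheta^{-1}(Y)$ follows directly. (A merely birational identification over $\p^1_\R$ would in fact suffice for the conjugacy statement, since it lifts to the fibre products compatibly with the deck involutions, but then you should say so and check compatibility along the branch fibres.)

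The second gap is your last sentence. The configuration $\lambda_1\ne\epsilon_1<\lambda_2<\epsilon_2$ required in Section~\ref{subsection:Trepalin-involution-1} cannot be achieved by ``a M\"obius change together with elementary transformations'': a coordinate change cannot alter the combinatorial position of $\{\lambda_1,\lambda_2\}$ relative to the $\varepsilon_i$, and elementary transformations in the fixed fibres would change the number of singular fibres or the twisting type. What actually forces the configuration is the connectedness of $S(\R)$ (Theorem~\ref{theorem:rational-real}): since $\omega\circ\pi_S(S(\R))=[\lambda_1,\lambda_2]\cap\bigsqcup_{i}[\varepsilon_{2i-1},\varepsilon_{2i}]$, the interval $[\lambda_1,\lambda_2]$ must meet exactly one interval $[\varepsilon_{2i-1},\varepsilon_{2i}]$ and cannot strictly contain it (otherwise $S(\R)\approx\Sph^2\sqcup\Sph^2$), while $\lambda_1,\lambda_2$ avoid the $\varepsilon_j$ because $F_1$ and $F_2$ are smooth; only after these exclusions does an affine change of $t$ put the data in the required form. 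This argument is short but it is a genuine use of $\R$-rationality, not a normalization.
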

\begin{proof}
	First note that $V$ is a smooth surface, equipped with the structure of $\R$-minimal conic bundle with $2r$ singular fibres, so $K_V^2=8-2r$ and $K_S^2=8-4r$. Denote by $F$ the general fibre of $\pi_V$. Then one has $C_1+C_2\sim -K_V+rF$, $C_1^2=C_2^2=-r$ and $-K_V\cdot C_i=2-r$. 
	
	One can show that the linear system $|-K_V+(r-2)F|$ is base-point free and gives a birational morphism $\rho: V\to Z$ contracting $C_1$ and $C_2$ such that $Z$ is a hypersurface in $\p_\R(1,1,r,r)$ given by the equation
	\[
	x^2+y^2+f_{2r}(t,s)=0,
	\] 
	where $t,s,x,y$ are coordinates on $\p_\R(1,1,r,r)$ of weights $1,1,r,r$, respectively, $f_{2r}$ is a real polynomial of degree $2r$, and $\pi_V\circ\rho^{-1}\colon Z\dashrightarrow\p_\R^1$ is given by $[t:s:x:y]\mapsto [t:s]$. So, we identify $[t:s]$ with coordinates on the base of $\pi_V$. Since $V$ is smooth and $\R$-minimal over $\p_\R^1$, the polynomial $f_{2r}(t,s)$ has $2r$ distinct real roots in $\p_\R^1$. By our assumption, $f_{2r}([1:0])>0$. 
	
	Let $[\varepsilon_i:1]$, $\varepsilon_i<\varepsilon_{i+1}$, $i=1,\ldots, 2r$, denote the roots of $f_{2r}(t,s)$, i.e.
	\[
	f_{2r}(t,s)=(t-\varepsilon_1s)\cdot\ldots\cdot (t-\varepsilon_{2r}s).
	\]
	Set $Y=V\setminus\big(\pi_V^{-1}([1:0])\cup C_1\cup C_2 \big )$. Then $Y$ is given by the equation (\ref{eq: Trepalin 1}) in $\AAA_\R^3$. We now regard $t$ as the affine coordinate on $\AAA_\R^1$ and denote by $\pi_Y: Y\to\AAA_\R^1$ the corresponding conic bundle. Let $U=\vartheta^{-1}(Y)\subset S$ and let $\theta\colon U\to Y$ be the induced double cover. Then $U$ can be given by the equation
	\begin{equation}
		\left\{\aligned
		&x^2+y^2+(t-\epsilon_1)(t-\epsilon_2)\cdots(t-\epsilon_{2r})=0,\\
		&w^2\pm(t-\lambda_1)(t-\lambda_2)=0,
		\endaligned
		\right.
	\end{equation} 
	in $\AAA_\R^4$ and $\theta$ is given by $(x,y,t,w)\mapsto (x,y,t)$. Observe that $Y(\R)=V(\R)$, $S(\R)=U(\R)$, 
	\[
	\pi_Y(Y(\R))=\bigsqcup_{i=1}^{r}[\varepsilon_{2i-1},\varepsilon_{2i}].
	\]
	By Lemma \ref{lem: lambda are real}, one has $\lambda_i\in\R$. By Lemma \ref{lem: point at infinity}, one has $[1:0]\notin\omega(\p_\R^1(\R))$, so $U$ is in fact given by (\ref{eq: Trepalin 1 in A4}). By (\ref{eq: intervals}), one has
	\[
	\pi_Y\circ\vartheta(U(\R))=[\lambda_1,\lambda_2]\cap \left (\bigsqcup_{i=1}^{r}[\varepsilon_{2i-1},\varepsilon_{2i}] \right ).
	\]
	Since $U(\R)$ is connected, $[\lambda_1,\lambda_2]$ can intersect only one of the $[\varepsilon_{2i-1},\varepsilon_{2i}]$. Making a further change of coordinates on $\p_\R^1$, if needed, we may assume that this is $[\varepsilon_1,\varepsilon_2]$. Moreover, $\lambda_1,\lambda_2\notin\{\varepsilon_1,\varepsilon_2\}$, as the fibres $F_1$ and $F_2$ are smooth. If $[\varepsilon_1,\varepsilon_2]$ is strictly contained in $[\lambda_1,\lambda_2]$ then $S(\R)=U(\R)\approx\Sph^2\sqcup\Sph^2$, a contradiction. Hence up to an affine change of coordinate $t$, we may assume that $\lambda_2<\varepsilon_2$. Observe that we obtained the commutative diagram (\ref{diagr: Trepalin 1}) with $X=V$. This shows that $\tau$ is a $0$-twisted Trepalin involution constructed in Section \ref{subsection:Trepalin-involution-1}.
	
\end{proof}

\begin{lemma}\label{lem: Trepalin 2}
	Assume that $F_1$ is pointwise fixed by $G$, and $Z_1\cap F_2$, $Z_2\cap F_2$ are the only $G$-fixed points on $F_2$. Then $\tau$ is conjugate to a $1$-twisted Trepalin involution.
\end{lemma}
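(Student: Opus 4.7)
The plan is to mirror the proof of Lemma~\ref{lem: Trepalin 1}, adapting it to handle the fact that $\tau$ now has two isolated fixed points on $F_2$ rather than fixing $F_2$ pointwise. First, I would consider the quotient $\vartheta\colon S\to V = S/\tau$. The conjugate pair of isolated fixed points $\{Z_1\cap F_2, Z_2\cap F_2\}$ produces a pair of complex conjugate $A_1$ singularities on $V$, both lying on the image of $F_2$ over $\omega(p_2)$. The branch divisor of $\vartheta$ in codimension one then consists only of the image of $F_1$, a smooth fibre of the induced conic bundle $\pi_V\colon V\to\p^1_\R$.

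To obtain a smooth model, I would minimally resolve the conjugate $A_1$ pair to get $\widetilde V\to V$ with a $\Gal(\C/\R)$-conjugate pair of $(-2)$-curves over $\omega(p_2)$, and then contract the proper transforms of the two components of the reducible fibre of $\widetilde V$ over $\omega(p_2)$ to a conjugate pair of smooth points. The result is a smooth conic bundle $\pi_X\colon X\to\p^1_\R$ with $2r$ singular fibres, one of which (above $\omega(p_2)$) is precisely the reducible fibre created by the contraction. The sections $Z_1,Z_2$ descend to two disjoint sections $C_1,C_2$ of $\pi_X$, and the same anticanonical linear system argument as in Lemma~\ref{lem: Trepalin 1} (using $|-K_X+(r-2)F|$) realizes $X$ as a hypersurface
\[
x^2+y^2+(t-\varepsilon_1 s)\cdots(t-\varepsilon_{2r}s)=0
\]
in $\p_\R(1,1,r,r)$, with $\pi_X$ induced by $[t:s:x:y]\mapsto[t:s]$.

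Next, I would reconstruct $S$ as a double cover of $X$. Under the birational modification $V\dashrightarrow X$, the cover $\vartheta$ corresponds to a double cover $\vartheta'\colon S'\to X$ branched over $\pi_X^{-1}([\lambda_1:1])$ (the image of $F_1$, smooth) and $\pi_X^{-1}([\lambda_2':1])$ (the image of $F_2$, necessarily a singular fibre because $V$ had $A_1$ singularities above $\omega(p_2)$). Hence $\lambda_2'\in\{\varepsilon_1,\ldots,\varepsilon_{2r}\}$, and $S$ is recovered from $S'$ by the blow-up--contraction sequence of diagram~\eqref{diag: Trepalin 2}. Lemma~\ref{lem: lambda are real} gives $\lambda_1,\lambda_2'\in\R$, and Lemma~\ref{lem: point at infinity} allows coordinates on $\p^1_\R$ with $[1:0]\notin\omega'(\p^1_\R(\R))\cup\pi_X(X(\R))$. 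A case analysis of $S(\R)$, which must be connected and nonempty by Theorem~\ref{theorem:rational-real}, then determines the relative position of $\lambda_1$ and $\lambda_2'$ among the $\varepsilon_i$: up to an affine change of the $t$-coordinate the only possibilities are $\lambda_2'=\varepsilon_2$ with $\lambda_1\ne\varepsilon_1$ and $\lambda_1<\varepsilon_2$, or $\lambda_2'=\varepsilon_3$ with $\varepsilon_1<\lambda_1<\varepsilon_2$. These are exactly the two configurations of Section~\ref{subsection:Trepalin-involution-2}, so $\tau$ is conjugate to a $1$-twisted Trepalin involution.

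The main obstacle will be the equivariant book-keeping of the modification $V\dashrightarrow X$: one needs to check that the resolution of the $A_1$ pair followed by the contraction pulls back through $\vartheta$ to a $\tau$-equivariant composition of elementary transformations of conic bundles on the side of $S$, so that $(S,\tau)$ is $\langle\tau\rangle$-birational to the surface $(\widehat S',\tau')$ of Section~\ref{subsection:Trepalin-involution-2} rather than merely abstractly birational to it. Since $K_S^2\leqslant 4$, Lemma~\ref{rem:K2=4-and-rkPic=2} ensures that such $G$-birational maps between $G$-minimal conic bundles are generated by elementary transformations, so the required decomposition can be read off from the local behavior at the blown-up and contracted curves; the real-locus constraints from Lemma~\ref{lem: point at infinity} then guarantee $\R$-rationality of $\widehat S'$ and pin down the allowed configurations.
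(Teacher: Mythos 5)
Your overall architecture coincides with the paper's proof: pass to the quotient $V=S/\tau$, note the conjugate pair of $A_1$ points on the image of $F_2$ and that the codimension-one branch locus is only the image of $F_1$, modify $V$ into a smooth relatively minimal conic bundle with a singular fibre over $\omega(p_2)$, write it as $x^2+y^2+f_{2r}(t,s)=0$, recover $S$ as the double cover branched over the smooth fibre at $\lambda_1$ and a singular fibre at $\lambda_2'\in\{\varepsilon_i\}$ via the blow-up--contraction of diagram~(\ref{diag: Trepalin 2}), and finish with Lemmas~\ref{lem: lambda are real} and~\ref{lem: point at infinity} plus the connectedness of $S(\R)$ to pin down $\lambda_2'\in\{\varepsilon_2,\varepsilon_3\}$. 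The endgame and the two resulting configurations match the paper exactly.

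However, the one step you spell out in detail --- the modification $\widetilde V\to X$ --- is wrong as written. On the minimal resolution $\widetilde V$ of the two conjugate $A_1$ points, the fibre over $\omega(p_2)$ has \emph{three} components: the proper transform of $\vartheta(F_2)$, which is a \emph{real} $(-1)$-curve (its self-intersection drops from $0$ to $-1$ because it passes through both $A_1$ points), and the conjugate pair of exceptional $(-2)$-curves. You cannot ``contract the two components to a conjugate pair of smooth points'': the only conjugate pair of components consists of the $(-2)$-curves, whose contraction reproduces the $A_1$ singularities rather than smooth points; and any such contraction would leave an \emph{irreducible} fibre over $\omega(p_2)$, contradicting your next sentence that the fibre there is ``the reducible fibre created by the contraction.'' The correct move, which is what the paper's $\widehat V\to V'$ does, is to contract the single real $(-1)$-curve (the proper transform of $\vartheta(F_2)$) to a smooth real point; the two $(-2)$-curves then become a conjugate pair of $(-1)$-curves meeting at that point, i.e.\ precisely the singular fibre of $\pi_{V'}$ over $[\lambda_2:1]$ with components swapped by $\Gal(\C/\R)$. (The operation you describe --- contracting a conjugate pair of fibre components to a conjugate pair of points --- is the map $\widetilde S'\to\widehat S'$ of diagram~(\ref{diag: Trepalin 2}), which lives on the double-cover side, not on the quotient side.) Once this step is repaired the rest of your argument goes through and agrees with the paper's; you should also record that $K_S^2\leqslant 4$ forces $r\geqslant 2$, so that the conclusion indeed lands in the class $\mathfrak{T}_{4r-2}'$.
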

\begin{proof}
	Recall that $\lambda_1,\lambda_2\in\R$. The surface $V$ has two singular points of type $A_1$ which are exchanged by the Galois involution. We have the following commutative diagram
	\[
	\xymatrix{
		&& & \widetilde{S}\ar[d]^{\widetilde{\vartheta}}\ar[ddlll]_{\beta}\ar[ddrrr]^{\alpha}\\
		&& & \widehat{V}\ar[dl]\ar[dr] &&\\
		S\ar@{->}[d]_{\pi_S}\ar[rr]^{\vartheta}&& V\ar@{->}[d]^{\pi_{V}} && V'\ar[d]^{\pi_{V'}} && S'\ar[d]^{\pi_{S'}}\ar[ll]_{\vartheta'}\\
		\p^1_\R\ar[rr]^{\omega}&&\p^1_\R\ar@{=}[rr]&& \p_\R^1&& \p_\R^1\ar@{->}[ll]_{\omega'}
	}
	\]
	where $\beta$ is a blow-up of two points $Z_1\cap F_1$ and $Z_1\cap F_2$, $\alpha$ is a contraction of the proper transform of $F_2$ to a singular point of $S'$ of type $A_1$, $\widehat{V}\to V$ is a minimal resolution of the singularities of $V$ (i.e. the blow up of $\vartheta(Z_1\cap F_2)$ and $\vartheta(Z_2\cap F_2)$), $\widehat{V}\to V'$ is a contraction of proper transform of $\vartheta(F_2)$ to a smooth point of $V'$, $\widetilde{\vartheta}$ is a double cover ramified in proper transform of $F_1$ and $\beta$-exceptional curves, and $\vartheta'$ is a double cover ramified in the smooth fibre $\pi_{V'}^{-1}([\lambda_1:1])$ and the singular fibre $\pi_{V'}^{-1}([\lambda_2:1])$; finally, $\omega'$ is a double cover branched at $[\lambda_1:1]$ and $[\lambda_2: 1]$. Observe that $V'$ is a smooth real surface, $\Pic(V')\simeq\Z^2$ and $\pi_{V'}$ is a conic bundle. Recall that the fibre at infinity $\pi_{V'}^{-1}([1:0])\simeq \pi_{V}^{-1}([1:0])$ is a smooth real conic with no real points.
	
	Denote by $C_1'$ and $C_2'$ the proper transforms of the (complex) curves $C_1$, $C_2$ on $V'$. Then $C_1'$ and $C_2'$ are two disjoint sections of the conic bundle $\pi_{V'}$ such that $C_1'+C_2'$ is defined over $\R$. Let $Y=V'\setminus (C_1'\cup C_2'\cup \pi_{V'}^{-1}([1:0]))$, and let $\pi_Y: Y\to\AAA_\R^1$ be the morphism induced by $\pi_{V'}$. Arguing as in the proof of Lemma \ref{lem: Trepalin 1}, we see that $Y$ can be given in $\AAA_\R^3$ by the equation (\ref{eq: Trepalin 2}) for some real numbers $\epsilon_1<\epsilon_2<\cdots<\epsilon_{2r}$, and $\pi_Y\colon Y\to\mathbb{A}^1_{\R}$ is the~map given by $(x,y,t)\mapsto t$. Note that $\pi_{V'}$ has $2r$ singular fibres over the points $[\varepsilon_i:1]$, and $\pi_S$ has $2(2r-1)=4r-2$ singular fibres. As we assume $K_S^2\leqslant 4$, we get $r\geqslant 2$.
	
	By construction, $\lambda_2\in\{\varepsilon_1,\ldots,\varepsilon_{2r}\}$. Set $U'={\vartheta'}^{-1}(Y)\subset S'$ and let $\theta'\colon U'\to Y$ be the induced double cover. Then $\theta'$ is branched over the smooth fibre $\pi_Y^{-1}(\lambda_1)$ and the singular fibre $\pi_Y^{-1}(\lambda_2)$. Since $[1:0]\notin\omega'(\p_\R^1(\R))=\omega(\p_\R^1(\R))$ by Lemma \ref{lem: point at infinity}, $U'$ may be given by the equation (\ref{eq: Trepalin 2}) with $\lambda_1'=\lambda_1$ and $\lambda_2'=\lambda_2$. Now, arguing as in the end of the proof of Lemma \ref{lem: Trepalin 1}, we can change the coordinates on $\p_\R^1$ so that $\lambda_2\in\{\varepsilon_2,\varepsilon_3\}$. Since $S(\R)$ is non-empty and connected, we see that	either $\epsilon_1\ne \lambda_1<\epsilon_2$ and $\lambda_2=\varepsilon_2$, or $\varepsilon_1<\lambda_1<\varepsilon_2$ and $\lambda_2=\varepsilon_3$. More precisely, we have the following possibilities: 
	\begin{itemize}
		\item $\lambda_2=\varepsilon_2$, $\varepsilon_1<\lambda_1<\varepsilon_2$, and $F_1(\R)\ne\varnothing$, $F_2(\R)\ne\varnothing$. 
		\item $\lambda_2=\varepsilon_2$, $\lambda_1<\varepsilon_1$ and $F_1(\R)=\varnothing$, $F_2(\R)\ne\varnothing$. 		
		\item $\lambda_2=\varepsilon_3$, $\varepsilon_1<\lambda_1<\varepsilon_2$ and $F_1(\R)\ne\varnothing$, $F_2(\R)=\varnothing$.
	\end{itemize}
	
	We obtained the commutative diagram (\ref{diag: Trepalin 2}) with $X=V'$, $\widehat{S}'=S$ and $\widetilde{S}'=\widetilde{S}$. This shows that $\tau$ is a $1$-twisted Trepalin involution constructed in Section~ \ref{subsection:Trepalin-involution-2}.
\end{proof}

\begin{lemma}\label{lem: Trepalin 3}
	Suppose that points $\{Z_i\cap F_j\colon i,j=1,2\}$ are the only points fixed by $G$. Then $\tau$ is conjugate to a $2$-twisted Trepalin involution.
\end{lemma}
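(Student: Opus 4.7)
The plan is to run the same program as in Lemmas~\ref{lem: Trepalin 1} and~\ref{lem: Trepalin 2}, but now both fibres $F_1$ and $F_2$ contribute $A_1$-singularities to the quotient $V=S/\tau$. Indeed, since neither $F_1$ nor $F_2$ is pointwise fixed by $\tau$, each of the images $\vartheta(F_1)$ and $\vartheta(F_2)$ contains a pair of complex-conjugate $A_1$-singularities of $V$, corresponding to the four isolated fixed points $\{Z_i\cap F_j:i,j=1,2\}$. By Lemma~\ref{lem: lambda are real}, $\lambda_1$ and $\lambda_2$ are real.

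First I would construct the analogue of diagram~(\ref{diag: Trepalin 2}) by simultaneously blowing up on $S$ both pairs of conjugate isolated $G$-fixed points, and then contracting the proper transforms of $F_1$ and $F_2$ to obtain a surface $S^{\prime\prime}$ with two singular points of type $A_1$. The induced double cover $\vartheta^{\prime\prime}\colon S^{\prime\prime}\to V^{\prime\prime}$ is then ramified over the two singular fibres of the conic bundle $\pi_{V^{\prime\prime}}\colon V^{\prime\prime}\to\p_\R^1$ lying over $[\lambda_1:1]$ and $[\lambda_2:1]$, where $V^{\prime\prime}$ is a smooth $\R$-minimal conic bundle with $2r$ singular fibres. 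Then $\pi_S$ has $4r-4$ singular fibres and $K_S^2=12-4r$, so the assumption $K_S^2\leqslant 4$ forces $r\geqslant 2$.

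Next, by Lemma~\ref{lem: point at infinity}, I would choose an affine coordinate on $\p_\R^1$ so that the fibre at infinity of $\pi_{V^{\prime\prime}}$ has no real points, and then use the two disjoint $\Gal(\C/\R)$-conjugate sections coming from $C_1,C_2$ to embed the affine part $Y\subset V^{\prime\prime}$ into $\AAA_\R^3$ as $x^2+y^2+\prod_{i=1}^{2r}(t-\varepsilon_i)=0$ with $\varepsilon_1<\cdots<\varepsilon_{2r}$, exactly as in Lemma~\ref{lem: Trepalin 1}. The preimage $U^{\prime\prime}=(\vartheta^{\prime\prime})^{-1}(Y)$ is then given in $\AAA_\R^4$ by equation~(\ref{eq: Trepalin 3}) with $\lambda_1^{\prime\prime},\lambda_2^{\prime\prime}\in\{\varepsilon_1,\ldots,\varepsilon_{2r}\}$, because the branch locus consists of two singular fibres of $\pi_{V^{\prime\prime}}$.

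The main obstacle is the combinatorial reduction of the pair $(\lambda_1^{\prime\prime},\lambda_2^{\prime\prime})$ to one of the two normal forms $(\varepsilon_1,\varepsilon_2)$ or $(\varepsilon_1,\varepsilon_3)$. As in the last paragraph of the proof of Lemma~\ref{lem: Trepalin 1}, the $\R$-rationality of $S$ (equivalently, non-emptiness and connectedness of $S(\R)$ by Theorem~\ref{theorem:rational-real}), combined with the identity $\pi_{V^{\prime\prime}}\circ\vartheta^{\prime\prime}(U^{\prime\prime}(\R))=[\lambda_1^{\prime\prime},\lambda_2^{\prime\prime}]\cap\bigsqcup_i[\varepsilon_{2i-1},\varepsilon_{2i}]$, forces the closed interval $[\lambda_1^{\prime\prime},\lambda_2^{\prime\prime}]$ to meet exactly one segment $[\varepsilon_{2j-1},\varepsilon_{2j}]$; an affine change of $t$ then brings the pair into the required normal form. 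This identifies our construction with diagram~(\ref{diag: Trepalin 3}) (with $\widehat{S}^{\prime\prime}$ playing the role of $S$), showing that $\tau$ is a $2$-twisted Trepalin involution.
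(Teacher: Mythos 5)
Your argument is correct and takes essentially the same route as the paper, whose own proof is just ``analogous to Lemma~\ref{lem: Trepalin 2}, only we now blow up the four $A_1$ singularities of $V$'' — you supply the same construction (blow up the four isolated fixed points, contract the proper transforms of $F_1,F_2$, pass to the quotient conic bundle, write the affine model, and normalize $\lambda_1'',\lambda_2''$ using connectedness of $S(\R)$) in more detail. The only quibble is the phrase ``meet exactly one segment'': in the normal form $(\varepsilon_1,\varepsilon_3)$ the interval $[\lambda_1'',\lambda_2'']$ also touches a second segment at the single point $\varepsilon_3$, so the precise statement is that non-emptiness and connectedness of $S(\R)$ force, after a coordinate change, $\lambda_1''=\varepsilon_1$ and $\lambda_2''\in\{\varepsilon_2,\varepsilon_3\}$ — which is what you conclude anyway.
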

\begin{proof}
The proof is analogous to the proof of Lemma~\ref{lem: Trepalin 2}, only we now blow up the four $A_1$ singularities of $V$. The same way, we obtain a $G$-birational map from $S$ to a smooth surface $U''\subset\AAA^4_\R$ given by equation (\ref{eq: Trepalin 3}) with $\lambda_1''=\lambda_1$ and $\lambda_2''=\lambda_2$. Arguing as in the end of the proof of Lemma \ref{lem: Trepalin 2}, we can change the coordinates on $\p_\R^1$ so that $\lambda_1=\varepsilon_1$ and  $\lambda_2\in\{\varepsilon_2,\varepsilon_3\}$. If $(\lambda_1,\lambda_2)=(\varepsilon_1,\varepsilon_2)$ then $F_1(\R)\ne\varnothing$, $F_2(\R)\ne\varnothing$. On the other hand, if $(\lambda_1,\lambda_2)=(\varepsilon_1,\varepsilon_3)$ then $F_1(\R)\ne\varnothing$, $F_2(\R)=\varnothing$. 
Observe that we obtained the commutative diagram (\ref{diag: Trepalin 3}) with $X=V''$, $\widehat{S}''=S$ and $\widetilde{S}''=\widetilde{S}$. This shows that $\tau$ is a $2$-twisted Trepalin involution constructed in Section~ \ref{subsection:Trepalin-involution-3}.
\end{proof}
 
\section{Bertini involutions}
\label{section:Bertini}

Let $S$ be a~real del Pezzo surface of degree $1$ that is rational over $\R$. In our treatment we follow \cite[§6.6]{Kollar97}. Recall that $S$ is a~hypersurface in $\p_\R(1,1,2,3)$ of degree $6$,
and the~natural projection to $S\to\p_\R(1,1,2)$ gives a~double cover
$\pi\colon S\to Q$, where $Q$ is the geometrically irreducible quadric cone in $\p^3_\R$.
The Galois involution of this double cover and the~corresponding birational involution in $\Bir(\mathbb{P}^2_{\mathbb{R}})$ will be both called the \emph{Bertini involution}.
The class of such birational  involutions in $\Bir(\mathbb{P}^2_{\mathbb{R}})$ will be denoted by $\mathfrak{B}_{4}$.

Now let us briefly discuss how the Bertini involutions are related to the topology of $S(\R)$. This will not be used in the classification, but shows that the fixed curve defines the involution uniquely in some cases. 
The real locus $S(\R)$ is not empty and smooth, thus the~regular part of $Q(\mathbb{R})$ is not empty and we can choose suitable affine coordinates $(x, y, z)$ on $\mathbb{A}^3$ to write an equation of $Q\cap \mathbb{A}^3_\R$ in the~form
$
x^2+y^2=1,
$
so that $Q(\mathbb{R})$ is a~cylinder with a~singular point at infinity.
The surface $S$ is the~double cover of the~quadric cone branched at the~vertex of the~cone and at a~smooth curve $C_6$ of genus $4$
which is the~intersection of a~cubic surface with the~cone. The equations of $S\cap \mathbb{A}^4$ are of the~form
$$
x^2+y^2-1=u^2+f_3(x,y,z)=0
$$
where $f_3$ is a~cubic inhomogeneous polynomial.

From Theorem~\ref{theorem:rational-real}, the~real locus $S(\mathbb{R})$ is not empty and connected.
Using \cite[§4]{Wa87} or \cite[§6.6]{Kollar97}, we get that $C_6(\R)$ is the~union of one simple closed loop homotopic
to a~plane section of the~cone called a~\emph{big circle}, which we denote by $\Omega_0$, and $t\in\{0,\dots, 4\}$ non-nested ovals (simple closed loops which are null homotopic)
on the~same side of the~big circle.

We may assume that $f_3(x,y,z)$ is positive inside the~ovals in $Q(\mathbb{R})$, if there are any. Let $S_{+}$ be the double cover of $Q$ such that the equation of $S_{+}\cap \mathbb{A}^4$ is of the~form
$
x^2+y^2-1=u^2+ f_3(x,y,z)=0
$. Similarly, let $S_{-}$ be the double cover of $Q$ such that the equation of $S_{-}\cap \mathbb{A}^4$ is of the~form
$
x^2+y^2-1=u^2-f_3(x,y,z)=0
$. 
If there are no ovals, the~case is symmetrical and we can denote by $S_+$ one of the~double cover and by $S_-$ the~other one.
The surface $S$ is either $S_+$ or $S_-$.
The topology of the~loci $S_\pm(\R)$  is described in Table~\ref{table.dp1}.

\begin{table}[ht]
\renewcommand{\arraystretch}{1.5}
\begin{tabular}{|c||c|c|c|c|c|}
\hline
$C_6(\mathbb{R})$ & \quad $\Omega_0$  \quad\quad& $\Omega_0\sqcup 1$ oval & $\Omega_0\sqcup 2$ ovals  & $\Omega_0\sqcup 3$ ovals  & $\Omega_0\sqcup 4$ ovals  \\
 \hline
$S_+(\R)$  & $\mathbb{RP}^2$ & $\mathbb{RP}^2 \sqcup \Sph^2$ & $\mathbb{RP}^2 \sqcup_{i=1}^{2}\Sph^2$ & $\mathbb{RP}^2 \sqcup_{i=1}^{3}\Sph^2$ & $\mathbb{RP}^2 \sqcup_{i=1}^{4}\Sph^2$ \\
\hline
$S_-(\R)$  & $\mathbb{RP}^2$ & $\#^3\mathbb{RP}^2$ & $\#^5\mathbb{RP}^2$ & $\#^7\mathbb{RP}^2$ & $\#^9\mathbb{RP}^2$ \\
  \hline
\end{tabular}
\bigskip
\caption{Real del Pezzo surfaces of degree $1$. The real locus $C_6(\mathbb{R})$ is formed by one big circle $\Omega_0$ and $t\in\{0,\dots, 4\}$ ovals.}
\label{table.dp1}
\end{table}

Here, we denote by $\#^k\mathbb{RP}^2$ the~connected sum of $k$ copies of $\mathbb{RP}^2$, for example $\#^2\mathbb{RP}^2$ is the~Klein bottle.
Note that there are four other possible real loci for $S_\pm(\R)$, which all are non connected, see Table \ref{table.dp1}.

\begin{lemma}
\label{lemma:Bertini-curve}
If $C_6(\mathbb{R})$ is non-connected, then $S$ is uniquely determined by $C_6$.
\end{lemma}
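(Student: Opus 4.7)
The plan is to combine the enumeration of real loci in Table~\ref{table.dp1} with the rationality criterion of Theorem~\ref{theorem:rational-real}. By the construction preceding the statement, the surface $S$ is one of the two double covers $S_+$ or $S_-$ of the quadric cone $Q$ branched along $\{v\}\cup C_6$, where $v$ is the vertex of $Q$. The assumption that $C_6(\mathbb{R})$ is non-connected means there is at least one oval in addition to the big circle $\Omega_0$, i.e. $t\geqslant 1$; in particular the sign convention ($f_3$ positive inside the ovals) intrinsically distinguishes $S_+$ from $S_-$, so the question becomes which of these two candidates can actually be isomorphic to $S$.

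From Table~\ref{table.dp1} one reads off that $S_+(\mathbb{R})$ has exactly $t+1\geqslant 2$ connected components (an $\mathbb{RP}^2$ together with $t$ disjoint spheres), while $S_-(\mathbb{R})\simeq\#^{2t+1}\mathbb{RP}^2$ is connected. Since $S$ is $\mathbb{R}$-rational by the standing hypothesis of Section~\ref{section:Bertini}, Theorem~\ref{theorem:rational-real} forces $S(\mathbb{R})$ to be non-empty and connected. This immediately rules out $S\simeq S_+$, leaving $S\simeq S_-$ as the only possibility. Consequently the isomorphism class of $S$ is recovered uniquely from $C_6$.

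I do not anticipate a serious obstacle here: once Table~\ref{table.dp1} is taken as input, the proof is essentially a one-line application of the rationality criterion. The only point to keep in mind is the remark already in the text that when $t=0$ the labels $S_\pm$ are only defined up to a swap, which is precisely the case excluded by the hypothesis that $C_6(\mathbb{R})$ is non-connected; so the sign convention used to read off $S_+$ and $S_-$ from $C_6$ is well defined throughout the argument.
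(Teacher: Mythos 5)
Your proof is correct and follows essentially the same route as the paper: the paper's one-line argument also reads off from Table~\ref{table.dp1} that for $t\geqslant 1$ only $S_-$ has connected (hence, by Theorem~\ref{theorem:rational-real}, potentially rational) real locus, so the rationality hypothesis on $S$ pins down $S\simeq S_-$. Your additional remark about the sign convention being well defined precisely when $t\geqslant 1$ is a sensible clarification but does not change the argument.
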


\begin{proof}
If $C_6(\mathbb{R})$ has at least one oval, then it follows from Table~\ref{table.dp1} that only one of the covers $S_+$ or $S_-$ is rational over $\R$, so $S$ is uniquely determined by the real curve $C_6$.
\end{proof}

If the real locus $C_6(\mathbb{R})$ is connected, then it is formed by one big circle in $Q$.
In this case, both surfaces $S_+$ and $S_-$ are rational over $\mathbb{R}$; their real loci are diffeomorphic to $\mathbb{RP}^2$. These two real surfaces may be isomorphic in some cases. Let us give two examples.

\begin{example}
\label{example:Geiser-no-oval-isomorphic}
Let $S_{+}$ and $S_{-}$ be smooth hypersurfaces in $\p_\R(1,1,2,3)$ that are given by
$$
w^2=\pm\big(z^3+xy^5+yx^5\big).
$$
Then  topologically both $S_{+}(\R)$ and $S_{-}(\R)$ are $\mathbb{RP}^2$.
Moreover, one has $S_+\cong S_-$, and this isomorphism is given by $[x:y:z:w]\mapsto[-x:y:-z:w]$.
\end{example}

\begin{example}
\label{example:Geiser-no-oval-non-isomorphic}
Let $S_{+}$ and $S_{-}$ be smooth hypersurfaces in $\p_\R(1,1,2,3)$ that are given by
$$
w^2=\pm\big(z^3+f_4(x,y)z+f_6(x,y)\big),
$$
where $f_4$ and $f_6$ are homogeneous polynomials of degree $4$ and $6$, respectively.
Suppose that $f_4(x,y)>0$ for every $(x,y)\in\R^2\setminus(0,0)$.
Then topologically both $S_{+}(\R)$ and $S_{-}(\R)$ are $\mathbb{RP}^2$.
On the~other hand, for sufficiently general choice of the~polynomials $f_4$ and $f_6$,
the surfaces $S_+$ and $S_-$ are not isomorphic over $\R$.
\end{example}

Now, let $\tau$ be an involution in $\mathrm{Aut}(S)$, let $G=\langle\tau\rangle$,
and let~$C$ be the~union of all curves in $S$ that are pointwise fixed by $\tau$.
Then the curve $C$ is smooth by Lemma~\ref{lemma:fixed-curves-smooth} and either $F(\tau)=\varnothing$
or $F(\tau)$ is the unique irrational component of the curve $C$ by Lemma~\ref{lemma:fixed-curve-unique}.
Moreover, if $\tau$ is the~Bertini involution then
$
F(\tau)=C=C_6
$
and $\Pic(S)^G\cong\mathbb{Z}$. Vice versa, we have the~following result:

\begin{proposition}[{\cite[Proposition 9.2]{Yasinsky}}]
\label{proposition:Bertini}
If $\Pic(S)^G\simeq\mathbb{Z}$, then $\tau$ is the Bertini involution. 
\end{proposition}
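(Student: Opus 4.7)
My plan is to identify $\tau$ with the Bertini involution $\beta$ by comparing their actions on $\Pic(S)$ and then invoking the faithfulness of the Picard representation. The starting point is the standard observation that $\beta$, being the Galois involution of the anticanonical double cover $\pi\colon S\to Q$, has its $+1$-eigenspace in $\Pic(S)\otimes\mathbb{Q}$ equal to $\pi^{*}\Pic(Q)\otimes\mathbb{Q}=\mathbb{Q}K_{S}$; since $\beta$ preserves the intersection form, it must act as $-\mathrm{id}$ on the orthogonal complement $K_{S}^{\perp}$, a lattice of type $E_{8}$. For the involution $\tau$, the identity $\tau^{*}K_{S}=K_{S}$ implies preservation of the orthogonal decomposition
\[
\Pic(S)\otimes\mathbb{Q}=\mathbb{Q}K_{S}\oplus K_{S}^{\perp}\otimes\mathbb{Q};
\]
the hypothesis $\Pic(S)^{\langle\tau\rangle}\simeq\mathbb{Z}$ forces the $+1$-eigenspace of $\tau$ on $K_{S}^{\perp}\otimes\mathbb{Q}$ to be zero, so $\tau$ acts as $-\mathrm{id}$ there, and hence on $K_{S}^{\perp}$ integrally.

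Consequently $\tau$ and $\beta$ induce the same automorphism of $\Pic(S)$, so the element $\tau\beta$ lies in the kernel of the natural representation $\rho\colon\Aut(S)\to\Aut(\Pic(S))$. To finish it thus suffices to verify that $\rho$ is injective for a del Pezzo surface of degree one. The classical route uses the elliptic fibration: every automorphism fixes the unique base point $p$ of the pencil $|-K_{S}|$, so blowing it up yields a rational elliptic surface $\widetilde{S}\to\mathbb{P}^{1}$ on which the exceptional divisor $E_{0}$ is a distinguished section, and every $\alpha\in\ker\rho$ lifts to $\widetilde{\alpha}\in\Aut(\widetilde{S})$ acting trivially on $\Pic(\widetilde{S})$. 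Such a $\widetilde{\alpha}$ preserves each fibre class and hence descends to an automorphism of the base fixing each fibre individually, so it acts trivially on $\mathbb{P}^{1}$; it also preserves the section $E_{0}$ (used as the fibrewise origin) and each of the other sections coming from the rank-$8$ Mordell--Weil lattice. The only non-trivial automorphism of a smooth elliptic fibre fixing the origin is $[-1]$, which acts as $-\mathrm{id}$ on Mordell--Weil and would therefore act non-trivially on $\Pic(\widetilde{S})$; hence $\widetilde{\alpha}$ must be trivial on a general fibre and so on all of $\widetilde{S}$. This gives $\tau\beta=\mathrm{id}$, i.e.\ $\tau=\beta$.

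The main technical obstacle is the injectivity of $\rho$ in the last step; once granted, the remainder is pure linear algebra on the $E_{8}$-lattice $K_{S}^{\perp}$. If one prefers to avoid the elliptic-surface argument, one may instead use the $\Aut(S)$-equivariant embedding $S\hookrightarrow\mathbb{P}(1,1,2,3)$ given by the graded ring $\bigoplus_{n\geqslant 0}H^{0}(S,\OOO(-nK_{S}))$ and observe that an element of $\ker\rho$ acts by scalars on each homogeneous component compatibly with the defining sextic equation, forcing it to be trivial; either route yields the required faithfulness.
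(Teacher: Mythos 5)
There is a genuine gap, and it sits in your very first reduction. (Note also that the paper itself gives no proof of this statement --- it is imported from \cite[Proposition 9.2]{Yasinsky} --- so your argument has to be measured against what the statement actually asserts.) In this paper $\Pic(S)$ is the Picard group of the surface \emph{over $\R$}, so the hypothesis $\Pic(S)^{G}\simeq\Z$ says that $\Pic(S_{\C})^{\langle\sigma,\tau\rangle}$ has rank one, where $\sigma$ is the antiholomorphic involution generating $\Gal(\C/\R)$. Your eigenvalue computation therefore only shows that $\tau^{*}=-\mathrm{id}$ on the $\sigma$-invariant part of $K_{S_{\C}}^{\perp}$; it gives no control over the action on the $\sigma$-anti-invariant part, so you cannot conclude that $\tau^{*}=-\mathrm{id}$ on the full $E_{8}$-lattice. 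Consequently $\tau\beta$ is only known to act trivially on the (possibly much smaller) real Picard lattice, and the faithfulness of $\Aut(S)\to\Aut(\Pic(S_{\C}))$ --- which your elliptic-fibration argument does establish, essentially correctly and by a standard route --- cannot be applied. What you have actually proved is the complex statement: $\rk\Pic(S_{\C})^{\langle\tau\rangle}=1$ implies $\tau=\beta$.

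That this is not a removable technicality is demonstrated by the degree-two analogue inside this very paper: the Kowalevskaya involution of Proposition~\ref{prop:Kowalevskays-involution} is induced by a linear involution of $\p_{\R}^{2}$, fixes a genus-one curve, and is far from $-\mathrm{id}$ on $K_{S_{\C}}^{\perp}\simeq E_{7}$, yet Remark~\ref{remark:Kowalevskays-involution} verifies that $\Pic(S)^{G}\simeq\Z$ for it; running your argument verbatim in degree two would ``prove'' that every such involution is the Geiser involution, contradicting Proposition~\ref{proposition:Geiser}. So in degree one the real content of the proposition is precisely that no analogous commuting pair $(\sigma^{*},\tau^{*})$ in $W(E_{8})$ with rank-one common invariants can be realised on an $\R$-rational del Pezzo surface of degree one by a non-Bertini $\tau$, and that requires an argument (a case analysis of the admissible classes of $\tau^{*}$ and $\sigma^{*}$, or the geometric analysis of \cite[Proposition 9.2]{Yasinsky}) which your proof does not contain. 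A minor secondary point: your alternative route to faithfulness via the anticanonical ring is also under-justified, since the Bertini involution itself acts by scalars on every graded piece of $\bigoplus_{n\geqslant 0}H^{0}(S,\OOO(-nK_{S}))$, and an element of $\ker\rho$ has no a priori reason to act by a scalar on the two-dimensional space $H^{0}(S,\OOO(-K_{S}))$.
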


\begin{corollary}
\label{corollary:Bertini-curve}
For a birational map $\rho\colon S\dasharrow\mathbb{P}^2_{\mathbb{R}}$, let $\iota=\rho\circ\tau\circ\rho^{-1}\in\Bir(\mathbb{P}^2_{\mathbb{R}})$.
If $\Pic(S)^G\simeq\mathbb{Z}$, then  $S$ is uniquely determined by the~conjugacy class of the birational involution $\iota$.
\end{corollary}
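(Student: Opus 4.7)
The plan is to reduce the statement to a direct application of the Manin--Segre birational rigidity (Theorem~\ref{theorem:Manin-Segre}) for the pair $(S,G)$ with $K_S^2=1$. Note first that the hypothesis $\Pic(S)^G\simeq\Z$ forces $\tau$ to be the Bertini involution by Proposition~\ref{proposition:Bertini}, and that $S$ itself is a $G$-Mori fibre space. Thus it suffices to show that any other regularization of a birational involution conjugate to $\iota$ by a surface satisfying the same hypothesis is isomorphic to $S$.

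Concretely, suppose we are given a second regularization $\iota'=\rho'\circ\tau'\circ(\rho')^{-1}$, with $\tau'\in\Aut(S')$, $S'$ a real del Pezzo surface of degree~$1$ and $\Pic(S')^{\langle\tau'\rangle}\simeq\Z$, such that $\iota'=\sigma\circ\iota\circ\sigma^{-1}$ for some $\sigma\in\Bir(\p^2_\R)$. Consider the composition
\[
f=(\rho')^{-1}\circ\sigma\circ\rho\colon S\dasharrow S'.
\]
A direct computation using the definitions of $\iota$ and $\iota'$ gives $f\circ\tau=\tau'\circ f$, so that $f$ is equivariant with respect to the identification $G=\langle\tau\rangle\xrightarrow{\sim}\langle\tau'\rangle$ sending $\tau$ to $\tau'$. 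Under this identification, $S'$ is a $G$-Mori fibre space that is $G$-birational to $S$.

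The conclusion then follows immediately from Theorem~\ref{theorem:Manin-Segre}: since $K_S^2=1\leqslant 3$, the surface $S$ is the unique $G$-Mori fibre space that is $G$-equivariantly birational to itself, so $S\cong S'$. The main (and essentially only) content of the corollary is therefore a bookkeeping step: translating the conjugacy relation between $\iota$ and $\iota'$ into a $G$-equivariant birational map between the regularizations, after which Manin--Segre closes the argument. There is no serious obstacle; one could in fact invoke the stronger conclusion of Theorem~\ref{theorem:Manin-Segre} in the $K_S^2=1$ case to upgrade the statement to the assertion that $(S,\tau)$ and $(S',\tau')$ are $G$-equivariantly isomorphic.
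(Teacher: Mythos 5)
Your argument is correct and follows essentially the same route as the paper: the paper's proof is exactly the observation that Theorem~\ref{theorem:Manin-Segre} (the $K_S^2\leqslant 3$, indeed $K_S^2=1$, case) makes $S$ $G$-birationally (super-)rigid, combined with Proposition~\ref{proposition:Bertini}; you have merely written out the bookkeeping step that turns a conjugation in $\Bir(\p^2_\R)$ into a $G$-equivariant birational map between the two regularizations. Your closing remark about upgrading to a $G$-equivariant isomorphism is also consistent with how the paper uses this rigidity later (cf.\ Section~\ref{subsection:conjugation}).
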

\begin{proof}
By Theorem~\ref{theorem:Manin-Segre}, if $\Pic(S)^G\simeq\mathbb{Z}$, then $S$ is $G$-birationally super-rigid. Applying Proposition~\ref{proposition:Bertini} to $S$ and $\tau$ yields the result.
\end{proof}

\section{Geiser and Kowalevskaya involutions}
\label{section:Geiser}

Let $S$ be a~real smooth projective del Pezzo surface of degree $K_S^2=2$ that is rational over $\R$,
and let~$\pi\colon S\to\p_\R^2$ be the~anticanonical double cover.
Then the Galois involution of this double cover and  the~corresponding birational involution in $\Bir(\mathbb{P}^2_{\mathbb{R}})$ will be both called the \emph{Geiser involution}.
The class of such birational involutions in $\Bir(\mathbb{P}^2_{\mathbb{R}})$ will be denoted by $\mathfrak{G}_{3}$.

Let $C_4$ be the quartic curve in $\p_\R^2$ that is the ramification curve of the double cover $\pi$.
Then $C_4$ is a~real curve of genus $3$ whose real locus $C_4(\mathbb{R})$ is a~collection of ovals in $\p_\R^2(\R)$.
A priori, we may have $C_4(\mathbb{R})=\varnothing$, but we will see later that this is impossible since $S$ is rational (over $\mathbb{R})$.

We may choose projective coordinates on $\p_\R^2$ such that all ovals of $C_4$ are contained in the~chart $z\ne 0$.
Indeed, it follows from \cite{Zeuthen} or \cite[Exercise 6.5.]{Kollar97} that $\p_\R^2$ contains a~real bi-tangent of the quartic $C_4$. See also \cite{MangolteRaffalli} for a more general result about multitangents. Thus, perturbing this bi-tangent a~little bit, we obtain a~real line in $\p_\R^2$ that does not intersect ovals of $C_4$.
Now, we can choose projective coordinates such that this line is given by $z=0$,
so that all ovals of $C_4$ are contained in the~chart $z\ne 0$,
which we identify with $\mathbb{R}^2$ with affine coordinates $(x,y)=[x:y:1]$.
Thus, in the following (except for the proof of Proposition~\ref{prop:Kowalevskays-involution}) we will assume that
all ovals of $C_4$ are contained in the~chart $z\ne 0$,
i.e. the intersection $\{z=0\}\cap C_4(\mathbb{R})$ is empty.

Choose quartic polynomial $f_4(x,y,z)$
such that $C_4$ is given by $f_4(x,y,z)=0$ and $f_4(x,y,1)$ is negative on the~unbounded part of the plane $\mathbb{R}^2$. Let $S_+$ and $S_-$ be two real quartic surfaces in $\p_\R(1,1,1,2)$ that are given by
$$
\pm w^2=f_4(x,y,z),
$$
respectively.
Here, we also consider $x$, $y$ and $z$ as coordinates on $\p_\R(1,1,1,2)$ of weight $1$, and $w$ is a~coordinate of weight $2$.
Note that $\p_\R(1,1,1,2)(\mathbb{R})$ is disconnected, and both surfaces $S_+$ and $S_-$ are isomorphic over $\C$.
Furthermore, our surface $S$ is either $S_+$ or $S_-$.
The topology of $S_+(\mathbb{R})$ and $S_-(\mathbb{R})$ is described in Table~\ref{table.dp2},  see \cite[Proposition 6.2]{Kollar97}.

\begin{table}[ht]
\renewcommand{\arraystretch}{1.5}\hspace*{-1cm}
\begin{tabular}{|c||c|c|c|c|c|c|}
\hline
 $C_4(\mathbb{R})$  & $\varnothing$ & \quad$1$ oval\quad\quad & $2$ non-nested ovals & $2$ nested ovals & \quad $3$ ovals \quad\quad & \quad $4$ ovals \quad\quad\\
\hline
\hline
$S_+(\mathbb{R})$  & $\varnothing$ & $\Sph^2$& $\Sph^2\sqcup \Sph^2$& $\Sph^1\times \Sph^1$& $\sqcup_{i=1}^{3}\Sph^2$& $\sqcup_{i=1}^{4}\Sph^2$\\
\hline
$S_-(\mathbb{R})$  & $\mathbb{RP}^2\sqcup\mathbb{RP}^2$ &{$\#^2\mathbb{RP}^2$}& {$\#^4\mathbb{RP}^2$} & {$\Sph^2\sqcup \#^2\mathbb{RP}^2$}& {$\#^6\mathbb{RP}^2$}& {$\#^8\mathbb{RP}^2$}\\
\hline
\end{tabular}
\bigskip
\caption{Real del Pezzo surfaces of degree $2$.}
\label{table.dp2}
\end{table}

Since $S$ is assumed to be $\R$-rational, the~real locus $S(\mathbb{R})$ is not empty and connected. In~particular, we see from Table~\ref{table.dp2} that the real locus $C_4(\mathbb{R})$ is not empty.

\begin{remark}
\label{corollary:Geiser-curve}
Table~\ref{table.dp2} implies the following:
if $C_4(\mathbb{R})$ is non-connected, then $S$ is uniquely determined by $C_4$,
and if $C_4(\mathbb{R})$ is connected, then $S$ is uniquely determined by $C_4$ and the orientability of the~real locus $S(\R)$.
\end{remark}

Now let $\tau$ be an involution in $\mathrm{Aut}(S)$, let $G=\langle\tau\rangle$,
and let~$C$ be the~union of all curves in $S$ that are pointwise fixed by $\tau$.
By Lemma~\ref{lemma:fixed-curves-smooth} and Lemma~\ref{lemma:fixed-curve-unique}, the~curve $C$ is smooth (and maybe empty) and
either $F(\tau)=\varnothing$, or $F(\tau)$ is the unique irrational component of the curve $C$.
Moreover, if our involution $\tau$ is the~Geiser involution,
then $\Pic(S)^G\cong\mathbb{Z}$ and $F(\tau)=C=C_4$.
However, contrary to the case of complex del Pezzo surface of degree 2, if $\Pic(S)^G\simeq\mathbb{Z}$,
we cannot conclude that $\tau$ is the~Geiser involution. This phenomenon was first observed in \cite[Example 8.2]{Yasinsky}. 
The reason for this is the following remarkable (not well known) fact that was essentially discovered by Sophie Kowalevskaya in \cite{Kowalevskaya}.

\begin{proposition}[{\cite[\S9]{DolgachevDuncan}}]
\label{proposition:Kowalevskays}
Let $B$ be a~smooth quartic curve in $\p^2$. Then four bitangents of the curve $B$ meet at one point $O$
if and only if there exists a~biregular involution $\kappa\in\mathrm{Aut}(\p^2)$ which leaves $B$ invariant
and has the point $O\notin B$ as an isolated fixed point.
\end{proposition}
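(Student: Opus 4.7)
My plan is to prove both implications by a coordinate computation, placing $O = [0:0:1]$ and analysing when a line through $O$ is a bitangent of $B$ by restricting the defining quartic $f$ to the pencil of such lines.

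For the $(\Leftarrow)$ direction, any biregular involution of $\p^2$ with an isolated fixed point is conjugate to $\kappa\colon [x:y:z]\mapsto [x:y:-z]$, so I take $O = [0:0:1]$ and the pointwise fixed line $L = \{z=0\}$. The hypotheses $\kappa(B)=B$ and $O \notin B$ force the equation of $B$ to have, after rescaling, the normal form
$$f(x,y,z) = z^4 + A_2(x,y)z^2 + A_4(x,y),$$
with $\deg A_i = i$. Restricting $f$ to the line through $O$ of direction $[x_0:y_0]$ gives the biquadratic $t^4 + A_2(x_0,y_0)\, t^2 + A_4(x_0,y_0)$, which is a perfect square in $t$ precisely when $A_2(x_0,y_0)^2 = 4 A_4(x_0,y_0)$. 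This is a form of degree $4$ in $(x,y)$, and it is not identically zero because otherwise $f = (z^2 + A_2/2)^2$ would be a square, contradicting smoothness of $B$. Hence it has four zeros in $\p^1$ (counted with multiplicity), yielding four bitangents of $B$ through $O$.

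For the $(\Rightarrow)$ direction, I place $O = [0:0:1]$ and write
$$f(x,y,z) = z^4 + B_1(x,y)z^3 + B_2(x,y)z^2 + B_3(x,y)z + B_4(x,y)$$
(rescaling so that the $z^4$-coefficient is $1$, using $O \notin B$). Since $B_1$ is linear in $(x,y)$, the substitution $z \mapsto z - B_1(x,y)/4$ is a linear automorphism of $\p^2$ fixing $O$ and the pencil of lines through $O$; after applying it I may assume $B_1 \equiv 0$. Then a line through $O$ in direction $[x_0:y_0]$ is a bitangent iff $t^4 + B_2 t^2 + B_3 t + B_4$ equals $(t^2 + \alpha t + \beta)^2$ for some $\alpha,\beta$, which by coefficient matching forces $\alpha = 0$, $\beta = B_2/2$, together with the two conditions
$$B_3(x_0,y_0) = 0 \quad\text{and}\quad B_2(x_0,y_0)^2 - 4 B_4(x_0,y_0) = 0.$$

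The decisive step is then the following: $B_3(x,y)$ is a cubic form, so has at most three zeros in $\p^1$ unless it vanishes identically. By hypothesis four distinct $[x_0:y_0] \in \p^1$ satisfy both conditions, so $B_3$ has four distinct zeros and must be identically zero. Consequently
$$f(x,y,z) = z^4 + B_2(x,y)z^2 + B_4(x,y)$$
is manifestly invariant under $\kappa\colon [x:y:z]\mapsto [x:y:-z]$, which has $O = [0:0:1]$ as an isolated fixed point and satisfies $O \notin B$ by the normalisation of the leading coefficient. The one subtlety to handle is the interpretation of ``four bitangents meet at $O$'': the argument uses four \emph{distinct} bitangent lines, so that $B_3$ is forced to have four distinct zeros in $\p^1$; if the statement tolerates coincident or infinitely near bitangents, the same conclusion follows by a slightly more careful count of intersection multiplicities of the resultant system.
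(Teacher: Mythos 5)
The paper does not prove this proposition --- it is quoted from Dolgachev--Duncan with a citation only --- so there is no in-paper argument to compare against; I can only assess your computation on its own terms. Your overall strategy (normalising $O=[0:0:1]$, restricting $f$ to the pencil of lines through $O$, and reading off bitangency from the restricted binary quartic being a perfect square) is the standard one and is essentially sound: the coefficient matching in both directions is correct, and the observation that four distinct concurrent bitangents force the cubic form $B_3$ to vanish identically is the right key step for $(\Rightarrow)$.

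There are, however, two genuine gaps. First, in $(\Rightarrow)$ you normalise the $z^4$-coefficient to $1$ ``using $O\notin B$'' and later deduce $O\notin B$ ``by the normalisation of the leading coefficient''; this is circular, since $O\notin B$ is part of what must be proved in that direction. The fix is short but must be stated: a bitangent with divisor $2P+2Q$ passing through a point of $B$ must be the tangent line at that point, so at most one bitangent passes through any point of $B$, and hence a point lying on four distinct bitangents is off $B$. Second, and more substantively, in $(\Leftarrow)$ you only produce four bitangents through $O$ \emph{counted with multiplicity}, namely the zeros of the binary quartic $A_2^2-4A_4$; the statement asserts four concurrent bitangents, and distinctness is exactly what your $(\Rightarrow)$ argument consumes. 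You must show that $A_2^2-4A_4$ has no multiple root. This does follow from smoothness of $B$: writing $f=\bigl(z^2+\tfrac{1}{2}A_2\bigr)^2-\tfrac{1}{4}\bigl(A_2^2-4A_4\bigr)$, if $[x_0:y_0]$ is a multiple root of $A_2^2-4A_4$ then any point $[x_0:y_0:z_0]$ with $z_0^2=-\tfrac{1}{2}A_2(x_0,y_0)$ lies on $B$ and all partial derivatives of $f$ vanish there, so $B$ would be singular. (Equivalently, Riemann--Hurwitz for the quotient $B\to B/\kappa$ shows the quotient is a smooth genus-$1$ double cover of $\p^1$ branched exactly at the roots of $A_2^2-4A_4$, which must therefore be simple.) Your closing remark that coincident bitangents could be absorbed ``by a slightly more careful count'' points the wrong way: with fewer than four distinct concurrent bitangents the cubic $B_3$ need not vanish and the $(\Rightarrow)$ implication would fail, so distinctness is not a cosmetic issue but the crux of the equivalence.
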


\begin{remark}
	Sophie Kowalevskayas's original motivation for the study of plane quartics was a~problem of reduction	of abelian integrals to elliptic integrals. Namely, in \cite{Kowalevskaya} she proves the following remarkable statement. Let $y=f(x)$ be an algebraic function which satisfies a~quartic equation $F(x,y)=0$. Let $B$ denote the quartic curve given by $F(x,y)=0$. Then there exists an abelian integral of the first kind
	\[
	\int \Phi(x,f(x)) dx
	\]
	which can be reduced to an elliptic integral using a~change of variables of degree 2 if and only if $B$ has four bitangents meeting
	at one point.
\end{remark}

Using Proposition~\ref{proposition:Kowalevskays} and its proof, we get the following refinement of \cite[Proposition~8.1]{Yasinsky}.

\begin{proposition}
\label{prop:Kowalevskays-involution}
Suppose $\Pic(S)^G\simeq\mathbb{Z}$, the involution $\tau$ is not the involution of the~double cover $\pi\colon S\to\p^2_\R$, i.e. $\tau$ is not the Geiser involution, and $F(\tau)\ne\varnothing$.
Then one can choose coordinates on $\p_\R^2$ and $\p_\R(1,1,1,2)$ such that $\tau$ is given by
$[x:y:z:w]\mapsto[x:-y:z:w]$, one has $S=S_{+}$ and $f_4=-(y^2+ax^2+bxz+cz^2)^2\pm xz(x-z)(x-sz)$ for some real numbers $a,b,c,s$ such that $a>0$, $s>1$, the~curve $C_4$ is smooth,
and either $C_4(\mathbb{R})$ consists of one oval, or the~locus $C_4(\mathbb{R})$ consists of two nested ovals in $\p_\R^2(\R)$.
In particular, the~surface $S$ is given in $\p_\R(1,1,1,2)$ by
$$
w^2=-(y^2+ax^2+bxz+cz^2)^2\pm xz(x-z)(x-sz).
$$
Moreover, if the real locus $C_4(\mathbb{R})$ consists of two nested ovals, then $S(\R)\approx \Sph^1\times \Sph^1$.
Similarly, if the locus $C_4(\mathbb{R})$ consists of one oval, then $S(\R)\approx \Sph^2$.
One has $F(\tau)=C$, where $C$ is the smooth genus 1 curve
that is cut out on $S$ by $y=0$.
\end{proposition}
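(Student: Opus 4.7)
The plan is to exploit that the Geiser involution $\gamma\in\Aut(S)$---the deck transformation of the anticanonical double cover $\pi\colon S\to\p_\R^2$---is central in $\Aut(S)$, so as to descend $\tau$ to an involution of $\p_\R^2$ preserving $C_4$ and then invoke Proposition~\ref{proposition:Kowalevskays}.

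First I would observe that $\tau$ commutes with $\gamma$, hence descends to an involution $\kappa\in\PGL_3(\R)$ preserving $C_4$; the assumption $\tau\neq\gamma$ guarantees $\kappa\neq\id$. Every nontrivial real involution of $\p_\R^2$ has a line of fixed points and one isolated fixed point, so after a real change of coordinates I may take $\kappa\colon[x:y:z]\mapsto[x:-y:z]$, with fixed line $L=\{y=0\}$ and isolated fixed point $O=[0:1:0]$. Since $\gamma$ acts by $w\mapsto -w$, the two lifts of $\kappa$ to $S$ are given by $w\mapsto\pm w$ (each combined with $y\mapsto -y$); one checks that the lift $w\mapsto w$ fixes pointwise the curve $C:=S\cap\{y=0\}$, while the other lift fixes only the finite scheme $C_4\cap\{y=0\}$. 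The hypothesis $F(\tau)\neq\varnothing$ therefore singles out $\tau$ as the first lift, giving the stated formula $[x:y:z:w]\mapsto[x:-y:z:w]$ and $F(\tau)=C$; a direct computation in weighted projective coordinates shows that $C$ is a smooth hyperelliptic curve of genus $1$.

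Next I would put $f_4$ into normal form. The $\kappa$-invariance $f_4(x,-y,z)=f_4(x,y,z)$ yields $f_4=\alpha y^4+y^2Q(x,z)+R(x,z)$ with $Q,R$ binary forms in $(x,z)$ of degrees $2,4$; the sign normalisation of $f_4$ on the unbounded part of the affine chart $z=1$ forces $\alpha<0$, and rescaling $y$ lets me take $\alpha=-1$. Completing the square in $y$ rewrites this as $f_4=-(y^2+Q_1(x,z))^2+R_1(x,z)$. Substituting a generic line $\{[x_0:t:z_0]\colon t\in\R\}$ through $O$ into $f_4$ and asking for a perfect square in $t$ shows that the line is bitangent to $C_4$ if and only if $R_1(x_0,z_0)=0$; hence the four bitangents through $O$ predicted by Proposition~\ref{proposition:Kowalevskays} correspond bijectively to the (necessarily distinct) roots of the binary quartic $R_1$ in $\p_\C^1$. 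A $\PGL_2(\R)$-coordinate change on $(x,z)$ together with an overall rescaling then moves three of these roots to $[0:1]$, $[1:0]$, $[1:1]$ and the fourth to $[s:1]$ with $s>1$, while the leading coefficient of $R_1$ is reduced to $\pm 1$; writing $Q_1=ax^2+bxz+cz^2$ introduces the parameters $a,b,c$ and produces the equation of $S=S_+$ stated in the proposition.

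The main obstacle is the last step: showing that all four roots of $R_1$ are real (so that the above normalisation over $\R$ is possible), establishing $a>0$, and reading off the topology of $S(\R)$. For this I would work directly in the affine chart $z=1$: the equation $(y^2+ax^2+bx+c)^2=R_1(x,1)$ forces $R_1\geq 0$ wherever $C_4(\R)$ projects, and since $O\notin C_4$ lies in the unbounded component of $\p_\R^2(\R)\setminus C_4(\R)$, each oval of $C_4(\R)$ is either $\kappa$-invariant or paired with its $\kappa$-image. Combining this with the $\R$-rationality of $S$ (Theorem~\ref{theorem:rational-real}, which forces $S(\R)$ to be non-empty and connected) and consulting Table~\ref{table.dp2} leaves exactly the two configurations: one oval (yielding $S(\R)\approx\Sph^2$) or two nested ovals (yielding $S(\R)\approx\Sph^1\times\Sph^1$). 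In both configurations, counting intersections of the $\kappa$-fixed line $y=0$ with $C_4(\R)$ identifies the four boundary abscissae of the ovals along $y=0$ with the four real roots of $R_1(x,1)$, and analysing the sign of $ax^2+bx+c$ on the region occupied by the ovals (so that the auxiliary conic $y^2+ax^2+bxz+cz^2=0$ does not introduce spurious real components of $C_4$) forces $a>0$.
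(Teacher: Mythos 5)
Your opening is sound and coincides with the paper's: $\tau$ descends to a nontrivial linear involution $\kappa$ of $\p_\R^2$ preserving $C_4$, the normal form $f_4=\pm(y^2+g_2(x,z))^2+g_4(x,z)$ follows from $\kappa$-invariance, and $F(\tau)\ne\varnothing$ selects the lift $[x:y:z:w]\mapsto[x:-y:z:w]$ fixing the genus-$1$ curve $\{y=0\}\cap S$. The second half, however, has genuine gaps, all stemming from the fact that you never use the hypothesis $\Pic(S)^G\simeq\Z$, which is exactly what drives the paper's argument from this point on. First, rationality plus Table~\ref{table.dp2} does \emph{not} ``leave exactly the two configurations'': for two non-nested ovals, three ovals, or four ovals the surface $S_-$ has real locus $\#^4\mathbb{RP}^2$, $\#^6\mathbb{RP}^2$, $\#^8\mathbb{RP}^2$, all connected, hence $\R$-rational. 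The paper excludes non-nested ovals by producing a real bitangent touching two of them, whose pullback splits into two real $(-1)$-curves $\ell_1,\ell_2$ with $\tau(\ell_1)\ne\ell_2$ and $(\ell_1+\tau(\ell_1))^2\leqslant 0$, contradicting $\Pic(S)^G\simeq\Z$.

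Second, your argument for the reality of the four roots of $R_1=g_4$ does not work. The roots of $g_4$ parametrize the bitangents through $O$ (the lines $x=x_0z$), whereas the ``boundary abscissae of the ovals along $y=0$'' are the points of $C_4\cap\{y=0\}$, i.e.\ the roots of $g_2^2\mp g_4$ --- a different binary quartic --- so the claimed identification is false; moreover a bitangent through $O$ can be real while its two tangency points are complex conjugate (this happens precisely when $g_2>0$ at the corresponding root), so no count of real tangencies to the ovals can detect all four roots. The paper instead shows that a non-real root $\xi$ of $g_4$ produces an explicit Galois-conjugate pair of $(-1)$-curves on $S$ whose sum is a real $G$-invariant divisor of self-intersection $\leqslant 0$, again contradicting $\Pic(S)^G\simeq\Z$; reality of the roots is not a topological consequence and cannot be obtained without this (or an equivalent) input. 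Finally, you do not actually establish $S=S_+$ (Table~\ref{table.dp2} with one oval also allows the connected $S_-$), nor $a>0$: the paper needs one more $(-1)$-curve argument to show the two leading signs must differ, the observation that at least two of $g_2(1,0)$, $g_2(0,1)$, $g_2(1,1)$, $g_2(s,1)$ are positive (else non-nested ovals reappear), and the $(\Z/2)^2\subset\Aut(\p_\R^1)$ permuting the four roots to move a positive value to $[1:0]$, which is what $a>0$ means. As written, your proof establishes the statement only up to these unproved sign, reality, and configuration claims.
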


\begin{proof}
Since $\tau$ is not a~Geiser involution and the~double cover $\pi$ is $G$-equivariant,
the~involution $\tau$ induces a~non-trivial involution $\kappa\in\mathrm{Aut}(\p_\R^2)$ which leaves the curve $C_4$ invariant.
Hence, choosing appropriate coordinates on $\p_\R^2$, we may assume that $\kappa$ acts on $\p_\R^2$ by $[x:y:z]\mapsto [x:-y:z]$.
Note that we abuse our previous choice of coordinates on $\p_\R^2$ that guarantees that the~equation $f_4(x,y,0)=0$ has no real solutions with $(x,y)\ne(0,0)$.
However, we will see later that this condition can also be preserved under the new choice of coordinates on $\p_\R^2$.

Since $C_4$ is $G$-invariant, we may assume that
$f_4(x,y,z)=\pm(y^2+g_2(x,z))^2+g_4(x,z)$
for some real homogeneous polynomials $g_2(x,z)$ and $g_4(x,z)$ of degree $2$ and $4$, respectively.
Since $F(\tau)\ne\varnothing$, we conclude that $\tau$ acts on $S$ as follows: $[x:y:z:w]\mapsto[x:-y:z:w]$.
Then $F(\tau)=C$, where $C$ is the smooth genus 1 curve cut out on $S$ by $y=0$.

Suppose that $C_4(\mathbb{R})$ contains two non-nested ovals. 
By \cite[Exercise 6.5]{Kollar97}, $C_4$ has at least 8 real bitangents. Then there exists a~real line $\ell\subset\p_\R^2$ that is tangent to both of them,
so that $\ell$ is a~real bi-tangent of the curve $C_4$ that is contained in the locus where $f_4(x,y,z)$ is non-positive.
Using Table~\ref{table.dp2}, we see that $S=S_-$, since $S$ is rational by assumption.
Hence, we have $\pi^{*}(\ell)=\ell_1+\ell_2$, where $\ell_1$ and $\ell_2$ are two distinct $(-1)$-curves in $S$ that are both defined over $\mathbb{R}$.
Moreover, we also have $\tau(\ell_1)\ne\ell_2$, because $\tau$ acts on $S$ as $[x:y:z:w]\mapsto[x:-y:z:w]$.
Thus, either $\tau(\ell_1)=\ell_1$ or $\tau(\ell_1)\cdot\ell_1=1$ or $\tau(\ell_1)\cap\ell_1=\varnothing$,
which implies that $(\ell_1+\tau(\ell_1))^2\leqslant 0$.
This contradicts the~condition $\Pic(S)^G\simeq\mathbb{Z}$, because the divisor $\ell_1+\tau(\ell_1)$ is $G$-invariant.

Hence, using Table~\ref{table.dp2}, we conclude that either $C_4(\mathbb{R})$ consists of one oval, or $C_4(\mathbb{R})$ consists of two nested ovals.

We claim that $g_4(x,z)$ has four real roots $[x:z]\in\p_\R^1$.
Indeed, suppose that $g_4(x,z)$ has a~root $[1:\xi]\in\p_\R^1$ such that $\xi\not\in\mathbb{R}$. Thus, if $S$ is given by $w^2=(y^2+g_2(x,y))^2+g_4(x,z)$ or by $-w^2=-(y^2+g_2(x,y))^2+g_4(x,z)$, then the curves
$$
\left\{\aligned
&z=\xi x,\\
&w=\big(y^2+g_2(x,z)\big)
\endaligned
\right.
$$
and
$$
\left\{\aligned
&z=\bar{\xi} x,\\
&w=\big(y^2+g_2(x,z)\big)
\endaligned
\right.
$$
are $(-1)$-curves in $S$ that intersect transversally by $1$ point, namely $[0:1:0:1]$. 
Since the sum of these two curves is a~$G$-invariant divisor on $S$ that is defined over $\mathbb{R}$,
we immediately obtain a~contradiction with the~condition $\Pic(S)^G\simeq\mathbb{Z}$, because this divisor has self-intersection zero.
Similarly, if $S$ is given by $w^2=-(y^2+g_2(x,y))^2+g_4(x,z)$ or by $-w^2=(y^2+g_2(x,y))^2+g_4(x,z)$, then the curves
$$
\left\{\aligned
&z=\xi x,\\
&w=i\big(y^2+g_2(x,z)\big)	
\endaligned
\right.
$$
and
$$
\left\{\aligned
&z=\bar{\xi} x,\\
&w=-i\big(y^2+g_2(x,z)\big)	
\endaligned
\right.
$$
are disjoint $(-1)$-curves in $S$.
This also contradicts $\Pic(S)^G\simeq\mathbb{Z}$,
because the sum of these two curves is a~$G$-invariant divisor on $S$ that is defined over $\mathbb{R}$,
and its square is $-2$.

Thus, we see that the~polynomial $g_4(x,z)$ has four real roots in $\p_\R^1$.
Keeping in mind that $C_4$ is smooth, we see that these roots are distinct.
Therefore, changing the coordinates $x$ and $z$,
we may assume that $g_4=\pm xz(x-z)(x-sz)$
for some real number $s>1$. Then
$$
f_4(x,y,z)=\pm\big(y^2+g_2(x,z)\big)^2\pm xz(x-z)(x-sz),
$$
so that the surface $S$ is given in $\p_\R(1,1,1,2)$ by
\begin{equation}
\label{equation:dP2-Kowalevskays}
\pm w^2=\pm\big(y^2+g_2(x,z)\big)^2\pm xz(x-z)(x-sz),
\end{equation}
where a~priori all $\pm$ in these two equations are independent of each other.

We claim that the first two $\pm$ in the~equation \eqref{equation:dP2-Kowalevskays} are actually not the same,
i.e. either we have $S=S_+$ and the~surface $S$ is given by $w^2=-(y^2+g_2(x,z))^2\pm xz(x-z)(x-sz)$,
or $S=S_-$ and $S$ is given by $-w^2=(y^2+g_2(x,z))^2\pm xz(x-z)(x-sz)$.
Indeed, if this is not the case, then the curve
$\{x=w-(y^2+g_2(x,z))=0\}\subset\p_\R(1,1,1,2)$
is a~real $G$-invariant $(-1)$-curve in $S$, which contradicts to $\Pic(S)^G\cong\mathbb{Z}$.
Thus, we see that
\begin{itemize}
\item either $S=S_+$ and $f_4=-(y^2+g_2(x,z))^2\pm xz(x-z)(x-sz)$,
\item or $S=S_-$ and $f_4=(y^2+g_2(x,z))^2\pm xz(x-z)(x-sz)$.
\end{itemize}
Now, we claim that at least two numbers among $g_2(1,0)$, $g_2(0,1)$, $g_2(1,1)$, $g_2(s,1)$ are positive. 
Indeed, if all these four numbers are non-positive, $C_4(\R)$ has points in two non consecutive sectors bordered by the lines $\{x=0\}$, $\{z=0\}$, $\{x=z\}$, $\{x=sz\}$ hence the locus $C_4(\R)$ contains at least two non-nested ovals,
which is impossible, since we already proved that $C_4(\R)$ is either a~single oval or a~two nested ovals. 
By the same reason, the three numbers $g_2(0,1)$, $g_2(1,1)$ and $g_2(s,1)$ cannot be all non-positive.
On the other hand, the group $\Aut(\p_\R^1)$ contains a~subgroup isomorphic to $(\mathbb{Z}/2)^2$ that transitively
permutes the points $[1:0]$, $[0:1]$, $[1:1]$ and $[s:1]$.
Hence, we may change the coordinates $x$ and $z$ further such that $g_2(1,0)>0$.
This simply means that $g_2(x,z)=ax^2+bxz+cz^2$ for some real numbers such that $a>0$. 

In particular, we see that the line $\{z=0\}$ does not contain points in $C_4(\mathbb{R})$,
i.e. all ovals of $C_4$ are contained in the~chart $z\ne 0$ as we assumed earlier in this section. 
Since at least one number among $g_2(0,1)$, $g_2(1,1)$ or $g_2(s,1)$ is positive, at least one of the lines $\{x=0\}$, $\{x=z\}$, $\{x=sz\}$ does not contain points in $C_4(\mathbb{R})$,
so that the real points of this line are contained in the~unbounded part of the complement $\mathbb{R}^2\setminus C_4(\mathbb{R})$,
where $\mathbb{R}^2$ is the subset in $\p_\R^2$ given by $z\ne 0$.
We can choose $f_4(x,y,z)$ to be negative on the unbounded part of $\mathbb{R}^2\setminus C_4(\mathbb{R})$, as explained in the beginning of the section, so $f_4(x,y,z)$ is negative at every real point of this line.
In particular, one of the numbers $f_4(0,0,1)$,  $f_4(1,0,1)$, $f_4(s,0,1)$ is $<0$,
so $f_4=-\big(y^2+ax^2+bxz+cz^2\big)^2\pm xz(x-z)(x-sz)$. It follows that $S=S_+$ and $S$ is given by $w^2=-(y^2+ax^2+bxz+cz^2)^2\pm xz(x-z)(x-sz)$.
\end{proof}

\begin{remark}
\label{remark:Kowalevskays-involution}
Suppose that $\tau$ is given by $[w:x:y:z]\mapsto[w:x:-y:z]$,
and $S$ is a~smooth surface in $\p_\R(1,1,1,2)$ that is given by
$$
w^2=-(y^2+ax^2+bxz+cz^2)^2\pm xz(x-z)(x-sz)
$$
for some real $a,b,c,s$ such that $s\ne 0, 1$.
Then $\Pic(S)^G\cong\mathbb{Z}$. Indeed, let $Y=S/G$. Then $Y$ is a~hypersurface in $\p_\R(1,1,2,2)$ that is given by
$$
w^2=-(u+ax^2+bxz+cz^2)^2\pm xz(x-z)(x-sz).
$$
where we consider $x$ and $z$ as coordinates on $\p_\R(1,1,2,2)$ of weight $1$,
and $u$ and $w$ are coordinates of weight $2$. Then $Y$ is a~so-called Iskovskikh surface.
Namely, the surface $Y$ is a~del Pezzo surface of degree $4$ that is singular at the points $[0:0:1:i]$ and $[0:0:1:-i]$,
its singularities at these points are ordinary double points, and there exists the following commutative diagram:
$$
\xymatrix{
\widetilde{Y}\ar@{->}[d]_{\eta}\ar@{->}[rr]^\nu&& Y\ar@{-->}[d]^{\chi}\\
\p^1_\R\ar@{=}[rr]&&\p^1_\R}
$$
where $\nu$ is the blow up of the points $[0:0:1:i]$ and $[0:0:1:-i]$,
and $\chi$ is the rational map given by $[x:z:u:w]\to[x:z]$.
Then $\eta$ is a~conic bundle, which is defined over $\mathbb{R}$.
Observe that $\eta$ has exactly four geometrically singular fibres --- these are the preimages of the curves in $Y$
that are cut out on $Y$ by the~equations $x=0$, $z=0$, $x=z$ and $x=sz$.
These fibres are conics in $\p_\R^2$ that are irreducible over $\R$.
This implies that $\Pic(\widetilde{Y})\cong\mathbb{Z}^2$, so that $\Pic(Y)\cong\mathbb{Z}$.
Therefore, we conclude that $\Pic(S)^G\cong\mathbb{Z}$.
\end{remark}

If $S$ is a~smooth del Pezzo surface in $\p_\R(1,1,1,2)$ that is given by
$$
w^2=-(y^2+ax^2+bxz+cz^2)^2\pm xz(x-z)(x-sz),
$$
for some real numbers $a,b,c,s$ such that $a>0$, $s>1$, the~curve $C_4$ is smooth,
and either $C_4(\mathbb{R})$ consists of one oval, or the~locus $C_4(\mathbb{R})$ consists of two nested ovals in $\p_\R^2(\R)$,
then the~involution $\tau\in\mathrm{Aut}(S)$ given by
$[x:y:z:w]\mapsto[x:-y:z:w]$ and the~corresponding birational involution in $\Bir(\mathbb{P}^2_{\mathbb{R}})$
will be both called a~\emph{Kowalevskaya involution}.
The class of such involutions in $\Bir(\p_\R^2)$ will be denoted by $\mathfrak{K}_1$.

\begin{example}
\label{example:Kowalevskays}
If $S=S_+$ and $f_4=-(y^2+x^2+z^2)^2-xz(x-z)(x-2z)$, then $C_4(\R)$ consists of a~single oval, the surface $S$ is smooth and rational,
one has $S(\R)\approx \Sph^2$, and the Kowalevskaya involution is given by $[x:y:z:w]\mapsto[x:-y:z:w]$.
Similarly, if $S=S_+$ and $f_4=-(y^2+3(3x-z)(3x-2z))^2-xz(x-z)(x-2z)$, then $C_4(\R)$ consists of two nested ovals, the surface $S$ is smooth and rational, one has $S(\R)\approx \Sph^1\times \Sph^1$, and the Kowalevskaya involution is given by $[x:y:z:w]\mapsto[x:-y:z:w]$.
\end{example}

Let us summarize the results of this section

\begin{proposition}
\label{proposition:Geiser}
If $\Pic(S)^G\simeq\mathbb{Z}$, then $\tau$ is the Geiser involution or a~Kowalevskaya  involution.
\end{proposition}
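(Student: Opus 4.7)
My plan is to split on whether $\tau$ descends trivially or non-trivially via the anticanonical double cover $\pi\colon S \to \p_\R^2$. Since any biregular automorphism of $S$ preserves $-K_S$, the involution $\tau$ induces an involution $\kappa\in\Aut(\p_\R^2)$ preserving the branch quartic $C_4$, and the kernel of $\Aut(S)\to\Aut(\p_\R^2)$ is generated by the Geiser involution $\gamma$. If $\kappa=\id$, then $\tau=\gamma$ is the Geiser involution and we are done.

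Suppose $\kappa\neq\id$. By Proposition \ref{proposition:Kowalevskays}, $\kappa$ has an isolated fixed point $O\notin C_4$, with four bitangents of $C_4$ passing through $O$. After a real linear change of coordinates on $\p_\R^2$, I may assume $\kappa\colon[x:y:z]\mapsto[x:-y:z]$ and $O=[0:1:0]$, so that $f_4$ is even in $y$. The two lifts of $\kappa$ to $\Aut(S)$ differ by $\gamma$:
\[
\tau_+\colon[x:y:z:w]\mapsto[x:-y:z:w]\qquad\text{and}\qquad \tau_-=\gamma\circ\tau_+\colon[x:y:z:w]\mapsto[x:-y:z:-w].
\]
The fixed locus of $\tau_+$ contains the smooth curve $\{y=0\}\cap S$ of geometric genus $1$, whereas the fixed locus of $\tau_-$ is finite (cut out by $y=w=0$). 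If $\tau=\tau_+$, then $F(\tau)\neq\varnothing$, and Proposition \ref{prop:Kowalevskays-involution} immediately yields that $\tau$ is a Kowalevskaya involution.

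The main step, and main obstacle, is to rule out the alternative $\tau=\tau_-$ under the hypotheses $\Pic(S)^G\simeq\Z$ and $\R$-rationality of $S$. I would analyse the quotient $V=S/\langle\tau_-\rangle$, which embeds as a sextic of the form $v^2=u\,a_0(x,z)+u^2a_2(x,z)+u^3a_4$ in $\p_\R(1,1,2,3)$, with four $A_1$ singularities at the images of the fixed points of $\tau_-$, so that $V$ is a singular del Pezzo surface of degree $1$. Using $\Pic(S)^G\otimes\Q\cong\Pic(V)\otimes\Q$, the plan is to exhibit $\tau_-$-invariant divisor classes on $S$ linearly independent from $-K_S$, forcing $\rk\Pic(V)\geqslant 2$. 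A natural source of such classes is the pullback under $\pi$ of the four $\kappa$-invariant bitangents $L_1,\dots,L_4$ through $O$: each pulls back to a pair of $(-1)$-curves $E_i+E'_i$ that $\tau_-$ swaps (since $\tau_+$ fixes both components, by tracking the intersection with the pointwise fixed curve $\{y=0\}\cap S$). A careful intersection-theoretic computation together with the Galois action coming from the $\R$-rationality hypothesis should then produce the extra invariant class, contradicting $\Pic(S)^G\simeq\Z$. Once the case $\tau=\tau_-$ is excluded, the conclusion in Case $\kappa\neq\id$ reduces to Proposition \ref{prop:Kowalevskays-involution}, completing the proof.
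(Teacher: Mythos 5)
Your reduction is sound as far as it goes: the dichotomy on the induced involution $\kappa$, the identification of the two lifts $\tau_{\pm}$ of a nontrivial $\kappa$ differing by the Geiser involution, and the observation that $\tau_+$ fixes the genus-$1$ curve $\{y=0\}\cap S$ pointwise so that Proposition~\ref{prop:Kowalevskays-involution} applies, all match what the proposition requires. But the step you yourself flag as the main obstacle --- excluding $\tau=\tau_-$, equivalently showing that $F(\tau)\ne\varnothing$ whenever $\Pic(S)^G\simeq\mathbb{Z}$ and $S$ is $\R$-rational --- is precisely the nontrivial content here, and your sketch does not close it. The paper's proof is a one-liner because it imports this fact from \cite[\S 8.1]{Yasinsky}; you are attempting to reprove it, which is legitimate, but the argument you outline has a concrete defect: for each $\kappa$-invariant bitangent $L_i$ through $O$ one has $E_i+E_i'=\pi^*L_i\sim -K_S$, so the $\tau_-$-invariant classes produced this way are all proportional to the canonical class and yield no contradiction with $\Pic(S)^G\simeq\mathbb{Z}$. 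Moreover the four bitangents through $O$ need not be individually real, so even a cleverer combination such as $E_1+E_2$ is not automatically Galois-invariant.

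To actually rule out $\tau_-$ one must combine the eigenvalue data of $\tau_-^*$ on $K_S^{\bot}\subset\Pic(S_\C)$ with constraints on the Galois action coming from $\R$-rationality. A Lefschetz fixed-point computation (the fixed locus of $\tau_+$ is a genus-$1$ curve plus two points, of Euler characteristic $2$) shows that $\tau_+^*$ has trace $0$ on $H^2$, hence $\tau_-^*=\gamma^*\tau_+^*$ has a $4$-dimensional $(+1)$-eigenspace inside the $7$-dimensional lattice $K_S^{\bot}$; one then has to argue inside the Weyl group $\mathcal{W}(\mathrm{E}_7)$ that $\sigma^*$ and $\tau_-^*$ necessarily share a common invariant vector there, using that $S(\R)$ is nonempty and connected to restrict the conjugacy class of $\sigma^*$. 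This is the degree-$2$ analogue of the argument the paper carries out explicitly for degree $4$ in Lemma~\ref{lem: KS2=4}, and it is genuinely more involved than "a careful intersection-theoretic computation". Without supplying it (or citing \cite{Yasinsky}), the proof is incomplete.
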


\begin{proof}
This follows from Proposition~\ref{prop:Kowalevskays-involution} and the fact that $F(\tau)\neq\varnothing$ by \cite[\S 8.1]{Yasinsky}. 
\end{proof}

Kowalevskaya involutions and Geiser involutions are never conjugate because one fixes an elliptic curve and the other does not.
If $\Pic(S)^G\simeq\mathbb{Z}$, then the surface $S$ is $G$-birationally rigid by Theorem~\ref{theorem:Manin-Segre}.
Hence, the surface $S$ is uniquely determined by the conjugacy class of the birational involution in $\Bir(\p_\R^2)$ that is induced by $\tau$, and two such involutions are conjugate if and only if the corresponding surfaces are $G$-isomorphic. However, the following question remains open:
\begin{question}
	Is a Kowalevskaya involution uniquely determined by its fixed curve?
\end{question}

\section{De Jonqui\`{e}res involutions}
\label{section:exceptional-conic-bundles}

This and the next sections are both devoted to the study of involutions of real conic bundles. We have treated involutions on conic bundles inducing an involution on $\p^1_\R$ in Section~\ref{subsec: Trepalin classification}, so now we are interested in involutions inducing the identity on the base. 
Following the~classical pattern \cite{DolgachevIskovskikh}, we distinguish two cases: $G$-exceptional and non-$G$-exceptional conic bundles.
The~corresponding birational involutions in $\Bir(\p_\R^2)$ will be called \emph{de Jonqui\`{e}res involutions}
and (\emph{$0$-twisted}, \emph{$1$-twisted}, \emph{$2$-twisted}) \emph{Iskovskikh involutions}, respectively.
In this section, all surfaces are assumed to be $\R$-rational.

\subsection{Explicit models}
Let $S$ be a~real smooth projective surface that is rational over~$\R$,
let $\tau$ be an involution in $\Aut(S)$, and let $G=\langle\tau\rangle$.
Suppose that there exists a~$G$-minimal conic bundle $\pi\colon S\to\p^1_\R$ such that $G$ acts trivially on the base of $\pi$. 

The following definition is an~adaptation of a~well-known definition of an  \emph{exceptional conic bundle} to our setting,  cf. \cite[\S~5.2]{DolgachevIskovskikh} and \cite[Definition 13]{BlancMangolte}.

\begin{definition}
\label{definition:exceptional-conic-bundle}
The $G$-conic bundle $\pi\colon S\to\p^1_\R$ is said to be \emph{$G$-exceptional}
if $\pi$ admits a real section.
\end{definition}

We now give several equivalent characterizations of $G$-exceptional  real conic bundles.

\begin{proposition}
	\label{proposition:exceptional-conic-bundles}
	Let $\pi\colon S\to\p_\R^1$ be a~real $G$-conic bundle. Assume that $S$ is $\R$-rational.
	Then the following conditions are equivalent:
	\begin{enumerate}
		\item $S$ is $G$-exceptional;
		\item $\pi$ admits two real sections $Z_1$ and $Z_2$ such that $Z_1+Z_2$ is $G$-invariant.
		\item the relatively minimal model of the conic bundle $\pi$ is a~Hirzerbruch surface $\FFF_n$;
		\item $\pi(S(\R))=\p_\R^1(\R)$.
	\end{enumerate}
\end{proposition}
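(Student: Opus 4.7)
The plan is to establish the equivalence $(1)\Leftrightarrow(2)$ directly, and then the cycle $(1)\Rightarrow(4)\Rightarrow(3)\Rightarrow(1)$. Two of these are formal. The direction $(2)\Rightarrow(1)$ is trivial, and for $(1)\Rightarrow(2)$ one observes that if $Z$ is a real section then so is $\tau(Z)$, because $\tau$ acts trivially on the base of $\pi$; then $Z+\tau(Z)$ is $G$-invariant (taking $Z_1=Z_2=Z$ when $\tau(Z)=Z$). The implication $(1)\Rightarrow(4)$ is also immediate: any real section $Z$ satisfies $\pi|_Z\colon Z\xrightarrow{\sim}\p^1_\R$, hence $\pi(Z(\R))=\p^1_\R(\R)\subseteq\pi(S(\R))$.

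For $(3)\Rightarrow(1)$, I would use that the Hirzebruch surface $\FFF_n\to\p^1_\R$ admits an infinite family of real sections, parametrised by the points of a fibre. Choose one avoiding the (finitely many) real and conjugate-pair centres of the birational morphism $S\to\FFF_n$ witnessing (3); its proper transform on $S$ is then a real section of $\pi$.

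The main obstacle is $(4)\Rightarrow(3)$, which I would handle by descending to the $\R$-minimal model. Explicitly, contract a maximal $\R$-defined collection of $(-1)$-curves supported in fibres of $\pi$ to obtain an $\R$-minimal conic bundle $\pi'\colon S'\to\p^1_\R$ with $\Pic(S')\simeq\Z^2$. The surface $S'$ remains $\R$-rational since $S$ is, and a case analysis on the types of $(-1)$-curves contracted shows $\pi'(S'(\R))=\pi(S(\R))=\p^1_\R(\R)$. By Proposition \ref{proposition:conic-bundle-real-loci}, the only $\R$-rational $\R$-minimal conic bundles are $Q_{3,1}(0,1)$ and the Hirzebruch surfaces $\FFF_n$. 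Remark \ref{remark:conic-bundle-intervals} rules out the former, since for $Q_{3,1}(0,1)$ the image $\pi'(S'(\R))$ is a proper interval of $\p^1_\R(\R)$. Hence $S'\simeq\FFF_n$, establishing (3). The delicate point is the preservation of $\pi(S(\R))$ under the contractions; this reduces to the observation that the singular point of any real singular fibre is automatically real (being Galois-fixed), so that contracting one of its components does not remove the image of that fibre from $\pi(S(\R))$, while contractions of conjugate pairs of $(-1)$-curves living in non-real conjugate fibres affect neither the real locus of $S$ nor its image in $\p^1_\R(\R)$.
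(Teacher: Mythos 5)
Your proof is correct and uses essentially the same ingredients as the paper's: the equivalence of (1) and (2) via $\tau(Z)$, the passage between real sections and Hirzebruch models via strict transforms, and Proposition~\ref{proposition:conic-bundle-real-loci} together with Remark~\ref{remark:conic-bundle-intervals} to handle condition (4); you merely arrange the implications as a cycle $(1)\Rightarrow(4)\Rightarrow(3)\Rightarrow(1)$ where the paper proves $(1)\Leftrightarrow(3)$ and $(3)\Leftrightarrow(4)$ directly. One small simplification: for $(4)\Rightarrow(3)$ you only need the trivial inclusion $\pi(S(\R))\subseteq\pi'(S'(\R))$ coming from $\pi'\circ f=\pi$ on real points, so the ``delicate point'' about preserving the image under the contractions can be dispensed with entirely.
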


\begin{proof}
	First, (1) and (2) are obviously equivalent, since having a real section $Z_1$ one can take $Z_2=\tau(Z_1)$.
	The implication (1)$\Rightarrow$(3) follows from the existence of a~real section of the conic bundle~$\pi$.
	Conversely, if there exists some~birational morphism $S\to\F_n$ over $\p^1_\R$ for some $n\geqslant 0$,
	then letting $Z_1$ be the strict transform of the special section (or any $0$-section if $n=0$), we get a section defined over $\R$. By Remark \ref{remark:conic-bundle-intervals}, also (3) and (4) are equivalent. 
\end{proof}

Let us show that every $G$-exceptional conic bundle is $G$-birational to a~hypersurface in the weighted projective space $\p(n,n,1,1)$ of degree $2n$.
In fact, we obtain even more general result, namely explicit equations for any $G$-exceptional $G$-minimal conic bundle.

\begin{lemma}
\label{lemma:exceptional}
Suppose that the conic bundle $\pi$ admits two real sections $Z_1$ and $Z_2$ such that $Z_1+Z_2$ is $G$-invariant.
Assume  that $\tau$ acts trivially on the base of $\pi$.
Then there exists a~$G$-equivariant birational map $\chi\colon S\dasharrow X$ that fits into the following commutative $G$-equivariant diagram:
\begin{equation}
\label{equation:exceptional}
\xymatrix{
S\ar@{->}[d]_{\pi}\ar@{-->}[rr]^\chi&& X\ar@{->}[d]^{\eta}\ar@{->}[rr]^{\rho}&&Y\ar@{-->}[dll]\\
\p^1_\R\ar@{=}[rr]&&\p^1_\R&&}
\end{equation}
where $X$ is a~smooth real surface, $\eta$~is a~$G$-minimal conic bundle, and
$Y$ is a~hypersurface in the weighted projective space $\mathbb{P}(n,n,1,1)$ of degree $2n=8-K_S^2$ that is given by
$$
xy=f(z,t)
$$
for some real (homogeneous) polynomial $f(z,t)$ of degree $2n$ that has no multiple roots.
The curves $Z_1$ and $Z_2$ are $\rho\circ\chi$-exceptional, $G$ acts on $Y$ as $[x:y:z:t]\mapsto [y:x:z:t]$,
the~map $Y\dasharrow\p^1_\R$ is given by $[x:y:z:t]\mapsto [z:t]$,
the morphism $\rho$ is the~minimal resolution,
and $x$, $y$, $z$, $t$ are coordinates on $\p(n,n,1,1)$ of weights $n$, $n$, $1$, $1$, respectively.
\end{lemma}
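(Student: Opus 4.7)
The plan is to proceed in four steps. First, I use $G$-equivariant elementary transformations over $\p^1_\R$ to arrange $Z_1 \cap Z_2 = \varnothing$: whenever the sections meet at a point $p$ of some fiber $F_p$, I blow up the $G$-orbit of $p$ and contract the strict transform of the fiber(s) involved. This is a sequence of $G$-equivariant Sarkisov links of type II preserving both the $G$-conic bundle structure and $G$-minimality while strictly decreasing $Z_1 \cdot Z_2$, so iterating produces a $G$-birational map $\chi\colon S \dasharrow X$ onto a smooth $G$-minimal conic bundle $\eta\colon X \to \p^1_\R$ with disjoint sections (still denoted $Z_1, Z_2$).

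Second, since $\tau$ swaps $Z_1$ and $Z_2$, both sections have the same self-intersection $-n$ for some positive integer $n$. A straightforward Picard-group calculation on $X$ (using the basis given by $Z_1$, $F$, and one component $E_j$ from each singular fiber, together with the relations $K_X \cdot F = -2$ and $K_X \cdot E_j = -1$) expresses $Z_2 = Z_1 - nF + \sum E_j$ and yields $K_X^2 = 8-2n$, so the number of singular fibers of $\eta$ is $2n = 8 - K_S^2$ as required.

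Third, the two disjoint $(-n)$-curves $Z_1$ and $Z_2$ can be simultaneously contracted via a projective birational morphism $\rho\colon X \to Y$ onto a normal projective surface $Y$. The images are two isolated cyclic quotient singularities of type $\frac{1}{n}(1,1)$, and $\rho$ is the minimal resolution of $Y$; the conic bundle $\eta$ descends to a rational map $\pi_Y\colon Y \dasharrow \p^1_\R$ whose indeterminacy locus consists of exactly these two singular points.

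The fourth and most delicate step is to embed $Y$ into $\p(n,n,1,1)$ with equation of the form $xy = f(z,t)$. I would take $z, t$ as the pullbacks to $Y$ of homogeneous coordinates on $\p^1_\R$ along $\pi_Y$ (sections of weight $1$), and construct $x, y$ as sections of weight $n$ whose zero loci on $X$ are the unions of those components of singular fibers of $\eta$ meeting $Z_2$ and $Z_1$ respectively; $G$-equivariance forces $\tau$ to interchange $x$ and $y$. The key claim is that $(x,y,z,t)$ realises $Y$ as a hypersurface of degree $2n$ in $\p(n,n,1,1)$, sending the two singular points of $Y$ to the two singular points $[0:1:0:0]$ and $[1:0:0:0]$ of the ambient weighted projective space (which carry the matching $\frac{1}{n}(1,1)$ singularity type). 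The equation must then take the form $xy = c \cdot f(z,t)$ with $\deg f = 2n$, because $xy$ vanishes precisely on the $2n$ singular fibers of $\eta$; absence of multiple roots of $f$ follows from smoothness of $X$, since a multiple root would force two singular-fiber components to coincide or produce an unresolved singularity of $Y$ along a fiber. The hard part will be this final embedding/separation verification: matching the local structure of $Y$ at its two cyclic quotient singularities to the weighted-projective ambient structure, and showing that the four sections generate the relevant graded ring of $Y$, requires a careful local analysis at the $\frac{1}{n}(1,1)$ singularities and using the symmetry provided by $\tau$ to handle both singularities simultaneously.
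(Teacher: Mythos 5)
Your proposal follows essentially the same route as the paper: the reduction to disjoint sections via $G$-equivariant elementary transformations centred at the points of $Z_1\cap Z_2$ is exactly the paper's argument, and your contraction of the two $(-n)$-curves together with the construction of the weight-$n$ coordinates from the fiber components is an expanded version of the paper's appeal to the linear system $|Z_1+Z_2+nF|$ and its reference to the standard exceptional conic bundle model of Dolgachev--Iskovskikh. The step you flag as delicate (verifying the embedding into $\p(n,n,1,1)$) is precisely the part the paper also leaves to the reader, so your outline is consistent with, and somewhat more explicit than, the published proof.
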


\begin{proof}
The~proof is the~same as the~proof of \cite[Lemma-Definition 13, Lemma~15]{BlancMangolte}.
Indeed, if \mbox{$Z_1\cap Z_2=\varnothing$}, we can let $\chi=\mathrm{Id}_{S}$, since
$Z_1^2=Z_2^2=\frac{1}{2}(K_S^2-8)\leqslant 0$ \cite{BlancMangolte},
and the linear system $|Z_1+Z_2+nF|$ gives birational morphism from $S=X$ to the required hypersurface in $\mathbb{P}(n,n,1,1)$,
where $F$ is a~fibre of the conic bundle $\pi$, and $n=-Z_1^2$.
The proofs of the remaining assertions are similar to what is done in \cite[Section~5.2]{DolgachevIskovskikh}
and are left to the reader.
	
If \mbox{$Z_1\cap Z_2\ne\varnothing$}, then singular fibres of the conic bundle~$\pi$ do not contain any point~in the~intersection \mbox{$Z_1\cap Z_2$}.
In this case, there exists a~$G$-equivariant  commutative diagram
$$
\xymatrix{
&W\ar@{->}[dl]_{\alpha}\ar@{->}[dr]^{\beta}&\\
S\ar@{->}[d]_{\pi}&&\widehat{S}\ar@{->}[d]^{\widehat{\pi}}\\
\p^1_\R\ar@{=}[rr]&&\p^1_\R}
$$
such that $\alpha$ is the blow up of the set  $Z_1\cap Z_2$,
and $\beta$ is the contraction of the proper transforms of the fibres of $\pi$
that contain points in $Z_1\cap Z_2$. We have $\widehat{Z}_1\cdot\widehat{Z}_1<Z_1\cdot Z_1$,
where $\widehat{Z}_1$ and $\widehat{Z}_2$ are proper transforms on $\widehat{S}$ of the curves $Z_1$ and $Z_2$, respectively.
Iterating the described construction, we obtain the required map $S\dasharrow X$.
\end{proof}

\begin{remark}
	The equivalent conditions of Proposition \ref{proposition:exceptional-conic-bundles} are thus equivalent to 
	\begin{enumerate}
		\item[(5)] there exists a $G$-equivariant diagram (\ref{equation:exceptional}).
	\end{enumerate}
\end{remark}

\begin{remark}
\label{remark:exceptional-rational}
Observe that all surfaces in the diagram \eqref{equation:exceptional} are rational over the field $\R$,
and this does not impose any additional restriction on the polynomial $f(z,t)$.
\end{remark}

If the conic bundle $\pi\colon S\to\p^1_\R$ is $G$-exceptional and $K_S^2=8-2n\leqslant 4$,
the involution $\tau$ and the~corresponding birational involution in $\Bir(\mathbb{P}^2_{\mathbb{R}})$
will be both called \emph{de Jonqui\`{e}res involution} of genus $g=n-1\geqslant 1$, where $2n$ is the number of singular fibres of the conic bundle $\pi$.
The class of such birational  involutions in $\Bir(\mathbb{P}^2_{\mathbb{R}})$ will be denoted by $\mathfrak{dJ}_{g}$.
Note that it follows from Lemma~\ref{lemma:exceptional} that de Jonqui\`{e}res involutions in $\Bir(\mathbb{P}^2_{\mathbb{R}})$
are given by the same formulas as classical de Jonqui\`{e}res involutions in $\Bir(\mathbb{P}^2_{\mathbb{C}})$.

\subsection{Real hyperelliptic curves}
\label{ss:fixedcurve}
Let us use assumptions and notations of Lemma~\ref{lemma:exceptional}. The fixed locus of the~biregular involution $\tau\in\Aut(X)$ is the curve $C\simeq\rho(C)$,
where $\rho(C)$ is given by
$$
\left\{\aligned
&x=y,\\
&x^2=f(z,t).
\endaligned
\right.
$$
If $n\geqslant 3$, then $\rho(C)$ is a~real hyperelliptic curve of genus $g=n-1$ with hyperelliptic covering
$\nu\colon C\to\p^1_\R$ given by $[x:y:z:t]\mapsto [z:t]$.
If $n=2$, then $\rho(C)$ is an elliptic curve. Over $\C$, the~curve $C$ is determined by the~roots in $\p^1_{\C}$ of the~polynomial $f(z,t)$.
However, over $\R$, there are two forms of this (complex) curve: the~curve $C=C_{+}$ and the~curve $C_{-}$ that is given in $\mathbb{P}(n,1,1)$ by $x^2=-f(z,t)$.
These two real curves are isomorphic over $\C$, but they are not \emph{always} isomorphic over $\mathbb{R}$.
If $C_+\cong C_-$, the curve $C$ is called a~\emph{Gaussian curve} in \cite{HuismannLattarulo}.

\begin{lemma}[cf. {\cite[\S~2]{HuismannLattarulo}}]
\label{lemma:HuismanaLattarulo}
Suppose that $n\geqslant 3$. Then $C_{+}\simeq C_{-}$ if and only if there exists $\beta\in\mathrm{GL}_{2}(\mathbb{R})$ such that $\beta^*(f)=-f$.
\end{lemma}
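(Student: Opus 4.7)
The plan is to exploit the canonical nature of the hyperelliptic double cover $C_\pm\to\p_\R^1$ (available precisely because $g=n-1\geqslant 2$ when $n\geqslant 3$) and to reduce the statement to a direct computation with line bundles on $\p_\R^1$.

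\emph{Sufficiency.} Given $\beta=\bigl(\begin{smallmatrix}a & b\\ c & d\end{smallmatrix}\bigr)\in\mathrm{GL}_2(\R)$ with $\beta^*f=-f$, the automorphism of the weighted projective space
$$\Phi\colon\p(n,1,1)\longrightarrow\p(n,1,1),\qquad[x:z:t]\longmapsto[x:az+bt:cz+dt]$$
pulls back the defining equation $x^2+f(z,t)=0$ of $C_-$ to $x^2+\beta^*f(z,t)=x^2-f(z,t)=0$, and therefore restricts to an $\R$-isomorphism $C_+\xrightarrow{\sim}C_-$.

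\emph{Necessity.} Conversely, let $\varphi\colon C_+\xrightarrow{\sim}C_-$ be an isomorphism over $\R$. For $n\geqslant 3$ the genus $g=n-1$ is at least~$2$, so each curve $C_\pm$ carries a unique hyperelliptic involution $\iota_\pm$ (characterized, for instance, as the only non-trivial involution with rational quotient; uniqueness follows from the uniqueness of the $g^1_2$ on a hyperelliptic curve of genus $\geqslant 2$). Hence $\varphi\circ\iota_+=\iota_-\circ\varphi$, and $\varphi$ descends to an automorphism $\bar\varphi\in\Aut(\p_\R^1)=\PGL_2(\R)$, which we lift to some $\beta\in\mathrm{GL}_2(\R)$.

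Now regard $C_\pm$ as the relative spectrum over $\p_\R^1$ of the $\OOO$-algebra $\OOO\oplus\OOO(-n)$, whose multiplication $\OOO(-n)\otimes\OOO(-n)\to\OOO$ is encoded by the section $\pm f\in H^0(\p_\R^1,\OOO(2n))$. The pullback of $C_-$ by $\beta$ is the double cover with defining section $-\beta^*f$, and $\varphi$ becomes an isomorphism of such double covers between $C_+$ and $\beta^*C_-$ over $\p_\R^1$. Since these covers share the underlying line bundle $\OOO(-n)$, the isomorphism must restrict on the $(-1)$-eigenspace of the Galois involution to multiplication by a global scalar $\mu\in H^0(\p_\R^1,\OOO)^\times=\R^\times$, and compatibility with the algebra structure forces the identity $-\beta^*f=\mu^2 f$ in $H^0(\p_\R^1,\OOO(2n))$. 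Finally, replacing $\beta$ by $\lambda\beta$ with $\lambda=|\mu|^{-1/n}>0$ multiplies $\beta^*f$ by $\lambda^{2n}=\mu^{-2}$ and yields $\beta^*f=-f$, as required.

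The main obstacle, and the point at which the hypothesis $n\geqslant 3$ enters essentially, is the descent of $\varphi$ to $\p_\R^1$ through the canonical identification of the hyperelliptic involution (which fails in genus $1$). Once descent is secured, the rest is formal: the equation $-\beta^*f=\mu^2 f$ is a direct consequence of the classification of double covers with fixed underlying line bundle, and the positive scalar $\mu^2$ can always be absorbed into $\beta$ because every positive real is a $2n$-th power.
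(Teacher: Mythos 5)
Your argument is correct, but it is not what the paper does: the paper disposes of this lemma in one line by citing \cite[Theorem~2.3]{HuismannLattarulo}, so your write-up is in effect a self-contained proof of (the relevant case of) that cited result. The sufficiency direction is the same explicit coordinate change anyone would write down. For necessity, your route --- descent of $\varphi$ to $\PGL_2(\R)$ via the uniqueness of the hyperelliptic involution in genus $g=n-1\geqslant 2$, then the identification of $C_\pm$ with $\mumu_2$-covers of $\p^1_\R$ determined by the algebra $\OOO\oplus\OOO(-n)$ and a section of $\OOO(2n)$, forcing $-\beta^*f=\mu^2 f$ with $\mu\in\R^\times$, and finally absorbing the positive square into $\beta$ using that $f$ is homogeneous of degree $2n$ --- is the standard proof of the Huisman--Lattarulo criterion, and it correctly isolates where $n\geqslant 3$ is needed (for $n=2$ the involution does not descend canonically, which is exactly why the paper treats the elliptic case separately). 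Two cosmetic points: the phrase ``Galois involution'' in your necessity step refers to the deck transformation of $C_\pm\to\p^1_\R$, not to the $\Gal(\C/\R)$-action that pervades the rest of the paper, so it would be safer to say ``hyperelliptic involution''; and the direction of the identity $-\beta^*f=\mu^2f$ versus $f=\mu^2(-\beta^*f)$ depends on which cover you take as source, but this is harmless since only the positivity of the constant is used in the final rescaling.
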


\begin{proof}
The required assertion follows from \cite[Theorem 2.3]{HuismannLattarulo}.
\end{proof}

\begin{example}
Let $f(z,t)=zt(z-t)(z-2t)(z^2+t^2)$.
Then there is no non trivial elements $\beta\in\mathrm{GL}_{2}(\mathbb{R})$ such that $\beta^*(f)=-f$.
Thus, by Lemma~\ref{lemma:HuismanaLattarulo}, we see that $C_+$ is not isomorphic to $C_-$ over the reals.
\end{example}

For the surface $Y$ given by $xy=f(z,t)$, we set $Y_{+}=Y$, and we let $Y_{-}$ be $xy=-f(z,t)$.
Let $X_+=X$ and $X_-$ be their~minimal resolutions, respectively. The action of $G$ by permuting $x$ and $y$ gives these surfaces the structure of $G$-varieties.
Then Lemma~\ref{lemma:HuismanaLattarulo} gives: 

\begin{corollary}\label{corollary:HuismanaLattarulo}
In the notation of Lemma~\ref{lemma:exceptional}, suppose that $n\geqslant 3$.
Then one has a $G$-isomorphism $X_{+}\simeq X_{-}$ over $\mathbb{R}$ if and only if $C_{+}\simeq C_{-}$ over $\mathbb{R}$. 
\end{corollary}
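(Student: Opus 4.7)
The plan is to handle the two implications separately, using Lemma~\ref{lemma:HuismanaLattarulo} to translate the statement $C_+\simeq C_-$ into the purely algebraic condition that there exists $\beta\in\mathrm{GL}_2(\mathbb{R})$ with $\beta^*f=-f$.

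For the forward direction, I would argue that the irrational fixed curve of $\tau$ on $X_\pm$ is a well-defined isomorphism invariant of the $G$-surface by Lemma~\ref{lemma:fixed-curve-unique}. Since the action of $G$ on $Y_\pm$ swaps $x$ and $y$, the fixed locus on $Y_\pm$ is cut out by $x=y$, giving the curve $x^2=\pm f(z,t)$ in $\mathbb{P}(n,1,1)$, which is precisely $C_\pm$. For $n\geqslant 3$ and $f$ without multiple roots, each $C_\pm$ is smooth, so the minimal resolution $\rho$ restricts to an isomorphism from the proper transform on $X_\pm$ onto $C_\pm\subset Y_\pm$. Hence any $G$-isomorphism $X_+\to X_-$ restricts to an $\mathbb{R}$-isomorphism $C_+\to C_-$.

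For the backward direction, given $\beta\in\mathrm{GL}_2(\mathbb{R})$ with $f\circ\beta=-f$, I would define the map
\[
\varphi\colon Y_+\longrightarrow Y_-,\qquad [x:y:z:t]\longmapsto[x:y:\beta(z,t)].
\]
If $xy=f(z,t)$ holds on $Y_+$, then $-f(\beta(z,t))=f(z,t)=xy$, which shows $\varphi(Y_+)\subset Y_-$; the inverse is given by $\beta^{-1}$, so $\varphi$ is an isomorphism of real surfaces. Since $\varphi$ does not touch the coordinates $x,y$, it commutes with the $G$-action $[x{:}y{:}z{:}t]\mapsto[y{:}x{:}z{:}t]$, and because $\rho\colon X_\pm\to Y_\pm$ is the minimal resolution (hence canonical with respect to $G$), the map $\varphi$ lifts uniquely to a $G$-isomorphism $X_+\to X_-$.

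The forward direction is the one that requires more attention: I must make sure that the curve recovered abstractly as the fixed locus of $\tau$ on $X_\pm$ is really isomorphic to $C_\pm$ as defined. This comes down to checking that the fixed points of $\tau$ sitting over the singular points of $Y_\pm$ (where $x=y=0$ and $f(z,t)=0$) are isolated and do not contribute extra curve components to the resolution; since $f$ has no multiple roots, each such singularity of $Y_\pm$ is an ordinary double point with $G$ acting by $\pm 1$ on the exceptional $(-2)$-curve, so the resolution only adds isolated $\tau$-fixed points and the irrational fixed curve is indeed the proper transform of $\{x=y\}\cong C_\pm$. Beyond this verification, the remainder of the argument is formal.
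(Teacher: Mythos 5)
Your strategy is the one the paper intends: the paper deduces this corollary directly from Lemma~\ref{lemma:HuismanaLattarulo}, and your two implications (invariance of the fixed curve for the forward direction; the map $[x:y:z:t]\mapsto[x:y:\beta(z,t)]$, lifted through the minimal resolutions, for the backward one) supply exactly the details the paper omits. The backward direction is correct as written: $\varphi$ is well defined on $\mathbb{P}(n,n,1,1)$ because $\beta$ is linear in $(z,t)$, it visibly commutes with the swap of $x$ and $y$, and it lifts uniquely and equivariantly to the minimal resolutions.

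The one place you go wrong is precisely the verification you single out as delicate. The points of $Y_\pm$ with $x=y=0$ and $f(z,t)=0$ are \emph{not} singular points of $Y_\pm$: in the affine chart $t\neq 0$ the surface is $xy=f(z,1)$ in $\mathbb{A}^3$, and at such a point the differential equals $(0,0,-f'(z_0,1))$, which is nonzero because $f$ has no multiple roots. These are smooth points of $Y_\pm$ lying on the fixed curve (they are the $2n$ ramification points of the hyperelliptic covering $C_\pm\to\p^1_\R$), and nothing is blown up there. The actual singular locus of $Y_\pm$ consists of the two points $[1:0:0:0]$ and $[0:1:0:0]$, where $Y_\pm$ meets the singular line $\{z=t=0\}$ of $\mathbb{P}(n,n,1,1)$; each is a cyclic quotient singularity of type $\tfrac{1}{n}(1,1)$ (not an ordinary double point once $n\geqslant 3$), resolved by a single rational curve of self-intersection $-n$. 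Your desired conclusion survives, but for a different reason: these two singular points are \emph{interchanged} by $\tau$, so the two exceptional curves on $X_\pm$ are swapped by the lifted involution and contribute nothing to the fixed locus, while the curve $\{x=y\}\cap Y_\pm\cong C_\pm$ avoids them entirely and is therefore carried isomorphically into $X_\pm$. With this correction the forward direction, and hence the whole argument, is complete.
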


\subsection{$G$-birational classification}
\label{subsection:de-Jonquires-involution-conjugation}

We are ready to prove the main result of this section.
Let $S$ and $S^\prime$ be two real smooth  $\mathbb{R}$-rational projective surfaces,
and let $\tau$ and $\tau^\prime$ involutions in $\Aut(S)$ and $\Aut(S^\prime)$, respectively.
Set $G=\langle\tau\rangle$ and $G^\prime=\langle\tau^\prime\rangle$.
Suppose, in addition,  there are $G$-minimal conic bundle $\pi\colon S\to\p^1_\R$
and $G^\prime$-minimal conic bundle \mbox{$\pi^\prime\colon S^\prime\to\p^1_\R$}.

\begin{theorem}
\label{theorem:exceptional-conic-bundles-n-large}
Suppose that the fixed curve $F(\tau)$ is a~smooth curve of genus $g\geqslant 2$,
the~$G$-conic bundle $\pi$ is~$G$-exceptional, and the~$G^\prime$-conic bundle $\pi^\prime$ is $G^\prime$-exceptional.
Then there is a~birational map $\vartheta\colon S\dasharrow S^\prime$ such that $\tau^\prime=\vartheta\circ\tau\circ\vartheta^{-1}$
if and only if $F(\tau)\simeq F(\tau^\prime)$.
\end{theorem}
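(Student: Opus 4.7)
The forward implication is immediate from the~defining property of the~fixed curve (Section~\ref{subsection:fixed-curves}): $F(\iota)$ depends only on the~conjugacy class of $\iota\in\Bir(\p^2_\R)$. For the~converse, my plan is to reduce both sides to the~explicit weighted-projective models provided by Lemma~\ref{lemma:exceptional} and then match them up using the~hypothesised isomorphism of fixed curves.

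Applying Lemma~\ref{lemma:exceptional} to each side gives $G$-equivariant birational maps $S\dashrightarrow X$ and $S'\dashrightarrow X'$, where $X$ and $X'$ are the~minimal resolutions of hypersurfaces $Y:xy=f(z,t)$ and $Y':xy=f'(z,t)$ in $\p(n,n,1,1)$, with $2n=2g+2$, and in both cases the~involution acts as the~swap $[x:y:z:t]\mapsto[y:x:z:t]$. The~fixed loci on $X$ and $X'$ are the~smooth hyperelliptic curves $C:x^2=f$ and $C':x^2=f'$ in $\p(n,1,1)$, which represent $F(\tau)$ and $F(\tau')$. It therefore suffices to construct a~$G$-equivariant $\R$-isomorphism $Y\to Y'$, for this lifts to a~$G$-isomorphism $X\cong X'$ and, composed with the~birational maps above, produces the~required $\vartheta$.

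Given an~$\R$-isomorphism $\varphi\colon C\to C'$, the~canonicity of the~hyperelliptic $g^1_2$ (valid for $g\geq 2$) makes $\varphi$ descend to the~common base $\p^1_\R$, lifting to some $\beta\in\mathrm{GL}_2(\R)$. A~fibrewise local analysis of the~double cover, together with the~fact that $f$ has no multiple roots, forces $\varphi$ to take the~form $[x:z:t]\mapsto[\alpha x:\beta(z,t)]$ for a~constant $\alpha\in\R^*$, yielding
\[
\beta^*(f')\;=\;\alpha^2 f,
\]
so that the~proportionality constant $\mu=\alpha^2$ is \emph{strictly positive}. With $\mu>0$ secured, the~assignment $[x:y:z:t]\mapsto[\sqrt{\mu}\,x:\sqrt{\mu}\,y:\beta(z,t)]$ defines an~$\R$-isomorphism $Y\to Y'$ commuting with the~swap involution, since $(\sqrt{\mu}\,x)(\sqrt{\mu}\,y)=\mu f(z,t)=\beta^*(f')(z,t)$.

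The~main obstacle is precisely the~positivity of $\mu$: a~priori one only extracts $\beta^*(f')=\mu f$ with $\mu\in\R^*$, and in the~``Gaussian'' situation of Lemma~\ref{lemma:HuismanaLattarulo} (where $\beta^*(f)=-f$ is possible) the~formula above would force $\sqrt{\mu}\notin\R$. The~delicate point in the~argument is therefore to exploit that both $C$ and $C'$ are written as $x^2=+f$ and $x^2=+f'$ respectively in order to conclude that the~fibrewise factor $\alpha$ is a~nonzero real scalar, ruling out the~sign swap. Once this is done, the~construction of the~conjugating map is as above.
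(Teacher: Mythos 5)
Your proposal is correct and follows essentially the same route as the paper: reduce both involutions to the model $xy=f(z,t)$ in $\p(n,n,1,1)$ via Lemma~\ref{lemma:exceptional}, translate $F(\tau)\simeq F(\tau')$ into $\beta^{*}(f')=\mu f$ for some $\beta\in\mathrm{GL}_2(\R)$, and conjugate by the induced coordinate change. The only divergence is at the sign issue you flag: the paper quotes \cite[Lemma~2.1]{HuismannLattarulo} to get $\beta^{*}(f)=\pm f'$ and then removes the minus sign using Lemma~\ref{lemma:HuismanaLattarulo}, whereas you obtain positivity directly by noting that the $x$-coordinate of an isomorphism between the two curves $x^2=+f$ and $x^2=+f'$ is multiplied by a real constant $\alpha$ (constant because $f$, $f'$ are squarefree of the same degree), forcing $\mu=\alpha^{2}>0$; both arguments are valid and amount to the same computation inside the function field.
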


\begin{proof}
Suppose that $F(\tau)\simeq F(\tau^\prime)$.
To complete the proof, it is enough to show that there exists a~birational map $\vartheta\colon S\dasharrow S^\prime$ such that $\tau^\prime=\vartheta\circ\tau\circ\vartheta^{-1}$. By Lemma~\ref{lemma:exceptional},
we may assume that $S$ is given by $xy=f(z,t)$ in $\mathbb{P}(n,n,1,1)$ for some homogeneous polynomial $f(z,t)$ of degree $2g+2$ that does not have multiple roots,
and~$\tau$~is given by $[x:y:z:t]\mapsto[y:x:z:t]$.
Likewise, we may assume that $S^\prime$ is given by $xy=f^\prime(z,t)$ in $\mathbb{P}(n,n,1,1)$ for some polynomial $f^\prime(z,t)$ of degree $2g+2$ that does not have multiple roots,
and $\tau^\prime$ is given by $[x:y:z:t]\mapsto[y:x:z:t]$.

If $F(\tau)\simeq F(\tau^\prime)$,
then it follows from  \cite[Lemma 2.1]{HuismannLattarulo} that there are real numbers $a$, $b$, $c$, $d$ such that $ad-bc\ne 0$ and $f(az+bt,cz+dt)=\pm f^\prime(z,t)$. Moreover, using Lemma \ref{lemma:HuismanaLattarulo}, we may choose the sign to be $+$.
Now $f(az+bt,cz+dt)=f^\prime(z,t)$ and we let $\vartheta$ to be the map given by $[x:y:z:t]\mapsto [x:y:az+bt:cz+dt]$.
\end{proof}

\section{Iskovskikh involutions}
\label{section:non-exceptional}

Now, we present another three constructions of birational involutions of the real projective plane that fix hyperelliptic curves.
Over $\mathbb{C}$, all these involutions are conjugate to de Jonqui\`{e}res involutions described in Section~\ref{section:exceptional-conic-bundles},
which is not the case over $\mathbb{R}$.
The involutions described in this section are given by the involutions of non-$G$-exceptional real conic bundles (see Definition~\ref{definition:exceptional-conic-bundle}).
Since Vasily Iskovskikh pioneered the study of conic bundles over algebraically non-closed fields
\cite{Iskovskikh1967,Iskovskikh1970,Iskovskikh80},
we decided to call the constructed birational involutions (\mbox{\emph{$0$-twisted}}, \emph{$1$-twisted}, \emph{$2$-twisted}) \emph{Iskovskikh involutions}.

First, we give an explicit birational model of a~non-$G$-exceptional real conic bundle,
and present a tool that can be used to check whether two such $G$-conic bundles are $G$-birational or not.
This is done in Theorems~\ref{theorem:main-non-exceptional} and~\ref{theorem:main-non-exceptional-equivalence}, respectively.

\subsection{Good birational model}
\label{subsection:good-model-non-exceptional}
Let $\pi\colon S\to\p^1_\R$ be a~smooth real $G$-minimal conic bundle,
where $G=\langle\tau\rangle$ for an involution $\tau\in\mathrm{Aut}(S)$ that acts by identity on $\p^1_\R$.
By Proposition~\ref{proposition:exceptional-conic-bundles}, this conic bundle  is not $G$-exceptional
if and only if $\pi(S(\R))$ is a~union of intervals in $\p^1_\R(\R)$.
The main result of this section is the following theorem, which we will prove in Section~\ref{subsection:birational-model-proof}.

\begin{theorem}
\label{theorem:main-non-exceptional}
Suppose that the $G$-minimal conic bundle $\pi\colon S\to\p^1_\R$ is not $G$-exceptional.
Then there exists a $G$-equivariant commutative diagram
\begin{equation}
\label{equation:main-non-exceptional}
\xymatrix{
S\ar@{->}[d]_{\pi}\ar@{-->}[rr]^{\chi}&&X\ar@{->}[d]^{\eta}\\
\p^1_\R\ar@{->}[rr]_\phi&&\mathbb{P}^1_\R}
\end{equation}
where $\chi$ is a birational map, $\phi\in\mathrm{PGL}_2(\mathbb{R})$, $X$ is a smooth surface, $\eta$ is a $G$-minimal conic bundle,
the fibre $\eta^{-1}([1:0])$ is smooth and does not have real points,
the quasi-projective surface $Y=X\setminus \eta^{-1}([1:0])$ is given in $\mathbb{P}^2_\mathbb{R}\times\mathbb{A}^1_{\mathbb{R}}$ by
\begin{equation}
\label{equation:non-exceptional-model}
A(t)x^2+B(t)xy+C(t)y^2=H(t)z^2
\end{equation}
for some polynomials $A,B,C,H\in\mathbb{R}[t]$ such that $(B^2-4AC)H$ does not have multiple roots and $\deg(B^2-4AC)$ is even,
the involution $\tau$ acts on the surface $Y$ by
$$
([x:y:z],t)\mapsto([x:y:-z],t),
$$
and the restriction map $\eta\vert_{Y}\colon Y\to\mathbb{P}^1_\R\setminus [1:0]=\mathbb{A}^1_{\R}$ is the map given by $([x:y:z],t)\mapsto t$,
where $([x:y:z],t)$ are coordinates on $\mathbb{P}^2_\mathbb{R}\times\mathbb{A}^1_{\mathbb{R}}$.
Moreover, the following holds:
\begin{enumerate}
\item the polynomial $H(t)$ has only real roots and its leading coefficient is negative,
\item fibres of $\eta$ over roots of the polynomial $H(t)$ are singular irreducible conics.
\end{enumerate}
\end{theorem}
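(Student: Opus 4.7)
The plan is to realize $S$ birationally as a double cover of a $\p^1$-bundle over $\p^1_\R$, with $\tau$ being the Galois involution of that double cover, and then to write down an explicit equation.

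First, I analyze the fixed locus. Since $\tau$ acts trivially on the base and non-trivially on a general fiber of $\pi$ (a smooth conic), the restriction $\tau|_F$ is an involution with exactly two fixed points. The divisorial part $D$ of $\mathrm{Fix}(\tau)$ is therefore a (possibly reducible) double section of $\pi$. Form the quotient $V = S/\langle\tau\rangle$; it inherits a conic bundle structure $\pi_V : V \to \p^1_\R$ whose general fiber is $\p^1$ — the quotient of a smooth conic by an involution with two fixed points. Hence $V$ is birationally a $\p^1$-bundle over $\p^1_\R$.

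Next, I put the model in standard form. After $G$-equivariant elementary transformations over the base (automatically $G$-equivariant because $\tau$ acts trivially on the base), I may assume the minimal resolution of $V$ is a Hirzebruch surface $\FFF_n$. Using homogeneous fiber coordinates $[x:y]$ on $\FFF_n$ and the base coordinate $t$, the branch divisor of $\vartheta: S \to V$ decomposes as the image of $D$ together with some fibers of $\pi_V$; the former is cut out by $A(t)x^2 + B(t)xy + C(t)y^2 = 0$ and the latter by $H(t) = 0$. The double cover is therefore
\[
H(t)z^2 = A(t)x^2 + B(t)xy + C(t)y^2
\]
in $\p^2_\R \times \AAA^1_\R$, with $\tau$ acting by $z \mapsto -z$. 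Completing this to a smooth projective model over $\p^1_\R$ yields the desired $\eta : X \to \p^1_\R$.

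Then I normalize the base coordinate. Because $\pi$ is not $G$-exceptional, Proposition~\ref{proposition:exceptional-conic-bundles} provides a real point $p \in \p^1_\R(\R) \setminus \pi(S(\R))$; choosing $\phi \in \PGL_2(\R)$ sending $p$ to $[1:0]$ and absorbing it into the diagram makes the fiber of $\eta$ over $[1:0]$ smooth and real-point-free. This forces the leading coefficient of $H$ to be negative, and ensures $\deg(B^2 - 4AC)$ is even (no singular fiber at infinity). Smoothness of $X$ combined with $G$-minimality translates into the condition that $(B^2-4AC)H$ has no multiple roots. Singular fibers of $\eta$ occur exactly over the roots of this product: roots of $H$ give singular conics $A x^2 + B xy + C y^2 = 0$, irreducible over $\R$ precisely when $B^2 - 4AC < 0$ there, while roots of $B^2 - 4AC$ give the (over $\R$) reducible singular conics. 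Finally, $G$-minimality forces every component of a reducible singular fiber to be exchanged by $\Gal(\C/\R)$, which in turn forces all roots of $H$ to be real.

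The main obstacle lies in the second step: choosing the elementary transformations carefully so that $V$ takes Hirzebruch form while the branch divisor assumes the prescribed shape, with everything remaining $G$-equivariant. A related subtlety is that the bisection part of the branch locus may meet singular fibers of $\pi_V$, requiring delicate local analysis to show that the resulting double cover equation extends to a global form with polynomial $A, B, C, H$, and to simultaneously arrange for the sign, parity, and irreducibility conditions on $H$ and $B^2 - 4AC$.
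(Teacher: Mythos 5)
Your overall strategy (realize $S$ birationally as a double cover of a ruled surface and read off the equation) is close in spirit to what the paper does via the generic fibre --- a conic over $\R(t)$ with a linearized $G$-action, which after projection from the isolated fixed point of the involution yields exactly the normal form $A(t)x^2+B(t)xy+C(t)y^2=H(t)z^2$. But there is a genuine gap at the step you yourself flag as ``the main obstacle'' and then leave undone. The coordinate manipulations that produce the normal form (equivalently, your elementary transformations bringing $V=S/\tau$ to a Hirzebruch surface) are fibrewise birational modifications that in general \emph{destroy} relative $G$-minimality: the resulting model can acquire non-real roots of $H$, or real roots of $H$ over which the fibre splits into two real lines, each individually $\tau$- and Galois-invariant and hence contractible. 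You then invoke ``$G$-minimality forces every component of a reducible singular fibre to be exchanged by $\Gal(\C/\R)$'' to conclude that $H$ has only real roots and irreducible fibres over them --- but this only applies if the surface in normal form \emph{is} $G$-minimal, which you have not arranged. The actual content of the proof is to show that the relative $G$-equivariant MMP can be carried out \emph{within} the class of equations $Ax^2+Bxy+Cy^2=Hz^2$: one must exhibit explicit coordinate changes (as in Lemma~\ref{lemma:G-MMP-form}) that contract a pair of real lines over a real root of $H$, or a conjugate pair of singular fibres over a non-real root of $H$, and land again on an equation of the same shape with $H$ replaced by $H/(t-\epsilon)$ or $H/((t-\epsilon)(t-\bar\epsilon))$ and with $(B^2-4AC)H$ still squarefree. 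Without this, the statement that the final $G$-minimal model $X$ still satisfies \eqref{equation:non-exceptional-model} with conditions (1)--(2) is unproved.

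Two secondary points. First, your description of the branch locus is incorrect as stated: the branch divisor of the quotient map $\vartheta\colon S\to V$ is only the image of the fixed bisection $D$; fibres of $\pi_V$ are not branch components. The fibres over $H=0$ enter only after the substitution $w=Hz$, which rewrites the equation as $w^2=H\cdot(Ax^2+Bxy+Cy^2)$, an honest double cover of the ruled surface branched over $\{q=0\}\cup\{H=0\}$ --- but that surface is singular over the nodes of this branch divisor and is only birational to the model $Y$ of the theorem, so the identification needs care (and this is again where the minimality issue hides). Second, the evenness of $\deg(B^2-4AC)$ does not follow merely from ``no singular fibre at infinity'': the number of singular fibres of a conic bundle need not be even. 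One needs the observation that the roots of $B^2-4AC$ are precisely the branch points over $\mathbb{A}^1_\R$ of the double cover $F(\tau)\to\p^1_\R$ induced by $\eta$, that this cover is unramified over $[1:0]$ because the fibre there is smooth, and that a double cover of $\p^1$ has an even number of branch points. Likewise, the negativity of the leading coefficient of $H$ is a normalization (multiply the equation by $-1$, which leaves $B^2-4AC$ unchanged), not something ``forced'' by the choice of the point at infinity.
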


\begin{remark*}
	Note that $A\ne 0$ and $C\ne 0$ in \eqref{equation:non-exceptional-model} since our conic bundle $\pi\colon S\to\p_\R^1$ is assumed to be non-$G$-exceptional. 
\end{remark*}

\begin{corollary}
\label{corollary:fixed-curve-non-exceptional}
In the assumptions and notations of Theorem~\ref{theorem:main-non-exceptional},
either $F(\tau)=\varnothing$, or~the $G$-fixed curve $F(\tau)$ is birational to the real algebraic curve given in $\mathbb{A}^2_\R$ by $w^2=B(t)^2-4A(t)C(t)$,
which is elliptic if $\deg(B^2-4AC)=4$ and hyperelliptic if $\deg(B^2-4AC)\geqslant 6$. 
\end{corollary}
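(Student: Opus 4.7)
The plan is to extract the $\tau$-fixed locus on $X$ directly from the explicit affine model furnished by Theorem~\ref{theorem:main-non-exceptional}, and then to recognise the only curve component of this locus as a hyperelliptic double cover of $\p^1_\R$ whose branch data are dictated by $B^2-4AC$.

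First, I would identify the fixed locus on the open piece $Y$. The involution of $\p^2_\R$ given by $[x:y:z]\mapsto[x:y:-z]$ has fixed set equal to the line $\{z=0\}$ together with the isolated point $[0:0:1]$; restricting to $Y$, this yields the curve
\[
C_0 := \{z=0\}\cap Y = \bigl\{([x:y:0],t)\in\p^1_\R\times\mathbb{A}^1_\R : A(t)x^2+B(t)xy+C(t)y^2=0\bigr\}
\]
together with the isolated points $([0:0:1],t_0)$ with $H(t_0)=0$. To rule out additional curve components elsewhere on $X$, I~would use that $\tau$ restricts to the non-identity involution $z\mapsto -z$ on the generic (smooth) fibre of $\eta$, so the fixed locus on any individual fibre is zero-dimensional unless $\tau$ acts as the identity on that whole fibre; and in the latter case the component would be (part of) a conic, hence geometrically rational and invisible to $F(\tau)$. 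Consequently the only candidate non-rational component of the $\tau$-fixed locus is the closure $\overline{C_0}\subset X$, which is a double section of $\eta$.

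Next, I would parametrise $C_0$ to exhibit the desired birational model. Recall from Theorem~\ref{theorem:main-non-exceptional} that $A,C\not\equiv 0$. Working in the affine chart $y\ne 0$ and setting $u=x/y$, the defining equation of $C_0$ becomes the quadratic $A(t)u^2+B(t)u+C(t)=0$, and the standard substitution $w=2A(t)u+B(t)$ transforms it into
\[
w^2=B(t)^2-4A(t)C(t),
\]
giving a birational equivalence between $\overline{C_0}$ and the affine plane curve $D\subset\mathbb{A}^2_\R$ cut out by this equation.

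Finally, I would compute the genus of $D$. The hypothesis that $(B^2-4AC)H$ has no multiple roots forces $B^2-4AC$ to be squarefree, so $D$ is smooth and its smooth projective completion is a hyperelliptic double cover of $\p^1_\R$. Because $\deg(B^2-4AC)=2n$ is even, no branching occurs at infinity, and Riemann--Hurwitz yields $g(D)=n-1$: elliptic for $n=2$, hyperelliptic of genus $\geqslant 2$ for $n\geqslant 3$. When $n\geqslant 2$, the curve $\overline{C_0}$ is geometrically non-rational and Lemma~\ref{lemma:fixed-curve-unique} identifies it with $F(\tau)$; when $\deg(B^2-4AC)\leqslant 2$, the curve $\overline{C_0}$ is rational and therefore $F(\tau)=\varnothing$. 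The only delicate point is the fibrewise analysis used to exclude hidden fixed curves outside $Y$ or inside singular fibres; everything else is a direct computation in the explicit model.
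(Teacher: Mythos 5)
Your proposal is correct and follows essentially the same route as the paper: identify the $\tau$-fixed locus with the double section $\{Ax^2+Bxy+Cy^2=0\}$ (plus isolated points over the roots of $H$), then apply the completion-of-the-square substitution $w=2A\tfrac{x}{y}+B$ to land on $w^2=B^2-4AC$. The paper's proof is just a terser version of this; your additional fibrewise analysis ruling out hidden fixed curves and the Riemann--Hurwitz genus count are correct elaborations of steps the paper leaves implicit.
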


\begin{proof}
The $G$-fixed curve $F(\tau)$ is birational to the curve $\{A(t)x^2+B(t)xy+C(t)y^2=0\}\subset\mathbb{P}^1_\mathbb{R}\times\mathbb{A}^1_{\mathbb{R}}$,
where $([x:y],t)$ are coordinates on $\mathbb{P}^1_\mathbb{R}\times\mathbb{A}^1_{\mathbb{R}}$.
Setting $w=2\left(A\frac{x}{y}+\frac{B}{2}\right)$, we get the result.
\end{proof}

\begin{corollary}
\label{corollary:main-non-exceptional}
In the assumptions and notations of Theorem~\ref{theorem:main-non-exceptional},
there exists a $G$-equivariant commutative diagram
$$
\xymatrix{
Y\ar@{->}[d]_{\pi_Y}\ar@{-->}[rr]&&\widehat{Y}\ar@{->}[d]^{\pi_{\widehat{Y}}}\\
\mathbb{A}^1_\R\ar@{=}[rr]&&\mathbb{A}^1_\R}
$$
where $\widehat{Y}$ is a (possibly singular) surface in $\mathbb{P}^2_\mathbb{R}\times\mathbb{A}^1_{\mathbb{R}}$ given by $\widehat{A}(t)x^2+\widehat{C}(t)y^2=H(t)z^2$
for some non-zero polynomials $\widehat{A},\widehat{C}\in\R[t]$ such that $\widehat{C}=\widehat{A}(4AC-B^2)$,
both $\pi_Y$ and $\pi_{\widehat{Y}}$ are given by $([x:y:z],t)\mapsto t$,
the $G$-action on $\widehat{Y}$ is given by $([x:y:z],t)\mapsto([x:y:-z],t)$,
and $Y\dasharrow\widehat{Y}$ is a~birational map that is biregular along singular fibres of the conic bundle $\pi_Y$.
Moreover, we have:
\begin{itemize}
\item $\widehat{A}$ and $\widehat{C}$ have no multiple roots,
\item $\widehat{A}$ and $H$ are co-prime,
\item $\widehat{C}$ and $H$ are co-prime.
\end{itemize}
In particular, $H$ has only real roots,
the fibres of the~conic bundle $\pi_{\widehat{Y}}$ over roots of  $H$ are singular irreducible conics,
$\pi_{\widehat{Y}}(\widehat{Y}(\R))$ is a~union of closed bounded intervals in $\mathbb{A}^1_\R(\R)$,
and the fibres of the~conic bundle $\pi_{\widehat{Y}}$ over the boundary points of the intervals in $\pi_{\widehat{Y}}(\widehat{Y}(\R))$ are reduced.
\end{corollary}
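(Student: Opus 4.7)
My plan is to obtain $\widehat{Y}$ by $G$-equivariantly completing the square in $x,y$ to kill the cross term $Bxy$, and then rescaling to normalize the resulting coefficients. Since $\tau$ acts only on $z$, any fibrewise linear change of $(x,y)$ and any scaling of $z$ by a polynomial in $t$ is automatically $G$-equivariant, so equivariance is not an issue.

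First, apply the fibrewise projective map $(x\colon y\colon z)\mapsto(2Ax+By\colon y\colon 2Az)$, which is birational over $\mathbb{A}^1_\R$ because $A\not\equiv 0$. Substituting $x=(X-BY)/(2A)$, $y=Y$, $z=Z/(2A)$ into the defining equation and clearing denominators (by $4A^2$) produces $AX^2+A(4AC-B^2)Y^2=HZ^2$. Writing $A=A_0N^2$ with $A_0$ square-free and rescaling $Z\mapsto NZ$, another $G$-equivariant birational map of the $\mathbb{P}^2$-bundle, clears the factor $N^2$ from every coefficient, giving the target form with $\widehat{A}=A_0$ and $\widehat{C}=\widehat{A}(4AC-B^2)$.

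The numerical conditions are then verified as follows. By construction $\widehat{A}$ is square-free. The coprimality $\gcd(\widehat{A},H)=1$ holds because any common root $\alpha$ would make $\pi_Y^{-1}(\alpha)$ the conic $B(\alpha)xy+C(\alpha)y^2=0$, i.e., the union of the two real lines $y=0$ and $B(\alpha)x+C(\alpha)y=0$, contradicting Theorem~\ref{theorem:main-non-exceptional}(2). The coprimality $\gcd(\widehat{C},H)=1$ then follows from $\widehat{C}=\widehat{A}(4AC-B^2)$ together with $\gcd(4AC-B^2,H)=1$, which itself is a consequence of the square-freeness of $H(B^2-4AC)$.

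The main obstacle is the square-freeness of $\widehat{C}$, equivalent to $\gcd(\widehat{A},4AC-B^2)=1$. A common root $\alpha$ would satisfy $A(\alpha)=B(\alpha)=0$, which by the square-freeness of $B^2-4AC$ forces $A$ to have a simple zero at $\alpha$ and $C(\alpha)\ne 0$; at such $\alpha$ the fibre of $\pi_Y$ is the singular conic $C(\alpha)y^2=H(\alpha)z^2$ whose node is the smooth point $P_\alpha=(1\colon 0\colon 0,\alpha)\in Y$, and precisely here the linear change above collapses the fibre to a point, failing to be biregular. I would resolve this by pre-composing with a $G$-equivariant elementary transformation of the conic bundle centered at each such $P_\alpha$: this replaces the fibre over $\alpha$ with a smooth one and absorbs the factor $(t-\alpha)$ into the subsequent $Z$-rescaling, removing it from $\widehat{A}$. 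Iterating over the finitely many bad $\alpha$ yields the required $\chi\colon Y\dashrightarrow\widehat{Y}$, biregular along all singular fibres of $\pi_Y$ and satisfying all three numerical conditions. The final assertions about the real locus are immediate: since $H$ has only real roots with negative leading coefficient, $\pi_{\widehat{Y}}(\widehat{Y}(\R))=\{t\in\R:H(t)\geq 0\}$ is a union of closed bounded intervals, and the boundary fibres are, by the coprimality of $\widehat{A}$ and $\widehat{C}$ with $H$, singular irreducible and hence reduced conics.
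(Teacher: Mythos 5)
You have correctly reduced the statement to completing the square, and your coprimality arguments for $\gcd(\widehat{A},H)$ and $\gcd(\widehat{C},H)$ are sound; you have also correctly isolated the one real difficulty, namely a common root $\alpha$ of $A$ and $4AC-B^2$ (equivalently $A(\alpha)=B(\alpha)=0$), where the substitution collapses the fibre. But your repair of that case does not work. The point $P_\alpha=([1:0:0],\alpha)$ is the \emph{node of a singular fibre} of $\pi_Y$: the fibre over $\alpha$ is $C(\alpha)y^2=H(\alpha)z^2$, and $\alpha$ is a root of $B^2-4AC$, hence this is one of the $2g+2$ singular fibres. An elementary transformation "centered at $P_\alpha$" is not available there: blowing up the node turns both components of the fibre into $(-2)$-curves, neither of which can be contracted to a smooth point, and contracting both produces $A_1$ singularities rather than "a smooth fibre". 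More importantly, \emph{any} surgery supported on the fibre over $\alpha$ modifies a singular fibre of $\pi_Y$, whereas the corollary explicitly requires $Y\dasharrow\widehat{Y}$ to be biregular along all singular fibres of $\pi_Y$ — this is precisely the property exploited in the proof of Theorem~\ref{theorem:main-non-exceptional-equivalence} to conclude that $4AC-B^2$ and $H$ are preserved up to positive scalars. So even if your construction were carried out, it would not prove the statement as formulated.

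The paper's fix is both simpler and compatible with the biregularity requirement: \emph{before} completing the square, apply a general constant linear change $[x:y:z]\mapsto[ax+by:cx+dy:z]$ with $a,b,c,d\in\R$, $ad-bc\ne0$. This is $G$-equivariant (it fixes $z$), biregular on all of $Y$, multiplies $4AC-B^2$ by the positive constant $(ad-bc)^2$, and for a general choice arranges that the new $A$ does not vanish at any root of $(4AC-B^2)H$ (for each such root $\rho$ the bad locus $A(\rho)a^2+B(\rho)ac+C(\rho)c^2=0$ is at most two points of $\p^1$, since $A(\rho)$, $B(\rho)$, $C(\rho)$ cannot all vanish). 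After this, your substitution $\widehat{x}=Ax+\tfrac{B}{2}y$, $\widehat{y}=\tfrac{y}{2}$, $\widehat{z}=Az$ degenerates only over roots of $A$, which now lie away from every singular fibre, and all the square-freeness and coprimality conditions follow at once. A secondary slip in your last step: $\pi_{\widehat{Y}}(\widehat{Y}(\R))$ is not $\{t:H(t)\geqslant0\}$; the fibre over $t$ has a real point if and only if $\widehat{A}(t)$, $\widehat{C}(t)$, $-H(t)$ do not all have the same sign, and the assertion that the image is a union of closed bounded intervals is inherited from $\pi(S(\R))$ through the birational map rather than read off from $H$ alone.
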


\begin{proof}
Changing variables $[x:y:z]\mapsto[ax+by:cx+dy:z]$ for general real numbers $a,b,c,d$ such that $ad-bc\ne 0$,
we may assume that $A(t)$ does not vanish at the roots of $(4AC-B^2)H$. Introducing new coordinates $\widehat{x}=Ax+\frac{B}{2}y$, $\widehat{y}=\frac{y}{2}$, $\widehat{z}=Az$,
we get a $G$-equivariant birational map $Y\dasharrow\widehat{Y}$, where
$\widehat{Y}$ is a surface in $\mathbb{P}^1_\mathbb{R}\times\mathbb{A}^1_{\mathbb{R}}$ given by
$A\widehat{x}^2+A(4AC-B^2)\widehat{y}^2=H\widehat{z}^2$,
where $([\widehat{x}:\widehat{y}:\widehat{z}],t)$ are coordinates on $\mathbb{P}^2_\mathbb{R}\times\mathbb{A}^1_{\mathbb{R}}$.
This implies the required assertion.
\end{proof}

Note that the equation \eqref{equation:non-exceptional-model} is not canonically defined by the conic bundle \mbox{$\pi\colon S\to\mathbb{P}^1_{\mathbb{R}}$}.
To check whether two equations like \eqref{equation:non-exceptional-model} determine $G$-birationally equivalent $G$-conic bundles or not,
one can use the following result, which we will prove in Section~\ref{subsection:non-exceptional-birational-classification}.

\begin{theorem}
\label{theorem:main-non-exceptional-equivalence}
Let $\eta_1\colon X_1\to\mathbb{P}^1_\R$ and $\eta_1\colon X_2\to\mathbb{P}^1_\R$ be two smooth $G$-minimal $G$-conic bundles as on the right-hand side of Diagram  (\ref{equation:main-non-exceptional}). 
Suppose that $Y_1=X_1\setminus \eta_1^{-1}([1:0])$ and $Y_2=X_2\setminus \eta_2^{-1}([1:0])$ are given in $\mathbb{P}^2_\mathbb{R}\times\mathbb{A}^1_{\mathbb{R}}$ by
\begin{align*}
Y_1\colon A_1x^2+B_1xy+C_1y^2&=H_1z^2,\\
Y_2\colon A_2x^2+B_2xy+C_2y^2&=H_2z^2,
\end{align*}
for  $A_1,B_1,C_1,H_1,A_2,B_2,C_2,H_2\in\mathbb{R}[t]$
such that $(B_1^2-4A_1C_1)H_1$ and $(B_2^2-4A_2C_2)H_1$ do not have multiple roots,
and the degrees of   $B_1^2-4A_1C_1$ and $B_2^2-4A_2C_2$ are even,
where both restrictions $\eta_{1}\vert_{Y_1}\colon Y_1\to\mathbb{A}^1_{\mathbb{R}}$ and $\eta_{2}\vert_{Y_2}\colon Y_2\to\mathbb{A}^2_{\mathbb{R}}$
are given by $([x:y:z],t)\mapsto t$.
Suppose further that the following conditions are satisfied:
\begin{enumerate}
\item  $H_1$ and $H_2$ have only real roots and their leading coefficients are negative,
\item fibres of the $G$-conic bundles $\eta_1$ and $\eta_2$ over all roots of  $H_1$ and $H_2$ are singular irreducible conics, respectively.
\end{enumerate}
Suppose that we have the $G$-action on $X_1$ and $X_2$ such that $G$ acts on $Y_1$ and $Y_2$ as follows:
$
([x:y:z],t)\mapsto([x:y:-z],t).
$
Then there is a $G$-equivariant birational map $\rho\colon X_1\dasharrow X_2$ that fits commutative diagram
\begin{equation}
\label{equation:main-non-exceptional-equivalence}
\xymatrix{
X_1\ar@{->}[d]_{\eta_1}\ar@{-->}[rr]^{\rho}&&X_2\ar@{->}[d]^{\eta_2}\\
\p^1_\R\ar@{=}[rr]&&\mathbb{P}^1_\R}
\end{equation}
if and only if $\eta_1(X_1(\R))=\eta_2(X_2(\R))$, $B_1^2-4A_1C_1=\lambda(B_2^2-4A_2C_2)$ and $H_1=\mu H_2$
for some positive real numbers $\lambda$ and $\mu$. 
\end{theorem}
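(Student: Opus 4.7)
The plan is to pass to the generic fibres $\mathcal{C}_i$ of the $\eta_i$, regarded as $G$-conics over $K=\R(t)$ with $G$ acting by $z\mapsto -z$, and to translate the existence of $\rho$ over $\mathbb{P}^1_\R$ into a $G$-equivariant $K$-isomorphism $\mathcal{C}_1\cong\mathcal{C}_2$ via Theorem~\ref{theorem:Iskovskikh}.

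For \emph{necessity}, the equality $\eta_1(X_1(\R))=\eta_2(X_2(\R))$ is immediate: $\rho$ induces a birational correspondence of real loci compatible with both $\eta_1$ and $\eta_2$, and each $\eta_i(X_i(\R))$ is a closed union of intervals by Remark~\ref{remark:conic-bundle-intervals}. Next, the $G$-action splits the ternary form of $\mathcal{C}_i$ as the orthogonal sum of the binary form $\varphi_i=A_ix^2+B_ixy+C_iy^2$ (on the trivial piece $\langle x,y\rangle$) and the unary form $\psi_i=-H_iz^2$ (on the sign piece $\langle z\rangle$), and any $G$-equivariant $K$-isometry must respect this decomposition. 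Comparing discriminants modulo $K^{*2}$ gives $B_1^2-4A_1C_1\equiv B_2^2-4A_2C_2$ and $H_1\equiv H_2$; since each side is a squarefree polynomial in $\R[t]$, a short valuation argument (two squarefree polynomials in the same square class in $K^{*}$ must share the same irreducible factors with the same multiplicities, hence differ only by a positive real scalar) upgrades these congruences to the stated equalities with $\lambda,\mu>0$.

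For \emph{sufficiency}, I would apply Corollary~\ref{corollary:main-non-exceptional} to replace each $X_i$ by a $G$-birational diagonal model $\widehat{A}_ix^2+\widehat{C}_iy^2=H_iz^2$ with $\widehat{C}_i=-\widehat{A}_i(B_i^2-4A_iC_i)$, and then aim to identify these two diagonal models. Using the freedom of further coordinate changes in $\langle x,y\rangle$ together with the real-locus hypothesis, one arranges $\widehat{A}_1$ and $\widehat{A}_2$ in a common $K^*/K^{*2}$-class; the explicit substitutions $y\mapsto y/\sqrt{\lambda}$, $z\mapsto z/\sqrt{\mu}$ (legitimate over $\R$ because $\lambda,\mu>0$) then convert the equation of the first model into that of the second in a $G$-equivariant way, yielding the desired $G$-equivariant $K$-isomorphism of generic fibres. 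By Theorem~\ref{theorem:Iskovskikh} this isomorphism extends to a $G$-birational $\rho\colon X_1\dashrightarrow X_2$ over $\mathbb{P}^1_\R$, decomposable into elementary transformations.

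The \emph{main obstacle} is the step of matching $\widehat{A}_1$ and $\widehat{A}_2$ in $K^*/K^{*2}$ in the sufficiency direction. The algebraic matching of $B^2-4AC$ and $H$ only pins down the discriminant of the ternary conic modulo squares, which is not enough over $\R(t)$ to force $K$-isomorphism of ternary forms; what is missing are Hasse-type local sign invariants at the real closed points of $\mathbb{P}^1_\R$. The real-locus hypothesis $\eta_1(X_1(\R))=\eta_2(X_2(\R))$ is precisely what supplies this missing sign data, fixing at every real $t_0\in\mathbb{P}^1_\R(\R)$ the signs of the coefficients of the diagonalised conic, and so ensuring that the reconciliation can be carried out over $\R$ rather than merely over $\C$.
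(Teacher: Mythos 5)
Your overall architecture (pass to the generic fibre over $K=\R(t)$, split the ternary form into $G$-eigenspaces, diagonalise via Corollary~\ref{corollary:main-non-exceptional}, and let the real locus supply the missing local data) is essentially the paper's, but both directions contain genuine gaps.

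In the necessity direction, $\rho$ induces on generic fibres a $G$-equivariant isomorphism of \emph{conics}, i.e.\ a \emph{similarity} $q_2\circ T=c\,q_1$ for some $c\in K^*$, not an isometry. The factor $c$ cancels in the discriminant of the two-dimensional $(+1)$-eigenspace, so your derivation of $B_1^2-4A_1C_1=\lambda(B_2^2-4A_2C_2)$ with $\lambda>0$ via squarefreeness is sound; but on the one-dimensional $(-1)$-eigenspace it yields only $H_1\equiv c\,H_2\bmod K^{*2}$, and nothing in your argument controls the square class of $c$. So $H_1=\mu H_2$ does not follow from comparing discriminants. The paper obtains it geometrically: by Theorem~\ref{theorem:Iskovskikh} and Lemma~\ref{rem:K2=4-and-rkPic=2}, $\rho$ decomposes into elementary transformations, hence is an isomorphism along singular fibres and sends special fibres to special fibres; since the special fibres are exactly those over the roots of the squarefree $H_i$, whose leading coefficients are normalised to be negative, this forces $H_1=\mu H_2$ with $\mu>0$. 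That information lives in the total space, not in the generic fibre alone.

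In the sufficiency direction, the entire difficulty is concentrated in your sentence ``one arranges $\widehat{A}_1$ and $\widehat{A}_2$ in a common $K^*/K^{*2}$-class'', for which no argument is given --- and which is in fact not what happens: the individual diagonal coefficients need not lie in a common square class, and the correct (weaker) claim is that the binary forms $\langle\widehat{A}_1,\widehat{A}_1\Delta_1\rangle$ and $\langle\widehat{A}_2,\widehat{A}_2\Delta_2\rangle$ are isometric over $\R(t)$, after which Lemma~\ref{lemma:equivalence-of-forms} produces the $G$-equivariant equivalence of the ternary forms. Two binary forms with equal discriminant need not be isometric, so matching $\Delta_1$ with $\lambda\Delta_2$ is not enough; one must compute the second residue forms at every real closed point of $\mathbb{A}^1_\R$ and the invariant at infinity in Milnor's exact sequence. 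This is Proposition~\ref{proposition:equivalence-of-forms}, and checking its hypotheses --- in particular condition (3) on the parities of $\deg\widehat{A}_i$ and the signs of the leading coefficients, which is precisely where $\eta_1(X_1(\R))=\eta_2(X_2(\R))$ enters --- occupies most of the paper's proof. You correctly identify this as the main obstacle and correctly guess that the real-locus hypothesis supplies the missing sign data, but the proposal stops exactly where the actual proof begins.
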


The conditions $B_1^2-4A_1C_1=\lambda(B_2^2-4A_2C_2)$ and $H_1=\mu H_2$ in Theorem~\ref{theorem:main-non-exceptional-equivalence} are essential
and cannot be omitted, which follows from Corollary~\ref{corollary:fixed-curve-non-exceptional} and Section~\ref{subsection:special-fibres} below.
In fact, we cannot omit the condition $\eta_1(X_1(\R))=\eta_2(X_2(\R))$ either.

\begin{example}
\label{remark:conic-bundles-rational-non-rational}
In the notations of Theorem~\ref{theorem:main-non-exceptional-equivalence},
suppose that $Y_1$ is given in $\mathbb{P}^2_\mathbb{R}\times\mathbb{A}^1_{\mathbb{R}}$ by
$$
(t-1)(t-2)(t^2+1)x^2+(t-3)(t-4)(t^2+2)y^2=tz^2,
$$
and $Y_2$ is is given in $\mathbb{P}^2_\mathbb{R}\times\mathbb{A}^1_{\mathbb{R}}$ by 
$$(t-1)(t-3)(t^2+1)x^2+(t-2)(t-4)(t^2+2)y^2=tz^2$$
Thus, we have $H_1=H_2=t$ and
$$
B_1^2-4A_1C_1=B_2^2-4A_2C_2=-4(t-1)(t-2)(t-3)(t-4)(t^2+1)(t^2+2).
$$
But $X_1$ is $\R$-rational and $X_2$ is not $\R$-rational, since $X_1(\R)$ is connected and $X_2(\R)$~is~not. Note that the fixed curves here are hyperelliptic curves of genus $2$.
\end{example}

\subsection{Special fibres}
\label{subsection:special-fibres}

Let $\pi\colon S\to\p^1_\R$ be a~smooth real $G$-conic bundle that is $G$-minimal,
where $G=\langle\tau\rangle$ for an involution $\tau\in\mathrm{Aut}(S)$ that acts trivially on $\p^1_\R$. 
If $F(\tau)\neq\varnothing$, then 
$\tau$ pointwise fixes a~smooth irreducible curve $C\subset S$ that is a two-section of the conic bundle $\pi$, see Corollary~\ref{corollary:fixed-curve-non-exceptional}. In particular, $K_S^2\leqslant 4$. The case $K_S^2\geqslant5$ (and therefore $F(\tau)=\varnothing$) will be dealt with in Section~\ref{section:classification}. 

Let $g$ be the genus of the curve~$C$, let $F$ be a~(complex) singular fibre of the conic bundle $\pi$, let $F_1$ and $F_2$ be its irreducible components.
Then $F_1\cdot C=F_2\cdot C=1$, and the intersection $F_1\cap F_2$ consists of one~point.
Moreover, it follows from the $G$-minimality of the conic bundle $\pi\colon S\to\p^1_\R$ that
\begin{itemize}
\item either $F\cap C=F_1\cap F_2$,
\item or $F_1\cap F_2\not\in C$ and $F$ is real.
\end{itemize}
In the latter case, the curves $F_1$ and $F_2$ are both $G$-invariant and thus must be swapped by the action of the Galois group $\mathrm{Gal}(\mathbb{C}/\mathbb{R})$.
In this case (when $F_1\cap F_2\not\in C$), we will say that the singular fibre $F=F_1+F_2$ is \emph{special}. We let
$$
\SF_S=\text{the number of special fibres of the $G$-minimal conic bundle $\pi\colon S\to\p^1_\R$}.
$$

We have $K_S^2=6-2g-\SF_S$, because $\pi\colon S\to\p^1$ has $2g+2+\delta_S$ singular fibres. Moreover, if $\pi\colon S\to\p^1_\R$ is $G$-exceptional,~then~$\SF_S=0$.
Furthermore, if $S$ is $\R$-rational, then $\SF_S\in\{0,1,2\}$, see Remark~\ref{remark:conic-bundle-intervals}.

\begin{lemma}
	\label{lemma:special-fibres}
	Suppose that $g\geqslant 1$. Let $\chi\colon S\dasharrow X$ be a $G$-birational map such that $X$ is a smooth surface,
	and there exist $G$-minimal conic bundle $\eta\colon X\to\p^1_\R$. Then $\SF_S=\SF_{X}$.
\end{lemma}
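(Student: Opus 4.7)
The plan is to recognize $\delta_S$ as the combination of two known $G$-birational invariants. The relevant identity, already established in the preceding paragraph, is
\[
K_S^2 \;=\; 8 - \#\{\text{geometric singular fibres of }\pi\} \;=\; 8 - \bigl((2g+2)+\delta_S\bigr) \;=\; 6-2g-\delta_S,
\]
where the count $2g+2$ of non-special (geometric) singular fibres is Riemann--Hurwitz applied to the double cover $C\to\p^1_\R$: such fibres are exactly those whose node lies on $C$, and hence correspond to the branch points of $C\to\p^1_\R$, whereas a special fibre contributes no ramification. Since $g\geqslant 1$ by hypothesis, this already forces $K_S^2\leqslant 4$.

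The first invariant is $K_S^2$. As $K_S^2\leqslant 4$ and $\chi\colon S\dasharrow X$ is a $G$-birational map between two $G$-minimal conic bundles, Lemma~\ref{rem:K2=4-and-rkPic=2} applies directly and yields $K_X^2=K_S^2$.

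The second invariant is the genus of $F(\tau)$. Since $\chi$ is $G$-equivariant, the involution $\tau\in\Aut(S)$ and its transport $\chi\tau\chi^{-1}\in\Aut(X)$ are two regularisations of the same birational involution $\iota\in\Bir(\p^2_\R)$. By the Lemma-Definition of the fixed curve in Section~\ref{subsection:fixed-curves}, $F(\tau)$ depends only on the conjugacy class of $\iota$, so the $\tau$-fixed curve on $X$ is again isomorphic to $C$ and in particular has genus $g\geqslant 1$. The conic bundle $\eta\colon X\to\p^1_\R$ therefore satisfies the same formula $K_X^2=6-2g-\delta_X$.

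Combining the two displayed formulas gives $\delta_S=6-2g-K_S^2=6-2g-K_X^2=\delta_X$, as required. There is essentially no obstacle: once the combinatorial formula $K_S^2=6-2g-\delta_S$ has been set up, the rest is a formal invocation of the two $G$-birational invariants, $K_S^2$ (via Lemma~\ref{rem:K2=4-and-rkPic=2}) and the isomorphism class of the fixed curve.
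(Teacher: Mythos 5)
Your proposal is correct and follows exactly the paper's (very terse) proof: the paper also deduces the statement from the identity $K_S^2=6-2g-\SF_S$ together with the invariance of $K_S^2$ from Lemma~\ref{rem:K2=4-and-rkPic=2}, with the invariance of the fixed curve (hence of $g$) supplied by the Lemma-Definition of $F(\tau)$. Your write-up merely makes explicit the Riemann--Hurwitz justification of the count $2g+2$ of non-special singular fibres and the transport of $F(\tau)$ to $X$, both of which the paper leaves implicit.
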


\begin{proof}
	Follows from Lemma~\ref{rem:K2=4-and-rkPic=2} and the equality $K_S^2=6-2g-\SF_S$.
\end{proof}

In particular, if $g\geqslant 1$ and $\delta_S>0$, then  $S$ is not $G$-birational to a $G$-exceptional $G$-minimal conic bundle.
In fact, we can say more:

\begin{lemma}
	\label{lemma:non-exceptional-exceptional}
	Suppose that $g\geqslant 2$. If $\pi\colon S\to\p^1_\R$ is not $G$-exceptional, then $S$ is not $G$-birational to a $G$-exceptional $G$-minimal conic bundle.
\end{lemma}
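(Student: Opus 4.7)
The plan is to argue by contradiction. Suppose there is a $G$-birational map $\chi\colon S\dasharrow X$ to some $G$-exceptional $G$-minimal conic bundle $\eta\colon X\to\p^1_\R$. The guiding principle is that, under the hypothesis $g\geqslant 2$, the real image $\pi(S(\R))\subseteq\p^1_\R(\R)$ is a $G$-birational invariant of the conic bundle, up to an automorphism of the base in $\PGL_2(\R)$. Since by Proposition~\ref{proposition:exceptional-conic-bundles} a $G$-minimal conic bundle is $G$-exceptional if and only if this real image equals $\p^1_\R(\R)$, and by assumption $\pi$ is not $G$-exceptional while $\eta$ is, such invariance will yield a contradiction.

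I would first note that $g\geqslant 2$ together with the formula $K_S^2=6-2g-\delta_S$ forces $K_S^2\leqslant 2$, placing us in the scope of Lemma~\ref{rem:K2=4-and-rkPic=2}: one has $K_X^2=K_S^2$, and $\chi$ factors as a composition of elementary transformations of conic bundles and, only when $K_S^2\in\{1,2\}$, finitely many Sarkisov links of type~IV realised by biregular Bertini or Geiser involutions of $S$ (the case $K_S^2=3$ does not arise, as $2g+\delta_S=3$ is impossible when $g\geqslant 2$). Next I would check, step by step, that each such link preserves $\pi(S(\R))$: for an elementary transformation over a real base point $p_0\in\pi(S(\R))$, one blows up a real point on the smooth fibre $F_{p_0}$ (which carries real points by assumption) and contracts its strict transform, producing a new fibre which is the image of the $\R$-rational exceptional curve of the blow-up and hence still has real points, so that $p_0$ remains in the real image; elementary transformations over pairs of Galois-conjugate base points do not touch the real locus of the base at all. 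For a type~IV link, the underlying surface does not change, and the link is realised by a biregular involution $\sigma\in\Aut(S)$ defined over~$\R$ that is central in $\Aut(S)$ (hence commutes with $G$); since $\sigma(S(\R))=S(\R)$ and the new conic bundle structure is $\pi_2=\pi_1\circ\sigma$, we obtain
\[
\pi_2(S(\R))=\pi_1\big(\sigma(S(\R))\big)=\pi_1(S(\R)),
\]
so the real image is unchanged.

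Combining these step-by-step equalities with Theorem~\ref{theorem:Iskovskikh}(ii) in the range $K_S^2\leqslant 0$ and with the factorization of Lemma~\ref{rem:K2=4-and-rkPic=2} directly in the range $K_S^2\in\{1,2\}$, one arrives at an automorphism $\upsilon\in\PGL_2(\R)$ of the base satisfying $\upsilon(\pi(S(\R)))=\eta(X(\R))$. Since $X$ is $G$-exceptional, Proposition~\ref{proposition:exceptional-conic-bundles} gives $\eta(X(\R))=\p^1_\R(\R)$, forcing $\pi(S(\R))=\p^1_\R(\R)$ and contradicting the non-$G$-exceptionality of $\pi$.

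The main obstacle I foresee is the careful handling of the type~IV links, and this is exactly where the assumption $g\geqslant 2$ plays an essential role: it forces $K_S^2\leqslant 2$, so that every type~IV link is realised by a biregular involution of~$S$ fixing $S(\R)$ setwise. For $g=1$ one could instead have $K_S^2=4$, and type~IV links then genuinely exchange two distinct conic bundle structures on a del Pezzo surface of degree four without being induced by any involution of~$S$; the real image can change under such a link, and the analogous statement fails, consistent with the ambiguity between the classes $\mathfrak{dJ}_1$ and $\mathfrak{I}_1$ flagged in Example~\ref{example:non-exceptional-exceptional}.
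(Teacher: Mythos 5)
Your argument is correct, and its skeleton coincides with the paper's: since $g\geqslant 2$ forces $K_S^2=6-2g-\SF_S\leqslant 2$, both proofs invoke the factorization of Lemma~\ref{rem:K2=4-and-rkPic=2} into elementary transformations plus, when $K_S^2\in\{1,2\}$, type~IV links realised by the biregular Bertini or Geiser involution, and then check that every factor preserves an invariant detecting $G$-exceptionality. The genuine difference is the invariant tracked. The paper's one-line proof concludes that ``these involutions preserve the number of special fibres,'' i.e.\ it tracks $\SF_S$ as in Lemma~\ref{lemma:special-fibres}; but $\SF_S=0$ holds both for $G$-exceptional bundles and for $0$-twisted non-exceptional ones, so that invariant alone does not close the case $\SF_S=0$. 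You instead track $\pi(S(\R))$ up to $\PGL_2(\R)$, which by Proposition~\ref{proposition:exceptional-conic-bundles} characterizes exceptionality exactly, and you verify its invariance factor by factor (for the type~IV link your identity $\pi_2(S(\R))=\pi_1(\sigma(S(\R)))=\pi_1(S(\R))$ with $\sigma$ the real biregular Geiser or Bertini involution is exactly right, as is your remark that this is where $g\geqslant 2$ is used and why $g=1$ with $K_S^2=4$ escapes the argument). This buys a complete treatment of the $\SF_S=0$ case that the paper's phrasing leaves implicit. One point worth tightening: rather than enumerating where the centres of elementary transformations can lie (they need not sit on smooth fibres with real points), it is cleaner to say that an elementary transformation is an isomorphism away from finitely many fibres, while $\pi(S(\R))$ is, by Remark~\ref{remark:conic-bundle-intervals}, either all of $\p^1_\R(\R)$ or a finite union of non-degenerate closed intervals, hence is determined by its restriction to the complement of any finite set; this disposes of all configurations of centres at once.
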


\begin{proof}
Lemma~\ref{rem:K2=4-and-rkPic=2} implies that if $K_S^2\leqslant0$, then $S$ is birationally rigid, and if $K_S^2\in\{1,2\}$, then any $G$-equivariant birational map from $S$ to another $G$-minimal surface is a composition of $G$-equivariant elementary transformations of $G$-conic bundles and the exchange of two fibrations by a Geiser or Bertini involution. These involutions preserve the number of special fibres.
\end{proof}

On the other hand, if $g=1$ and $\delta_S=0$ and the conic bundle  $\pi\colon S\to\p^1_\R$ is not $G$-exceptional,
then $S$ can be $G$-birational to a $G$-exceptional $G$-minimal conic bundle, see Example~\ref{example:non-exceptional-exceptional}. 

It is very easy to locate special fibres of the $G$-conic bundle provided by Theorem~\ref{theorem:main-non-exceptional}.
Namely, in the assumptions and notations of Theorem~\ref{theorem:main-non-exceptional},
the special fibres of the conic bundle $\eta\colon X\to\p^1_\R$ are the fibres of the~morphism $\eta$ over the roots of the polynomial~$H$.
Then $\SF_X=\mathrm{deg}(H)$.

\subsection{Iskovskikh involutions}
\label{subsection:Iskovskikh-involutions}

As in Section~\ref{subsection:special-fibres},
let $S$ be a real smooth projective surface, let $\tau$ be an involution in $\Aut(S)$,
let $G=\langle\tau\rangle$, and let $\pi\colon S\to\p^1_\R$ be a~$G$-minimal conic bundle such that $\tau$ acts trivially on $\p^1_\R$.
The involution $\tau$ pointwise fixes a~smooth irreducible curve $C\subset S$ that is a two-section of the conic bundle $\pi$.
Let $g$ be the genus of the curve~$C$, and let $\SF_S$ be the number of special fibres of the conic bundle $\pi$.
Then
$$
K_S^2=6-2g-\SF_S.
$$
Suppose, in addition, that the surface $S$ is $\R$-rational and $g\geqslant 1$. Then $\SF_S\in\{0,1,2\}$,
and $\tau$ induces a birational involution $\iota\in\Bir(\p_\R^2)$. If the conic bundle $\pi\colon S\to\p^1_\R$ is not $G$-exceptional, we say that both involutions $\tau$ and $\iota$ are
\begin{itemize}
\item \emph{$0$-twisted Iskovskikh involution} if $\SF_S=0$,
\item \emph{$1$-twisted Iskovskikh involution} if $\SF_S=1$,
\item \emph{$2$-twisted Iskovskikh involution} if $\SF_S=2$.
\end{itemize}
We will denote by $\mathfrak{I}_{g}$ the class of all $0$-twisted Iskovskikh  birational involutions in $\Bir(\p_\R^2)$ whose fixed curves have genus $g\geqslant 1$.
Similarly, we will denote by $\mathfrak{I}_{g}^\prime$ and  $\mathfrak{I}_{g}^{\prime\prime}$
the classes consisting of all $1$-twisted Iskovskikh involutions and $2$-twisted Iskovskikh involutions whose fixed curves have genus $g\geqslant 1$,
respectively. We deliberately do not define the classes $\mathfrak{I}_0,\mathfrak{I}_0'$ and $\mathfrak{I}_0''$, as we shall see in the proof of Main Theorem (Section \ref{section:classification}) that they are contained in $\mathfrak{L}\cup\mathfrak{Q}\cup\mathfrak{T}_4$.

\begin{lemma}\label{lemma:Iskovskikh-involutions-conjugation}
Let $\jmath$, $\iota$, $\iota^\prime$, $\iota^{\prime\prime}$ be birational involutions in the classes $\mathfrak{dJ}_{g}$, $\mathfrak{I}_{g}$,
$\mathfrak{I}_{g}^\prime$, $\mathfrak{I}_{g}^{\prime\prime}$, respectively, where $g\geqslant 1$.
Then they are pairwise non-conjugate in  $\Bir(\p_\R^2)$ with only possible exception of involutions $\jmath$ and $\iota$
being conjugate in the case when $g=1$ and $F(\jmath)\simeq F(\iota)$.
\end{lemma}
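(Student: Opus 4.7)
The plan is to distinguish the four classes by two $G$-birational invariants: the exceptional versus non-exceptional character of the conic bundle structure, and the number $\SF_S$ of special fibres. These invariants are controlled by Lemma~\ref{lemma:special-fibres} and Lemma~\ref{lemma:non-exceptional-exceptional}, both of which apply because the fixed curve has genus $g\geqslant 1$, so each regularisation $\tau\in\Aut(S)$ lives on a $G$-minimal conic bundle $\pi\colon S\to\p^1_\R$ with $K_S^2=6-2g-\SF_S\leqslant 4$.

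First I would set up the data. Fix regularisations $\sigma\in\Aut(S_{\jmath})$, $\tau\in\Aut(S_{\iota})$, $\tau'\in\Aut(S_{\iota'})$, $\tau''\in\Aut(S_{\iota''})$, where each surface carries a $G$-minimal conic bundle structure over $\p^1_\R$ on which the involution acts trivially on the base (this is how the four classes are defined; for $\mathfrak{dJ}_g$ the bundle is $G$-exceptional, for the three Iskovskikh classes it is not). By definition of the classes together with the convention $\SF_S=0$ for $G$-exceptional bundles recorded in Section~\ref{subsection:special-fibres}, the corresponding special-fibre counts are
\[
\SF_{S_{\jmath}}=0,\quad \SF_{S_{\iota}}=0,\quad \SF_{S_{\iota'}}=1,\quad \SF_{S_{\iota''}}=2.
\]

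Next I would use these invariants to rule out conjugacy between classes of different types. Assume two of our involutions are conjugate in $\Bir(\p^2_\R)$. Conjugation produces a $G$-birational map between the two regularising $G$-surfaces, both of which are $G$-minimal conic bundles; since $g\geqslant 1$, Lemma~\ref{lemma:special-fibres} forces their special-fibre counts to agree. This immediately excludes conjugacy between any two of the three classes $\mathfrak{I}_g$, $\mathfrak{I}_g^\prime$, $\mathfrak{I}_g^{\prime\prime}$ (distinct values $0,1,2$), and also excludes conjugacy between $\mathfrak{dJ}_g$ and either $\mathfrak{I}_g^\prime$ or $\mathfrak{I}_g^{\prime\prime}$ (values $0$ versus $1,2$).

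The remaining case is conjugacy between $\jmath\in\mathfrak{dJ}_g$ and $\iota\in\mathfrak{I}_g$, where the $\SF$ invariant alone does not help. Here I would invoke Lemma~\ref{lemma:non-exceptional-exceptional}: when $g\geqslant 2$, no non-$G$-exceptional $G$-minimal conic bundle is $G$-birational to a $G$-exceptional one, so $\jmath$ and $\iota$ are not conjugate in $\Bir(\p^2_\R)$. For $g=1$, this lemma does not apply and one cannot rule out conjugacy by these methods; accordingly $\mathfrak{dJ}_1$ and $\mathfrak{I}_1$ are left as the explicit exception in the statement, consistent with Example~\ref{example:non-exceptional-exceptional}. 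Finally, the invariance of $F(\tau)$ under conjugation (from the discussion in Section~\ref{subsection:fixed-curves}) shows that any potential conjugacy in the exceptional $g=1$ pairing would already force $F(\jmath)\simeq F(\iota)$, matching the proviso in the statement. The only delicate step is justifying that Lemma~\ref{lemma:special-fibres} really applies in every link appearing in a $G$-Sarkisov decomposition of the conjugating map, i.e.~that all intermediate Mori fibre spaces remain $G$-minimal conic bundles rather than del Pezzo surfaces; this is handled by Lemma~\ref{rem:K2=4-and-rkPic=2} (noting $K_S^2\leqslant 4$), which confines the possible links to elementary transformations of conic bundles and, in low degree, Bertini/Geiser exchanges, with $\SF$ preserved in both situations.
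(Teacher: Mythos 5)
Your proposal is correct and follows exactly the paper's route: the paper's proof of this lemma is a one-line citation of Lemma~\ref{lemma:special-fibres} (invariance of the special-fibre count $\SF_S$ under $G$-birational maps between $G$-minimal conic bundles when $g\geqslant 1$) and Lemma~\ref{lemma:non-exceptional-exceptional} (non-exceptional bundles with $g\geqslant 2$ are not $G$-birational to exceptional ones), which is precisely the two-invariant argument you spell out. Your closing remark about why $\SF$ is preserved along every Sarkisov link is also the paper's justification, since Lemma~\ref{lemma:special-fibres} is itself deduced from Lemma~\ref{rem:K2=4-and-rkPic=2} together with $K_S^2=6-2g-\SF_S\leqslant 4$.
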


\begin{proof}
The statement follows from Lemma~\ref{lemma:special-fibres} and Lemma \ref{lemma:non-exceptional-exceptional}.
\end{proof}

Thus, we see that the classes $\mathfrak{dJ}_{g}$, $\mathfrak{I}_{g}$,
$\mathfrak{I}_{g}^\prime$, $\mathfrak{I}_{g}^{\prime\prime}$ are pairwise disjoint for $g\geqslant 2$.
Similarly, the classes $\mathfrak{I}_{1}$, $\mathfrak{I}_{1}^\prime$, $\mathfrak{I}_{1}^{\prime\prime}$ are pairwise disjoint,
and the classes $\mathfrak{dJ}_{1}$, $\mathfrak{I}_{1}^\prime$, $\mathfrak{I}_{1}^{\prime\prime}$ are also pairwise disjoint.
On the other hand, we will see in Example~\ref{example:non-exceptional-exceptional} that $\mathfrak{dJ}_{1}\cap\mathfrak{I}_{1}\ne\varnothing$.
We do not know whether $\mathfrak{dJ}_{1}=\mathfrak{I}_{1}$ or not, nor whether $\mathfrak{dJ}_{1}\subseteq\mathfrak{I}_{1}$ or $\mathfrak{dJ}_{1}\supseteq\mathfrak{I}_{1}$.

\subsection{Proof of Theorem~\ref{theorem:main-non-exceptional}}
\label{subsection:birational-model-proof}
In this section, we prove Theorem~\ref{theorem:main-non-exceptional}.
During the proof, we~will slightly abuse our original conventions and consider quasi-projective conic bundles.
As usual, all surfaces are assumed to be real and  geometrically rational.

Let $S$ be a smooth projective surface, let $\tau$ be an~involution in~$\mathrm{Aut}(S)$, and let $G=\langle\tau\rangle$.
Suppose that there is a $G$-minimal conic bundle $\pi\colon S\to\p^1_\R$ such that $G$ acts trivially on $\p^1_\R$,
and $\pi$ is not $G$-exceptional. Then $\pi(S(\R))\ne\p^1_\R(\R)$ by Proposition~\ref{proposition:exceptional-conic-bundles}.
To prove Theorem~\ref{theorem:main-non-exceptional}, we must construct a $G$-equivariant commutative diagram \eqref{equation:main-non-exceptional}.

To start with, let us choose $\phi$ in \eqref{equation:main-non-exceptional}.
Let $P$ be any point in $\p^1_\R$ such that $P\not\in\pi(S(\R))$.
Then we take any $\phi\in\mathrm{PGL}_2(\mathbb{R})$ such that $\phi(P)=[1:0]$.
Now, we swap $\pi$ with $\phi\circ\pi$. Then the fibre $\pi^{-1}([1:0])$ is smooth and has no real points.
Hence, to prove Theorem~\ref{theorem:main-non-exceptional}, we may assume that the map $\phi$ in \eqref{equation:main-non-exceptional} is identity.

We set $U=S\setminus\pi^{-1}([1:0])$. Then $U$ is $G$-invariant, and $\pi$ induces $G$-equivariant conic bundle
$\pi_U\colon U\to\mathbb{A}^1_{\mathbb{R}}$.

\begin{lemma}
\label{lemma:first-form}
There exists a $G$-equivariant commutative diagram
$$
\xymatrix{
U\ar@{->}[d]_{\pi_U}\ar@{-->}[rr]&&W\ar@{->}[d]^{\pi_W}\\
\mathbb{A}^1_\R\ar@{=}[rr]&&\mathbb{A}^1_\R}
$$
such that $U\dashrightarrow W$ is a birational map, $W$ is a smooth surface in $\p^{2}_\R\times\mathbb{A}^1_\R$ given by
\begin{equation}
\label{equation:non-exceptional-model-1}
A(t)x^2+C(t)y^2=H(t)z^2
\end{equation}
for a suitable choice of coordinates $([x:y:z],t)$ on $\p^{2}_\R\times\mathbb{A}^1_\R$,
where $A,C,H\in \mathbb{R}[t]$,
the~morphism $\pi_W$ is induced by the natural projection $\p^{2}_\R\times\mathbb{A}^1_\R\to \mathbb{A}^1_\R$,
and the~$G$-action on the~surface $W$ is given by $([x:y:z],t)\mapsto([x:y:-z],t)$.
\end{lemma}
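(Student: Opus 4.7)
The plan is to work on the generic fiber of $\pi_U$ and exploit the $\tau$-action to choose coordinates that put the defining quadratic form into diagonal shape with $\tau=\mathrm{diag}(1,1,-1)$.

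Let $K=\R(t)$ be the function field of $\AAA^1_\R$. The generic fiber $C_\eta$ of $\pi_U$ is a smooth conic over $K$ on which $\tau$ acts; since $\tau$ acts trivially on the base but is a non-trivial involution of $S$, the induced action on $C_\eta$ is non-trivial. Embedding $C_\eta\hookrightarrow\p^2_K$ via $\OOO_{C_\eta}(1)$, the involution $\tau$ lifts to a linear involution of the ambient $K^3$ with eigenspace decomposition $V=V_+\oplus V_-$. Because every non-trivial projective involution of $\p^2$ has the form $\mathrm{diag}(1,1,-1)$ up to a global sign, after possibly negating the linear lift we arrange $\dim V_+=2$ and $\dim V_-=1$. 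Choosing a $K$-basis adapted to this splitting produces coordinates $[x:y:z]$ on $\p^2_K$ in which $\tau$ acts as $[x:y:z]\mapsto[x:y:-z]$.

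The defining quadratic form $q$ of $C_\eta$, being $\tau$-invariant, must lie in the invariant subspace of $\mathrm{Sym}^2(V^*)$; the anti-invariant monomials $xz$ and $yz$ are therefore absent, leaving $q=\tilde A(t)x^2+\tilde B(t)xy+\tilde C(t)y^2-\tilde H(t)z^2$ for some $\tilde A,\tilde B,\tilde C,\tilde H\in K$. A further linear change of basis inside $V_+$, namely completing the square in $x$ when $\tilde A\neq 0$ (or first swapping $x$ and $y$ otherwise), removes the cross term $xy$, so that $q=A'(t)x^2+C'(t)y^2-H'(t)z^2$. Clearing denominators yields $A,C,H\in\R[t]$ and a surface $W_0\subset\p^2_\R\times\AAA^1_\R$ defined by $A(t)x^2+C(t)y^2=H(t)z^2$, which is birational to $U$ over $\AAA^1_\R$ and $G$-equivariantly carries the involution $([x:y:z],t)\mapsto([x:y:-z],t)$.

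Finally, I would upgrade $W_0$ to a smooth model $W$ of the same shape. Singularities of $W_0$ occur only at common zeros of pairs among $A,C,H$ or at multiple zeros of one of these polynomials; these correspond to non-reduced or otherwise pathological fibers of $\pi_{W_0}$. Each such bad fiber can be eliminated by a $G$-equivariant elementary transformation of conic bundles centered at the offending point, which preserves the diagonal form of the equation while modifying $A,C,H$ in a controlled way. Iterating finitely many times delivers the smooth $W$ required by the statement. The main obstacle is this last step: one must verify that the smoothening can be performed $G$-equivariantly without ever re-introducing a cross term $Bxy$, i.e.\ that the eigenspace structure provided by $\tau$ survives each elementary transformation. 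The diagonalization itself is pure linear algebra made possible by the $\tau$-action, but the bookkeeping for the smoothening is where the delicate part of the argument lies.
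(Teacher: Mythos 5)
Your diagonalization of the generic fibre is essentially the paper's argument: pass to the conic over $K=\R(t)$, lift $\tau$ to a linear involution of $K^3$ with eigenvalues $(1,1,-1)$, use $\tau$-invariance of the defining form to kill the $xz$ and $yz$ terms, and complete the square to remove $xy$. One point you gloss over: completing the square needs $\tilde A\ne 0$ or $\tilde C\ne 0$, and if both vanished the conic would contain the $K$-point $[1:0:0]$, i.e.\ $\pi$ would admit a real section and hence be $G$-exceptional (Proposition~\ref{proposition:exceptional-conic-bundles}); the paper invokes exactly the absence of $\R(t)$-points to guarantee $A_1\ne 0$, and you should say this explicitly.

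The genuine gap is the smoothening step, and you have correctly flagged it yourself as unresolved. Elementary transformations are the wrong tool here: $W_0$ may be a singular surface, so "an elementary transformation centered at the offending point" is not well defined without first resolving, and you give no mechanism ensuring the diagonal shape survives. The paper sidesteps the issue entirely by never touching the surface geometrically at this stage. It normalizes the coefficients of the diagonal form over $\R(t)$: first scale $x,y,z$ by polynomials so that $A,C,H$ are squarefree; then, if for instance $R=\gcd(A,C)\ne 1$, multiply the whole equation by $R$ and absorb $R^2$ into $x^2$ and $y^2$, which replaces $(A,C,H)$ by $(A/R,\,C/R,\,RH)$; iterating over the three pairs makes $A,C,H$ squarefree and pairwise coprime. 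All of these substitutions are diagonal in the coordinates, so they commute with $z\mapsto -z$ and can never reintroduce a cross term, and a direct Jacobian computation shows that the closed subscheme of $\p^2_\R\times\mathbb{A}^1_\R$ cut out by a diagonal equation with squarefree, pairwise coprime coefficients is automatically smooth. (Relative $G$-minimality, which is what genuinely requires elementary-transformation-type substitutions, is not part of this lemma; it is handled separately in Lemma~\ref{lemma:G-MMP-form}.)
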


\begin{proof}
Observe that $U$ is isomorphic to a closed subset in $\p^2_\R\times\mathbb{A}^1_\R$,
and the conic bundle $\pi_U$ is induced by the natural projection $\p^{2}_\R\times\mathbb{A}^1_\R\to \mathbb{A}^1_\R$. 
The surface $U$ is given in $\p^{2}_\R\times\mathbb{A}^1_\R$ by
$$
A_0(t)x^2+B_0(t)xy+C_0(t)y^2+H_0(t)z^2+E_0(t)xz+F_0(t)yz=0
$$
for some polynomials $A_0,B_0,C_0,H_0,E_0,F_0\in\R[t]$.
This equation defines a conic $\mathcal{C}\subset\p^2_{\R(t)}$,
which is the scheme generic fibre of the~conic bundle $\pi_U\colon U\to\mathbb{A}^1_\R$.
By Proposition~\ref{proposition:exceptional-conic-bundles}, this conic $\mathcal{C}$ does not have points in $\R(t)$,
because the conic bundle $\pi$ is not $G$-exceptional.

Since $G$ acts trivially on $\mathbb{A}^1_\R$, its action on $U$ gives a geometric $G$-action on the conic~$\mathcal{C}$,
which lifts to a linear action on $\p^2_{\R(t)}$.
Therefore, linearly changing coordinates on $\p^2_{\R(t)}$,
we may assume that the~group $G$ acts on $\p^2_{\R(t)}$ by the involution $[x:y:z]\mapsto[x:y:-z]$.
Then the conic $\mathcal{C}$ is given in $\p^2_{\R(t)}$ by the equation
$$
A_1(t)x^2+B_1(t)xy+C_1(t)y^2=H_1(t)z^2
$$
for some $A_1$ $B_1$, $C_1$, $H_1$ in $\R(t)$, where we consider $[x:y:z]$ as coordinates on~$\p^2_{\R(t)}$.

Note that $A_1\ne 0$, because $\mathcal{C}$ has no points in $\R(t)$.
Thus, completing the squares and clearing the denominators, we may assume that $\mathcal{C}$ is given by
$$
A_2(t)x^2+C_2(t)y^2=H_2(t)z^2
$$
for some  polynomials $A_2,C_2,H_2\in\R[t]$.
We may further assume that $\gcd(A_2,C_2,H_2)=1$.
Scaling $x$, $y$, $z$, we may also assume that $A_2$, $C_2$, $H_2$ have no multiple roots.

If~$R=\gcd(A_2,C_2)\ne 1$, then $A_2=RA_3$ and $C_2=RC_3$, so multiplying the~equation of the conic $\mathcal{C}$ by $R$ and scaling the coordinates,
we see that $\mathcal{C}$ is now given by
$$
A_3(t)x^2+C_3(t)y^2=H_3(t)z^2,
$$
where $A_3$, $C_3$, $H_3=RH_2$ have no multiple roots, while $A_3$ and $C_3$ are co-prime. If $T=\gcd(A_2,H_2)\ne 1$, $A_2=TA_3$, $H_2=TH_3$ we can again multiply the equation by $T$, and after a change of coordinates get 
		\[
		A_3(t)x^2+C_3(t)y^2=H_3(t)z^2,
		\]
		where $C_3=TC_2$, $\gcd(A_3, H_3)=1$. Note that $C_3$ is also coprime to $A_3$. Indeed, if $\gcd(C_3,A_3)=P\ne 1$ then $P$ divides $A_2$. Since $\gcd(A_2,C_2)=1$, we have $\gcd(T,C_2)=1$ and thus $P$ divides either $T$ or $C_2$. The latter case is impossible, as otherwise $P$ divides both $A_2$ and $C_2$, in the former case $P$ divides both $T$ and $A_3$, so $A_2=TA_3$ has a multiple root, a contradiction. The same argument shows that we may reach $\gcd(C_3,H_3)=1$. 

Finally, let $W$ be the~closed subset in $\p^{2}_\R\times\mathbb{A}^1_\R$ that is given by $A_3(t)x^2+C_3(t)y^2=H_3(t)z^2$.
Then $W$ is smooth, since $A$, $C$, $H$ do not have multiple roots and are pairwise co-prime.
This completes the proof of the lemma, because the coordinate changes we did give the~required $G$-equivariant birational map $U\dasharrow W$, since we did not change~$t$.
\end{proof}

\begin{remark}
One of the anonymous referees suggested the following shorter and more elegant proof of Lemma~\ref{lemma:first-form}.
Namely, the surface $S$ embeds over $\p^1_\R$ into the projectivisation of the $G$-equivariant vector bundle $\pi_*\omega_{S/\p^1_\R}$ as relative conic fibration. 
All vector bundles over $\mathbb{A}^1_\R$ are trivial and all $G$-equivariant coherent sheaves split as direct sum of the $+1$ and $-1$ eigensheaves for the $G$-action. 
Consequently, the projective bundle has homogeneous coordinates $[x:y:z]$ such that the $G$-action is given by $[x:y:z]\mapsto[x:y:-z]$. Then $S$ has an equation of the claimed form. The assumption that $S$ is $G$-minimal imply the remaining claims of Lemma~\ref{lemma:first-form}. 
We thank the referee for their suggestion. 
\end{remark}

We just replaced our conic bundle $\pi_U\colon U\to \mathbb{A}^1_\R$ with the conic bundle \mbox{$\pi_W\colon W\to \mathbb{A}^1_\R$},
which is given in $\p^{2}_\R\times\mathbb{A}^1_\R$ by a very simple equation.
However, this comes with a price:
\begin{itemize}
\item the conic bundle $\pi_U\colon U\to\AAA_\R^1$ is relatively $G$-minimal over $\mathbb{A}^1_\R$ by our assumption,
\item the conic bundle $\pi_W\colon W\to \mathbb{A}^1_\R$ is not necessarily relatively $G$-minimal over $\mathbb{A}^1_\R$,
\end{itemize}
because the constructed fibrewise $G$-birational map $U\dasharrow W$ can destroy $G$-minimality.
Observe that the conic bundle $\pi_W\colon W\to\mathbb{A}^1_\R$ is relatively $G$-minimal over $\mathbb{A}^1_\R$
if and only~if the polynomial $H(t)$ in \eqref{equation:non-exceptional-model-1} has only real roots,
and the fibre of the~morphism $\pi_W$ over every root of the polynomial $H(t)$ is a singular irreducible conic.

Now, we are going to explicitly apply $G$-equivariant relative Minimal Model Program
to the conic bundle $\pi_W\colon W\to\mathbb{A}^1_\R$ from Lemma~\ref{lemma:first-form}
to birationally transform it into a~$G$-minimal conic bundle which is also explicitly given in $\p^{2}_\R\times\mathbb{A}^1_\R$.

\begin{lemma}
\label{lemma:G-MMP-form}
Let $Z$ be a smooth surface in $\p^{2}_\R\times\mathbb{A}^1_\R$ that is given by
$$
A(t)x^2+B(t)xy+C(t)y^2=H(t)z^2
$$
for some polynomials $A,B,C,H\in\R[t]$,
let $\pi_Z\colon Z\to \mathbb{A}^1_\R$ be the morphism induced by the~natural projection $\p^{2}_\R\times\mathbb{A}^1_\R\to \mathbb{A}^1_\R$,
where $([x:y:z],t)$ are coordinates on $\p^{2}_\R\times\mathbb{A}^1_\R$.
Let us fix~a~$G$-action on the~surface $Z$ that is given by $([x:y:z],t)\mapsto([x:y:-z],t)$.
Then there exists  a~$G$-equivariant commutative diagram
$$
\xymatrix{
Z\ar@{->}[d]_{\pi_Z}\ar@{-->}[rr]&&\widetilde{Z}\ar@{->}[d]^{\pi_{\widetilde{Z}}}\\
\mathbb{A}^1_\R\ar@{=}[rr]&&\mathbb{A}^1_\R}
$$
such that $Z\dashrightarrow\widetilde{Z}$ is a birational map,
and $\widetilde{Z}$ is a smooth surface in $\p^{2}_\R\times\mathbb{A}^1_\R$ given by
$$
\widetilde{A}(t)x^2+\widetilde{B}(t)xy+\widetilde{C}(t)y^2=\widetilde{H}(t)z^2
$$
for some  $\widetilde{A},\widetilde{B},\widetilde{C},\widetilde{H}\in\R[t]$
such that the~polynomial $\widetilde{H}(t)$ has only real roots,
the fibre of the~morphism $\pi_{\widetilde{Z}}$ over every root of $\widetilde{H}(t)$ is a singular irreducible conic,
and the~$G$-action on the~surface $\widetilde{Z}$ is given by $([x:y:z],t)\mapsto([x:y:-z],t)$.
\end{lemma}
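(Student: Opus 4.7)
The plan is to perform a sequence of relative $G$-equivariant elementary transformations on $\pi_Z\colon Z\to\AAA^1_\R$ that successively eliminate the reducible fibres obstructing the two conditions on $\widetilde H$, and then to re-embed the resulting relatively $G$-minimal model in $\p^2_\R\times\AAA^1_\R$. First I would analyse the singular fibres of $\pi_Z$. Over a root $t_0$ of $H(t)$, smoothness of $Z$ forces $B(t_0)^2-4A(t_0)C(t_0)\ne 0$, so the fibre consists of two distinct lines through the $\tau$-fixed point $[0\!:\!0\!:\!1]$, and each of these lines is $\tau$-invariant because $\tau$ only negates $z$. Over a root of $B^2-4AC$ with $H\ne 0$, the two components of the fibre are instead swapped by $\tau$. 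Accordingly the only obstructions to $G$-minimality of $\pi_Z$ are: (a) a real $\tau$-invariant $(-1)$-curve inside a fibre over a real root $t_0$ of $H$ where $B(t_0)^2-4A(t_0)C(t_0)>0$, and (b) a pair of disjoint complex-conjugate $\tau$-invariant $(-1)$-curves inside the two fibres over a complex-conjugate pair of roots of $H$.

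Next I would contract these $G\times\Gal(\C/\R)$-orbits of $(-1)$-curves one orbit at a time. Each such contraction is defined over $\R$, is $G$-equivariant, preserves smoothness, preserves the conic bundle structure over $\AAA^1_\R$, and strictly decreases the number of real roots of $H$ (in case (a)) or of complex-conjugate pairs of roots of $H$ (in case (b)). After finitely many steps one reaches a smooth, relatively $G$-minimal conic bundle $\pi_{\widetilde Z}\colon\widetilde Z\to\AAA^1_\R$ together with a $G$-birational map $Z\dasharrow\widetilde Z$ over $\AAA^1_\R$.

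To re-embed $\widetilde Z$ in $\p^2_\R\times\AAA^1_\R$ in the prescribed shape, I would adapt the argument of Lemma~\ref{lemma:first-form}: the projectivisation of $(\pi_{\widetilde Z})_*\omega_{\widetilde Z/\AAA^1_\R}^{-1}$ is a trivial $\p^2$-bundle over $\AAA^1_\R$ since every vector bundle over the affine line is trivial, and the induced $G$-action on the rank-$3$ bundle splits into its $\pm 1$ eigensubbundles, producing homogeneous coordinates $[x\!:\!y\!:\!z]$ in which $\tau$ acts as $[x\!:\!y\!:\!z]\mapsto[x\!:\!y\!:\!-z]$; clearing denominators realises $\widetilde Z$ inside $\p^2_\R\times\AAA^1_\R$ as the vanishing locus of an equation of the form
\[
\widetilde A(t)x^2+\widetilde B(t)xy+\widetilde C(t)y^2=\widetilde H(t)z^2.
\]

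Finally I would verify the two claims about $\widetilde H$. If $\widetilde H$ had either a complex-conjugate pair of roots, or a real root $t_0$ with $\widetilde B(t_0)^2-4\widetilde A(t_0)\widetilde C(t_0)>0$, then by the dichotomy in the first paragraph one would produce either a pair of complex-conjugate disjoint $\tau$-invariant $(-1)$-curves or a real $\tau$-invariant $(-1)$-curve that could be $G$-contracted, contradicting the $G$-minimality of $\widetilde Z$. Hence $\widetilde H$ has only real roots. Moreover, the smoothness of $\widetilde Z$ forces $\widetilde B^2-4\widetilde A\widetilde C$ not to vanish at a root of $\widetilde H$, and so in fact $\widetilde B(t_0)^2-4\widetilde A(t_0)\widetilde C(t_0)<0$ at every root $t_0$ of $\widetilde H$, meaning the fibre over $t_0$ is a pair of complex-conjugate lines meeting at $[0\!:\!0\!:\!1]$, which is singular irreducible over~$\R$.

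The main obstacle is the re-embedding step: after the MMP one has only an abstract smooth $G$-conic bundle, and one must check that the trivialisation-and-splitting argument used in Lemma~\ref{lemma:first-form} goes through without the a priori assumption that the surface already sits inside $\p^2_\R\times\AAA^1_\R$. Once this is accomplished, the required restrictions on $\widetilde H$ follow directly from the $G$-minimality of $\widetilde Z$ via the combinatorial analysis of singular fibres described above.
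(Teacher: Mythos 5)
Your argument is correct, but it takes a genuinely different route from the paper's. The paper never leaves the ambient space $\p^2_\R\times\AAA^1_\R$: for each real root $t_0$ of $H$ whose fibre splits into two real lines it writes down an explicit birational substitution $([x:y:z],t)\mapsto([t(cx-ay):dx+by:tz],t)$ (and, for a conjugate pair of non-real roots of $H$, the composition of the two analogous substitutions, which is defined over $\R$), and checks by direct computation that the new equation has the same shape, that the factor $t-t_0$ (resp.\ $(t-\varepsilon)(t-\bar\varepsilon)$) has been removed from $H$, and that $(4AC-B^2)H$ changes only by a positive scalar, so smoothness is preserved at every step and no re-embedding is ever needed. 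You instead run the relative $G$-MMP abstractly --- your dichotomy of which fibre components are $\tau$-invariant versus $\tau$-swapped, and which $G\times\Gal(\C/\R)$-orbits of $(-1)$-curves are contractible, is exactly right --- and then re-embed the minimal model via $\p\bigl(\pi_*\omega^{-1}\bigr)$ using triviality of vector bundles on $\AAA^1_\R$ and the eigensheaf splitting; this is precisely the referee's alternative argument recorded after Lemma~\ref{lemma:first-form}, and it does go through here (the $(-1)$-eigenspace of the induced linearisation is one-dimensional because $\tau$ acts nontrivially on the generic fibre, so the $G$-invariant conic equation involves only $x^2$, $xy$, $y^2$, $z^2$). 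What the paper's computation buys is explicit control: one sees directly that the discriminant $4AC-B^2$ is preserved up to a positive scalar, which is convenient for Corollary~\ref{corollary:fixed-curve-non-exceptional} and Theorem~\ref{theorem:main-non-exceptional-equivalence}; your version gets smoothness and the conditions on $\widetilde H$ for free from minimality, at the cost of the extra re-embedding step. One cosmetic point: during the intermediate contractions there is no equation yet, so ``the number of real roots of $H$ decreases'' should be replaced by ``the number of reducible geometric fibres decreases'' (equivalently $K^2$ increases), which is what actually guarantees termination.
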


\begin{proof}
First, we observe that the smoothness of the surface $Z$ is equivalent to the condition that the~polynomial $(4AC-B^2)H$ does not have multiple roots.

Second, we suppose that $H(t)$ has a real root such that the fibre of $\pi_Z$ over this root is a~conic with two real components.
For simplicity, we may assume that this root is $t=0$.
Write  $A(t)=A_0+tA_1(t)$, $B(t)=B_0+tB_1(t)$, $C(t)=C_0+tC_1(t)$,
where $A_0,B_0,C_0\in\R$. Then $\pi_Z^{-1}(0)$ is given by $A_0x^2+B_0xy+C_0y^2=t=0$.
By our assumption, we have
$$
A_0x^2+B_0xy+C_0y^2=(cx-ay)(dx+by)
$$
for some real numbers $a,b,c,d$ such that the~polynomials $cx-ay$ and $dx+by$ are co-prime.
Let $u=t(cx-ay)$, $v=dx+by$, $\lambda=ad+bc$, so that
$$
\left\{\aligned
&x=\frac{1}{\lambda t}\left(bu+tav\right),\\
&y=\frac{1}{\lambda t}\left(ctv-du\right).
\endaligned
\right.
$$
Substituting this into the defining equation of $Z$, we see get the equation
\begin{multline*}
t^2\overline{H}(t)z^2=t\left(A_0x^2+B_0xy+C_0y^2\right)+t^2\left(A_1(t)x^2+B_1(t)xy+C_1(t)y^2\right)=\\
=uv\left(1+\frac{t}{\lambda^2}\left( 2abA_1(t)-2cdC_1(t)+(bc-ad)B_1(t)\right)\right)+\\
+u^2\frac{1}{\lambda^2}\left(b^2A_1(t)+d^2C_1(t)-bdB_1(t)\right)+v^2\frac{t^2}{\lambda^2}\left(a^2A_1(t)+c^2C_1(t)+acB_1(t)\right),
\end{multline*}
where $\overline{H}\in\R[t]$ such that $H=t\overline{H}(t)$.
Let $\alpha\colon\p^2_\R\times\mathbb{A}^1_\R\dashrightarrow\p^2_\R\times\mathbb{A}^1_\R$  be the map given by
$$
([x:y:z],t)\mapsto([t(cx-ay):dx+by:tz],t).
$$
Then the map $\alpha$ induces a $G$-equivariant birational map $Z\dasharrow\overline{Z}$
such that $\overline{Z}$ is a surface in $\p^2_\R\times\mathbb{A}^1_\R$ that is given by
$$
\overline{A}(t)x^2+\overline{B}(t)xy+\overline{C}(t)y^2=\overline{H}(t)z^2
$$
for some $\overline{A},\overline{B},\overline{C}\in\R[t]$ such that
the polynomial $(4\overline{A}(t)\overline{C}(t)-\overline{B}(t)^2)\overline{H}(t)$ is a positive scalar multiple of the~polynomial $(4A(t)C(t)-B(t)^2)\overline{H}(t)$.
So, we see that $\overline{Z}$ is~smooth.
Therefore, iterating this process, we may assume that the fibres of our original conic bundle $\pi_Z\colon Z\to\mathbb{A}^1_{\mathbb{R}}$ over every real root of the polynomial $H(t)$ is an irreducible conic.

Finally, suppose that $H(t)$ has a non-real root $t=\varepsilon\in\mathbb{C}$.
Then the~fibre of the~conic bundle $\pi_Z$ over this root is given by $t-\varepsilon=A(\varepsilon)x^2+B(\varepsilon)xy+C(\varepsilon)y^2=0$,
and
$$
A(\varepsilon)x^2+B(\varepsilon)xy+C(\varepsilon)y^2=(cx-ay)(dx+by)
$$
for $a,b,c,d\in\C$ such that $cx-ay$ and $dx+by$ are co-prime, since $\pi_Z$ has reduced fibres.
Let $\alpha_{\varepsilon}\colon\p^2_\R\times\mathbb{A}^1_\R\dashrightarrow\p^2_\R\times\mathbb{A}^1_\R$
and  $\alpha_{\overline{\varepsilon}}\colon\p^2_\R\times\mathbb{A}^1_\R\dashrightarrow\p^2_\R\times\mathbb{A}^1_\R$
be rational maps given by
$$
([x:y:z],t)\mapsto([(t-\varepsilon)(cx-ay):dx+by:(t-\varepsilon)z],t)
$$
and
$$
([x:y:z],t)\mapsto([(t-\bar{\varepsilon})(\bar{c}x-\bar{a}y):\bar{d}x+\bar{b}y:(t-\bar{\varepsilon})z],t),
$$
respectively. Then the composition $\alpha_{\bar{\epsilon}}\circ\alpha_\epsilon$ is a rational map defined over $\mathbb{R}$
that induces a $G$-equivariant birational map $Z\dasharrow\widehat{Z}$
such that $\widehat{Z}$ is a surface in $\p^2_\R\times\mathbb{A}^1_\R$  given by
$$
\widehat{A}(t)x^2+\widehat{B}(t)xy+\widehat{C}(t)y^2=\widehat{H}(t)z^2
$$
for polynomials $\widehat{A}(t),\widehat{B}(t),\widehat{C}(t),\widehat{H}(t)\in\R[t]$ such that
$$
\widehat{H}(t)=\frac{H(t)}{(t-\epsilon)(t-\bar{\epsilon})},
$$
and $(4\widehat{A}(t)\widehat{C}(t)-\widehat{B}(t)^2)\widehat{H}(t)=\xi (4A(t)C(t)-B(t)^2)\widehat{H}(t)$ for some positive real number ~$\xi$.
In particular, we see that the~surface $\widehat{Z}$ is~smooth.
Now, iterating this process, we obtain the required $G$-equivariant birational transformation $Z\dasharrow\widetilde{Z}$.
\end{proof}

Now, we are ready to finish the proof of Theorem~\ref{theorem:main-non-exceptional}.

\begin{proof}[Proof of Theorem~\ref{theorem:main-non-exceptional}]
Recall that $U=S\setminus\pi^{-1}([1:0])$, and the fibre $\pi^{-1}([1:0])$ is smooth.
Applying Lemmas~\ref{lemma:first-form} and \ref{lemma:G-MMP-form}, we get  $G$-equivariant commutative diagram
$$
\xymatrix{
U\ar@{->}[d]_{\pi_U}\ar@{-->}[rr]&&Y\ar@{->}[d]^{\pi_Y}\\
\mathbb{A}^1_\R\ar@{=}[rr]&&\mathbb{A}^1_\R}
$$
such that $Y$ is a smooth surface $\{A(t)x^2+B(t)xy+C(t)y^2-H(t)z^2=0\}\subset\mathbb{P}^2_\mathbb{R}\times\mathbb{A}^1_{\mathbb{R}}$,
where $A,B,C,H\in\mathbb{R}[t]$, the group $G$ acts on the surface $Y$ by
$$
([x:y:z],t)\mapsto([x:y:-z],t),
$$
and $\pi_Y$ is given by $([x:y:z],t)\mapsto t$, where $([x:y:z],t)$ are coordinates on $\mathbb{P}^2_\mathbb{R}\times\mathbb{A}^1_{\mathbb{R}}$.

Since $Y$ is smooth, we see that $(B^2-4AC)H$ does not have multiple roots. It follows from Lemmas~\ref{lemma:first-form} and \ref{lemma:G-MMP-form} that we may further assume that
$H(t)$ has only real roots, and the fibre of $\pi_Y$ over every root of the polynomial $H(t)$ is a singular irreducible conic.
Then the conic bundle $\pi_Y\colon Y\to\mathbb{A}^1_\R$ is relatively $G$-minimal over $\mathbb{A}^1_\R$.

Multiplying the defining equation of the surface $Y$ by $\pm 1$, we may further assume that the leading coefficient of the polynomial $H(t)$ is negative.

Now, there exists a completion $Y\subset X$ such that $X$ is a projective smooth surface,
the~action of the group $G=\langle\tau\rangle$ on the surface $Y$ extends to its regular action on $X$,
the~morphism $\pi_Y\colon Y\to\mathbb{A}^1_{\R}$ induces a $G$-equivariant morphism $\eta\colon X\to\mathbb{P}^1$ that
maps the~complement $X\setminus Y$ to the point $[1:0]$ and fits the $G$-equivariant commutative diagram:
$$
\xymatrix{
S\ar@{->}[d]_{\pi}\ar@{-->}[rr]^{\chi}&&X\ar@{->}[d]^{\eta}\\
\p^1_\R\ar@{=}[rr]&&\mathbb{P}^1_\R}
$$
where $\chi$ is a composition of the constructed $G$-equivariant birational map $U\dasharrow Y$ with the $G$-equivariant embedding $Y\subset X$.

Moreover, applying relative $G$-Minimal Model Program to $X$ over $\mathbb{P}^1_\R$, we may further assume that $\eta\colon X\to\mathbb{P}^1_\R$ is a $G$-minimal conic bundle. 
Since $\pi^{-1}([1:0])$ is smooth and has empty real locus by assumption, we conclude that the fibre $\eta^{-1}([1:0])=X\setminus Y$ is also smooth and has empty real locus.
Note that $\eta(X(\R))=\pi(S(\R))$, so that $\eta(X(\R))$ is a union of intervals in $\mathbb{P}^1_\R(\R)$,
because the conic bundle $\pi\colon S\to\mathbb{P}^1_\R$ is assumed to be non-exceptional.

To complete the proof of Theorem~\ref{theorem:main-non-exceptional}, it remains to explain why $\deg(B^2-4AC)$ is~even.
Let $\mathscr{C}$ be the curve in $X$ that is fixed by $\tau$.
Then $\eta$ induces the double cover  $\mathscr{C}\to\mathbb{P}^1_\R$, which is not ramified over the~point $[1:0]$,
because the fibre $\eta^{-1}([1:0])$ is smooth.
On~the~other hand, the induced double cover $\mathscr{C}\setminus(\mathscr{C}\cap\eta^{-1}([1:0]))\to\mathbb{A}^1_\R$
is branched exactly at the roots of the polynomial $B^2-4AC$, which does not have multiple roots.
Therefore, since~the~number of ramification points~of the~double cover $\mathscr{C}\to\mathbb{P}^1_\R$ is even,
we conclude that $\deg(B^2-4AC)$ is also even. Theorem~\ref{theorem:main-non-exceptional} is proved.
\end{proof}

\subsection{Proof of Theorem~\ref{theorem:main-non-exceptional-equivalence}}
\label{subsection:non-exceptional-birational-classification}
Let us use all assumptions and notations of Theorem~\ref{theorem:main-non-exceptional-equivalence}.
Set $\pi_{Y_1}=\eta_{1}\vert_{Y_1}$ and $\pi_{Y_2}=\eta_{1}\vert_{Y_2}$.
Let $\Delta_1=4A_1C_1-B_1^2$ and $\Delta_2=4A_2C_2-B_2^2$.
Then, applying Corollary~\ref{corollary:main-non-exceptional} twice,
we obtain two  $G$-equivariant commutative diagrams
$$
\xymatrix{
Y_1\ar@{->}[d]_{\pi_{Y_1}}\ar@{-->}[rr]^{\chi_1}&&\widehat{Y}_1\ar@{->}[d]^{\pi_{\widehat{Y}_1}}\\
\mathbb{A}^1_\R\ar@{=}[rr]&&\mathbb{A}^1_\R}
$$
and
$$
\xymatrix{
Y_2\ar@{->}[d]_{\pi_{Y_2}}\ar@{-->}[rr]^{\chi_2}&&\widehat{Y}_2\ar@{->}[d]^{\pi_{\widehat{Y}_2}}\\
\mathbb{A}^1_\R\ar@{=}[rr]&&\mathbb{A}^1_\R}
$$
such that $\widehat{Y}_1$ and $\widehat{Y}_2$ are (possibly singular) surfaces in $\mathbb{P}^{2}_\mathbb{R}\times\mathbb{A}^{1}_{\mathbb{R}}$
given by $\widehat{A}_1x^2+\widehat{A}_1\Delta_1y^2=H_1z^2$ and $\widehat{A}_2x^2+\widehat{A}_2\Delta_2y^2=H_2z^2$, respectively,
where $\widehat{A}_1$ and $\widehat{A}_2$ are polynomials in $\R[t]$ such that
$\widehat{A}_1$, $\widehat{A}_2$, $\widehat{A}_1\Delta_1$, $\widehat{A}_2\Delta_2$ do not have multiple roots,
$\widehat{A}_1$ and $H_1$ are co-prime,
$\widehat{A}_2$ and $H_2$ are co-prime,
$\widehat{A}_1\Delta_1$ and $H_1$ are co-prime,
$\widehat{A}_2\Delta_2$ and $H_2$ are co-prime, both conic bundles $\pi_{\widehat{Y}_1}$ and $\pi_{\widehat{Y}_2}$ are given by $([x:y:z],t)\mapsto t$,
the~$G$-actions on both surfaces $\widehat{Y}_1$ and $\widehat{Y}_2$ are given by $([x:y:z],t)\mapsto([x:y:-z],t)$,
both $\chi_1$ and $\chi_2$ are birational maps
that are biregular along singular fibres of the conic bundles $\pi_{Y_1}$ and $\pi_{Y_2}$, respectively.

First, we suppose that there exists a $G$-equivariant birational map $\rho\colon X_1\dasharrow X_2$
such that $\rho$ fits the~commutative diagram~\eqref{equation:main-non-exceptional-equivalence}.
Then $\eta_1(X_1(\R))=\eta_2(X_2(\R))$, since $\rho$ is an isomorphism away from finitely many fibres of $\eta_1$ and $\eta_2$.
Moreover, the map $\rho$ induces an isomorphism between the $G$-fixed curves in $X_1$ and $X_2$.
Hence, using  Corollary~\ref{corollary:fixed-curve-non-exceptional}, we get
$
\Delta_1=\lambda\Delta_2
$
for some $\lambda\in\R^\ast$.
Furthermore, the map $\rho$ is an isomorphism along singular fibres of  the conic bundles $\eta_1$ and $\eta_2$, because it is a composition of elementary transformations. 
Thus, in particular, $\rho$ maps special singular fibres of $\eta_1$ into special singular fibres of $\eta_2$,
which implies that
$
H_1=\mu H_2
$
for some $\mu\in\R^\ast$.
Since, by assumption, the leading coefficients of $H_1$ and $H_2$ are negative, we see that~$\mu>0$.
This gives $\lambda>0$.
Indeed, let $t_0$ be a general point in $\mathbb{A}^1_\R(\R)$ such that $t_0\not\in\eta_1(X_1(\R))$.
Since $\rho_1$ is an isomorphism along the~fibre $\pi_{Y_1}^{-1}(t_0)$,
we get $\Delta_1(t_0)>0$.
Similarly, we  get $\Delta_2(t_0)>0$, which implies that $\lambda>0$.
This proves one direction of Theorem~\ref{theorem:main-non-exceptional-equivalence}.

To prove the other direction of Theorem~\ref{theorem:main-non-exceptional-equivalence}, we suppose that  $\eta_1(X_1(\R))=\eta_2(X_2(\R))$,
and there are positive real numbers $\lambda$ and $\mu$ such that
$\Delta_1=\lambda\Delta_2$ and $H_1=\mu H_2$.
Let us show that there exists a $G$-equivariant birational map $\rho\colon X_1\dasharrow X_2$
that fits~\eqref{equation:main-non-exceptional-equivalence}.
By Lemma~\ref{lemma:equivalence-of-forms}, to prove this,
it is enough to show that the forms $\langle\widehat{A}_1, \widehat{A}_1\Delta_1\rangle$ and $\langle\widehat{A}_2,\widehat{A}_2\Delta_2\rangle$
are equivalent over $\R(t)$,
which can be shown by using Proposition~\ref{proposition:equivalence-of-forms}.

Let us check that all conditions of Proposition~\ref{proposition:equivalence-of-forms} applied to
the forms $\langle\widehat{A}_1, \widehat{A}_1\Delta_1\rangle$ and $\langle\widehat{A}_2,\widehat{A}_2\Delta_2\rangle$ are satisfied.
First, we observe that the condition (1) of Proposition~\ref{proposition:equivalence-of-forms} is satisfied, as $\Delta_1=\lambda\Delta_2$ and $\lambda>0$.

Now the option (2.a) is clearly not possible,
so assume that $\widehat{A}_1(\varepsilon)=(\widehat{A}_1\Delta_1)(\varepsilon)=0$.
Then $\Delta_1(\varepsilon)\ne 0$, because $\widehat{A}_1\Delta_1$ have no multiple roots.
Thus, we see that
$$
(\widehat{A}_1)_{\varepsilon}(\varepsilon)(\widehat{A}_1\Delta_1)_{\varepsilon}(\varepsilon)=\Big((\widehat{A}_1)_\varepsilon(\varepsilon)\Big)^2\Delta_1(\varepsilon)
$$
which is negative, since $\Delta_1(\varepsilon)=-B_1(\varepsilon)^2$. This shows that (2.b) holds.

To check (2.c), we use exactly the same argument to show that
$(\widehat{A}_1)_{\varepsilon}(\varepsilon)(\widehat{A}_1\Delta_1)_{\varepsilon}(\varepsilon)<0$
and $(\widehat{A}_2)_{\varepsilon}(\varepsilon)(\widehat{A}_2\Delta_2)_{\varepsilon}(\varepsilon)<0$.

We now check (3).
Let us denote the leading coefficients of $\widehat{A}_i$, $\widehat{A}_i\Delta_i$ and $H_i$ by $\alpha_i$, $\beta_i$ and $\gamma_i$, respectively. Assume $\alpha_i\beta_i<0$. Recall that $\pi(Y_i(\R))$ is a finite union of closed intervals bounded by the roots of $\Delta_1H_1$; let $\varepsilon_{\min},\varepsilon_{\max}\in\R$ be the smallest and the largest roots of $\Delta_1H_1$, respectively. But then for $t\gg\varepsilon_{\max}$ the fibre over $t$ is a conic with non-empty real locus (since the coefficients at $x^2$ and $y^2$ are of different signs), which is not possible. Therefore, both $\alpha_i$ and $\beta_i$ must be positive (recall that we assume $\gamma_i<0$). 
We now claim that $\deg \widehat{A}_1$ and $\deg \widehat{A}_2$ have the same parity, which will finish the proof, because the parities of $\deg \widehat{A}_i$ and $\deg \widehat{A}_i\Delta_i$ are the same (as $\deg \Delta_i$ is even). Assume the contrary, i.e. $\deg \widehat{A}_1=2n$, $\deg \widehat{A}_2=2m+1$. For $t\ll\varepsilon_{\min}$ the real loci of the fibres $\pi_1^{-1}(t)$ and $\pi_2^{-1}(t)$ are diffeomorphic to the real loci of the conics
$$
\alpha_1t^{2n}x^2+\beta_1 t^{2n+\deg\Delta_1}y^2=\gamma_1t^{\SF_{S_1}}z^2\ \ \text{and}\ \ \ \alpha_2t^{2m+1}x^2+\beta_2 t^{2m+1+\deg\Delta_2}y^2=\gamma_2t^{\SF_{S_2}}z^2,
$$
respectively. But since $\alpha_i$, $\beta_i$ are positive, $\gamma_i$ are negative, $\SF_{S_1}=\SF_{S_2}=k$ and $\deg\Delta_i$ is even, these conics are respectively isomorphic for any $t\ll\varepsilon_{\min}$ over $\R$ to
$$
x^2+y^2=(-1)^kz^2\ \ \text{and}\ \ -x^2-y^2=(-1)^kz^2,
$$
respectively.
But for any $k$ exactly one of these conics has non-empty real locus, which contradicts the condition $\eta_1(X_1(\R))=\eta_2(X_2(\R))$.
We conclude that $\deg \widehat{A}_1$ and $\deg \widehat{A}_2$ have the same parity.

Therefore, we can apply Proposition~\ref{proposition:equivalence-of-forms}  to
show that the forms $\langle\widehat{A}_1, \widehat{A}_1\Delta_1\rangle$
and $\langle\widehat{A}_2,\widehat{A}_2\Delta_2\rangle$ are equivalent over $\R(t)$.
This completes the proof of  Theorem~\ref{theorem:main-non-exceptional-equivalence}.

\section{Classification}
\label{section:classification}

\subsection{Proof of Main Theorem: classification}
\label{subsection:main-theorem}

Let $\iota$ be an involution in the group $\Bir(\p_\R^2)$. First, regularising the action of $\langle\iota\rangle$, we may assume that (after conjugation with a birational map $\p^2_\R\dashrightarrow S$) the involution $\iota$ is given by a biregular involution $\tau\in\Aut(S)$ of a smooth projective surface $S$, see Section~\ref{subsection:regularisation}. Let $G=\langle\tau\rangle$.	Then, applying $G$-equivariant Minimal Model Program, we may further assume that

\begin{enumerate}
	\item either $\Pic(S)^G\simeq\Z$, and $S$ is a real $\R$-rational~del Pezzo surface;
	\item or $\Pic(S)^G\simeq\Z^2$, and there exists a~$G$-equivariant conic bundle $\pi\colon S\to\p^1_\R$, and $S$ is again $\R$-rational.
\end{enumerate}

\begin{lemma}
\label{lemma:dP-Pic-Z-1-2-4-8-9}
Suppose that  $\Pic(S)^G\simeq\mathbb{Z}$. Then $K_S^2\in\{1,2,4,8,9\}$
\end{lemma}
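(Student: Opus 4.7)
The plan is to reformulate the hypothesis in terms of Picard lattice invariants. Write $\Gamma = \Gal(\C/\R)$ and let $H \subset \Aut(\Pic(S_\C))$ denote the finite abelian subgroup generated by the commuting actions of $G = \langle \tau \rangle$ and $\Gamma$, so that $|H|$ divides $4$. Since $\Pic(S) = \Pic(S_\C)^{\Gamma}$, one has $\Pic(S)^G \otimes \mathbb{Q} = (\Pic(S_\C) \otimes \mathbb{Q})^H$. The canonical class $K_S$ is always $H$-invariant, so the condition $\Pic(S)^G \simeq \mathbb{Z}$ amounts to requiring that $H$ have no nonzero fixed vector in the orthogonal complement $K_S^\perp \otimes \mathbb{Q}$. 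Preserving the intersection form, $H$ acts through the Weyl group $W_d = W(R_d)$ of the root system attached to a del Pezzo of degree $d = K_S^2$, namely $R_3 = E_6$, $R_5 = A_4$, $R_6 = A_2 \oplus A_1$, $R_7 = A_1$. It thus suffices to show that for $d \in \{3, 5, 6, 7\}$, no abelian subgroup of $W_d$ generated by at most two commuting involutions acts on $K_S^\perp \otimes \mathbb{Q}$ without nonzero fixed vectors.

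The cases $d = 7$ and $d = 5$ are immediate. For $d = 7$, the root system $R_7 = A_1$ has rank $1$ while $K_S^\perp$ has rank $2$, so any subgroup of $W_7$ fixes pointwise the line orthogonal to $A_1$. For $d = 5$, $W_5 = S_5$ acts on $K_S^\perp \otimes \mathbb{Q}$ as the standard representation, and the dimension of the $H$-fixed subspace equals the number of $H$-orbits on five letters minus one, which is positive for any abelian subgroup $H$ of order at most $4$. For $d = 6$, the three conjugacy classes of nontrivial involutions in $W_6 = \mathbb{Z}/2 \times S_3$ have traces $1$, $1$, $-1$ on $R_6 \otimes \mathbb{Q}$. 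Since the product of two distinct elements of the trace-$(-1)$ class lies in $S_3$ and is a $3$-cycle rather than an involution, any Klein-four subgroup of $W_6$ contains at most one element of trace $-1$, and Burnside's formula gives
\[
\dim (K_S^\perp)^H \;\geq\; \tfrac{1}{4}(3 + 1 + 1 - 1) = 1.
\]

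The case $d = 3$ is the main obstacle and uses the Carter classification of involutions in $W(E_6)$: every nontrivial involution is of type $A_1^k$ for some $k \in \{1, 2, 3, 4\}$, having trace $6 - 2k$ on $E_6 \otimes \mathbb{Q}$. For a Klein-four subgroup $H = \{e, h_1, h_2, h_1 h_2\} \subset W(E_6)$ to fix only $0$, Burnside forces each of $h_1$, $h_2$, $h_1 h_2$ to have trace $-2$, i.e.\ to be of type $A_1^4$. The commutativity of $h_1$ and $h_2$ implies that every reflection factor of one of them either coincides up to sign with a reflection factor of the other or is orthogonal to all of them; requiring $h_1 h_2$ also to be of type $A_1^4$ then forces exactly two reflection factors to be shared, so that $h_1$ and $h_2$ together involve $6$ mutually orthogonal roots in $E_6$. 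This contradicts the well-known maximality statement that $E_6$ contains no subsystem of type $kA_1$ with $k > 4$, and rules out $d = 3$. The main difficulty in the argument therefore lies in this cubic surface case, which requires the Carter classification of involutions in $W(E_6)$ together with the geometry of $kA_1$-subsystems of $E_6$.
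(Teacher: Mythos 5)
Your overall strategy --- reducing the lemma to the claim that no subgroup of order dividing $4$ of the relevant Weyl group acts on $K_S^\perp\otimes\mathbb{Q}$ without nonzero fixed vectors, and verifying this by character theory --- is legitimate and genuinely different from the paper's proof, which instead counts orbits of $\langle\sigma,\tau\rangle$ on the $(-1)$-curves: an orbit $C$ of size $k\in\{1,2,4\}$ must satisfy $C\sim\lambda(-K_S)$, hence $k=\lambda K_S^2$, which is impossible for $K_S^2\in\{5,6,7\}$, while for $K_S^2=3$ the oddness of $27$ forces a fixed line. Your cases $d=5,6,7$ are correct, and for $d=3$ the reduction to the statement that all three nontrivial elements of the Klein four group must have trace $-2$, i.e.\ be of type $A_1^4$, is also correct.

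The subsequent combinatorial step in the $d=3$ case, however, has a genuine gap. The assertion that commutativity of $h_1$ and $h_2$ forces each reflection factor of one to coincide up to sign with a factor of the other or to be orthogonal to all of them is false, because the decomposition of an $A_1^4$-involution into four orthogonal reflections is not unique: if $V$ is the $(-1)$-eigenspace of such an involution, the roots of $E_6$ lying in $V$ form a full $D_4$-subsystem (e.g.\ for $V=\langle e_1-e_2,\,e_3-e_4,\,e_5-e_6,\,2e_0-e_1-\cdots-e_6\rangle$ in the standard basis of $\Pic(S_{\mathbb{C}})$, the subspace $V$ also contains all roots $e_0-e_i-e_j-e_k$ with $i\in\{1,2\}$, $j\in\{3,4\}$, $k\in\{5,6\}$, giving $24$ roots in all), and $-\mathrm{id}$ on a $D_4$ admits three distinct expressions as a product of four orthogonal reflections. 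So ``the reflection factors'' are not well defined, commuting involutions need not have aligned factors, and the production of six mutually orthogonal roots does not follow. What the eigenspace analysis actually yields is only that $\dim(V_1\cap V_2)=2$ and $V_1+V_2=K_S^\perp\otimes\mathbb{Q}$, and these two-dimensional intersections need not be spanned by roots, so the appeal to the nonexistence of $kA_1\subset E_6$ for $k>4$ is not justified by what precedes it. The conclusion you want (no Klein four group in $W(E_6)$ with all three involutions of type $A_1^4$ and trivial fixed space) is nevertheless true; the cheapest repair is the paper's parity argument: a group of order dividing $4$ has orbits of size $1$, $2$ or $4$ on the $27$ lines, and since $27$ is odd some line $L$ is fixed, whence $L+\tfrac13K_S$ is a nonzero fixed vector in $K_S^\perp\otimes\mathbb{Q}$.
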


\begin{proof}
Let us consider $S$ as complex surface. Denote by $\sigma$ the antiholomorphic involution that generates $\Gal(\C/\R)$.
Assume that $K_S^2\notin\{8,9\}$. Let $L$ be a~$(-1)$-curve in $S$ defined over $\mathbb{C}$, let $C$ be the $\langle\sigma,\tau\rangle$-orbit of $L$, and let $k$ be the number of irreducible components of the curve $C$.
Then $C\sim n(-K_S)$  for some $n\in\mathbb{Z}_{>0}$. Then $k=-K_S\cdot C=nK_S^2$.

In particular, if $K_S^2=7$, then $k=7n$; but $k\leqslant 3$, because $S$ contains three $(-1)$-curves defined over $\mathbb{C}$.
Similarly, if $K_S^2=6$, then $S$ contains six $(-1)$-curves, so $6\geqslant k=6n$ gives $k=6$,
but $C$ cannot consists of $6$ irreducible components, because the group $\langle \sigma,\tau\rangle\simeq(\mathbb{Z}/2)^2$
cannot transitively permute $6$ objects.
Likewise, if $K_S^2=5$, then $S$ contains ten $(-1)$-curves defined over $\mathbb{C}$, so that $10\geqslant k=5n$ gives $k\in\{5,10\}$,
which leads to a~contradiction as above. 
If $K_S^2=3$, then $S$ is a smooth cubic surface in $\mathbb{P}^3_{\R}$,
and $S$ contains twenty seven $(-1)$-curves defined over $\mathbb{C}$.
which implies that one of them must be $\langle\sigma,\tau\rangle$-invariant, so $\rk\Pic(S)^G>1$.
\end{proof}

Recall that case $K_S^2=8$ was already discussed in Section \ref{section: dP8}. Hence we may continue with smaller values of $K_S^2$.

\begin{lemma}\label{lem:K-2-larger-4}
If $K_S^2\geqslant 5$ and $K_S^2\ne 8$, then $\iota$ is conjugate to a linear involution of $\p_\R^2$. 
\end{lemma}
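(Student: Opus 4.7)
The proof splits along the two possibilities for the $G$-Mori fibre space structure on $S$.

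\smallskip
\emph{Case 1: $\Pic(S)^G\simeq\Z$.} Then $S$ is an $\R$-rational del Pezzo surface, and Lemma~\ref{lemma:dP-Pic-Z-1-2-4-8-9} restricts $K_S^2\in\{1,2,4,8,9\}$. Since $K_S^2\geqslant 5$ and $K_S^2\neq 8$, we must have $K_S^2=9$, i.e.\ $S\simeq\p_\R^2$. Then $\tau\in\Aut(\p_\R^2)=\PGL_3(\R)$ is a linear involution, and $\iota$ is conjugate to it via the regularisation map.

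\smallskip
\emph{Case 2: $\Pic(S)^G\simeq\Z^2$.} A $G$-equivariant conic bundle $\pi\colon S\to\p_\R^1$ exists, with $8-K_S^2\in\{1,2,3\}$ singular geometric fibres. The plan is to exhibit, in each configuration, a sequence of $G$-equivariant Sarkisov links converting $(S,\tau)$ into $(\p_\R^2,\text{linear involution})$, placing $\iota$ in the class $\mathfrak{L}$. I further split this according to the induced $\tau$-action on the base $\p_\R^1$.

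\smallskip
\emph{Sub-case 2a: $\tau$ acts trivially on $\p_\R^1$.} Then $\tau$ restricts to a non-trivial involution on each smooth fibre, so its fixed locus contains a smooth curve $C$ which is a double section of $\pi$. The formula $K_S^2=6-2g-\delta_S$ from Section~\ref{subsection:special-fibres}, combined with $K_S^2\geqslant 5$, forces $g=0$ and $\delta_S\in\{0,1\}$, so $F(\tau)=\varnothing$ and $C$ is a rational double section. In particular $C(\R)\neq\varnothing$ contains a $\tau$-fixed real point $p\in S$; using $p$ together with the topological description of $S(\R)$ from Proposition~\ref{proposition:conic-bundle-real-loci} I produce a $G$-equivariant blow-down $S\to\FFF_n\to\p_\R^2$ conjugating $\tau$ to a linear involution. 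In particular, this sub-case cannot occur when $K_S^2=7$.

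\smallskip
\emph{Sub-case 2b: $\tau$ acts non-trivially on $\p_\R^1$.} Now $F(\tau)=\varnothing$ automatically, and all $\tau$-fixed points on $S$ lie in the two (possibly complex conjugate) fibres over the $\tau$-fixed points of the base. After applying $G$-equivariant elementary transformations of the conic bundle to render these two $\tau$-invariant fibres smooth, I locate $G$-invariant pairs of $(-1)$-curves whose contraction reduces $K_S^2$ towards $9$. The strict inequality $K_S^2\leqslant 7<8$ ensures enough such curves exist and prevents the iteration from terminating at one of the quadrics in the class $\mathfrak{Q}$ of Proposition~\ref{prop: K2=8}; instead it terminates at $\p_\R^2$ with a linear $\tau$-action.

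\smallskip
The main obstacle is Sub-case~2b: one must verify, for each of the configurations $K_S^2\in\{5,6,7\}$ and each possibility for the two $\tau$-fixed fibres (a real pair or a complex-conjugate pair, each either smooth or singular), that the required $G$-invariant sets of contractible $(-1)$-curves genuinely exist and that the composed Sarkisov links produce a $G$-equivariant birational map to $\p_\R^2$ rather than to a quadric with fixed-point-free involution. This reduces to exhibiting a real $\tau$-fixed point on $S$, which I expect to follow from a parity analysis of the $\tau$-action on singular components together with Proposition~\ref{proposition:conic-bundle-real-loci}.
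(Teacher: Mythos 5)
Your Case 1 coincides with the paper's argument: Lemma~\ref{lemma:dP-Pic-Z-1-2-4-8-9} forces $K_S^2=9$ when $\Pic(S)^G\simeq\Z$. For Case 2, however, the paper proceeds quite differently and far more economically. It first reduces to the situation where $S$ is $G$-minimal in the absolute sense (otherwise a $G$-equivariant contraction drops the rank of $\Pic(S)^G$ to $1$ and one lands in the del Pezzo case already treated), and then invokes Iskovskikh's classification of minimal rational surfaces \cite{Iskovskikh80} to see that a $G$-minimal conic bundle cannot have $K^2=7$, while for $K^2\in\{5,6\}$ the surface is a del Pezzo surface carrying a $\langle\sigma,\tau\rangle$-invariant set of disjoint $(-1)$-curves (a pair of conjugate sections when $K^2=6$, the exceptional configuration over the blown-up points when $K^2=5$), contradicting $G$-minimality. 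No case division on the induced action on the base and no explicit chain of Sarkisov links is required.

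The gap in your proposal sits exactly where you flag it, in Sub-case 2b, and it is not merely a missing verification: the claim that $K_S^2\leqslant 7$ ``prevents the iteration from terminating at one of the quadrics in the class $\mathfrak{Q}$'' is false. Take $Q_{3,1}$ with the antipodal involution $\tau$ and blow up a pair of complex conjugate points on its pointwise fixed conic $\{w=0\}\cap Q_{3,1}$. The result is a smooth $\R$-rational surface $S$ with $K_S^2=6$ and $\Pic(S)^{\langle\tau\rangle}\simeq\Z^2$, equipped with the $G$-equivariant conic bundle $[w:x:y:z]\mapsto[w:z]$ (two singular fibres, each a pair of conjugate lines, and two conjugate exceptional sections whose contraction returns one to $Q_{3,1}$). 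By Proposition~\ref{prop: K2=8} the antipodal involution is not linearizable, and $S$ has no real $\tau$-fixed point, so no ``parity analysis'' can produce the fixed point your strategy requires; the correct conclusion in such a configuration is $\iota\in\mathfrak{Q}$, which is why the paper's own proof only ever claims a reduction to the case $\Pic(S)^G\simeq\Z$ (a case that includes $K^2=8$) rather than a direct descent to a linear involution. Your Sub-case 2a likewise leaves the decisive steps unproved: $C(\R)\neq\varnothing$ does hold, but because the branch points of the double cover $C\to\p^1_\R$ are real, not by assertion, and the existence of the claimed $G$-equivariant blow-down $S\to\F_n\to\p^2_\R$ is precisely the content that has to be established.
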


\begin{proof}
If $\Pic(S)^G\simeq\Z$, then $K_S^2=9$ by Lemma~\ref{lemma:dP-Pic-Z-1-2-4-8-9} and we are done. Thus, we may assume that $\Pic(S)^G\simeq\Z^2$,
and  there exists a~$G$-equivariant conic bundle $\pi\colon S\to\p^1_\R$. Furthermore, we may assume that $S$ is $G$-minimal (i.e. any $G$-equivariant birational morphism $S\to S'$ to a smooth projective $G$-surface is an isomorphism), since otherwise we reduce to the case $\Pic(S)^G\simeq\Z$. Then $K_S^2\ne 7$ by \cite[Theorem~5]{Iskovskikh80}.
If $K_S^2\in\{5,6\}$, then it follows from the proof of \cite[Theorem~5]{Iskovskikh80}
that $S$ is a del Pezzo surface. 
If $K_S^2=6$, then $\pi$ has a pair of non-real conjugate disjoint $(-1)$-sections, so $S$ is not $G$-minimal. If $K_S^2=5$, the $S$ is the blow-up of $\p^2_\R$ in four real points or in two real points and a pair of non-real conjugate points or of two pairs of non-real conjugate points. In each case $S$ is not $G$-minimal.
\end{proof}

Now we are going to discuss the case $K_S^2=4$. 

\begin{remark}\label{rmk:dP4}
Suppose that $S$ is a del Pezzo surface of degree $K_S^2=4$ and consider its complexification $S_{\C}$. 
Then $S_{\C}$ is a blow-up of $\p^2_\C$ in five points in general position. 
So, $\Pic(S_{\C})\simeq\Z^{6}$ has a basis $e_0,\ e_1,\ldots,e_5$, where $e_0$ is the pull-back of the class of a line on $\p_\C^2$, and $e_i$ are the classes of exceptional curves. Then
$
\{s\in\Pic(S_{\C}):\ s^2=-2,\ s\cdot K_{S_{\C}}=0 \}
$
is a root system in the orthogonal complement to $K_{S_{\C}}^{\bot}\subset\Pic(S_{\C})\otimes\R$ and one can associate it with the Weyl group $\mathcal{W}(\mathrm{D}_5)\simeq(\Z/2)^4\rtimes \Sym_5$. Furthermore, there are natural homomorphisms 
\[
\rho\colon\Aut(S_{\C}) \hookrightarrow \mathcal{W}(\mathrm{D}_5),\ \ {\rm Gal}(\C/\R)\rightarrow \mathcal{W}(\mathrm{D}_5),
\]
where $\rho$ is an injection. We denote by $\alpha^*$ the image of $\alpha\in\Gal(\C/\R)\times \Aut(S_{\C})$ in the corresponding Weyl group.
The conjugacy classes in the Weyl groups are indexed by Carter graphs, named e.g. $A_1$, $A_1^2$, etc. A Carter graph determines the characteristic polynomial of an element from a given class, its eigenvalues and trace on $K_{S_{\C}}^{\bot}$. We refer to \cite[6.1]{DolgachevIskovskikh} for more details.
Moreover, the surface $S_{\C}$ is isomorphic to an intersection of two quadrics
\[
\sum_{i=1}^{5}x_i^2=\sum_{i=1}^{5}\lambda_ix_i^2=0
\]
in $\p_\C^4$ for some $\lambda_1,\dots,\lambda_5\in\C$. The group $\Aut(S_{\C})$ injects into the Weyl group $\mathcal{W}(\mathrm{D}_5)$ and $\Aut(S_{\C})$ always contains the subgroup $(\Z/2)^4$, which acts as a diagonal subgroup of $\PGL_5(\C)$. There are two types of involutions in this group, $\iota_{ij}$ and $\iota_{ijkl}$, which switch the signs of $x_i,x_j$ and $x_i,x_j,x_k,x_l$, respectively. 
\end{remark}

\begin{lemma}\label{lem: KS2=4}
Let $S$ be a del Pezzo surface.
	Suppose that $K_S^2=4$ and $\Pic(S)^G\simeq\Z$. Then one of the following two possibilities holds:
	\begin{enumerate}
		\item\label{dp4 elliptic}  $S^\tau=F(\tau)$ is a smooth genus 1 curve with non-empty real locus, $S/G\simeq Q_{3,1}$ and
		\begin{enumerate}
			\item either $S(\R)\approx\Sph^2$;
			\item or  $S(\R)\approx\Sph^1\times\Sph^1$;
		\end{enumerate}	
		\item\label{dp4 empty}  $F(\tau)=\varnothing$, $S^\tau$ consists of four points, and at least two of them are real. 
	\end{enumerate}
\end{lemma}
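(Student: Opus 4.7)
The plan is to combine the embedding $\Aut(S_\C)\hookrightarrow\mathcal{W}(\mathrm{D}_5)$ recalled in Remark~\ref{rmk:dP4} with the topological Lefschetz formula $\chi(S_\C^\tau)=2+\mathrm{tr}(\tau^*|_{\Pic(S_\C)\otimes\Q})$ to narrow the possibilities for $\tau$. Writing $p$ for the dimension of the $+1$-eigenspace of $\tau^*$ on $\Pic(S_\C)\otimes\Q$, one has $\chi(S_\C^\tau)=2p-4$. By Lemma~\ref{lemma:fixed-curves-smooth} the fixed locus $S^\tau$ is smooth, and by adjunction on a degree-$4$ del Pezzo surface a smooth curve with class in $\Z[-K_S]$ is either trivial, an anticanonical elliptic curve, or has genus $1+2n(n-1)$ with $n\ge 2$. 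Since the hypothesis $\Pic(S)^G\simeq\Z$ forces every real $\tau$-fixed effective class to be a multiple of $-K_S$, a case-by-case inspection of the remaining conjugacy classes of involutions in $\mathcal{W}(\mathrm{D}_5)$ leaves only two: $\tau\sim\iota_{ijkl}$ with $S^\tau$ an anticanonical elliptic curve (type $p=2$), or $\tau\sim\iota_{ij}$ with $S^\tau$ four isolated points (type $p=4$).

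For case (1), diagonalise the two defining quadrics over $\R$ so that $\tau$ acts on $\p^4_\R$ as $[x_1:x_2:x_3:x_4:x_5]\mapsto[-x_1:-x_2:-x_3:-x_4:x_5]$; then $C=F(\tau)=\{x_5=0\}\cap S$ is an anticanonical elliptic section. The ring of $\tau$-invariants is generated by $x_5$ and the quadratic monomials $x_ix_j$ with $1\le i,j\le 4$, and a direct calculation identifies $S/G$ with a smooth degree-$8$ quadric in $\p^3_\R$. The inclusion $\Pic(S/G)\otimes\Q\hookrightarrow\Pic(S)^G\otimes\Q\simeq\Q$ has image of rank~$1$; since $Q_{2,2}\simeq\p^1_\R\times\p^1_\R$ has Picard rank~$2$ over $\R$ while $Q_{3,1}$ has rank~$1$, we conclude $S/G\simeq Q_{3,1}$. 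The quotient restricted to real loci is a branched double cover $S(\R)\to Q_{3,1}(\R)\approx\Sph^2$ with branch set $C(\R)$. If $C(\R)=\varnothing$, the cover is unramified; the only closed $2$-submanifold of $\Sph^2$ is $\Sph^2$ itself and $\pi_1(\Sph^2)=0$, so the cover would have to be trivial, forcing $S(\R)\approx\Sph^2\sqcup\Sph^2$ and contradicting $\R$-rationality (Theorem~\ref{theorem:rational-real}). Hence $C(\R)\ne\varnothing$, consisting of one or two ovals in $\Sph^2$; connectedness of $S(\R)$ pins down $S(\R)/\tau$ as either the disk bounded by a single oval (so $S(\R)\approx\Sph^2$) or the annulus co-bounded by the two ovals (so $S(\R)\approx\Sph^1\times\Sph^1$).

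For case (2), $F(\tau)=\varnothing$, and the mod-$2$ Lefschetz relation $\chi(S(\R)^\tau)\equiv\chi(S(\R))\pmod 2$, combined with the fact that $\chi(S(\R))$ is even for every $\R$-rational real del Pezzo surface of degree~$4$, shows that the number of real fixed points is $0$, $2$, or $4$. To rule out $0$, suppose all four fixed points split into two complex-conjugate pairs. In real coordinates where $\tau$ acts as $[x_1:x_2:x_3:x_4:x_5]\mapsto[-x_1:-x_2:x_3:x_4:x_5]$, these four points are the base points of a real pencil $\{\lambda q_1+\mu q_2\}$ of ternary conics in $\p^2_{x_3,x_4,x_5}$. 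A direct analysis of the sixteen $(-1)$-curves of $S_\C$ and their $\tau$- and $\sigma$-orbit structure shows that two conjugate pairs of fixed points furnish two independent $\sigma$-invariant classes in $\Pic(S_\C)^\tau$ besides $[-K_S]$, so $\rk\Pic(S)^G\ge 2$, a contradiction.

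The main obstacle is the Galois-module bookkeeping in case (2): rigorously excluding the two-conjugate-pair configuration under the rank-one Picard condition requires a delicate analysis of how $\sigma$ permutes the $\tau$-stable $(-1)$-curves through the fixed points. An alternative route is to invoke Trepalin's classification of finite automorphism groups of real del Pezzo surfaces of degree $4$ (acknowledged in the introduction). The remaining steps are standard applications of the diagonal form for the defining pencil, the equivariant Lefschetz trace computation, and the topology of double covers of the $2$-sphere.
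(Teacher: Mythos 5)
Your treatment of case (1) is essentially sound and runs parallel to the paper's: both identify $S/G$ with $Q_{3,1}$ via the rank-one Picard condition (the paper uses Hurwitz to get $K_{S/G}^2=8$ and $\rho=1$; you use the invariant ring and the injectivity of $\Pic(S/G)\otimes\Q\hookrightarrow\Pic(S)^G\otimes\Q$), and both exclude an empty branch locus by connectedness of $S(\R)$ (Theorem~\ref{theorem:rational-real}) before reading off $\Sph^2$ versus $\Sph^1\times\Sph^1$ from the number of ovals. (Minor slip: a smooth quadric in $\p^3_\R$ has degree $2$, not $8$; you mean $K^2=8$.)

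The genuine gap is in case (2), and it sits exactly at the step you yourself flag as ``the main obstacle.'' Showing that the four fixed points cannot form two complex-conjugate pairs is the hard content of the lemma, and your proposal only asserts it: the claimed ``direct analysis of the sixteen $(-1)$-curves'' producing two independent $\langle\sigma,\tau\rangle$-invariant classes besides $-K_S$ is not carried out, and it is not at all clear that such classes exist in every configuration --- indeed the paper's argument shows the obstruction is subtler than a count of invariant $(-1)$-curve orbits. The fallback of ``invoking Trepalin's classification'' is also not a proof as stated. What the paper actually does here is pass to the quotient $Z=S/\tau$, a degree-$2$ del Pezzo surface with two pairs of conjugate $A_1$ points, and split on whether its minimal resolution $\widetilde Z$ is $\R$-rational. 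If it is, running the MMP over $\R$ lands on an $\R$-minimal $\R$-rational del Pezzo surface of degree $5$ or $6$ with Picard rank $1$, which does not exist by \cite[Corollary~2.3]{Russo}. If it is not, \cite[Proposition~5.1]{Tre18} identifies $\tau$ over $\C$ with $\iota_{12}$ of Remark~\ref{rmk:dP4}, and a conjugacy-class and eigenvalue analysis of $\tau^*$ and $\sigma^*$ inside $\mathcal{W}(\mathrm{D}_5)$ (using that $\sigma^*$ is not of type $A_1^4$ by $\R$-rationality, and that $\mathcal{W}(\mathrm{D}_5)$ has no elements of type $A_1^5$) produces a common eigenvector of $\sigma^*$ and $\tau^*$ in $K_S^\perp$, contradicting $\rk\Pic(S)^G=1$. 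None of this dichotomy, nor the inputs from \cite{Russo} and \cite{Tre18}, appears in your proposal, so case (2) is not established. Your mod-$2$ Lefschetz reduction to $\{0,2,4\}$ real fixed points is fine but does not close the gap, since $0$ is precisely the value that must be excluded.
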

\begin{proof}
	The map given by the~anticanonical linear system $|-K_S|$ embeds the surface $S$ into $\p^4_\R$ as a~complete intersection of two real quadrics. The involution $\tau$ extends linearly to $\p^4_\R$ and hence $S^{\tau}$ is a linear section, which is moreover smooth (Lemma~\ref{lemma:fixed-curves-smooth}). Thus $S^\tau$ is either a smooth genus 1 curve in $|-K_S|$, or four points (possibly complex conjugate), see e.g. \cite[\S~9]{BlancLinearisation}. Let $\eta: S\to Z=S/\tau$ be the quotient map. Then  $\Pic(Z)\simeq\Z$. 
		
	(\ref{dp4 elliptic}) Assume $S^\tau$ is a smooth genus 1 curve. Then $Z$ is smooth and by the Hurwitz formula one has $K_Z^2=\frac{1}{2}(2K_S)^2$, so	$Z\simeq Q_{3,1}$ since $\rho(Z)=1$ and the quotient map $S\to S/G$ is branched over a~divisor $E$ of bidegree $(2,2)$ in $Z_\C\simeq\p_\C^1\times\p_\C^1$, which is a~complete intersection of the quadric~$Z$ with another quadric surface in $\p^3_{\R}$. Note that $E$ is a real genus 1 curve, so $E(\R)\in\{ \varnothing,\Sph^1,\Sph^1\sqcup\Sph^1\}$. In fact, the case $E(\R)=\varnothing$ is impossible, as $S(\R)$ is then disconnected, contradicting the rationality of $S$. The other two possibilities give $S(\R)\approx\Sph^2$ and $S(\R)\approx\Sph^1\times\Sph^1$, respectively.
	
	(\ref{dp4 empty}) To complete the proof, we have to exclude the case when $S^\tau$ consists of two pairs of complex conjugate points. Assume this is the case. Then $Z$ is a singular del Pezzo surface of degree $K_Z^2=2$, which has two pairs of complex conjugate $A_1$ points. Let $\widetilde{Z}$ be the minimal resolution of $Z$. Let us consider two cases: when $\widetilde{Z}$ is $\R$-rational and when it is not.
		
	Assume that $\widetilde{Z}$ is $\R$-rational. Run the minimal model program over $\R$ on $\widetilde{Z}$. We get a birational morphism $\widetilde{Z}\to\overline{Z}$ such that $\overline{Z}$ is a smooth $\R$-rational and $\R$-minimal surface. By Theorem \ref{thm: IskovskikhCrit}, one has $K_{\overline{Z}}^2\geqslant 5$. Since $\rk\Pic(\widetilde{Z})=3$ and $K_{\widetilde{Z}}^2=2$, we get $\rk\Pic(\overline{Z})=1$ and $K_{\overline{Z}}^2\in\{ 5,6\}$. In particular, $\overline{Z}$ is del Pezzo. However, there is no $\R$-rational $\R$-minimal del Pezzo surface of degree $5$ or $6$ \cite[Corollary 2.3]{Russo}. 
		
	Thus, we see that $\widetilde{Z}$ is not $\R$-rational. We will show that this is not possible. Indeed, assume the opposite holds. Then by \cite[Proposition 5.1]{Tre18}, our involution $\tau$ is conjugate over $\C$ to the involution $\iota_{12}$ in the notation of Remark~\ref{rmk:dP4}. 
	The centraliser of this involution in the group $\mathcal{W}(\mathrm{D}_5)$ is $(\Z/2)^4 \rtimes \langle (12), (345), (45)\rangle$, and the generator $\sigma$ of $\Gal(\C/\R)$ maps to this group. Conjugacy classes of all elements in $\mathcal{W}(\mathrm{D}_5)$ are listed in \cite[Table 4]{Tre20} or \cite[Table 3]{DolgachevIskovskikh}; we use this information below.
	
	Recall that $K_S^\bot\subset\Pic(S_\C)$ is a 5-dimensional lattice. The induced involution $\tau^*$ is of type $A_1^2$, with two eigenvalues $-1$. Note that $\sigma^*\ne{\rm id}$, as we cannot have $\Pic(S_\C)^{\langle\tau\rangle}\simeq\Z$, e.g. by \cite[Theorem 6.9]{DolgachevIskovskikh}. The involution $\sigma^*$ cannot be of type $A_1^4$, because $S$ would be not $\R$-rational (see e.g. \cite[\S 4]{Wa87}). So, at most three eigenvalues of $\sigma^*$ are $-1$. Moreover, the product of $\tau^*$ and $\sigma^*$ has at least one eigenvalue equal to 1, as there are no elements of type $A_1^5$ in $\mathcal{W}(\mathrm{D}_5)$. Then $K_S^\bot$ contains a vector that is an eigenvector for both $\sigma^*$ and $\tau^*$, which implies $\rk\Pic(S)^G>1$, a contradiction.
\end{proof}

\noindent{\bf Summary on del Pezzo case:} If $\Pic(S)^G\simeq\Z$ and $K_S^2\in\{1,2\}$, it follows from Proposition~\ref{proposition:Bertini} and Proposition~\ref{proposition:Geiser} that  $\tau$ belongs to the classes $\mathfrak{B}_4$, $\mathfrak{G}_3$ or $\mathfrak{K}_1$. If $K_S^2=8$ then it follows from Proposition~\ref{prop: K2=8} that $\tau$ belongs either to the class $\mathfrak{L}$ or $\mathfrak{Q}$. If $K_S^2\geqslant 5$ and $K_S^2\ne 8$ then $\tau$ belongs to $\mathfrak{L}$ by Lemma~\ref{lem:K-2-larger-4}. Moreover, if $\Pic(S)^G\simeq\Z$ and $K_S^2=4$, there is a $G$-fixed real point by Lemma \ref{lem: KS2=4}. Blowing up this point, we obtain a $G$-equivariant birational map from $S$ to a $G$-minimal conic bundle. 

\medskip

Now, using Lemma \ref{lemma:dP-Pic-Z-1-2-4-8-9}, to complete the proof of the classification part of the main theorem, we may assume that $K_S^2\leqslant 4$, $\Pic(S)^G\simeq\Z^2$ and there exists a~$G$-equivariant conic bundle $\pi\colon S\to\p^1_\R$. In this case, we have

\begin{lemma}\label{lemma: Trepalin}
Suppose that $F(\tau)=\varnothing$. Then $\tau$ is contained in one of the following classes: $\mathfrak{T}_{4n}$, $\mathfrak{T}_{4n+2}'$, or $\mathfrak{T}_{4n}''$ for some $n\geqslant 1$.
\end{lemma}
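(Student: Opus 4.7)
My plan is to reduce to Theorem~\ref{thm: Trepalin involutions} by showing that $\tau$ must act non-trivially on the base of the $G$-equivariant conic bundle $\pi\colon S\to\p^1_\R$. Once this is established, since $S$ is $\R$-rational, $G$-minimal and $K_S^2 \leqslant 4$, Theorem~\ref{thm: Trepalin involutions} places $\tau$ in one of $\mathfrak{T}_{4n}$, $\mathfrak{T}_{4n+2}'$, or $\mathfrak{T}_{4n}''$, which is the desired conclusion.

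Suppose, for contradiction, that $\tau$ acts trivially on $\p^1_\R$. Then $\tau$ restricts to a non-trivial involution on the generic fibre $\p^1$, and the fixed locus contains a smooth double section $C$ of $\pi$ (Lemma~\ref{lemma:fixed-curves-smooth}). I would distinguish two cases according to whether $\pi$ is $G$-exceptional. If $\pi$ is $G$-exceptional, Lemma~\ref{lemma:exceptional} gives a $G$-birational model $\{xy = f(z,t)\} \subset \p(n,n,1,1)$ with $\tau\colon (x,y)\mapsto (y,x)$; the fixed curve is then hyperelliptic of genus $n-1$, and $K_S^2 = 8 - 2n \leqslant 4$ forces $n \geqslant 2$, so the fixed curve is geometrically irrational, contradicting $F(\tau) = \varnothing$. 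If $\pi$ is non-$G$-exceptional, Theorem~\ref{theorem:main-non-exceptional} puts $S$ in the standard form $A(t)x^2 + B(t)xy + C(t)y^2 = H(t)z^2$ with $\tau\colon z\mapsto -z$, and Corollary~\ref{corollary:fixed-curve-non-exceptional} gives the genus of the fixed curve as $g = (\deg(B^2-4AC)-2)/2$. Using the identity $K_S^2 = 8 - \deg(B^2-4AC) - \deg(H)$ together with $K_S^2 \leqslant 4$ and $\SF_S = \deg(H) \leqslant 2$ for an $\R$-rational surface (Remark~\ref{remark:conic-bundle-intervals}), we obtain $\deg(B^2-4AC) \geqslant 2$, with equality only in the residual case $K_S^2 = 4$, $\SF_S = 2$, $g = 0$.

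In every subcase with $g \geqslant 1$ we have $F(\tau) = C \neq \varnothing$, the desired contradiction. The principal obstacle is the residual case $K_S^2 = 4$, $\SF_S = 2$, $g = 0$: here the fixed curve is rational, compatible with $F(\tau) = \varnothing$, and $S$ is a del Pezzo surface of degree $4$. By Lemma~\ref{rem:K2=4-and-rkPic=2}(\ref{K2=4-and-rkPic=2:4}), $G$-equivariant Sarkisov links of type IV (exchanges of conic bundle structures) are available on $S$. I would then argue, using the combinatorics of the pencils of conics on a degree-$4$ del Pezzo surface carrying a real involution, that such a link yields a new $G$-conic bundle $\pi'\colon S \to \p^1_\R$ on which $\tau$ acts non-trivially on the base, which returns us to the first case already handled by Theorem~\ref{thm: Trepalin involutions}.
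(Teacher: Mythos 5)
Your proof follows the same route as the paper's: reduce to Theorem~\ref{thm: Trepalin involutions} by showing that the action on the base must be non-trivial, eliminate the $G$-exceptional case and the non-exceptional cases with $g\geqslant 1$ because they force an irrational fixed curve, and isolate the residual case $K_S^2=4$, $\SF_S=2$, $g=0$, which is resolved by passing to the second conic bundle structure on the degree-$4$ del Pezzo surface. All of your reductions and the numerology $K_S^2=8-\deg(B^2-4AC)-\deg(H)$ with $\deg H\leqslant 2$ are correct.

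The only substantive issue is that the decisive step --- that $\tau$ acts \emph{non-trivially} on the base of the new fibration $\pi'$ --- is asserted (``I would then argue, using the combinatorics of the pencils of conics\dots'') rather than proved, and this is precisely where the paper invests its effort: it normalizes the equation via Theorem~\ref{theorem:main-non-exceptional-equivalence}, realizes $S$ as an intersection of two quadrics in $\p^4_\R$, and reads off the second fibration together with the induced base action explicitly. Your intended combinatorial argument does work and can be made precise without the explicit model: the pointwise fixed double section $C$ of $\pi$ is a smooth real irreducible rational curve, so its class lies in $\Pic(S)^G$, which is spanned over $\Q$ by the fibre class $F$ and $K_S$; writing $[C]=aF+bK_S$, the conditions $C\cdot F=2$ and adjunction for a smooth rational curve force $[C]=-K_S-F$, i.e. $C$ is a fibre of the complementary conic bundle $\pi'$. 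Hence the fixed locus of $\tau$ (namely $C$ together with two isolated points) contains no curve horizontal for $\pi'$, so $\tau$ cannot act trivially on the base of $\pi'$, and Theorem~\ref{thm: Trepalin involutions} applies. (The paper additionally pins down $\tau\in\mathfrak{T}_4$ using that the fixed locus contains no pair of conjugate non-real points, but that refinement is not needed for the statement of the lemma.) With this verification supplied, your argument is complete and coincides with the paper's.
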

\begin{proof}
If $\tau$ acts non-trivially on the base of the conic bundle $\pi$, then $\tau$ is conjugate to a Trepalin involution by Theorem \ref{thm: Trepalin involutions}. Let $\tau$ act trivially on the base of $\pi$. 
Since $K_S^2\leqslant4$ and $F(\tau)=\varnothing$, Lemma~\ref{lemma:exceptional} implies that $\pi\colon S\to \p^1$ is not exceptional. 
By Section~\ref{subsection:Iskovskikh-involutions}, we have $4\geqslant K_S^2=6-\delta_S$ and $\delta_S\in\{0,1,2\}$ since $S$ is $\R$-rational. 
Then $K_S^2=4$ and 
there exist $2$ special fibres of $\pi_S$. By Theorem~\ref{theorem:main-non-exceptional}, $S^{\tau}$ is a smooth rational curve that is the union of a double section of $\pi_S$ and two points (the singular points of the special fibres). 
Let us show that $\tau\in\mathfrak{T}_4$. 
 
Using Theorem~\ref{theorem:main-non-exceptional}, we may assume that the fibre $\pi_S^{-1}([1:0])$ is smooth and does not have real points,
the quasi-projective surface $Y=S\setminus \pi_S^{-1}([1:0])$ is given in $\mathbb{P}^2_\mathbb{R}\times\mathbb{A}^1_{\mathbb{R}}$ by
\[
A(t)x^2+B(t)xy+C(t)y^2+(t-a)(t-b)z^2=0
\]
for some polynomials $A,B,C\in\mathbb{R}[t]$ such that $(B^2-4AC)(t-a)(t-b)$ does not have multiple roots and $\deg(B^2-4AC)$ is two, the fibres of $\pi_S$ over $t=a$ and $t=b$ are singular irreducible conics and the involution $\tau$ acts on $Y$ by
$
([x:y:z],t)\mapsto([x:y:-z],t),
$
and the restriction map $\pi_S\vert_{Y}\colon Y\to\mathbb{P}^1_\R\setminus [1:0]=\mathbb{A}^1_{\R}$ is the map given by $([x:y:z],t)\mapsto t$,
where $([x:y:z],t)$ are coordinates on $\mathbb{P}^2_\mathbb{R}\times\mathbb{A}^1_{\mathbb{R}}$.
After an affine change of the coordinate $t$, we may assume that $\Delta=4AC-B^2$ is one of the polynomials
$t^2+1$, $-(t^2+1)$, $t^2-1$, or $-(t^2-1)$.

Since the fibres of $\pi_S$ over $t=a$ and $t=b$ are singular irreducible conics, we see that $\Delta(a)>0$ and $\Delta(b)>0$. This rules out the case $\Delta(t)=-(t^2+1)$. 
Moreover, if $\Delta(t)=-(t^2-1)$, then the real locus of  $\pi_S^{-1}([1:0])$ is non-empty, which contradicts the assumptions above. 
Hence $\Delta(t)=t^2-1$ or $\Delta(t)=t^2+1$ and in the first case, we have $a,b\notin[-1,1]$.
Recall that $\pi_S(S(\R))$ is a single interval, so $\pi_S(S(\R))=[a,b]$.
If $\Delta(t)=t^2-1$, then $(-1,1)\subset[a,b]$. This implies that $[-1,1]\subset[a,b]$.  

Applying Theorem~\ref{theorem:main-non-exceptional-equivalence}, we obtain a $G$-equivariant commutative diagram
\[
\xymatrix{
S\ar@{->}[d]_{\pi_S}\ar@{-->}[rr]&&S'\ar@{->}[d]^{\pi_{S'}}\\
\p^1_\R\ar@{=}[rr]&&\mathbb{P}^1_\R}
\]
such that $S\dashrightarrow S'$ is birational and $S'$ is a smooth projective surface, $\pi_{S'}$ is a $G$-minimal conic bundle and the quasi-projective surface $Y'=S'\setminus \pi_{S'}^{-1}([1:0])$ is given in $\mathbb{P}^2_\mathbb{R}\times\mathbb{A}^1_{\mathbb{R}}$ by
$
x^2+\Delta(t) y^2+(t-a)(t-b)z^2=0.
$
Observe that $(\frac{x}{z},\frac{y}{z})\mapsto \frac{y}{z}$ is rational and $G$-equivariant,  its generic fibre is a conic and the induced action on the base is non-trivial. Theorem~\ref{thm: Trepalin involutions} and Lemma~\ref{rem:K2=4-and-rkPic=2} and $K_{S'}^2=4$ imply that $\tau$ is a twisted Trepalin involution.

Indeed, the involution $\tau$ acts on the surface $Y'$ by
$
([x:y:z],t)\mapsto([x:y:-z],t),
$
and $\pi_{S'}$ induces the map $Y'\to\AAA^1_\R$, $([x:y:z],t)\to t$. 
Let $U$ in $Y'$ be the $G$-invariant subset given by $y\neq0$. Then $U$ is given in $\AAA^3_\R$ by
$
x^2+\Delta(t)+(t-a)(t-b)z^2=0,
$
where we consider $x,z,t$ as affine coordinates on $\AAA^3_\R$. Introducing new the affine coordinate $y=(t-a)z$, we see that $U$ is a complete intersection in $\AAA^4_\R$ given by
\[
\begin{cases}
	x^2+\Delta(t)+y(y+z(b-a))=0,\\
	y=(t-a)z.
\end{cases}
\]
Observe that $U$ admits a $G$-equivariant compactification $X\subset\p^4_\R$ given by 
\[
X\colon \begin{cases}
	x^2+\widetilde\Delta(t,w)+y(y+z(b-a))=0,\\
	yw=(t-aw)z.
\end{cases}
\]
where $\widetilde\Delta(t,w)=t^2+w^2$ if $\Delta(t)=t^2+1$ and $\widetilde\Delta(t,w)=t^2-w^2$ if $\Delta(t)=t^2-1$, and $\tau$ acts on $X$ as follows
$
[x:y:z:t:w]\mapsto [x:-y:-z:t:w],
$
where $[x:y:z:t:w]$ are homogeneous coordinates on $\p^4_\R$. 

Observe that $\Pic(X)^G\simeq\Z^2$, and $X$ admits two conic bundle structures, namely $\pi_1,\pi_2\colon X\to\p^1_\R$ given by
$\pi_1\colon [x:y:z:t:w]\mapsto [y:z]$ and	$\pi_2\colon  [x:y:z:t:w]\mapsto [z:w]$. Then $\pi_1$ and $\pi_2$ are both $G$-minimal conic bundles. 
In fact, we have constructed the following $G$-equivariant commutative diagram:
\[
\xymatrix{
S\ar@{->}[d]_{\pi_S}\ar@{-->}[rr]&&S'\ar@{->}[d]^{\pi_{S'}}\ar@{-->}[rr]&& X\ar@{->}[d]^{\pi_1}\ar[rr]^{\pi_2}&&\p^1_\R\\
\p^1_\R\ar@{=}[rr]&&\mathbb{P}^1_\R\ar@{=}[rr]&&\mathbb{P}^1_\R&&
}
\]
Furthermore, $\tau$ fixes pointwisely the conic
$
C=\{
	y=z=x^2+\widetilde\Delta(t,w)=0
\}
$
and the two points $p_1=[0:0:1:0:0]$ and $p_2=[0:a-b:1:0:0]$. 
Note that $\tau$ acts non-trivially on the base of $\pi_2$, 
and $C$ is the fibre of $\pi_2$ over $[0:1]$, and the points $p_1$ and $p_2$ are contained in the fibre $\pi_2^{-1}([1:0])$. Therefore, we are in the setting of Section~\ref{subsec: Trepalin classification}.
It follows from Theorem~\ref{thm: Trepalin involutions} that $\tau$ is conjugate to a $l$-twisted Trepalin involution, $l\in\{0,1,2\}$. 
We have 
$\tau\in\mathfrak{T}_4$, since $\tau$ fixes no non-real points. 
\end{proof}

\noindent{\bf Summary on conic bundle case:} Thus in what follows we assume that $F(\tau)$ is a smooth curve of genus $g\geqslant 1$. In particular, $\tau$ acts trivially on the base $\p_\R^1$. If $S$ is $G$-exceptional then $\tau$ is a de Jonqui\`{e}res involution of genus $g$, studied in Section \ref{section:exceptional-conic-bundles}, i.e. it belongs to the class $\mathfrak{dJ}_g$. Now assume that $\pi: S\to\p_\R^1$ is not $G$-exceptional. 
Then we are in the setting of Section~\ref{section:non-exceptional}. Therefore, the involution $\tau$ is a $\delta_S$-twisted Iskovskikh involution (see Section~\ref{subsection:Iskovskikh-involutions}), where $\delta_S\in\{0,1,2\}$ is the number of special fibres of $\pi$ (see Section \ref{subsection:special-fibres}). So, $\iota$ is contained in one of the classes $\mathfrak{I}_{g}$, $\mathfrak{I}_{g}^\prime$, $\mathfrak{I}_{g}^{\prime\prime}$. This completes the proof of the classification part of Main Theorem.

\subsection{Proof of Main Theorem: conjugation}
\label{subsection:conjugation}

Now, let $\iota$ and $\iota^\prime$ be involutions in $\Bir(\p^2_\R)$.
We already know from Section~\ref{subsection:main-theorem} 
that $\iota$ and $\iota^\prime$ are  contained in one of the classes 
$\mathfrak{L}$, $\mathfrak{Q}$, $\mathfrak{T}_{4n}$, $\mathfrak{T}_{4n+2}^\prime$, $\mathfrak{T}_{4n}^{\prime\prime}$, 
$\mathfrak{B}_4$, $\mathfrak{G}_3$, $\mathfrak{K}_1$, $\mathfrak{dJ}_{g}$,
$\mathfrak{I}_{g}$, $\mathfrak{I}_{g}^\prime$, $\mathfrak{I}_{g}^{\prime\prime}$, where $n\geqslant 1$ and $g\geqslant 1$. We now show that these classes are actually disjoint, with one possible exception: involutions in $\mathfrak{dJ}_{1}$ and $\mathfrak{I}_{1}$  may be conjugate (see Example~\ref{example:non-exceptional-exceptional}).

We first explore possible conjugations between involutions fixing an irrational curve, i.e. for which $F(\tau)\ne\varnothing$. The classes $\mathfrak{B}_4$, $\mathfrak{G}_3$ and $\mathfrak{K}_1$ are pairwise different, because they fix non-isomorphic curves. 
Moreover, $\mathfrak{B}_4$, $\mathfrak{G}_3$ are disjoint from the classes of de Jonqui\`{e}res and Iskovskikh involutions with $g\geqslant 1$, as the latter two fix a (hyper)elliptic curve and the former two fix non-hyperelliptic curves (the Geiser involution fixes a plane quartic curve and the Bertini involution fixes the intersection of a quadric cone and a cubic in $\p^3_\R$). The class $\mathfrak{K}_1$ is disjoint from $\mathfrak{dJ}_1$, $\mathfrak{I}_1$, $\mathfrak{I}_1^{\prime}$ and $\mathfrak{I}_1^{\prime\prime}$ by Theorem \ref{theorem:Manin-Segre}. By Lemma \ref{lemma:Iskovskikh-involutions-conjugation}, the classes $\mathfrak{dJ}_{g}$, $\mathfrak{I}_{g}$,
$\mathfrak{I}_{g}^\prime$, $\mathfrak{I}_{g}^{\prime\prime}$ with $g\geqslant 1$ are non-intersecting, with the only possible exception $\mathfrak{dJ}_1\cap\mathfrak{I}_1\ne\varnothing$, mentioned in the statement (see Example \ref{example:non-exceptional-exceptional} below).

It remains to investigate the conjugacy between involutions, for which $F(\tau)=\varnothing$; those include the classes $\mathfrak{L}, \mathfrak{Q}$ and Trepalin involutions $\mathfrak{T}_{4n}$, $\mathfrak{T}_{4n+2}^{\prime}$, $\mathfrak{T}_{4n}^{\prime\prime}$, $n\geqslant 1$. By Proposition~\ref{prop: K2=8}, the classes $\mathfrak{L}$ and $\mathfrak{Q}$ are disjoint. Lemma~\ref{rem:K2=4-and-rkPic=2} implies that Trepalin involutions are neither linearizable, nor conjugate to an involution in $\mathfrak{Q}$. The same lemma implies that $\mathfrak{T}_{4n}\cap\mathfrak{T}_{4m+2}^{\prime}=\varnothing$ and $\mathfrak{T}_{4m+2}^{\prime}\cap\mathfrak{T}_{4n}^{\prime\prime}=\varnothing$ for all $n,m\geqslant 1$. Similarly, Theorem \ref{theorem:Iskovskikh} and Lemma~\ref{rem:K2=4-and-rkPic=2}(\ref{K2=4-and-rkPic=2:2}) imply that $\mathfrak{T}_{4n}\cap \mathfrak{T}_{4m}^{\prime\prime}=\varnothing$ for all $n,m\geqslant 1$ with only possible exception $n=m=1$.

Finally, let us show that $\mathfrak{T}_4\cap\mathfrak{T}_4''=\varnothing$. Indeed, let $\iota\in\mathfrak{T}_4''$. By Theorem~\ref{thm: Trepalin involutions}, we can assume that it is regularised as $\tau$ on a conic fibration $\pi_{\widehat S''}\colon \widehat S''\to\p_\R^1$ as in (\ref{diag: Trepalin 3}).
Recall that $(\widehat S'')^\tau$ consists of two pairs of complex conjugate points and each of these pairs is contained in a $\tau$-invariant fibre of $\pi_{\widehat S''}$. This property is preserved by all possible $G$-equivariant Sarkisov links starting from $\widehat S''$, see Lemma~\ref{rem:K2=4-and-rkPic=2}(\ref{K2=4-and-rkPic=2:4}). It implies that we cannot conjugate $\tau$ to an involution from $\mathfrak{T}_4$ (as the $G$-fixed locus of elements of $\mathfrak T_4$ are $\R$-rational curves, see Section~\ref{subsection:Trepalin-involution-1}).

\begin{remark}
Suppose $\tau$ is a Trepalin involution acting regularly on a $\langle\tau\rangle$-conic bundle $S\to\p^1_\R$ with $S(\R)\approx \mathbb{S}^2$. Then, if $S\dashrightarrow S'$ is any  $\langle\tau\rangle$-birational map to a $\langle\tau\rangle$-conic bundle $S'\to\p^1$, then $S'(\R)\approx \mathbb{S}^2$ which again follows from Lemma~\ref{rem:K2=4-and-rkPic=2} and $K_S^2\leqslant4$ even.
\end{remark}

Let us show that $\mathfrak{dJ}_1\cap\mathfrak{I}_1\neq\varnothing$.
 
\begin{example}\label{example:non-exceptional-exceptional}
Let $\omega\colon S\to\p^1_\R\times\p^1_\R$ be a double cover branched over the smooth genus 1 curve $R$ that is given by
$(x_0^2+x_1^2)(y_0^2-y_1^2)=x_1^2y_1^2$, let $C$ be the preimage of the curve $R$ via $\omega$,
let $\tau$ be the~involutions of the double cover $\omega$, let $G=\langle\tau\rangle$,
where $([x_0:x_1],[y_0:y_1])$ are coordinates on $\p^1_\R\times\p^1_\R$.
Then $\mathrm{Pic}(S)^G\simeq\mathbb{Z}^2$, $F(\tau)=C\simeq R$, and $R(\R)\approx\mathbb{S}^1\sqcup\mathbb{S}^1$. Thus $S(\R)\approx\mathbb{T}^2$ and $S$ is $\mathbb{R}$-rational by Theorem \ref{theorem:rational-real}.
Let $\mathrm{pr}_i\colon \p^1_\R\times\p^1_\R\to\p^1_\R$ be the~projection to the $i$-th factor.
Set $\pi_1=\mathrm{pr}_1\circ\omega$ and $\pi_2=\mathrm{pr}_2\circ\omega$.
Then $\pi_1$ and $\pi_2$ are $G$-minimal conic bundles without special fibres.
But $\pi_1(S(\mathbb{R}))=\p^1_\R$, while $\pi_2(S(\mathbb{R}))$ is an interval in $\p^1_\R$,
which implies that the conic bundle $\pi_1$ is $G$-exceptional, but $\pi_2$ is not $G$-exceptional. 
In particular, $\tau\in\mathfrak{dJ}_1\cap\mathfrak{I}_1$. 
\end{example}

Let us discuss conjugation between involution of a given class. 
If $\tau,\tau'$ are two involution in the class $\mathfrak{B}_4$, they are regularised on smooth del Pezzo surfaces of degree $1$ and they are conjugate if and only if the surfaces are equivariantly isomorphic, see Corollary~\ref{corollary:Bertini-curve}. 
This allows to distinguish them with linear algebra. 
Similarly, two involutions in classes $\mathfrak{G}_3$ and $\mathfrak{K}_1$ are regularised on smooth del Pezzo surfaces of degree $2$ and they are conjugate if and only if the surfaces are equivariantly  isomorphic, see Section~\ref{section:Geiser}. 

We can also distinguish involutions up to conjugacy in $\mathfrak{T}_{4n}$, $n\geqslant 2$. Namely, let $\tau$ and $\tau'$ be two $0$-twisted Trepalin involutions. They can be regularised on the affine surfaces $Y$ and $Y'$, respectively, given by
\[
Y\colon\begin{cases}
&x^2+y^2+\prod_{i=1}^{n}(t-\epsilon_i)=0,\\
&w^2+(t-\lambda_1)(t-\lambda_2)=0,
\end{cases}
\quad\text{and}\quad
Y'\colon\begin{cases}
&x^2+y^2+\prod_{i=1}^{n}(t-\epsilon_i')=0,\\
&w^2+(t-\lambda_1')(t-\lambda_2')=0,
\end{cases}
\]
where $\tau$ and $\tau'$ act by $(x,y,z,w)\mapsto(x,y,z,-w)$. If $n\geq2$, then $\tau$ and $\tau'$ are conjugate if and only if there exist $\varphi\in\PGL_2(\R)$ that maps $\{\lambda_1,\lambda_2\}$ to $\{\lambda_1',\lambda_2'\}$ and $\{\epsilon_1,\dots,\epsilon_n\}$ onto $\{\epsilon_1',\dots,\epsilon_n'\}$, as follows from Theorem~\ref{theorem:Iskovskikh}. 
The same argument allows us to distinguish involutions in $\mathfrak{T}_{4n+2}'$ if $n\geqslant 1$ and in $\mathfrak{T}_{4n}''$ if $n\geqslant2$. 

Involutions in $\mathfrak{dJ}_g$ are uniquely determined by their fixed curve if $g\geqslant 2$, see Theorem~\ref{theorem:exceptional-conic-bundles-n-large}. Finally, recall that Theorem~\ref{theorem:main-non-exceptional-equivalence} gives an effective way to distinguish conjugacy classes inside $\mathfrak{I}_g$, $g\geqslant 2$, inside $\mathfrak{I}_g'$, $g\geqslant 2$ and inside $\mathfrak{I}_g''$, $g\geqslant 1$. For the remaining classes $\mathfrak{T}_4''$, $\mathfrak{dJ}_1$, $\mathfrak{I}_1$ and $\mathfrak{I}_1'$ we do not know how to decide whether two involutions inside a class are conjugate. 

\section{Proof of the Main Corollary}\label{subsection:main-corollary}

Let $C$ be a~real hyperelliptic curve of genus $g\geqslant 2$ such that
the~real locus $C(\R)$ consists of at least $2$ connected components.
Let us show that $\Bir(\p_\R^2)$ contains uncountably many non-conjugate $2$-twisted Iskovskikh involutions in the class $\mathfrak{I}_g^{\prime\prime}$
that fix a~curve isomorphic to~$C$.
Observe that $C$ is birational to the curve in $\mathbb{A}^2_\R$ which is given by
$
w^2=-4f(t)
$
for
$$
f(t)=\prod_{i=1}^{2r}\big(t-\epsilon_i\big)\prod_{i=1}^{s}\big(t-\beta_i\big)\big(t-\bar{\beta}_i\big)
$$
where $\epsilon_1<\epsilon_2<\cdots<\epsilon_{2r}$ are real numbers,
$\beta_1,\ldots,\beta_s$ are complex non-real numbers, $r\geqslant 2$, $s\geqslant 0$,
and $(w,t)$ are coordinates on $\mathbb{A}^2_\R$. Note that $g=r+s-1$. Let $a$ and $b$ be real numbers such that $a<\epsilon_1$ and $b>\epsilon_{2r}$,
and let $Y_{a,b}$ be the surface in $\mathbb{P}^2_\mathbb{R}\times\mathbb{A}^1_{\mathbb{R}}$ that is given by
$
x^2+f(t)y^2+(t-a)(t-b)z^2=0,
$
where $([x:y:z],t)$ are coordinates on $\mathbb{P}^2_\mathbb{R}\times\mathbb{A}^1_{\mathbb{R}}$.
Let $\tau_{a,b}$ be the involution in $\mathrm{Aut}(Y_{a,b})$ given by
$
([x:y:z],t)\mapsto([x:y:-z],t),
$
let $G=\langle\tau_{a,b}\rangle$,
and let $\pi_{Y_{a,b}}\colon Y_{a,b}\to\mathbb{A}^1_{\mathbb{R}}$ be $G$-equivariant morphism   given by $([x:y:z],t)\mapsto t$.
Then $Y_{a,b}$ is smooth, and there exists $G$-equivariant commutative diagram
$$
\xymatrix{
Y_{a,b}\ar@{->}[d]_{\pi_{a,b}}\ar@{^{(}->}[rr]&&X_{a,b}\ar@{->}[d]^{\eta_{a,b}}\\
\mathbb{A}^1_\R\ar@{^{(}->}[rr]&&\mathbb{P}^1_\R}
$$
where $X_{a,b}$ is a real smooth projective surface, $\mathbb{A}^1_\R\hookrightarrow\mathbb{P}^1_\R$ and $Y_{a,b}\hookrightarrow X_{a,b}$ are open immersions
such that the embedding $\mathbb{A}^1_\R\hookrightarrow\mathbb{P}^1_\R$ is given by $t\mapsto[t:1]$,
and $\eta_{a,b}$ is a $G$-minimal conic bundle such that the fibre $\eta_{a,b}^{-1}([1:0])=X_{a,b}\setminus Y_{a,b}$ is a smooth conic without real points.

\begin{remark}\label{remark:scrolls}
We can construct $X_{a,b}$ by taking any smooth projective completion of $Y_{a,b}$
such that the $G$-action extends to a biregular action,
and the rational map to $\mathbb{P}^1_\R$ induced by $\pi_{a,b}$ is morphism, and then applying relative $G$-equivariant Minimal Model program over $\mathbb{P}^1_\R$.
Alternatively, we can construct $X_{a,b}$ as a hypersurface in the scroll $\mathbb{F}(0,r+s,1)$.
Namely, recall from \cite{Reid} that the~real scroll $\mathbb{F}(0,r+s,1)$~can be defined as the~quotient
$(\mathbb{A}^3_\R\setminus 0)\times(\mathbb{A}^2_\R\setminus 0)/\mathbb{G}_m^2$
for the following $\mathbb{G}_m^2$-action:
$$
\Big(\big(x,y,z\big),\big(t_0,t_1\big)\Big)\mapsto\Bigg(\Big(\mu x,\frac{\mu y}{\lambda^{r+s}},\frac{\mu z}{\lambda}\Big),\big(\lambda t_0,\lambda t_1\big)\Bigg)
$$
where $(\lambda,\mu)\in\mathbb{G}_m^2$, and $((x,y,z),(t_0,t_1))$ are coordinates on $\mathbb{A}^2_\R\times\mathbb{A}^3_\R$.
Therefore, we can define the~surface $X_{a,b}$ as a hypersurface in the scroll $\mathbb{F}(0,r+s,1)$ that is given by
$$
x^2+F(t_0,t_1)y^2+(t_0-at_1)(t_0-bt_1)z^2=0,
$$
where $F(t_0,t_1)=t_1^df(\frac{t_0}{t_1})$,
and $([x:y:z],[t_0:t_1])$ are bihomogeneous coordinates on $\mathbb{F}(0,r+s,1)$.
Then $Y_{a,b}\hookrightarrow X_{a,b}$ is given by $t_1\ne 0$,
the $G$-action on $X_{a,b}$ is given by
$$
\big([t_0:t_1],[x:y:z]\big)\mapsto\big([t_0:t_1],[x:y:-z]\big),
$$
and the~conic bundle $\eta_{a,b}\colon X_{a,b}\to \mathbb{P}^{1}_{\mathbb{R}}$ is given by $([t_0:t_1],[x:y:z])\mapsto[t_0:t_1]$.
\end{remark}

Note  that $X_{a,b}(\R)=\eta_{a,b}^{-1}([a,b])$ and  $X_{a,b}(\R)$ is connected, which implies that $X_{a,b}$ is $\R$-rational.
Let $\iota_{a,b}$ be a birational involution in $\Bir(\p^2_\R)$ given by $\tau_{a,b}$ and some birational map $X_{a,b}\dasharrow\p^2_\R$.
Then $F(\iota_{a,b})\cong C$ by Corollary~\ref{corollary:fixed-curve-non-exceptional}, so that
$\iota_{a,b}$ is a $2$-twisted Iskovskikh involution in the class $\mathfrak{I}_{g}^{\prime\prime}$,
because $\eta_{a,b}$ has two special singular fibres: the fibres
$\eta_{a,b}^{-1}([a:1])$ and $\eta_{a,b}^{-1}([b:1])$.

The surface $X_{a,b}$ has no real point over $[1:0]$ thus its real locus is identical to the real locus of $Y_{a,b}$ and then diffeomorphic to the connected sum of $2r$ real projective planes. Indeed, forgetting the action of $\tau_{a,b}$, we can contract over $\R$ one real $(-1)$-component of each of the $2r$ real singular fibres and obtain a surface whose real locus is diffeomorphic to the sphere, then  $X_{a,b}(\R)=\#^{2r}\R\p^2$.

\begin{proposition}
\label{proposition:invinitely-many-involutions}
Let $a_1,b_1,a_2,b_2$ be general real numbers such that $a_1,a_2<\epsilon_1$ and $b_1,b_2>\epsilon_{2r}$. Suppose that $g(C)\geqslant 2$. 
Then the involutions $\tau_{a_1,b_1}$ and $\tau_{a_2,b_2}$ are not conjugate in $\Bir(\p^2_\R)$.
\end{proposition}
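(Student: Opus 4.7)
The plan is to argue by contradiction, using Theorem~\ref{theorem:main-non-exceptional-equivalence} to extract rigid combinatorial data from a hypothetical conjugation and then show these data cannot be matched generically.

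First, I would observe that each $\tau_{a_i,b_i}$ is regularised on the $G$-minimal conic bundle $\eta_{a_i,b_i}\colon X_{a_i,b_i}\to\p_\R^1$ described above, and lies in the class $\mathfrak{I}_g''$: since $\eta_{a_i,b_i}(X_{a_i,b_i}(\R))=[a_i,b_i]\subsetneq \p_\R^1(\R)$, the bundle is not $G$-exceptional by Proposition~\ref{proposition:exceptional-conic-bundles}; the fibres over $a_i$ and $b_i$ are the special fibres (their singular points lie off $F(\tau_{a_i,b_i})=C$), so $\delta_{X_{a_i,b_i}}=2$; and therefore $K_{X_{a_i,b_i}}^2=6-2g-2=4-2g\leqslant 0$.

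Now suppose for contradiction that $\tau_{a_1,b_1}$ and $\tau_{a_2,b_2}$ are conjugate in $\Bir(\p_\R^2)$. This gives a $G$-equivariant birational map $X_{a_1,b_1}\dashrightarrow X_{a_2,b_2}$, where $G=\langle\tau\rangle$. Because $K^2\leqslant 0$, Lemma~\ref{rem:K2=4-and-rkPic=2}(1) (combined with Lemma~\ref{lemma:non-exceptional-exceptional}) ensures this is a composition of elementary transformations of conic bundles, so by Theorem~\ref{theorem:Iskovskikh}(ii) it fits into a commutative diagram over some isomorphism $\upsilon\in\PGL_2(\R)$ of the base. Performing the base change by $\upsilon$ on $X_{a_1,b_1}$ (and reapplying Theorem~\ref{theorem:main-non-exceptional} to restore the canonical form with the empty fibre at $[1:0]$), we produce a $G$-minimal non-$G$-exceptional conic bundle over $\p_\R^1$ that is $G$-birational to $X_{a_2,b_2}$ as conic bundles over the identity of $\p_\R^1$. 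Applying Theorem~\ref{theorem:main-non-exceptional-equivalence}, the zero divisors on $\p_\R^1$ of the discriminant polynomial $B^2-4AC$ and of the ``special fibre'' polynomial $H$ must match. For our surfaces these are respectively the divisor of roots of $f$ and $[a_i]+[b_i]$; so $\upsilon$ must permute the roots of $f$ in $\p_\C^1$ and satisfy $\upsilon(\{a_2,b_2\})=\{a_1,b_1\}$.

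Since $\upsilon$ is real, it permutes in particular the $2r$ real roots $\epsilon_1,\ldots,\epsilon_{2r}$ of $f$. The hypothesis that $C(\R)$ has at least two connected components gives $r\geqslant 2$, so $\{\epsilon_1,\ldots,\epsilon_{2r}\}$ is a set of at least four points of $\p_\R^1$. Any element of $\PGL_2(\R)$ is determined by its action on three points, hence the subgroup $\Gamma\subset\PGL_2(\R)$ stabilising $\{\epsilon_1,\ldots,\epsilon_{2r}\}$ embeds into the symmetric group $\mathrm{Sym}_{2r}$ and is therefore finite. For fixed $(a_2,b_2)$, the set of pairs $(a_1,b_1)$ for which some $\upsilon\in\Gamma$ satisfies $\upsilon(\{a_2,b_2\})=\{a_1,b_1\}$ is the finite set $\{\{\upsilon(a_2),\upsilon(b_2)\}:\upsilon\in\Gamma\}$. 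Hence, for general $(a_1,b_1,a_2,b_2)$ in the region $a_i<\epsilon_1<\epsilon_{2r}<b_i$, no such $\upsilon$ exists and the involutions are not conjugate.

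The main technical subtlety is Step~2: implementing the base change by a general $\upsilon\in\PGL_2(\R)$ (which need not fix $[1:0]$) while preserving the hypotheses of Theorem~\ref{theorem:main-non-exceptional-equivalence}, namely that the polynomial $H$ have only real roots, that its leading coefficient be negative, and that the two positivity constants $\lambda,\mu$ come out positive. This is bookkeeping with divisors and signs of leading coefficients under $\upsilon$, and can be handled by first reapplying Theorem~\ref{theorem:main-non-exceptional} to choose a new point at infinity outside $\upsilon([a_2,b_2])\cup\upsilon^{-1}([a_1,b_1])$; after this adjustment the argument reduces to the divisorial conditions used above.
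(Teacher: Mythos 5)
Your proposal is correct and shares its skeleton with the paper's proof: assume a conjugation, invoke Theorem~\ref{theorem:Iskovskikh} to obtain a fibrewise $G$-birational map over some $\upsilon\in\PGL_2(\R)$, show that $\upsilon$ preserves the branch data of the fixed curve and carries $\{a_2,b_2\}$ to $\{a_1,b_1\}$, then conclude by finiteness of the stabiliser of a set of at least three points together with genericity. The one genuine divergence is the middle step. The paper never passes through Theorem~\ref{theorem:main-non-exceptional-equivalence}: it observes directly that the decomposition of the map into elementary transformations forces it to be an isomorphism along the singular fibres, which immediately gives $\phi(\Sigma)=\Sigma$ and $\phi(\Sigma_1)=\Sigma_2$ for the full branch set $\Sigma$ (of cardinality $2g+2\geqslant 6$ in $\p^1_\C$) and $\Sigma_i=\{a_i,b_i\}$. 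Your detour through Theorem~\ref{theorem:main-non-exceptional-equivalence} requires first renormalising the base map to the identity, which is precisely the ``technical subtlety'' you flag; but to verify the hypotheses of that theorem after the base change (smoothness and real-emptiness of the fibre over $[1:0]$, equality of the images of the real loci) you end up using the same isomorphism-along-singular-fibres property, so the detour buys nothing and only adds bookkeeping. Note also that you need not choose a new point at infinity: since $\upsilon([a_1,b_1])=[a_2,b_2]$ and singular fibres go to singular fibres, $[1:0]$ itself remains a valid point at infinity for the base-changed bundle, whereas literally moving it (as you suggest) would take $X_{a_2,b_2}$ out of its normal form. With either implementation the finiteness and genericity conclusion is identical, and your restriction to the $2r\geqslant 4$ real roots instead of all $2g+2$ roots is an equally valid way to see that the relevant subgroup of $\PGL_2(\R)$ is finite.
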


\begin{proof}
The involutions $\tau_{a_1,b_1}$ and $\tau_{a_2,b_2}$ are conjugate  if and only if
there is a $G$-equivariant birational map $X_{a_1,b_1}\dasharrow X_{a_2,b_2}$.
Suppose such birational map exists. Since \mbox{$K_{X_{a_1,b_1}}^2=K_{X_{a_2,b_2}}^2=4-2g\leqslant 0$},
it follows from Theorem~\ref{theorem:Iskovskikh}
that there is a $G$-equivariant commutative diagram
$$
\xymatrix{
X_{a_1,b_1}\ar@{->}[d]_{\eta_{a_1,b_1}}\ar@{-->}[rr]&&X_{a_2,b_2}\ar@{->}[d]^{\eta_{a_2,b_2}}\\
\mathbb{P}^1_\R\ar@{->}[rr]_{\phi}&&\mathbb{P}^1_\R}
$$
for some  $\phi\in\mathrm{PGL}_2(\R)$. Let
\begin{gather*}
\Sigma=\big\{[\epsilon_1:1],[\epsilon_2:1],\ldots,[\epsilon_{2r}:1],[\beta_1:1],[\bar{\beta}_1:1],\ldots,[\beta_s:1],[\bar{\beta}_s:1]\big\},\\
\Sigma_{1}=\big\{[a_1,1],[b_1,1]\big\},\ \ \
\Sigma_{2}=\big\{[a_2,1],[b_2,1]\big\}.
\end{gather*}
It follows from the proof of Theorem~\ref{theorem:Iskovskikh}
or from the descriptions of two-dimensional Sarkisov links that
the birational map $X_{a_1,b_1}\dasharrow X_{a_2,b_2}$ is an isomorphism along singular fibres of the conic bundles $\eta_{a_1,b_1}$ and $\eta_{a_2,b_2}$.
Therefore, we have $\phi(\Sigma)=\Sigma$ and $\phi(\Sigma_1)=\Sigma_2$. Since the set $\Sigma$ consists of at least $6$ points in $\mathbb{P}^1_\C$,
it follows from $\mathrm{Aut}(\mathbb{P}^1_\R,\Sigma)$ is a finite group.
So, since $\phi\in\mathrm{Aut}(\mathbb{P}^1_\R,\Sigma)$,
we have finitely many possibilities for the subset $\phi(\Sigma_1)$,
and we may assume that $\Sigma_2$ is not among these finitely many possibilities,
since $a_2$ and $b_2$ are general by assumption.
Then $\phi(\Sigma_1)\ne\Sigma_2$, which is a contradiction.
\end{proof}

\vskip\baselineskip

Now let us construct an uncountable number of non-conjugate involutions in $\Bir(\p^2_\R)$ that fix no geometrically irrational curve. Let  $r\geqslant 2$ and fix real numbers $a<\epsilon_1<b<\epsilon_2<\cdots<\epsilon_{2r}$. Consider the affine surface $U\subset\AAA^4_\R$ given by  
$$
\left\{\aligned
&x^2+y^2+(t-\epsilon_1)(t-\epsilon_2)\cdots(t-\epsilon_{2r})=0,\\
&w^2+(t-a)(t-b)=0,
\endaligned
\right.
$$
and the projectivisation $S$ of $U$ described in Section~\ref{subsection:Trepalin-involution-1}. As explained in the construction \ref{subsection:Trepalin-involution-1}, $S$ is $\R$-rational and there is a conic fibration $S\to\p^1_\R$ on $S$ that has $4r\geqslant 8$ singular fibres and smooth fibres above $a$ and $b$. The involution 
$[w:x:y:z]\mapsto[-w:x:y:z]$
of $U$ extends to a regular involution $\tau_{a,b}$ on $S$ such that $\Pic(S)^{\langle\tau\rangle}\simeq\Z^2$ and $F(\tau_{a,b})=\varnothing$. In fact, the set of 
$\tau_{a,b}$-fixed points are the fibres above $a$ and $b$, and $\tau_{a,b}$ is a $0$-twisted Trepalin involution.

\begin{proposition}\label{proposition:invinitely-many-involutions-rational-curve}
Let $a_1,a_2,b_1,b_2$ be general real numbers such that $a_1,a_2<\epsilon_1<b_1,b_2<\epsilon_2$. Then the involutions $\tau_{a_1,b_1}$ and $\tau_{a_2,b_2}$ are not conjugate in $\Bir(\p^2_\R)$. 
\end{proposition}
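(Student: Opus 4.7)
The plan is to mimic the proof of Proposition \ref{proposition:invinitely-many-involutions}, adapted to the Trepalin setting where the involution $\tau_{a,b}$ acts non-trivially on the base of the conic bundle. Suppose for contradiction that $\tau_{a_1,b_1}$ and $\tau_{a_2,b_2}$ are conjugate in $\Bir(\p^2_\R)$, so that there is a birational map $\chi\colon S_{a_1,b_1}\dasharrow S_{a_2,b_2}$ with $\chi\circ\tau_{a_1,b_1}=\tau_{a_2,b_2}\circ\chi$. Since $K_{S_{a_i,b_i}}^2=8-4r\leqslant 0$ (as $r\geqslant 2$), Lemma \ref{rem:K2=4-and-rkPic=2}(1) applies, so $\chi$ decomposes into $G$-equivariant elementary transformations of conic bundles. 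This yields a commutative diagram
\[
\xymatrix{
S_{a_1,b_1}\ar[d]_{\pi_1}\ar@{-->}[rr]^\chi&&S_{a_2,b_2}\ar[d]^{\pi_2}\\
\mathbb{P}^1_\R\ar[rr]_{\phi}&&\mathbb{P}^1_\R}
\]
for some $\phi\in\PGL_2(\R)$ that conjugates the $\tau_{a_1,b_1}$-action on the left base to the $\tau_{a_2,b_2}$-action on the right base.

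Next, I would exploit the double-base structure of the Trepalin construction. Recall that the base of $\pi_i$ is a double cover $\omega_i\colon\mathbb{P}^1_\R\to\mathbb{P}^1_\R$ of the base of $\pi_X$ (branched at $a_i$ and $b_i$), that the base of $\pi_X$ does not depend on $(a_i,b_i)$ since the $\epsilon_j$ are fixed, and that $\tau_{a_i,b_i}$ acts on the upper $\mathbb{P}^1_\R$ as the Galois involution of $\omega_i$. Since $\phi$ intertwines these two Galois involutions, it descends to a map $\bar\phi\in\PGL_2(\R)$ on the lower $\mathbb{P}^1_\R$, and $\bar\phi$ must send the branch pair $\{a_1,b_1\}$ to $\{a_2,b_2\}$. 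Moreover, as in Proposition \ref{proposition:invinitely-many-involutions}, elementary transformations of conic bundles do not alter the set of points in the base over which the fibration has singular fibres, so $\phi$ maps the $4r$ singular fibre locations of $\pi_1$ bijectively to those of $\pi_2$; both sets project via $\omega_i$ onto $\{\epsilon_1,\dots,\epsilon_{2r}\}$, so $\bar\phi$ preserves this subset of the lower $\mathbb{P}^1_\R$.

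To conclude, since $r\geqslant 2$ the set $\{\epsilon_1,\dots,\epsilon_{2r}\}$ contains at least $4$ points, hence the subgroup $\Aut(\mathbb{P}^1_\R,\{\epsilon_1,\dots,\epsilon_{2r}\})\subset\PGL_2(\R)$ is finite. Therefore there are only finitely many candidates for $\bar\phi$, and only finitely many possibilities for the unordered pair $\bar\phi(\{a_1,b_1\})$. For general $(a_2,b_2)$ satisfying $a_2<\epsilon_1<b_2<\epsilon_2$, the pair $\{a_2,b_2\}$ avoids this finite list, yielding the desired contradiction.

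The main obstacle compared to Proposition \ref{proposition:invinitely-many-involutions} is the two-step base structure: one must carefully verify that the $\tau$-equivariant $\chi$ induces a $\tau$-equivariant $\phi$ on the upper bases and that $\phi$ then descends to a well-defined $\bar\phi\in\PGL_2(\R)$ on the common lower base. Once this bookkeeping is set up, the combinatorial finiteness argument mirrors that of Proposition \ref{proposition:invinitely-many-involutions}.
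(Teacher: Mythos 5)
Your proposal is correct and follows essentially the same route as the paper, which simply says to repeat the proof of Proposition~\ref{proposition:invinitely-many-involutions} "almost word by word": reduce via Theorem~\ref{theorem:Iskovskikh} (or Lemma~\ref{rem:K2=4-and-rkPic=2}, since $K^2=8-4r\leqslant 0$) to a fibrewise map with $\phi\in\PGL_2(\R)$ preserving singular-fibre loci, and then use finiteness of $\Aut(\p^1_\R,\Sigma)$ together with genericity of $(a_2,b_2)$. The only extra content in your write-up — the descent of $\phi$ to the common lower base, where it must preserve $\{\epsilon_1,\dots,\epsilon_{2r}\}$ and send $\{a_1,b_1\}$ to $\{a_2,b_2\}$ — is exactly the "almost" the authors leave implicit, and you handle it correctly.
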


\begin{proof}
We can follow the proof of Proposition~\ref{proposition:invinitely-many-involutions} almost word by word. 
\end{proof}

Similar uncountable families can be constructed with $1$-twisted and $2$-twisted Trepalin involutions. 
\bigskip

\appendix

\section{Quadratic forms}
\label{section:Witt-ring}

In this appendix, we present some techniques from the theory of quadratic forms.

\subsection{Witt's ring of a~field}\label{ss:Witt}

Let $\kk$ be a~base field of characteristic not equal to two and $V$ be a~vector space of dimension $n$ over $\kk$. A {\it quadratic space} is a~pair $(V,q)$ where $q: V\to\kk$ is a~quadratic form. Given two quadratic spaces $(V_1,q_1)$ and $(V_2,q_2)$, one can form their {\it orthogonal sum} $V_1\perp V_2$ which is a~quadratic space $(V_1\oplus V_2,q_1\perp q_2)$ where $(q_1\perp q_2)(v_1,v_2)=q_{1}(v_1)+q_{2}(v_2)$. Further, we have an operation of tensor product $(V_1\otimes V_2,q_1\otimes q_2)$ of these two quadratic spaces, where $q_1\otimes q_2$ is defined on elementary tensors by $(q_1\otimes q_2)(v_1\otimes v_2)=q_1(v_1)q_2(v_2)$. Since every quadratic form can be brought to the diagonal form $a_1x_1^2+a_2x_2^2\ldots +a_nx_n^2$, $a_i\in\kk^*$, we shall denote quadratic forms $\langle a_1,a_2,\ldots,a_n\rangle$; the latter is   equal to $\langle a_1\rangle\perp\langle a_2\rangle\perp\ldots\perp\langle a_n\rangle$.

Let $\QForms(\kk)$ be the set of all isometry classes of non-singular quadratic forms over $\kk$. The operations of orthogonal sum $\perp$ and tensor product $\otimes$ make $\QForms(\kk)$ into commutative semiring: note that no non-zero element of $\QForms(\kk)$ has an additive inverse. However, Witt's Cancellation Theorem \cite[I.4.2]{Lam} states that $\QForms(\kk)$ is a~cancellation monoid, i.e. $q\perp q_1\cong q\perp q_2$ always implies $q_1\cong q_2$. This allows to embed $\QForms(\kk)$ into a~group via the standard Grothendieck construction. Namely, define a~relation on $\QForms(\kk)\times\QForms(\kk)$ by setting
\[
(q_1,q_2)\sim(q_1',q_2')\ \ \ \text{if and only if}\ \ \ q_1\perp q_2'=q_1'\perp q_2.
\]
The cancellation law implies that $\sim$ is an equivalence relation and there is a~well-defined addition on the set of equivalence classes
\[
[(q_1,q_2)]+[(q_1',q_2')]=[(q_1\perp q_1',q_2\perp q_2')]
\]
which makes $(\QForms(\kk)\times\QForms(\kk)/\sim)$ into a~commutative ring $\widehat{\Witt}(\kk)$ called the {\it Grothendieck-Witt} ring of quadratic forms over $\kk$. The map $\iota: \QForms(\kk)\to\widehat{\Witt}(\kk)$, $\iota(q)=(q,0)$, is an injection, so we can view $\QForms(\kk)$ as a~subset of $\widehat{\Witt}(\kk)$. We can write $(q_1,q_2)=\iota(q_1)-\iota(q_2)=q_1-q_2$, hence saying that $q_1$ and $q_2$ are equal as elements of $\widehat{\Witt}(\kk)$ is equivalent to saying that $q_1$ and $q_2$ are equivalent as quadratic forms.

The quadratic form $\mathbf{h}=\langle 1,-1\rangle$ is called the {\it hyperbolic plane}, and an orthogonal sum of hyperbolic planes is called a~{\it hyperbolic space}. It is not difficult to see that the set of all hyperbolic spaces and their additive inverses constitutes an ideal in $\widehat{\Witt}(\kk)$ denoted $\mathbb{Z}\mathbf{h}$. The quotient ring $\Witt(\kk)=\widehat{\Witt}(\kk)/\mathbb{Z}\mathbf{h}$ is called the {\it Witt ring} of $\kk$.

\begin{remark}\label{rem: Witt of R}
	It is easy to see that the elements of $\Witt(\kk)$ are in one-to-one correspondence with the isometry classes of all anisotropic forms over $\kk$. For example, over $\kk=\R$ there are only two classes of anisotropic forms of a~given dimension $n>1$, namely $n\langle 1\rangle$ and $n\langle -1\rangle$. In particular, the map
	\begin{equation}\label{eq: Witt ring of R}
		\psi: \Witt(\R)\to\mathbb{Z},\ \ \ \ q\mapsto\begin{cases}
			\ \ n,\ \ \text{if}\ q\cong n\langle 1\rangle,\\
			-n,\ \ \text{if}\ q\cong n\langle -1\rangle
		\end{cases}
	\end{equation}
	is an isomorphism.
\end{remark}

Two $n$-ary quadratic forms $q_1$ and $q_2$ on $V$ are called {\it equivalent} if there exists $A\in\mathrm{GL}(V)$ such that $q_1(v)=q_2(Av)$ for all $v\in V$. Now we can formulate the following classical result which is due to Witt (see e.g. \cite[II.1.4]{Lam} for a~modern exposition).

\begin{theorem}\label{thm: Witt equivalence}
	Two non-degenerate quadratic forms over a~field $\kk$ are equivalent if and only if they have the same dimension and equal classes in $\Witt(\kk)$.
\end{theorem}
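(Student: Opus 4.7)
The forward direction is essentially a tautology: if $q_1$ and $q_2$ are equivalent, then, by the very definition of equivalence, they are isometric as quadratic forms on $V$. In particular they have the same dimension, and define the same isometry class in $\QForms(\kk)$. This class maps to the same element of $\widehat{\Witt}(\kk)$ via $\iota$, and hence to the same element of $\Witt(\kk) = \widehat{\Witt}(\kk)/\mathbb{Z}\mathbf{h}$. So the work is entirely in the reverse direction.

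For the reverse direction, the plan is to pull back equality in $\Witt(\kk)$ to a precise isometry statement via Witt's cancellation theorem \cite[I.4.2]{Lam}, the very result that makes $\QForms(\kk)$ embed into $\widehat{\Witt}(\kk)$. Assume $\dim q_1 = \dim q_2 = n$ and $[q_1] = [q_2]$ in $\Witt(\kk)$. Then in $\widehat{\Witt}(\kk)$ one has $\iota(q_1) - \iota(q_2) = m\,\iota(\mathbf{h})$ for some integer $m \in \Z$. Replacing $(q_1, q_2)$ by $(q_2, q_1)$ if necessary, we may assume $m \geqslant 0$, so that $\iota(q_1) = \iota(q_2 \perp m\mathbf{h})$ in $\widehat{\Witt}(\kk)$.

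Now I would unwind the Grothendieck construction: by definition of $\sim$ this equality means $(q_1, 0) \sim (q_2\perp m\mathbf{h}, 0)$, i.e.\ $q_1 \perp 0 \cong (q_2 \perp m\mathbf{h}) \perp 0$ as isometry classes of non-singular quadratic forms. (Here I am using that, thanks to Witt cancellation, the equivalence relation $\sim$ on $\QForms(\kk)\times\QForms(\kk)$ can be formulated without an auxiliary summand.) So there is an actual isometry $q_1 \cong q_2 \perp m\mathbf{h}$. Comparing dimensions on both sides gives $n = n + 2m$, forcing $m = 0$, whence $q_1 \cong q_2$. Transporting this isometry by a choice of basis of $V$ yields an element $A \in \mathrm{GL}(V)$ with $q_1(v) = q_2(Av)$, which is exactly the equivalence required.

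The only subtle point, and the one I would take some care with, is the bookkeeping in the Grothendieck construction: one must verify that the equivalence $(q_1',q_2')\sim(q_1'',q_2'')\iff q_1'\perp q_2'' \cong q_1''\perp q_2'$ really is an equivalence relation \emph{without} an auxiliary stabilization term, which is precisely where Witt cancellation enters. Once this is in place, the remainder of the argument is the two-line dimension count performed above.
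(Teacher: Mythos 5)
Your argument is correct and complete: the forward direction is immediate, and in the reverse direction the chain $\iota(q_1)-\iota(q_2)=m\,\iota(\mathbf{h})$, hence $q_1\cong q_2\perp m\mathbf{h}$ by injectivity of $\iota$ (which is exactly Witt cancellation), followed by the dimension count forcing $m=0$, is the standard proof. Note that the paper itself gives no proof of this statement --- it is quoted as a classical theorem of Witt with a pointer to \cite[II.1.4]{Lam} --- so there is nothing in the text to compare your argument against; for the record, an equivalent route (the one usually taken in Lam) is via Witt decomposition: equal classes in $\Witt(\kk)$ mean isometric anisotropic kernels, and equal dimensions then force equal Witt indices, giving the isometry directly.
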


Let $(\FFF,\nu)$ be a~discretely valuated field and $A=\{x\in \FFF: \nu(x)\geqslant 0 \}$ be the corresponding valuation ring (one sets $\nu(0)=\infty$, so $0\in A$). This is a~local ring with maximal ideal $\mathfrak{m}=\{x\in\FFF: \nu(x)\geqslant 1\}$ generated by any element $\pi$ with $\nu(\pi)=1$; such $\pi$ is called a~{\it uniformizer} of $A$. The field $\overline{\FFF}=A/\mathfrak{m}$ is called the {\it residue class field} of $A$. Recall that every $y\in\FFF^*$ can be written uniquely in the form $y=u\pi^{\nu(y)}$ where $\pi$ is a~fixed uniformizer and $u$ belongs to the ring of units $\Units(A)=\{x\in\FFF^*: \nu(x)=0\}$. This implies that every quadratic form over $\FFF$ can be written as
\begin{equation}\label{eq: quadratic forms over dvf}
	q=q_1\perp \langle\pi\rangle q_2, \ \ \ \text{where}\ \ q_1=\langle u_1,\ldots,u_r\rangle,\ q_2=\langle u_{r+1},\ldots,u_n\rangle,\ \ u_i\in\Units(A).
\end{equation}
Now Springer's theorem \cite[VI.1.4]{Lam} implies that the classes of $\overline{q}_1=\langle \overline{u}_1,\ldots,\overline{u}_r\rangle$ and $\overline{q}_2=\langle \overline{u}_{r+1},\ldots,\overline{u}_n\rangle$ are uniquely determined in $\Witt(\overline{\FFF})$ for a~given form $q$. They are called the {\it first and second residue forms} of $q$, respectively.

\subsection{Quadratic forms over $\R(t)$}
\label{ss:quadratic forms equivalence}

In section \ref{section:non-exceptional}, we will need to find out when two quadratic forms in three variables over $\R(t)$ are $G$-equivariantly equivalent. So, let us start with the following easy

\begin{lemma}\label{lemma:equivalence-of-forms}
	Suppose that $G$ act on $\R(t)^3$ by $(x,y,z)\mapsto(x,y,-z)$.
	Let $A,B,C,D,E,F\in\R[t]$ be polynomials with no multiple roots, such that $A,B,E$ have no common divisors and $C,D,F$ have no common divisors. Then the forms $\langle A,B,E\rangle$ and $\langle C,D,F\rangle$ are $G$-equivariantly equivalent if and only if  $\langle A,B\rangle$ and $\langle C,D\rangle$ are equivalent over $\R(t)$ and $F=\mu E$ for some $\mu>0$.
\end{lemma}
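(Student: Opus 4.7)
The proof rests on a single structural observation: a $\R(t)$-linear isomorphism $M$ of $\R(t)^3$ is $G$-equivariant precisely when it commutes with $\sigma := \mathrm{diag}(1,1,-1)$, and since $\sigma$ has two distinct eigenvalues $\pm 1$ on the subspaces $V_+ = \R(t)e_1 \oplus \R(t)e_2$ and $V_- = \R(t)e_3$, any such $M$ must preserve this decomposition. Hence $M$ has block form
\[
M = \begin{pmatrix} M_1 & 0 \\ 0 & \alpha \end{pmatrix}, \qquad M_1 \in \mathrm{GL}_2(\R(t)),\ \alpha \in \R(t)^{\ast}.
\]
Saying that such $M$ realizes an equivalence $\langle A,B,E\rangle \sim \langle C,D,F\rangle$ amounts, block by block, to $M_1$ realizing an equivalence $\langle A,B\rangle \sim \langle C,D\rangle$ over $\R(t)$ together with the scalar identity $F = \alpha^2 E$ in $\R(t)^{\ast}$.

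\textbf{Sufficiency.} Given the two assumptions, take any $M_1 \in \mathrm{GL}_2(\R(t))$ realizing $\langle A,B\rangle \sim \langle C,D\rangle$ and set $\alpha = \sqrt{\mu} \in \R^{\ast} \subset \R(t)^{\ast}$, using $\mu>0$. The resulting block-diagonal $M$ is tautologically $G$-equivariant and induces the required equivalence.

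\textbf{Necessity.} From a $G$-equivariant equivalence we extract $M_1$ and $\alpha$ as above; the block $M_1$ gives $\langle A,B\rangle \sim \langle C,D\rangle$, while the scalar block gives $F/E = \alpha^2 \in (\R(t)^{\ast})^2$. The remaining task is to upgrade the square condition to $F = \mu E$ with $\mu > 0$. This is where the hypothesis that $E$ and $F$ have no multiple roots enters. Using the UFD structure of $\R[t]$, one has the well-known decomposition
\[
\R(t)^{\ast}/(\R(t)^{\ast})^2 \;\cong\; \R^{\ast}/(\R^{\ast})^2 \oplus \bigoplus_{p\ \text{monic irred.}} \Z/2\Z,
\]
where a rational function is sent to the sign of its leading coefficient and to the parities $(v_p(\cdot) \bmod 2)_p$ of its valuations. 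Since $E$ and $F$ are squarefree, $v_p(E), v_p(F) \in \{0,1\}$, so $F/E \in (\R(t)^{\ast})^2$ forces $v_p(E) = v_p(F)$ for every irreducible $p$; hence $E$ and $F$ have the same set of monic irreducible factors with the same multiplicities, and differ by a nonzero real constant. The component in $\R^{\ast}/(\R^{\ast})^2 \simeq \{\pm 1\}$ being trivial forces this constant to be positive, yielding $F = \mu E$ with $\mu > 0$.

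\textbf{Main obstacle.} There is no serious obstacle: the argument is essentially linear algebra plus the elementary computation of $\R(t)^{\ast}/(\R(t)^{\ast})^2$. The only subtle point is being careful that the reduction from ``$F/E$ is a square in $\R(t)^{\ast}$'' to ``$F = \mu E$ with $\mu > 0$'' genuinely uses the squarefree hypothesis on both $E$ and $F$, without which one could only conclude $F = \mu E \cdot g^2$ for some $g \in \R(t)^{\ast}$.
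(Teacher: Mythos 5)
Your proof is correct and follows essentially the same route as the paper: both reduce a $G$-equivariant equivalence to a block-diagonal one (a $2\times 2$ block on the $+1$-eigenspace and a scalar $\alpha$ on the $-1$-eigenspace), read off the equivalence $\langle A,B\rangle\sim\langle C,D\rangle$ and the relation $F=\alpha^2E$, and then use squarefreeness of $E$ and $F$ to force $\alpha\in\R^{\ast}$, hence $F=\mu E$ with $\mu>0$. You merely spell out two steps the paper states tersely — why commuting with $\mathrm{diag}(1,1,-1)$ forces the block shape, and the computation of $\R(t)^{\ast}/(\R(t)^{\ast})^2$ behind the final reduction — which is a harmless elaboration rather than a different argument.
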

\begin{proof}
	If the forms $\langle A,B,E\rangle$ and $\langle C,D,F\rangle$ are $G$-equivariantly equivalent, then the equivalence is given by an automorphism of ${\R(t)}^3$ of the form $\phi\colon (x,y,z)\mapsto (ax+by, cx+dy,\lambda z)$ with $a,b,c,d,\lambda\in\R(t)$.
	Then $\langle A,B\rangle$ and $\langle C,D\rangle$ are equivalent and $ F=\lambda^2 E$. Since $E$ and $F$ are both polynomials without multiple roots, it follows that $\lambda\in\R^*$.
	
	Suppose that $\langle A,B\rangle$ and $\langle C,D\rangle$ are equivalent over $\R(t)$ and $F=\mu E$ for some $\mu>0$. If the map $(x,y)\mapsto(ax+by,cx+dy)$, $a,b,c,d\in\R(t)$, induces the equivalence between $\langle A,B\rangle$ and $\langle C,D\rangle$, then the $G$-equivariant isomorphism $(x,y,z)\mapsto(ax+by,cx+dy,\sqrt{\mu}z)$ induces an equivalence between $\langle A,B,E\rangle$ and $\langle C,D,F\rangle$.
\end{proof}

We now provide necessary and sufficient conditions for $\langle A,B\rangle$ and $\langle C,D\rangle$ to be equivalent over $\R(t)$ by applying the above described construction to the particular case $\FFF=\R(t)$. Let $\{\pi\}$ be the set of monic irreducible polynomials in $\R[t]$. Recall that each maximal ideal $(\pi)$ of $\R[t]$ defines a~valuation on the field $\R(t)$ given by $|f|_\pi=c^{-\nu_\pi(f)}$ with $c\in\R_{>1}$ and $\nu_\pi(f)$ being the power of $(\pi)$ dividing $(f)$. Note that different choice of $c$ gives an equivalent valuation. Denote by $\R(t)_\pi$ the $(\pi)$-adic completion of $\R(t)$ with respect to $|\cdot|_\pi$. Define $\partial_\pi$ to be the composition
\[
\Witt(\R(t))\to\Witt(\R(t)_\pi)\to \Witt(\overline{\R(t)_\pi}),
\]
where the second map is the second residue homomorphism with $\pi$ as the uniformizer. Then there is a~{\it Milnor's exact sequence} \cite[IX.3.1]{Lam}
\begin{equation}
	 0\overset{}{\longrightarrow}\Witt(\R)\overset{\iota}{\longrightarrow}\Witt(\R(t))\overset{\oplus\partial_\pi\hspace{0.3cm}}{\longrightarrow}\bigoplus_{\{\pi\}}\Witt\left (\overline{\R(t)_\pi}\right )\overset{}{\longrightarrow} 0
\end{equation}
\begin{lemma}
	One has $\overline{\R(t)_\pi}\cong\R[t]/(\pi)$.
\end{lemma}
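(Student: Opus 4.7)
The plan is to identify the residue field of $\R(t)_\pi$ by first computing the residue field of the uncompleted discrete valuation ring, and then invoking the standard fact that completion with respect to the maximal ideal preserves the residue field.

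First, I would identify the valuation ring of $|\cdot|_\pi$ inside $\R(t)$ as the localization $\OOO_\pi = \R[t]_{(\pi)}$, with maximal ideal $\mathfrak{m}_\pi = \pi\R[t]_{(\pi)}$. The natural ring homomorphism $\R[t] \to \OOO_\pi/\mathfrak{m}_\pi$ sends $\pi$ to zero, so it factors through $\R[t]/(\pi)$. Since $(\pi)$ is a maximal ideal of the principal ideal domain $\R[t]$ (as $\pi$ is irreducible), the quotient $\R[t]/(\pi)$ is already a field, and the induced map $\R[t]/(\pi) \to \OOO_\pi/\mathfrak{m}_\pi$ is a nonzero homomorphism of fields, hence injective. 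Surjectivity is immediate since every element of $\OOO_\pi$ can be written as $f/g$ with $f,g \in \R[t]$ and $g \notin (\pi)$, and then $g$ becomes a unit modulo $\mathfrak{m}_\pi$, so $f/g \equiv f \cdot g^{-1} \pmod{\mathfrak{m}_\pi}$ lies in the image of $\R[t]$.

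Next, I would invoke the general fact that if $(\OOO,\mathfrak{m})$ is a discrete valuation ring with $\mathfrak{m}$-adic completion $(\widehat{\OOO},\widehat{\mathfrak{m}})$, then the natural map $\OOO/\mathfrak{m} \to \widehat{\OOO}/\widehat{\mathfrak{m}}$ is an isomorphism. Applied to $\OOO_\pi$, this gives $\overline{\R(t)_\pi} = \widehat{\OOO}_\pi/\widehat{\mathfrak{m}}_\pi \cong \OOO_\pi/\mathfrak{m}_\pi \cong \R[t]/(\pi)$, as claimed.

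There is no real obstacle here: the statement is a routine identification of the residue field of a completed DVR arising from a prime of a polynomial ring, and both steps above are entirely standard.
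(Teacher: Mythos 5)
Your proof is correct and takes essentially the same route as the paper: both reduce to the standard fact that passing to the $\mathfrak{m}$-adic completion does not change the residue field (the paper cites Atiyah--MacDonald 10.15, applied to $\R[t]$ completed at $(\pi)$ rather than to the localization $\R[t]_{(\pi)}$, but these completions coincide). Your explicit verification that the residue field of the localization is $\R[t]/(\pi)$ is a detail the paper leaves implicit; nothing is missing.
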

\begin{proof}
The valuation ring of $\R(t)_\pi$ is $\R[t]_\pi$, the $\pi$-adic completion of $\R[t]$. Its unique maximal ideal is   $(\pi)_\pi$, the $\pi$-adic completion of $(\pi)$. Then we have
$$
\overline{\R(t)_\pi}\cong\R[t]_\pi/{(\pi)_\pi}\cong\R[t]/(\pi),
$$
where the last isomorphism follows from e.g. \cite[10.15]{Atiyah}.
\end{proof}
In what follows let $\overline{u}$ be the class of $u\in\R[t]$ in $\R[t]/(\pi)$. We can rewrite Milnor's sequence in the form
\begin{equation}\label{eq: Milnor sequence}
	 0\overset{}{\longrightarrow}\Witt(\R)\overset{\iota}{\longrightarrow}\Witt(\R(t))\overset{\oplus\partial_\pi\hspace{0.3cm}}{\longrightarrow}\bigoplus_{\{\pi\}}\Witt\left (\R[t]/(\pi)\right )\overset{}{\longrightarrow} 0
\end{equation}
and the homomorphism $\partial_\pi$ is explicitly given on $1$-forms by
\begin{equation}\label{eq: second residue}
	\partial_\pi\langle v\rangle=\begin{cases}
		0,\ \ \ \ \ \ \ \text{if}\ \pi\ \text{does not divide}\ v,\\
		\langle \overline{v/\pi} \rangle,\  \text{otherwise}.
	\end{cases}
\end{equation}

If $t-\varepsilon$ divides $Q\in\R[t]$ we set $Q_\varepsilon=Q/(t-\varepsilon)$ for brevity. 

\begin{theorem}[{\cite[Satz 23]{Witt}}]\label{thm:Witt}
Two quadratic forms over $\R(t)$ are isomorphic if and only if they have same rank, same determinant and same signature for almost all real values of $t$. 
\end{theorem}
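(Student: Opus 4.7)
The plan is the following. The ``only if'' direction is immediate: same rank and same determinant (as a class in $\R(t)^\ast/\R(t)^{\ast 2}$) are obvious isomorphism invariants, and specializing an isometry $q_1 \cong q_2$ at any real $t_0$ at which both forms are defined and nondegenerate yields isometric real forms, hence the same signature. For the converse, suppose $q_1, q_2$ have the same rank $n$, the same determinant, and the same signature at all but finitely many real points. By Theorem~\ref{thm: Witt equivalence} it will suffice to show that $\phi := [q_1] - [q_2] = 0$ in $\Witt(\R(t))$: the equal-rank assumption then forces $q_1 \cong q_2$.

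The main tool will be the Milnor exact sequence (\ref{eq: Milnor sequence}). I will first verify that $\partial_\pi(\phi) = 0$ for every monic irreducible $\pi \in \R[t]$, so that $\phi$ lies in the image of $\iota\colon \Witt(\R) \hookrightarrow \Witt(\R(t))$, and then pin down the resulting constant ambiguity by specialization.

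For $\pi$ quadratic irreducible the residue field is $\C$ and $\Witt(\C) \cong \Z/2\Z$ is detected by rank mod $2$. Diagonalising $q_i = \langle a_1^{(i)},\dots,a_n^{(i)}\rangle$ and writing $a_j^{(i)} = u_j^{(i)}\pi^{m_j^{(i)}}$ with $\pi \nmid u_j^{(i)}$, formula~(\ref{eq: second residue}) gives $\rk\,\partial_\pi(q_i) \equiv \sum_j m_j^{(i)} \pmod 2$, which is the parity of the $\pi$-adic valuation of $\det(q_i)$. The hypothesis on determinants forces these two parities to agree, so $\partial_\pi(\phi) = 0$. For $\pi = t - \varepsilon$ linear, $\Witt(\R) \cong \Z$ is detected by signature (Remark~\ref{rem: Witt of R}), and the key computation will be the signature jump formula
\[
\sigma_q(\varepsilon + \delta) \;-\; \sigma_q(\varepsilon - \delta) \;=\; 2\,\sigma\bigl(\partial_{t-\varepsilon}(q)\bigr)
\]
valid for $0 < \delta \ll 1$, obtained by writing $a_j = u_j(t-\varepsilon)^{m_j}$ with $u_j(\varepsilon)\neq 0$ and tracking the signs of $\delta^{m_j}$ and $(-\delta)^{m_j}$. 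The signature hypothesis ensures the left-hand side of this formula agrees for $q_1$ and $q_2$ at every $\varepsilon$, whence $\partial_{t-\varepsilon}(\phi) = 0$.

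Granted these vanishing statements, exactness of (\ref{eq: Milnor sequence}) yields $\phi = m\langle 1\rangle$ in $\Witt(\R(t))$ for some $m \in \Z$. Specializing $q_1 \perp (-q_2)$ at any real $t_0$ at which both $q_i$ are nondegenerate and of equal signature produces a Witt class of signature zero over $\R$; but that class is $m\langle 1\rangle$, of signature $m$, so $m = 0$ and hence $\phi = 0$. The main technical obstacle will be the jump formula for linear $\pi$, which requires careful bookkeeping of signs of $u_j(\varepsilon)$ weighted by the parities of $m_j$; the quadratic case and the final specialization step are essentially formal consequences of Milnor's sequence and the identifications $\Witt(\R) = \Z$, $\Witt(\C) = \Z/2\Z$.
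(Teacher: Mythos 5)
Your argument is correct. Note that the paper does not actually prove this statement — it is quoted from Witt's paper (Satz 23) and used as a black box — so there is no internal proof to compare against; but your derivation uses exactly the machinery the appendix sets up and deploys in the proof of Proposition~\ref{proposition:equivalence-of-forms}: Milnor's exact sequence \eqref{eq: Milnor sequence}, the identification $\Witt(\C)\cong\Z/2\Z$ by rank mod $2$ at quadratic primes (where the determinant hypothesis enters), and $\Witt(\R)\cong\Z$ by signature at linear primes (where your signature-jump formula correctly identifies the second residue, since only diagonal entries of odd valuation flip sign across $\varepsilon$). The one genuine difference is at the last step: the paper kills the residual constant class coming from $\Witt(\R)$ using the splitting $\jmath_\infty$ at the infinite place, whereas you do it by specializing at a good finite real point; both are legitimate, and your version avoids introducing $\jmath_\infty$ at the cost of needing that Witt classes (after padding by hyperbolic planes to equal dimension) specialize compatibly, which is immediate here since an $\R(t)$-isometry can be evaluated at all but finitely many real points.
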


As explained to us by A. Merkurjev, Theorem~\ref{thm:Witt} implies the following technical proposition, whose more algorithmic form makes it easier to use in some applications.

\begin{proposition}
\label{proposition:equivalence-of-forms}
Let $A, B, C, D\in\R[t]$ be polynomials with no multiple roots. Two quadratic forms $\langle A, B\rangle$ and $\langle C, D\rangle$ are equivalent (with respect to $\mathrm{GL}_2(\R(t))$-action) if and only if the following three conditions hold:
\begin{enumerate}
\item[(1)] $ABCD$ is a~square in $\R(t)$ (equivalently, $AB=CD$ in $\R(t)/(\R(t)^*)^2$).
\item[(2)] For every real root $\varepsilon$ of $A$ the following conditions hold
\begin{enumerate}
\item[(2.a)] if $A(\varepsilon)=0$, $B(\varepsilon)\ne 0$  then $A_\varepsilon(\varepsilon)C_\varepsilon(\varepsilon)>0$ when $C(\varepsilon)=0$ and $A_\varepsilon(\varepsilon)D_\varepsilon(\varepsilon)>0$ when $D(\varepsilon)=0$.
\item[(2.b)] if $A(\varepsilon)=B(\varepsilon)=0$ and $C(\varepsilon),D(\varepsilon)\ne 0$ then $A_\varepsilon(\varepsilon)B_\varepsilon(\varepsilon)<0$.
\item[(2.c)] if $A(\varepsilon)=B(\varepsilon)=C(\varepsilon)=D(\varepsilon)=0$ then either $A_\varepsilon(\varepsilon)B_\varepsilon(\varepsilon)<0$ and $C_\varepsilon(\varepsilon)D_\varepsilon(\varepsilon)<0$, or all numbers $A_\varepsilon(\varepsilon), B_\varepsilon(\varepsilon), C_\varepsilon(\varepsilon), D_\varepsilon(\varepsilon)$ have the same sign.
\end{enumerate}
\item[(3)]\label{item3}
If $\lambda_A,\lambda_B,\lambda_C,\lambda_D$ are the leading coefficients of $A,B,C,D$, respectively, then one of the following holds:
\begin{itemize}
		\item Exactly one of $A,B$ and one of $C,D$ has odd degree and
		the leading coefficients of even-degree polynomials have the same sign;
		\item $\deg A$, $\deg B$, $\deg C$, $\deg D$ are all odd;
		\item or $\deg A$, $\deg B$ are odd, $\deg C$, $\deg D$ are even, and $\lambda_C\lambda_D<0$; 
		\item or $\deg A$, $\deg B$ are even, $\deg C$, $\deg D$ are odd, and $\lambda_A\lambda_B<0$;
		\item $\deg A$, $\deg B$, $\deg C$, $\deg D$ are all even
		and either $\lambda_A\lambda_B<0$ and $\lambda_C\lambda_D<0$, or all $\lambda_A$, $\lambda_B$, $\lambda_C$, $\lambda_D$ are of the same sign.
	\end{itemize}
\end{enumerate}
\end{proposition}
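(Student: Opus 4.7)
I would derive Proposition~\ref{proposition:equivalence-of-forms} as a concrete, sign-theoretic reformulation of Witt's criterion for the equivalence of quadratic forms over $\R(t)$. Both $\langle A,B\rangle$ and $\langle C,D\rangle$ already have rank $2$, so by Theorem~\ref{thm:Witt} they are equivalent if and only if they have the same class of determinant in $\R(t)^*/(\R(t)^*)^2$ and the same signature for all but finitely many $t\in\R$. The first part matches condition (1): since $A,B,C,D$ have no multiple roots, $AB\equiv CD$ modulo squares in $\R(t)^*$ is equivalent to $ABCD\in(\R(t)^*)^2$, i.e.\ every irreducible factor of $ABCD$ occurs to even multiplicity. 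The main work is therefore the translation of the signature condition into (2) and (3).

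For $t_0\in\R$ with $A(t_0)B(t_0)\ne 0$, the signature of $\langle A(t_0),B(t_0)\rangle$ is determined by the signs of $A(t_0)$ and $B(t_0)$: it is definite of sign $\pm 2$ when these signs agree and equal to $0$ otherwise. Assuming (1), the signs of $AB$ and $CD$ coincide at every $t_0\in\R\setminus Z(ABCD)$, so both forms are simultaneously definite or simultaneously indefinite; in the indefinite case signatures match automatically, and in the definite case matching is equivalent to $\{\operatorname{sgn}A(t_0),\operatorname{sgn}B(t_0)\}=\{\operatorname{sgn}C(t_0),\operatorname{sgn}D(t_0)\}$ as multisets. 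Since each of $A,B,C,D$ has constant sign on every connected component of $\R\setminus Z(ABCD)$, the signature condition is equivalent to this multiset equality on each such interval.

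I would then read off (3) and (2) as the necessary and sufficient conditions for this, component by component. Condition (3) is the boundary condition at $\pm\infty$: for $t\gg 0$ one has $\operatorname{sgn}A(t)=\operatorname{sgn}\lambda_A$ etc., while for $t\ll 0$ the sign picks up $(-1)^{\deg}$. Given (1), the pairs $(\deg A+\deg B)$ and $(\deg C+\deg D)$ have the same parity, and a direct case check on parities of $\deg A,\deg B,\deg C,\deg D$ shows that the signatures agree at both $\pm\infty$ exactly in the five situations enumerated in (3). Condition (2) controls the sign-flip pattern across each real root $\varepsilon$ of $ABCD$: a polynomial $Q\in\{A,B,C,D\}$ changes sign at $\varepsilon$ iff $Q(\varepsilon)=0$, and the sign of $Q$ immediately to the right of $\varepsilon$ is $\operatorname{sgn}Q_\varepsilon(\varepsilon)$. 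Using (1), at any real root $\varepsilon$ the possibilities are: exactly one of $A,B$ and exactly one of $C,D$ vanishes (case (2.a)); both $A$ and $B$ vanish, neither $C$ nor $D$ does (case (2.b), and its mirror exchanging $AB\leftrightarrow CD$); or all four vanish (case (2.c)). In each case I would compute the signs of $A,B,C,D$ on the right interval at $\varepsilon$ in terms of the signs on the left, and verify that preservation of multiset equality of signs amounts exactly to the stated sign inequalities on the $Q_\varepsilon(\varepsilon)$'s.

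Having set this up, the implications in both directions follow by induction along the real line: starting from the signs at $t\gg 0$ pinned down by (3), conditions (2.a)--(2.c) transport the multiset equality of signs across each real root, giving equivalence of signatures on every component and hence equivalence of forms. Conversely, if the forms are equivalent, then the multiset of signs agrees on every component, and reading this off at a point just to the right of each $\varepsilon\in Z(ABCD)$ and at $t\to+\infty$ gives (2) and (3). The main obstacle is the bookkeeping in case (2.c), where up to four simultaneous sign flips at $\varepsilon$ must be analysed: one must verify that the dichotomy ``$A_\varepsilon B_\varepsilon<0$ and $C_\varepsilon D_\varepsilon<0$'' versus ``all four of $A_\varepsilon,B_\varepsilon,C_\varepsilon,D_\varepsilon$ have the same sign'' exhausts precisely the configurations that preserve multiset equality of signs across $\varepsilon$, and is compatible with the square condition $ABCD\in(\R(t)^*)^2$ at $\varepsilon$.
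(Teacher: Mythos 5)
Your overall route --- reduce to Witt's criterion (Theorem~\ref{thm:Witt}) and track the multiset of signs of $A,B,C,D$ on the connected components of $\R\setminus Z(ABCD)$ --- is a legitimate alternative to the paper's proof, which instead computes the second residue homomorphisms $\partial_\pi$ in Milnor's exact sequence together with the splitting $\jmath_\infty$ at infinity. Your reading of (1) as the determinant condition, and of (2.a)--(2.c) as the statement that the signature functions $\sigma_1(t)=\operatorname{sgn}A(t)+\operatorname{sgn}B(t)$ and $\sigma_2(t)=\operatorname{sgn}C(t)+\operatorname{sgn}D(t)$ have equal jumps at each real root, is correct and amounts to the same computation as the paper's evaluation of $\partial_{t-\varepsilon}$; you also rightly note that (2) must be imposed at real roots of $B$, $C$, $D$ as well, with the mirror of (2.b).

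The gap is in your treatment of (3). You assert that, given (1), ``the signatures agree at both $\pm\infty$ exactly in the five situations enumerated in (3)'', and you use this as the base of your induction along the real line. This is false: take $A=B=t$ and $C=D=-t$. Then (1) holds and all four degrees are odd, so the second bullet of (3) is satisfied, yet $\sigma_1(+\infty)=2\neq-2=\sigma_2(+\infty)$. More generally, whenever an entire pair has odd degree (bullets two to four of (3)), condition (3) says nothing about the leading coefficients of that pair, whereas signature agreement at $+\infty$ genuinely constrains them; so (3) is strictly weaker than your claimed base case and your induction never gets started. What you are missing is that the discarded constraints are consequences of (1) and (2): condition (2) forces $\delta=\sigma_1-\sigma_2$ to be constant on $\R\setminus Z(ABCD)$, and since $\operatorname{sgn}Q(-\infty)=(-1)^{\deg Q}\operatorname{sgn}\lambda_Q$ termwise, constancy of $\delta$ combined with the parity pattern already forces, for instance, $\sigma_1(+\infty)=0$ (i.e.\ $\lambda_A\lambda_B<0$) when $\deg A,\deg B$ are odd and $\deg C,\deg D$ are even --- which is exactly why the third bullet of (3) records only $\lambda_C\lambda_D<0$, and why (3) records nothing at all when all four degrees are odd. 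The correct statement is therefore: assuming (1) and (2), condition (3) is equivalent to $\delta(+\infty)=0$, hence to $\delta\equiv 0$. You need to prove this conditional equivalence (by the parity argument just sketched, or by invoking the splitting $\jmath_\infty$ of Milnor's sequence, which discards the odd-degree entries, as the paper does), rather than the false unconditional identification of (3) with signature agreement at $\pm\infty$.
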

\begin{proof}
	By Witt's theorem \ref{thm: Witt equivalence}, we need to check that our two forms have the same class in the Witt ring $\Witt(\R(t))$. For this we are going to use Milnor's exact sequence (\ref{eq: Milnor sequence}). First let us compare the images of our forms under each $\partial_\pi$. If $\deg\pi=2$ then $\R[t]/(\pi)\cong\C$ and $\Witt(\C)$ is isomorphic to $\mathbb{Z}/2\mathbb{Z}$ via the ``parity'' homomorphism which takes a~form $q$ to $0$ if $\dim q$ is even, and to $1$ if $\dim q$ is odd. Since $ABCD$ is a~square in $\R(t)$, the images of $\langle A,B\rangle$ and $\langle C,D\rangle$ under $\partial_\pi$ are the same.
	
	Now let $\pi=t-\varepsilon$, where $\varepsilon\in\R$. Denote by $\vartheta$ the isomorphism \[\vartheta: \R[t]/(t-\varepsilon)\overset{\raisebox{0.25ex}{$\sim\hspace{0.2ex}$}}{\smash{\longrightarrow}}\R,\ \ \overline{u}\mapsto u(\varepsilon).\]
	We have the following cases.
	\begin{enumerate}
		\item $A(\varepsilon)=0$, $B(\varepsilon)\ne 0$. Then by (\ref{eq: second residue}) one has $\partial_\pi\langle B\rangle= 0$. Since $ABCD$ is a~square and $A,B,C,D$ have no multiple factors, exactly one of $C$ and $D$ is divisible by $t-\varepsilon$. Without loss of generality, assume $C(\varepsilon)=0$, $D(\varepsilon)\ne 0$, so $\partial_\pi\langle D\rangle= 0$. We see that the class of $\partial_\pi\langle A\rangle=\langle\overline{A_\varepsilon} \rangle$ in $\Witt(\R[t]/\pi)\cong\Witt(\R)$ is equal to the class of $\partial_\pi\langle C\rangle=\langle\overline{C_\varepsilon} \rangle$ if and only if $\vartheta(\overline{A_\varepsilon})=A_\varepsilon(\varepsilon)$ and $\vartheta(\overline{C_\varepsilon})=C_\varepsilon(\varepsilon)$ have the same sign (see Remark \ref{rem: Witt of R}).
		\item $A(\varepsilon)=B(\varepsilon)= 0$, but $C(\varepsilon)$ and $D(\varepsilon)$ do not equal $0$. Then $\partial_\pi\langle C\rangle=\partial_\pi\langle D\rangle=0$ and we must have $\langle \overline{A_\varepsilon}\rangle=-\langle \overline{B_\varepsilon}\rangle$ in $\Witt(\R[t]/(\pi))$, which exactly means that $A_\varepsilon(\varepsilon)$ and $B_\varepsilon(\varepsilon)$ are of different signs.
		\item $A(\varepsilon)=B(\varepsilon)=C(\varepsilon)=D(\varepsilon)=0$.
		We must have
		$
		\partial_\pi\langle A\rangle+\partial_\pi\langle B\rangle=\partial_\pi\langle C\rangle+\partial_\pi\langle D\rangle
		$
		which, by definition of $\partial_\pi$, is equivalent to the following equality between quadratic 1-forms over $\R[t]/(\pi)$:
		$
		\langle \overline{A_\varepsilon}\rangle+\langle \overline{B_\varepsilon}\rangle=\langle \overline{C_\varepsilon}\rangle+\langle \overline{D_\varepsilon}\rangle.
		$
		This is equivalent, after applying the isomorphism $\vartheta: \R[t]/(\pi)\to\R$, to the equality
		\[
		\langle {A_\varepsilon(\varepsilon)}\rangle+\langle {B_\varepsilon(\varepsilon)}\rangle=\langle {C_\varepsilon(\varepsilon)}\rangle+\langle {D_\varepsilon(\varepsilon)}\rangle.
		\]
		Let $\psi$ be the isomorphism (\ref{eq: Witt ring of R}) $\Witt(\R)\to\mathbb{Z}$. Now for any $z\in\{A_\varepsilon(\varepsilon), B_\varepsilon(\varepsilon),C_\varepsilon(\varepsilon),D_\varepsilon(\varepsilon)\}$ we have $\psi(z)=1$ if $z>0$ and $\psi(z)=-1$ if $z<0$.
		This translates  in the condition that either $A_\varepsilon(\varepsilon)B_\varepsilon(\varepsilon)<0$ and $C_\varepsilon(\varepsilon)D_\varepsilon(\varepsilon)<0$, or all the numbers $A_\varepsilon(\varepsilon), B_\varepsilon(\varepsilon), C_\varepsilon(\varepsilon), D_\varepsilon(\varepsilon)$ have the same sign.
	\end{enumerate}
	Now the map $\iota$ in the exact sequence (\ref{eq: Milnor sequence}) admits a~splitting $\jmath_\infty$ which is defined on $1$-forms as follows: if $q=\langle h(t)\rangle$ and $h$ has degree $d$ and the leading coefficient $\lambda_q$ then $\jmath_\infty(q)=0$ for $d$ odd and $\jmath_\infty(q)=\langle\lambda_q\rangle$ for $d$ even, see \cite[Theorem 21.1, Lemma 19.10]{ElmanKarpenkoMerkurjev}. Note that $AB=CD$ in $\R(t)/(\R(t)^*)^2$ implies that $\deg(AB)$ and $\deg(CD)$ have the same parity. If they are both odd then exactly one element of each pair $A,B$ and $C,D$ has odd degree, and then $\jmath_\infty\langle A,B\rangle=\langle \pm1\rangle=\jmath_\infty\langle C,D\rangle$ if and only if the leading coefficients of the even-degree polynomials have the same sign. If they are both even, then one of the remaining conditions in (3) must be satisfied.
\end{proof}

\begin{remark}
\label{rem: Vishik}
	A slightly different approach to Proposition \ref{proposition:equivalence-of-forms} was suggested to us by A. Vishik. Namely, we have the following statement (again, probably due to Witt): $\langle A,B\rangle\cong \langle C,D\rangle$ if and only if the following two conditions hold:
	\begin{enumerate}
		\item The classes of discriminants $AB$ and $CD$ in $\kk^*/(\kk^*)^2$ coincide;
		\item $\langle A,B\rangle$ represents $C$.
	\end{enumerate}
	The necessity is easy, so let us prove the sufficiency. The diagonalization theorem for quadratic forms implies that, given a~non-degenerate quadratic space $(V,q)$ and any anisotropic vector $v\in V$ there exists an orthogonal basis $(v,e_2,\ldots,e_n)$. Thus if $q$ represents $\theta\in\kk^*$ then $q\cong \theta x_1^2+\alpha_2x_2^2+\ldots + \alpha_nx_n^2$ (this observation implies in particular that if $n=2$ then $q\cong\langle \theta,\theta\cdot\disc{q}\rangle$, a~fact that we shall use below). In our situation we have $\langle A,B\rangle\cong\langle C,E\rangle$, but then condition (1) implies $E=D$.
	
	Now $\langle A,B\rangle$ represents $C$ if and only if $q=\langle A,B,-C \rangle$ is isotropic. Indeed, the latter condition is equivalent to $\mathbf{h}$ being a~subform of $q$. But $\mathbf{h}\cong\langle C,-C\rangle$ (by the observation from above) and therefore $q$ is isotropic if and only if $\langle C,-C\rangle$ is its subform, which is equivalent to $\langle C\rangle$ being a~subform of $\langle A,B\rangle$ by Witt's Cancellation Theorem \cite[I.4.2]{Lam}.
	
	Set $E=A/C$, $F=B/C$. The form $q$ is then proportional to the form $\langle 1,-E,-F\rangle $. Consider Milnor's K-theory $\Milnor_*^M(\kk)=\bigoplus_{n\geqslant 0}\left (\Milnor_1(\kk) \right )^{\otimes n}/J$, where $\Milnor_1(\kk)$ is the group $\kk^*$ written additively (with elements denoted by $\{ a\}$, $a\in\kk^*$) and $J$ is the ideal generated by all elements $\{a \}\otimes\{1-a\}$. Let $a_1,\ldots,a_n\in\kk^*$. The class of $a_1\otimes\ldots\otimes a_n$ in $\Milnor_*^M$ is called a~{\it symbol} $\{a_1,\ldots,a_n\}$. It is well known that the following statements are equivalent:
	\begin{enumerate}
		\item The Pfister form $\langle\langle a_1,\ldots,a_n\rangle\rangle=\langle 1,-a_1\rangle\otimes\ldots\otimes\langle 1,-a_n\rangle $ is isotropic.
		\item The quadratic form $\langle\langle a_1,\ldots,a_{n-1}\rangle\rangle+\langle -a_n\rangle$ is isotropic.
		\item The symbol $\{a_1,\ldots,a_n\}$ is zero in $\Milnor_n^M(\kk)$.
	\end{enumerate}
	We conclude that $q$ is isotropic if and only if the symbol $\{E,F\}$ is zero in $\Milnor_2^M(\kk)$. Now let $\kk=\R(t)$. There is a~short exact sequence similar to (\ref{eq: Milnor sequence})
	\[
	 0\overset{}{\longrightarrow}\Milnor^M_n(\R)\overset{}{\longrightarrow}\Milnor_n^M(\R(t))\overset{}{\longrightarrow}\bigoplus_{\{\pi\}}\Milnor_{n-1}^M(\R[t]/(\pi))\overset{}{\longrightarrow} 0
	\]
	and an analysis similar to the one of Proposition \ref{proposition:equivalence-of-forms} applies. We leave the details to an interested reader.
\end{remark}

\end{document}